\newtheorem{thm}{Theorem}[section]
\newtheorem{prop}[thm]{Proposition}
\newtheorem{lem}[thm]{Lemma}
\newtheorem{cor}[thm]{Corollary}
\theoremstyle{definition}
\newtheorem{defn}[thm]{Definition}
\theoremstyle{remark}
\newtheorem{remark}[thm]{Remark}
\numberwithin{equation}{subsection}
\newcommand{\R}{\mathbb{R}}  
\newcommand{\C}{\mathbb{C}} 
\newcommand{\Z}{\mathbb{Z}}
\newcommand{\N}{\mathbb{N}}
\newcommand{\T}{\mathbb{T}}
\newcommand{\TA}{\partial_\theta^\alpha}
\newcommand{\OB}{\partial_\omega^\beta}
\newcommand{\TD}{\partial_t^\delta}
\newcommand{\IB}{\partial_I^\beta}
\newcommand{\Cm}{\mathcal{C}^\infty_{M,\tilde{M}}(X\times Y)}
\colorlet{linkcolour}{black}
\colorlet{urlcolour}{blue}
\begin{document}


\title{KAM Hamiltonians are not quantum ergodic}


\author{Se\'{a}n Gomes}
\address{Department of Mathematics, Northwestern University, 
Chicago, IL}
\email{sean.p.gomes@gmail.com}






\begin{abstract}
	We show that under generic conditions, the quantisation of a $1$-parameter family of KAM perturbations $P(x,\xi;t)$ of a completely integrable and Kolmogorov non-degenerate Gevrey smooth Hamiltonian is not quantum ergodic, at least for a full measure subset of the parameter $t\in (0,\delta)$.
\end{abstract}

\maketitle
\tableofcontents
\newpage

\newpage



\section{Introduction}
\subsection{Hamiltonian Dynamics}
Let $M$ be a compact boundaryless Riemannian $G^\rho$ smooth manifold of dimension $n\geq 2$, and let $P(x,\xi)\in \mathcal{C}^\infty (T^*M)$ be a completely integrable Hamiltonian with $P(x,\xi)\rightarrow \infty$ as $|\xi|\rightarrow \infty$. Complete integrability is the assumption that there exist $n$ functionally independent conserved quantities of the Hamiltonian flow that are pairwise in involution.

The Liouville-Arnold theorem asserts that we can locally choose symplectomorphisms
\begin{equation}
\chi:D \times \T^n \rightarrow T^* M
\end{equation}
such that the transformed Hamiltonian
\begin{equation}
H^0=P\circ \chi 
\end{equation}
is independent of $\theta$. It follows that the Hamiltonian flow is quasi-periodic and constrained to $n$-dimensional Lagrangian tori, given in local coordinates by
\begin{equation}
\dot{I}=0;\qquad \dot{\theta}=\nabla H(I).
\end{equation}
Under the Kolmogorov non-degeneracy condition $\det(\nabla_I^2 H)\neq 0$, we can locally index the invariant Lagrangian tori $\Lambda_\omega$ by the frequency $\omega=\nabla_I H$ of their quasi-periodic motion.

If we now consider a smooth one-parameter family of perturbed Hamiltonians given by $H(\theta,I;t)$ in action-angle coordinates with $H(\theta,I;0)=H^0(I)$, a natural question is whether or not any of the tori $\Lambda_\omega$ persist for sufficiently small $t$.
This question was resolved positively by Kolmogorov, Arnold, and Moser \cite{kolmogorov}\cite{arnold2}\cite{moser}. In particular, they established that the Lagrangian invariant tori corresponding to all but a $o(1)$ symplectic measure subset of frequencies survive this perturbation as the size of the perturbation tends to zero. 

In particular peristing tori are those with frequencies $\omega$ in a set $\Omega_\kappa$ determined by the Diophantine condition \eqref{diophantine}, where $\tau>n-1$ is fixed and the choice of $\kappa$ then dictates the measure of the union of preserved tori.

The paper \cite{popovkam} uses a local version of the KAM theorem to construct a Birkhoff normal form for Gevrey class Hamiltonians $H$ about $\Lambda$. This normal form generalises the notion of ``action-angle" variables of a completely integrable Hamiltonian as discussed in \cite{arnoldmechanics}. As a consequence of the normal form construction, Popov obtains an effective stability result for the Hamiltonian flow near the union of remaining invariant tori. The natural setting for the estimates is that of Gevrey regularity. This work generalises earlier work in \cite{popov1} and \cite{popov2} where a Birkhoff normal form is constructed  for real analytic Hamiltonians.

\subsection{Quantum Ergodicity}
We now consider the quantisation of a KAM Hamiltonian system given by a family of self-adjoint semiclassical pseudodifferential operators
\begin{equation}
\label{basicform}
\mathcal{P}_h(t)=\sum_{j=0}^m P_j(x,hD;t)h^j
\end{equation}
acting on the half-density bundle $\mathcal{C}^\infty(M,\Omega^{1/2})$ with principal symbol $P_0$ equal to the KAM Hamiltonian $P(x,\xi;t)$, and  subprincipal symbol $P_1=0$. We assume that $\mathcal{P}_h(t)$ is elliptic and self-adjoint, with fixed positive differential order. The operator $\mathcal{P}_h(t)$ then has an orthonormal basis of eigenfunctions $u_j(t;h)$ and corresponding real eigenvalues $E_j(t;h)\rightarrow \infty$ for each fixed $t,h$.

The Bohr correspondence principle asserts that aspects of the classical dynamics should be reflected in the spectral theory of $\mathcal{P}_h(t)$ in the semiclassical limit $h\rightarrow 0$. A rigorous manifestation of this correspondence principle is the celebrated quantum ergodicity theorem, due to \cite{schnirelman}\cite{verdiere}\cite{zelditch}, which asserts that billiards with ergodic geodesic flow have eigenfunctions satisfying a quantum notion of equidistribution, made precise using the machinery of pseudodifferential operators.

We work with a semiclassical formulation of quantum ergodicity. Let $d\mu_E$ denote the measure on the energy surface $\Sigma_E=p^{-1}(E)$ induced by the symplectic measure $|d\xi \wedge dx|$ on $T^*M$ by
\begin{equation}
|d\mu_E\wedge dE| = |d\xi \wedge dx|.
\end{equation}
If a Hamiltonian  $p(x,\xi)\in\mathcal{C}^\infty(T^*M)$ generates an ergodic Hamiltonian flow on every energy surface $\Sigma_E$ with $E\in [a,b]$ and  $dp|_{p^{-1}([a,b])}\neq 0$, then for any semiclassical pseudodifferential operator $A$ of semiclassical order $0$, we have 
\begin{equation}
\label{eq:qedef}
h^n\sum_{E_j(h)\in [a,b]} \left|\langle A_h u_j(h),u_j(h) \rangle-\frac{1}{\mu_{E_j}(\Sigma_{E_j})}\int_{\Sigma_{E_j}} \sigma(A) \, d\mu_{E_j}\right|^2\rightarrow 0.
\end{equation}
This formulation of the quantum ergodicity theorem is a straightforward consequence of the sharper formulation in \cite{HRM}, or \cite{DYG}, in which the statement is localised to $O(h)$ energy bands.
From \eqref{eq:qedef}, a standard diagonal argument introduced in \cite{colin} shows that
\begin{equation}
\label{eq:qecons}
\langle A_h u_j(h),u_j(h) \rangle\rightarrow \frac{1}{\mu_{E_j}(\Sigma_{E_j})}\int_{\Sigma_{E_j}} \sigma(A) \, d\mu_{E_j}
\end{equation}
uniformly for a family $\Lambda(h)\subset \{E_j(h)\in [a,b]\}$ of full-density, in the sense that
\begin{equation}
\frac{\# \Lambda(h)}{\#\{E_j(h)\in [a,b]\}}\rightarrow 1.
\end{equation} 
We say that a semiclassical pseudodifferential operator of the form \eqref{basicform} is \emph{quantum ergodic} if its eigenfunctions satisfy \eqref{eq:qedef}.

In the appendix to \cite{marklof}, Zelditch raises the question of \textit{converse quantum ergodicity}: To what extent is it possible for non-ergodic Hamiltonian systems such as those in the KAM regime to have quantum ergodic quantisations?
In the extreme situation of quantum complete integrability, rigorous results on eigenfunction microlocalisation onto unions of Lagrangian tori have been established in \cite{toth}, which clearly rules out quantum ergodicity.
In the intermediate regimes between complete integrability and ergodicity, fewer rigorous results on the question of converse quantum ergodicity are known. In the appendix to \cite{marklof}, Zelditch shows that the ``pimpled spheres", which are $S^2$ with a  metric deformed polar cap are not quantum ergodic, exploiting the periodicity of the flow in a strong way. In \cite{gutkin} it is shown that the ``racetrack billiard" is quantum ergodic but not ergodic, with phase space splitting into two disjoint invariant sets of equal measure.

In this paper, we consider families of self-adjoint and uniformly elliptic semiclassical pseudodifferential operators 
\begin{equation}
\label{operatorP}
\mathcal{P}_h(t)=\sum_{j=0}^m P_j(x,hD;t)h^j
\end{equation}
with real-valued full symbol in the Gevrey class $S_\ell(T^*M)$ from Definition \ref{selldef}, smooth in the parameter $t$, where $\ell=(\rho,\mu,\nu)$, with $\rho(\tau+n)+1>\mu>\rho'=\rho(\tau+1)+1$ and $\nu=\rho(\tau+n+1)$. Furthermore, we assume $\mathcal{P}_h(t)$ acts on half-densities in $\mathcal{C}^\infty(M;\Omega^{1/2})$ with principal symbol $P_0(x,\xi;t)$ completely integrable and non-degenerate at $t=0$, and with vanishing subprincipal symbol.
As KAM dynamics are far from ergodic dynamics in character, the Bohr correspondence principle suggests that $\mathcal{P}_h(t)$ is typically \emph{not} quantum ergodic, and that under generic conditions on the perturbation, there could exist sequences of eigenfunctions for $\mathcal{P}_h(t)$ with semiclassical mass entirely supported on individual invariant tori.

This localisation has been proven for quasimodes in the paper \cite{popovquasis}, where semiclassical Fourier integral operators were used to construct a quantum Birkhoff normal form for a class of semiclassical pseudodifferential operators $\mathcal{P}_h(t)$. This quantum Birkhoff normal form is used to obtain a family of quasimodes microlocalised near the union of KAM Lagrangian tori of a Hamiltonian associated to $\mathcal{P}_h$. A similar construction was previously made in \cite{colin}, which establishes the existence of quasimodes microlocalised near the Lagrangian tori of a completely integrable Hamiltonian on a compact smooth manifold. 

As pointed out by Zelditch \cite{zelditchnote} however, the passage from quasimode microlocalisation statements to microlocalisation statements for genuine eigenfunctions typically requires information on the spectral concentration of the operator in question.

One way in which this information can be obtained is by considering the spectral flow of $\mathcal{P}_h(t)$ in an analytic parameter $t$ as in this paper. The Hadamard variational formula allows us to rule out spectral concentration for full measure $t$, given suitable information on the expectation of the quantum observable
\begin{equation}
\langle \mathcal{P}_h'(t)u_j(t;h),u_j(t;h)\rangle
\end{equation}
which can be obtained from conditions like \eqref{eq:qecons}.
One can then draw conclusions about eigenfunction microlocalisation from those about quasimode microlocalisation.

In \cite{hassellque}, this technique was exploited to obtain the existence of a sequence of Laplacian eigenfunctions on the Bunimovich stadium that does not equidistribute, at least for a full measure set of aspect ratios. This strategy was also exploited in \cite{mushroom}, where the author establishes a weak form of Percival's conjecture for the mushroom billiard.

It is the purpose of this paper to use the same technique to show that quantisations of KAM Hamiltonian systems in the sense of \eqref{operatorP} are typically \emph{not} quantum ergodic, at least for full measure $t\in (0,\delta)$. 

We follow Popov \cite{popovquasis} in working in the category of Gevrey regularity for our Hamiltonian $P$, due to the availability of explicit and full  details of the quantum Birkhoff normal form construction in this setting.

\subsection{Statement of results}
The following is the main result of this paper.
\begin{thm}
	\label{thm:main}
	\label{nonqe}
	Suppose $M$ is a compact boundaryless $G^\rho$ manifold and $\mathcal{P}_h(t)$ is a family of self-adjoint elliptic semiclassical pseudodifferential operators acting on $\mathcal{C}^\infty (M;\Omega^{1/2})$ with fixed positive differential order such that 
	
	\begin{enumerate}
		\item The operator $\mathcal{P}_h(t)$ has full symbol real-valued, smooth in $t$, and in the Gevrey class $S_\ell(T^*M)$ from Definition \ref{selldef} where $\ell=(\rho,\mu,\nu)$, with $\rho(\tau+n)+1>\mu>\rho'=\rho(\tau+1)+1$ and $\nu=\rho(\tau+n+1)$;
		\item The principal symbol $P_0(x,\xi;t)$ lies in $G^{\rho,1}(T^*M\times (-1,1))$;
		\item $P_0(x,\xi;0)$ is a completely integrable and non-degenerate Hamiltonian;
		\item The subprincipal symbol of $\mathcal{P}_h(t)$ vanishes;
		\item In an action-angle variable coordinate patch $\T^n\times D$ for the unperturbed Hamiltonian $P_0(x,\xi;0)$, the KAM Hamiltonian can be written as $H(\theta,I;t)=P_0(\cdot,\cdot;t)\circ \chi$, and we define $H^0(I):=H(\theta,I;0)$;
		\item \label{genericcond} The KAM perturbation is such that
		\[\int_{\T^n} \partial_t H(\theta,I;0)\, d\theta\] 
		is nonconstant on some regular energy surface  $\{I\in D:H^0(I)=E\}$ in the action-angle coordinate patch.
	\end{enumerate}
	Then for any regular energy band $P_0^{-1}(E)$ with $E\in[a,b]$, there exists $\delta >0$ such that the family of operators $\mathcal{P}_h(t)$ is not quantum ergodic in $[a,b]$ for full measure $t\in (0,\delta)$.
\end{thm}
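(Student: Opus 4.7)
The plan is to prove Theorem~\ref{thm:main} by contradiction, combining Popov's quantum Birkhoff normal form from \cite{popovquasis} with a Hadamard variational spectral flow argument in the style of Hassell's work \cite{hassellque} on the Bunimovich stadium. I assume for contradiction that the set $T \subseteq (0,\delta)$ on which $\mathcal{P}_h(t)$ is quantum ergodic in $[a,b]$ has positive measure. The objective is to extract, for almost every $t$ in $T$, a genuine eigenfunction of $\mathcal{P}_h(t)$ whose mass is essentially carried by a single KAM torus, contradicting the full-density equidistribution \eqref{eq:qecons} that QE forces.

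On the quasimode side, Popov's construction yields, for each Diophantine frequency $\omega \in \Omega_\kappa$ with $H^0(I_\omega) = E$, a family of quasimodes $v_{\omega,h}(t)$ microlocalised on the perturbed torus $\Lambda_\omega(t)$, with quasi-eigenvalues $\tilde{E}_\omega(t;h)$ and subexponentially small residuals $\norm{(\mathcal{P}_h(t) - \tilde{E}_\omega(t;h))v_{\omega,h}(t)}$. Differentiating the quasimode equation in $t$ and invoking the Hellmann--Feynman identity at the quasimode level yields
\begin{equation}
\partial_t \tilde{E}_\omega(t;h) = \frac{1}{(2\pi)^n}\int_{\T^n}\partial_t H(\theta, I_\omega(t);t)\, d\theta + O(h),
\end{equation}
i.e.\ the mean of $\partial_t H$ over $\Lambda_\omega(t)$. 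Assumption (\ref{genericcond}) guarantees that this mean is nonconstant in $\omega$ on the energy surface $\{H^0 = E\}$, so there exist Diophantine frequencies $\omega$ for which $\partial_t\tilde{E}_\omega(0;h)$ differs, by a fixed positive amount, from the microcanonical average $\mathcal{M}(t)$ of $\partial_t P_0(\cdot;t)$ on $\Sigma_E(t)$; by continuity this strict inequality persists on a subinterval $(0,\delta')$.

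On the eigenfunction side, the Hadamard variational formula $\partial_t E_j(t;h) = \langle \partial_t \mathcal{P}_h(t) u_j(t;h), u_j(t;h)\rangle$ combined with \eqref{eq:qecons} applied to $A = \partial_t \mathcal{P}_h(t)$ shows that, for each $t \in T$, there is a full-density subsequence $\Lambda(h,t)$ of eigenvalues in $[a,b]$ along which $\partial_t E_j(t;h) \to \mathcal{M}(t)$. The Hassell-type spectral concentration step, exploiting the monotone $t$-flow of eigenvalues afforded by Hadamard, then lets me show that for almost every $t \in (0,\delta')$ the number of eigenvalues of $\mathcal{P}_h(t)$ in a window of width $o(h)$ around each $\tilde{E}_\omega(t;h)$ stays uniformly bounded in $h$. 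Expanding $v_{\omega,h}(t)$ in the eigenbasis of $\mathcal{P}_h(t)$ and using the subexponentially small residual, almost all of its $L^2$ mass sits on a uniformly bounded collection of honest eigenfunctions $u_{j_k}(t;h)$; the spectrally weighted average of $\partial_t E_{j_k}(t;h)$ must reproduce $\partial_t \tilde{E}_\omega(t;h)$ up to $o(1)$. The QE hypothesis, however, forces all but a density-zero set of the $\partial_t E_{j_k}(t;h)$ to approach $\mathcal{M}(t)$, which for the specific $\omega$ from the previous paragraph is separated from $\partial_t\tilde{E}_\omega(t;h)$ by a fixed positive amount; this is the desired contradiction.

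The main technical obstacle is the spectral concentration step: producing, for a positive-measure set of $t \in T$, a uniform-in-$h$ bound on the number of eigenvalues of $\mathcal{P}_h(t)$ inside an $o(h)$-window around each relevant $\tilde{E}_\omega(t;h)$. This is exactly where the absolute-continuity / spectral-flow machinery from \cite{hassellque} and \cite{mushroom} must be transplanted to the Gevrey KAM setting, which in turn requires that both the Popov quasimode construction and the error estimate for $\partial_t\tilde{E}_\omega$ are valid uniformly for $t \in (0,\delta)$; controlling these uniformly is precisely where hypotheses (1)--(2) and the $G^{\rho,1}$ regularity of the principal symbol in $(x,\xi,t)$ are used. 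Once this uniform bound is in place, the genericity assumption (\ref{genericcond}) supplies a frequency $\omega$ whose quasi-eigenvalue derivative disagrees with the microcanonical average, and the contradiction closes.
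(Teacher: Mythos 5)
Your outline gets the broad architecture right: argue by contradiction, feed the genericity condition through Popov's classical and quantum Birkhoff normal forms to produce quasimodes on a family of KAM tori, compute the quasi-eigenvalue flow speed as a torus average of $\partial_t H$, compare against the microcanonical average via Hadamard plus the QE hypothesis, and then deduce spectral non-concentration. That is indeed the engine of the paper's proof. But your closing step has a genuine gap, and the spectral-concentration goal you set yourself is stronger than you need and stronger than the paper proves.

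The gap is in the last paragraph. You want to expand a single quasimode $v_{\omega,h}(t)$ against the eigenbasis, observe that most of its mass lives on a uniformly bounded collection of eigenfunctions $u_{j_k}$, and then invoke QE to conclude that ``all but a density-zero set of the $\partial_t E_{j_k}$ approach $\mathcal{M}(t)$.'' The QE conclusion \eqref{eq:qecons} is a full-density statement over \emph{all} eigenvalues in $[a,b]$; it says nothing about a fixed finite collection $\{j_k\}$, which could sit entirely inside the exceptional density-zero set for each $h$. So the inequality you want to derive for that specific handful of eigenvalues does not follow from QE, and the contradiction does not close. This is precisely the pitfall that the paper is designed to avoid: instead of comparing individual eigenvalue derivatives against $\partial_t\tilde{E}_\omega$, the paper works with a positive-measure family $\overline{\Omega}$ of slow tori, hence $\asymp h^{-n}$ quasimodes, and finishes via a pure dimension count: Proposition \ref{specnonconckamprop} shows that at some $t_*$ the number $N(t_*;h)$ of genuine eigenvalues in the union $W(t_*;h)$ of all the quasi-eigenvalue windows drops below $\#\mathcal{M}_h(t_*)/2$, while Proposition \ref{spectralcont} shows that the $\#\mathcal{M}_h(t_*)$ projections $\pi_U(v_m)$ onto $U=\operatorname{span}$ of those eigenfunctions remain linearly independent. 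Since $\dim U = N(t_*;h)$, this is impossible, and no statement about which individual eigenvalues are ``good'' for QE is ever needed.

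Relatedly, the spectral-concentration statement you aim for --- a uniform-in-$h$ bound on the number of eigenvalues in each $o(h)$-window, for almost every $t$ --- is unnecessarily strong and is not what the Hassell mechanism yields in this setting. The paper only proves the much weaker aggregate deficit $N(t_*;h) < \#\mathcal{M}_h(t_*)/2$ for a single $t_*$ along a sequence $h_j\to 0$, obtained by integrating in $t$ over $J$, using the Hadamard/QE discrepancy to bound $m(C_j)/m(J)$ for most indices $j$, and applying Borel--Cantelli. Aiming for per-window boundedness would require much more refined control over eigenvalue clustering than the flow-speed discrepancy provides. To repair your proof you should replace both the per-window concentration target and the finite-sum Hellmann--Feynman comparison by the paper's two-step counting argument (average-in-$t$ spectral non-concentration, then the linear-independence dimension bound); the rest of your outline, including the derivation of the flow-speed gap from condition \eqref{genericcond} via the Birkhoff normal form derivative computation, is sound and matches the paper.
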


\begin{remark}
	Though we choose to work with Gevrey class Hamiltonians, it should be noted that we only require quasimodes for $\mathcal{P}_h(t)$ of order $O(h^{\frac{3n+2}{2}})$ to carry out the arguments in Section \ref{sec:main}. In particular this implies that Theorem \ref{thm:main} should hold in the $\mathcal{C}^\infty$ setting, where $O(h^\infty)$ quasimodes are constructed in \cite{colin}.
\end{remark}

\begin{remark}
	The condition \eqref{genericcond} is a rather mild one. Indeed for Hamiltonian perturbations of the form $H^0(I)+tH^1(\theta,I)$, it is equivalent to the functional independence of $H^0(I)$ and  $\int_{\T^n}H^1(\theta,I)\, d\theta$. This holds for generic choice of $H^1$. 
\end{remark}

\subsection{Examples}

The broad class of operators satisfying the assumptions of Theorem \ref{thm:main} are perturbations of completely integrable Schr\"{o}dinger type operators
\begin{equation}
\mathcal{P}_h=-h^2\Delta_g +V(x).
\end{equation}
In particular, Theorem \ref{thm:main} applies to the case of the semiclassical Laplace-Beltrami operator ($V=0$) on a manifold with perturbed metric $(M,g_t)$, where $(M,g_0)$ has completely integrable and non-degenerate geodesic flow. 

The model example of a completely integrable geodesic flow is that of the flat torus
\begin{equation}
\T^n=\R^n/\Z^n.
\end{equation}
The Hamiltonian that generates the geodesic flow on $\T^n$ can be written as $|I|^2$, where $I\in \R^n$ is dual to the spatial variable $\theta\in \T^n$. This is clearly a non-degenerate and completely integrable Hamiltonian system. Similarly, in \cite{ellipsoid}, it is shown that the geodesic flow on an $n$-axial ellipsoid $E$ is completely integrable and non-degenerate.

Thus the Laplace-Beltrami operator for  metric perturbations of both of these manifolds is covered by by Theorem \ref{thm:main}, provided the generic condition \eqref{genericcond} is satisfied.

\subsection{Outline of paper}

In Section \ref{sec:notation}, we introduce some definitions and notations that are prevalent throughout the paper.

In Section \ref{sec:main}, we prove Theorem \ref{thm:main} by contradiction. We now outline the strategy of the proof. In Section \ref{flowspeedsec}, under the assumptions of \eqref{genericcond} in Theorem \ref{thm:main}, Proposition \ref{quasispeedprop} makes use of the calculation in Section \ref{leading} to obtain an upper bound for the flow speed of a positive density family of the quasi-eigenvalues constructed in Section \ref{gevquasisec}. On the other hand, the assumption of quantum ergodicity of $\mathcal{P}_h(t)$ for large measure $t$ yields an estimate for the variation of exact eigenvalues in \eqref{hvfcons}. The results in this section establish a gap \eqref{discrepancy} between the the flow speed of these quasi-eigenvalues and exact eigenvalues that ensures that individual eigenvalues cannot spend large measure $t\in(0,\delta)$ within $O(h^{n+1})$ distance of any of the quasi-eigenvalues. This is formalised in Section \ref{specflowsec}, where it is deduced that there exists $t_*\in (0,\delta)$ at which there are very few actual eigenvalues within $O(h^{n+1})$ distance of the union of quasi-eigenvalue windows. An elementary spectral theory contradiction is arrived at from this spectral non-concentration, completing the proof.

In Section \ref{sec:bnf}, we construct a Gevrey class Birkhoff normal form for the family of Hamiltonians $P(x,\xi;t)$. The construction is that of Popov \cite{popovkam}, with our only additional concern being establishing the regularity of this Birkhoff normal form construction in the parameter $t$. In Section \ref{leading}, we compute the derivative of the integrable term  $K(I;t)$ of the Birkhoff normal form in the parameter $t$. This is done by applying two KAM iterations to $P(x,\xi;t)$ prior to the application of the Birkhoff normal form construction of Theorem \ref{main1}. 

In Section \ref{sec:qbnf}, we recall the quantum Birkhoff normal form construction of Popov \cite{popovquasis}, formulated in Theorem \ref{main2}. This construction yields a Gevrey family of quasimodes microlocalising on the KAM Lagrangian tori of the Hamiltonian $P(x,\xi;t)$. For the spectral flow arguments in Section \ref{specflowsec} we require that the associated quasi-eigenvalues are smooth in $t$, which is a statement entirely about the symbols of this quantum Birkhoff normal form. 

In Appendix \ref{gevreyappendix}, we introduce the anisotropic classes of Gevrey functions that are used throughout this paper as well as some of their basic properties.

In Appendix \ref{gevsymbsec}, we introduce the semiclassical pseudodifferential calculus for Gevrey class symbols.

In Appendix \ref{analytic}, we collect two elementary assertions about analytic functions.

In Appendix \ref{whitneysec}, we state and prove a version of the Whitney extension theorem for the anisotropic class of Gevrey functions.

\section{Proof of Theorem \ref{thm:main}}
\label{sec:main}
\subsection{Introduction}
\label{6intro}
We begin by assuming that $\mathcal{P}_h(t)$ is a family of operators satisfying the assumptions of Theorem \ref{thm:main}. 

The condition \eqref{genericcond} in Theorem \ref{thm:main} implies that there exists a nonresonant frequency $\omega_0\in \tilde{\Omega}_\kappa$ with associated Lagrangian torus $\Lambda_{\omega_0}$ such that the average of $\partial_t P_0(x,\xi;0)$ over the torus $\Lambda_{\omega_0}$ differs from the average of $\partial_t P_0(x,\xi;0)$ over the associated energy surface \begin{equation}
{\{(x,\xi)\in T^*M :P_0(x,\xi;0)=H^0(I(\omega_0))\}}.
\end{equation}
Moreover, we can ensure that $\Lambda_{\omega_0}$ lies in an arbitrarily small energy window $[a,b]$ about the regular energy $E$ from the condition \eqref{genericcond}. Without loss of generality, the hypotheses of Theorem \ref{thm:main} thus guarantee the existence of what we shall call a \emph{slow torus}. 

\begin{defn}
	\label{slowtorusdef}
	A \emph{slow torus} in the energy band $[a,b]$ for the unperturbed Hamiltonian 
	\begin{equation}
	H(\theta,I;0)=H^0(I)
	\end{equation}
	written in action-angle coordinates, is a Lagrangian invariant torus $\Lambda_{\omega_0}$ with nonresonant frequency $\omega_0\in \tilde{\Omega}_\kappa$ and energy ${H^0(I(\omega_0))\in [a,b]}$ in the notation of Theorem \ref{main1} that satisfies
\begin{equation}
\label{slowtorus}
(2\pi)^{-n}\int_{\T^n} \partial_tH(\theta,I(\omega_0);0)\, d\theta < \inf_{E\in [a,b]}\frac{1}{\mu_E(\Sigma_E)}\int_{\Sigma_E} \partial_t P_0(x,\xi;0)\, d\mu_E
\end{equation}
	at $t=0$.
\end{defn}

We call such a torus a \emph{slow torus} to draw intuition from the special case where $\mathcal{P}_h'(t)$ is a positive operator. In this case, as $t$ evolves, the quasi-eigenvalues associated to such a torus increase as $t$ evolves at a slower rate than the typical increase of eigenvalues at the same energy. The intuition behind this stems from the Hadamard variational formula \eqref{qequantity}, and the fact that the associated quasimodes microlocalise onto $\Lambda_{\omega_0}$. This intuition is confirmed in Section \ref{leading}, by a more careful  analysis of the leading order behaviour as $t\rightarrow 0$ of the integrable term in the Birkhoff normal form established in Theorem \ref{main1}. This discrepancy \eqref{discrepancy} in the spectral flow of genuine eigenvalues and quasi-eigenvalues attached to slow tori leads to the spectral non-concentration statement Proposition \ref{specnonconckamprop}.

We begin by using the slow torus condition and choosing $c>0$ sufficiently small so that
\begin{equation}
\label{firstslowpostori}
(2\pi)^{-n}\int_{\T^n} \partial_tH(\theta,I(\omega_0;0);0)\, d\theta < \inf_{E\in [a,b]}\frac{1}{\mu_E(\Sigma_E)}\int_{\Sigma_E} \partial_tP_0(x,\xi;0)\, d\mu_E-3c
\end{equation}
is satisfied.

As the quantum ergodicity condition \eqref{eq:qedef} is preserved upon passing to energy subintervals, we can assume that $[a,b]$ is an arbitrarily small energy window containing $H^0(I(\omega_0;0))$. In particular, we can scale our interval $[a,b]$ by a small factor $\lambda$ to ensure that the condition
\begin{equation}
\label{qplusqminus}
\sup_{E\in[a,b]}\frac{1}{\mu_E(\Sigma_E)}\int_{\Sigma_E} \partial_tP_0\, d\mu_E - \inf_{E\in [a,b]}\frac{1}{\mu_E(\Sigma_E)}\int_{\Sigma_E} \partial_tP_0\, d\mu_E =: Q_+(0)-Q_-(0) < \epsilon < c.
\end{equation}
is satisfied for any particular $\epsilon<c$. From the regularity of $P_0$, one can achieve this by taking
\begin{equation}
\label{lambdadef}
\lambda=O(\epsilon).
\end{equation}
Through the course of this Section, we will track the size of various small quantities in terms of this $\epsilon$, which we will eventually take small in the proof of Proposition \ref{specnonconckamprop}.

Theorem \ref{bnf2} applies to $H$, and we obtain a family of symplectomorphisms \begin{equation}
\chi\in G^{\rho,\rho',\rho'}(\T^n\times D\times (-1/2,1/2), \T^n\times D)
\end{equation} 
and a family of diffeomorphisms 
\begin{equation}
\label{omegadiffeomainsec}
\omega\in G^{\rho',\rho'}(D\times (-1/2,1/2),\Omega)
\end{equation}
such that 
\begin{equation}
\label{BNF6}
H(\chi(\theta,I;t);t)=K(I;t)+R(\theta,I;t) 
\end{equation}
where $R$ is flat in $I$ at the nonresonant actions $I\in E_\kappa(t)$.
Using the diffeomorphism \eqref{omegadiffeomainsec}, we can define an action map $I\in G^{\rho',\rho'}(\Omega\times (-1/2,1/2))$ implicitly by 
\begin{equation}
\tilde{\omega}=\omega(I(\tilde{\omega};t) ; t)
\end{equation}
and we can use this map to specify the action coordinates of a nonresonant torus with fixed frequency at any $t\in (-1/2,1/2)$ in the Birkhoff normal form furnished by $\chi(\cdot,\cdot;t)$.

We first obtain a positive measure family of slow tori near $\Lambda_{\omega_0}$.
\begin{prop}
	\label{fattori}
	There exists $r>0$ and $\delta>0$ such that for any $\omega\in \overline{\Omega}:= B(\omega_0,r)\cap \tilde{\Omega}_\kappa$, the torus $\Lambda_\omega=\chi(\T^n\times \{I(\omega,t)\})$ has energy  
	\begin{equation}
	\label{inenergywindow}
	K(I(\omega;t),t)\in[a,b]
	\end{equation}
	for all $t\in (0,\delta)$.
	
	In particular, the family of tori
	\begin{equation}
	\label{torifam}
	\Lambda(t):=\bigcup_{\omega\in \overline{\Omega}} \Lambda_{\omega}
	\end{equation}
	is a positive measure family of KAM tori entirely contained within the energy band $[a,b]$.
	
	Moreover, $r$ and $\delta$ can be chosen small enough to ensure
	\begin{eqnarray}
	\nonumber (2\pi)^{-n}\int_{\T^n} \partial_tH(\theta,I(\omega;t);t)\, d\theta &<&  (2\pi)^{-n}\int_{\T^n} \partial_tH(\theta,I(\omega;0);t)\, d\theta+\epsilon\\ &<& \label{slowpostori}\inf_{t\in (0,\delta)}Q_-(t)-2c.
	\end{eqnarray}

	for each $\omega\in \overline{\Omega}$ and each $t\in (0,\delta)$.
	
	We can also choose $\delta>0$ small enough to ensure that
	\begin{equation}
	\label{ergodicboundsclose}
	Q_+-Q_-:=\sup_{t\in (0,\delta)} Q_+(t)- \inf_{t\in (0,\delta)} Q_-(t)<2\epsilon.
	\end{equation}
	
	In particular $r,\delta$ can be taken to be $O(\epsilon)$, with constant independent of $t$ and $h$.
\end{prop}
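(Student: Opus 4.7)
The proposition is essentially a continuity statement: at the base point $(\omega, t) = (\omega_0, 0)$ we have the inequalities \eqref{firstslowpostori} and \eqref{qplusqminus}, and all of the objects involved---the Birkhoff diffeomorphism and frequency map from Theorem \ref{bnf2}, the integrable term $K(I;t)$, the Hamiltonian $H(\theta, I; t)$, and the energy averages $Q_\pm(t)$---depend continuously on $(\omega, t)$ near $(\omega_0, 0)$, with Lipschitz constants uniform in the small parameter $\epsilon$ coming from the Gevrey bounds of Theorems \ref{main1} and \ref{bnf2}. The plan is to choose $r, \delta$ small enough that each of the four claimed conclusions follows by perturbing its analogue at $(\omega_0, 0)$.

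I would first record that $\overline{\Omega} = B(\omega_0, r) \cap \tilde{\Omega}_\kappa$ has positive Lebesgue measure for any $r > 0$, since the complement of the Diophantine set has measure $O(\kappa)$ in any ball and $\omega_0$ is a Lebesgue density point of $\tilde{\Omega}_\kappa$; combined with energy containment this yields the positive-measure assertion for \eqref{torifam}. For \eqref{inenergywindow} itself, $K(I(\omega_0; 0); 0) = H^0(I(\omega_0; 0)) = E$ sits in the interior of $[a, b]$, and $(\omega, t) \mapsto K(I(\omega; t); t)$ is continuous, so the Lipschitz bound, combined with the fact that $[a,b]$ has half-width of order $\lambda = O(\epsilon)$ by \eqref{lambdadef}, forces $r, \delta = O(\epsilon)$. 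For the slow-torus chain \eqref{slowpostori}, the first inequality follows from continuity of $t \mapsto I(\omega; t)$ at $t = 0$ and smoothness of $\partial_t H$ in $I$, which makes the two torus averages differ by at most $\epsilon$ for $\delta$ small and $\omega \in B(\omega_0, r)$. The second inequality distributes the $3c$ deficit in \eqref{firstslowpostori} across three continuity statements---continuity of $(\omega, t) \mapsto (2\pi)^{-n} \int_{\T^n} \partial_t H(\theta, I(\omega; 0); t)\, d\theta$ in each of its two arguments (costing at most $c/2$ each) together with continuity of $Q_-$ at $t = 0$ (costing a further $c$ when passing from $Q_-(0)$ to $\inf_{t \in (0,\delta)} Q_-(t)$)---leaving an honest gap of order $c$. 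Inequality \eqref{ergodicboundsclose} is immediate from continuity of $Q_\pm$ together with \eqref{qplusqminus}, after demanding that each $Q_\pm(t)$ move by at most $\epsilon/2$ from its value at $t = 0$.

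The main concern is uniformity of constants rather than any individual step: one has to verify that the Lipschitz constants of $I(\omega; t)$, $K(I; t)$, $H(\theta, I; t)$ and $Q_\pm(t)$ near $(\omega_0, 0)$ are independent of $\epsilon$, so that a single choice of $r, \delta = O(\epsilon)$ simultaneously satisfies all four smallness constraints. This independence is supplied by the uniform Gevrey estimates of Theorems \ref{main1} and \ref{bnf2}, which bound the relevant derivatives by constants that do not deteriorate as $\epsilon \to 0$. The $O(\epsilon)$ scaling of $r, \delta$ is then dictated by the energy-window constraint \eqref{inenergywindow} and the bound \eqref{ergodicboundsclose}, both of which genuinely require smallness of order $\epsilon$; the remaining constraint \eqref{slowpostori} is met with a fixed gap to spare thanks to the $c$-deficit at the base point.
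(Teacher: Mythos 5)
Your proposal is correct and follows essentially the same route as the paper: establish the strict inequalities at the base point $(\omega_0, 0)$ and propagate them to $B(\omega_0, r) \times (0, \delta)$ via the continuity and Gevrey regularity of $I(\omega; t)$, $K(I; t)$, $\partial_t H$, and $Q_\pm$ furnished by Theorems \ref{main1} and \ref{bnf2}, with the energy-window constraint \eqref{inenergywindow} being what forces $r, \delta = O(\lambda) = O(\epsilon)$. One small bookkeeping remark: in your budget for the second inequality of \eqref{slowpostori} you spend about $2c$ of the $3c$ slack, leaving a gap of order $c$, but the target requires a residual gap of $2c + \epsilon$; since $r$ and $\delta$ can be taken as small as one likes (subject to the $O(\epsilon)$ ceiling), each continuity cost is actually $O(\epsilon) \ll c$, so the argument closes, but the stated budget is looser than what is actually available.
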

\begin{proof}
	From the regularity of $\chi,I,$ and $K$ established in Theorem \ref{main1}, it follows that we can take $r=O(\lambda)$ to ensure that \eqref{inenergywindow} is satisfied at $t=0$, where $\lambda=O(\epsilon)$ is as in \eqref{lambdadef}. Similarly, we can ensure that 
	\begin{equation}
	(2\pi)^{-n}\int_{\T^n} \partial_t H(\theta,I;0)\, d\theta <  (2\pi)^{-n}\int_{\T^n} \partial_tH(\theta,I(\omega_0);t)\, d\theta+\epsilon/2
	\end{equation}
	holds for $|I-I(\omega_0)|=O(\lambda)$. Since \eqref{firstslowpostori} is satisfied at $t=0$, it follows that 
	\begin{equation}
	(2\pi)^{-n}\int_{\T^n} \partial_t H(\theta,I(\omega;0);0)\, d\theta <  Q_-(0)-3c+\epsilon/2
	\end{equation}
	for all $\omega\in \overline{\Omega}=B(\omega_0,r)\cap \tilde{\Omega}_\kappa$ upon taking $r=O(\lambda)$.
	
	The regularity of $\chi,I$ and $K$ in the parameter $t$ then allow us to then deduce that \eqref{inenergywindow} and \eqref{slowpostori} are satisfied for $t\in(0,\delta)$, for sufficiently small $\delta>0$ and for each $\omega \in \overline{\Omega}$. In particular, we can take $\delta=O(\lambda)=O(\epsilon)$. 
	
	Finally, the estimate \eqref{ergodicboundsclose} for small $\delta$ follows from the regularity of 
	\begin{equation}
	\frac{1}{\mu_E(\Sigma_E)}\int_{\Sigma_E} \partial_tP_0 d\mu_E
	\end{equation}
	in $t$ and $E$.
\end{proof}

We can now apply the quantum Birkhoff normal form construction outlined in Section \ref{sec:qbnf}. This yields a family of quasimodes  that microlocalise onto the family of KAM tori $\Lambda(t)$ introduced in \eqref{torifam}. In particular, following \ref{gevquasisec}, we take $S(t)=\{I(\omega;t):\omega\in \overline{\Omega}\}$ and define the index set  $\mathcal{M}_h(t)$ as in \eqref{indexsetref}.

We next introduce notation for the union of $h^{n+1}$-width energy windows about the quasi-eigenvalue associated to tori in $\Lambda(t)$.
\begin{equation}
\label{wthdef}
W(t;h):= \bigcup_{m\in \mathcal{M}_h(t)} [K^0(h(m+\vartheta/4),t;h)-h^{n+1},K^0(h(m+\vartheta/4),t;h)+h^{n+1}]
\end{equation}
where $K^0$ is as in Theorem \ref{main2}.

For the sake of brevity, we introduce the notation 
\begin{equation}
\mu_m(t;h):=K^0(h(m+\vartheta/4),t;h)
\end{equation} 
for the quasi-eigenvalues under consideration.

We also introduce the index set
\begin{equation}
\label{Gdef}
G(h)=\{j\in \N:E_j(t)\in [a,b]\textrm{ for some }t\in (0,\delta)\}
\end{equation}
of the eigenvalues that can possibly play a role in the spectral flow considerations in Section \ref{specflowsec}.

To conclude this section, we collect asymptotic estimates for the number of eigenvalues and the number of quasi-eigenvalues that are in the energy window $[a,b]$ as $h\rightarrow 0$.

\begin{prop}
	\label{counting}
	We have the asymptotic estimate
	\begin{equation}
	\label{indexsetcount}
	\#\mathcal{M}_h(t)\sim (2\pi h)^{-n}\mu(\T^n\times \{I(\omega,t):\omega\in \overline{\Omega}\}).
	\end{equation}
	for each $t\in (0,\delta)$.
	
	Furthermore, we have
	\begin{equation}
	\label{eigencount}
	\limsup_{h\rightarrow 0}(2\pi h)^n\#G(h)\leq \mu(\{(x,\xi):P_0(x,\xi;0)\in [a-M\delta,b+M\delta]\})
	\end{equation}
	where $M$ is the uniform bound on spectral flow speed in \eqref{globalboundgevrey} and $G(h)$ is as in \eqref{Gdef}.
	
	Here $\mu$ denotes the symplectic measure $d\xi\, dx$ on $T^*M$.
\end{prop}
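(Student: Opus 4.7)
The plan is to establish the two asymptotics separately. The first, \eqref{indexsetcount}, is essentially a semiclassical lattice point count for the quasi-eigenvalue index set. The second, \eqref{eigencount}, follows by combining the uniform spectral flow speed bound \eqref{globalboundgevrey} with the semiclassical Weyl law applied to $\mathcal{P}_h(0)$.

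For \eqref{indexsetcount}, recall from Section \ref{gevquasisec} that $\mathcal{M}_h(t)$ consists of multi-indices $m\in\Z^n$ for which $h(m+\vartheta/4)$ lies in $S(t):=\{I(\omega;t):\omega\in\overline{\Omega}\}$. By the $G^{\rho'}$ regularity of the action map $I(\cdot;t)$ supplied by Theorem \ref{main1}, $S(t)$ is the image of $\overline{\Omega}$ under a bi-Lipschitz change of coordinates, and in particular has positive Lebesgue measure. A standard Riemann-sum-style lattice count then gives
\begin{equation*}
\#\mathcal{M}_h(t)=\#\{m\in\Z^n:h(m+\vartheta/4)\in S(t)\}\sim h^{-n}\mathrm{Leb}(S(t)).
\end{equation*}
Because $\chi(\cdot,\cdot;t)$ is a symplectomorphism, the symplectic measure $\mu=|d\xi\wedge dx|$ on $T^*M$ pulls back to $|dI\wedge d\theta|$ on the action-angle chart, so $\mu(\T^n\times S(t))=(2\pi)^n\mathrm{Leb}(S(t))$, which yields \eqref{indexsetcount}.

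For \eqref{eigencount}, the spectral flow bound \eqref{globalboundgevrey} gives a uniform constant $M$ with $|E_j'(t;h)|\leq M$ for all $j,t,h$. Hence if $j\in G(h)$, meaning $E_j(t_0;h)\in[a,b]$ for some $t_0\in(0,\delta)$, then integrating along $[0,t_0]$ gives $E_j(0;h)\in[a-M\delta,b+M\delta]$, so $G(h)\subseteq\{j:E_j(0;h)\in[a-M\delta,b+M\delta]\}$. The operator $\mathcal{P}_h(0)$ is self-adjoint, elliptic, of positive differential order, with principal symbol $P_0(\cdot,\cdot;0)$, so the semiclassical Weyl law yields
\begin{equation*}
\#\{j:E_j(0;h)\in J\}\sim (2\pi h)^{-n}\mu(P_0^{-1}(J;0))
\end{equation*}
for any interval $J$ whose endpoints are regular values of $P_0(\cdot,\cdot;0)$. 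Applying this with $J=[a-M\delta,b+M\delta]$ and taking $\limsup_{h\to 0}$ gives \eqref{eigencount}.

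The main subtlety is the lattice count in \eqref{indexsetcount}, since $\overline{\Omega}=B(\omega_0,r)\cap\tilde{\Omega}_\kappa$ is the intersection of a ball with a Diophantine Cantor set and is not Jordan measurable. The cleanest way to handle this is to rely on the Whitney-thickened construction of $\mathcal{M}_h(t)$ used in Section \ref{gevquasisec} (in line with the Whitney extension framework of Appendix \ref{whitneysec}), which effectively replaces $S(t)$ by a Jordan measurable neighborhood and reduces the count to standard Riemann sums; the error incurred by this thickening is controlled by the known density estimates for $\tilde{\Omega}_\kappa$. A routine secondary point is to ensure that $a-M\delta$ and $b+M\delta$ are regular values of $P_0(\cdot,\cdot;0)$ so that the semiclassical Weyl law applies at these endpoints, which can be arranged by a negligible perturbation of $\delta$ because $[a,b]$ is a regular energy band.
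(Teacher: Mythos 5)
Your treatment of \eqref{eigencount} is exactly the paper's argument: the uniform bound $|E_j'(t;h)|\leq M$ from \eqref{globalboundgevrey} shows that $j\in G(h)$ forces $E_j(0;h)\in[a-M\delta,b+M\delta]$, and then the semiclassical Weyl law for $\mathcal{P}_h(0)$ gives the stated bound. The remark about perturbing $\delta$ so the endpoints are regular values is a sensible precaution.

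The treatment of \eqref{indexsetcount}, however, has a genuine gap. You open by asserting that $\mathcal{M}_h(t)$ consists of the $m\in\Z^n$ with $h(m+\vartheta/4)\in S(t)$. That is not the definition. The definition \eqref{indexsetref} is
\begin{equation*}
\mathcal{M}_h = \{m\in\Z^n : \dist\bigl(S, h(m+\vartheta/4)\bigr) < Lh\},
\end{equation*}
and the distinction is essential: $S(t)=\{I(\omega;t):\omega\in\overline{\Omega}\}$ is (the image under a diffeomorphism of) a Cantor-like set with empty interior, so the number of lattice points lying \emph{in} $S(t)$ is generically $o(h^{-n})$ and would not produce the asymptotic $\sim h^{-n}\,\mathrm{Leb}(S(t))$. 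The ``Riemann-sum-style'' count you sketch in the main argument would therefore fail as stated.

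You do notice the subtlety, but the resolution you offer is not the one that works. You attribute the thickening in the definition to the ``Whitney extension framework of Appendix \ref{whitneysec}''; that appendix concerns jet extension of Gevrey functions and plays no role in the lattice count. Nor are ``density estimates for $\tilde{\Omega}_\kappa$'' the mechanism. The argument that the paper actually invokes (given immediately after \eqref{indexsetasymp} in Section \ref{gevquasisec}) is a sandwich: let $U$ be the union of the $h$-side cubes centred at the lattice points in $\mathcal{M}_h$; then
\begin{equation*}
S \subset U \subset \{I : \dist(I,S) < \tilde{L}h\},
\end{equation*}
so $\#\mathcal{M}_h\cdot h^n = \mathrm{vol}(U)$ is pinned between $\mathrm{Leb}(S)$ and $\mathrm{Leb}(\{\dist(\cdot,S)<\tilde{L}h\})$. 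Because $S$ is \emph{closed}, $S = \bigcap_{h>0}\{\dist(\cdot,S)<\tilde{L}h\}$, and monotone convergence of measures sends the upper bound to $\mathrm{Leb}(S)$ as $h\to 0$. The correct ingredients are therefore the thickened definition of $\mathcal{M}_h$, closedness of $S$, and monotone convergence — not Whitney extension or Diophantine density estimates. Your conversion $\mu(\T^n\times S(t))=(2\pi)^n\mathrm{Leb}(S(t))$ via the symplectomorphism is fine once the lattice count itself is repaired.
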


\begin{proof}
	The estimate \eqref{indexsetcount} is a consequence from \eqref{indexsetasymp}, and \eqref{eigencount} follows from \eqref{globalboundgevrey} and an application of the semiclassical Weyl law \cite[Theorem 14.11]{zworski}.
\end{proof}

From Proposition \ref{counting}, it follows that we can bound
\begin{equation}
\frac{\#G(h)}{\displaystyle\inf_{t\in (0,\delta)}\# \mathcal{M}_h(t)}
\end{equation}
for $t\in (0,\delta(\epsilon))$ and $h<h_0(\epsilon)$. Moreover, this upper bound is uniform in $\epsilon$. By the nature of their construction in Proposition \ref{fattori}, the quasi-eigenvalues $\mu_m(t;h)$ lie in $[a,b]$ for all $t\in (0,\delta)$.

It is convenient to introduce the subset $\tilde{G}(h)\subset G(h)$ given by
\begin{equation}
\label{tildeGdef}
\tilde{G}(h)=\{j\in \N: E_j(t)\in [a,b]\textrm{ for all }t\in (0,\delta)\}.
\end{equation}
By choosing $\delta(\epsilon) > 0$ appropriately small, we can ensure that a large proportion of eigenvalues that lie in $[a,b]$ for some $t\in (0,\delta)$ lie in $[a,b]$ for all $t\in(0,\delta)$.

\begin{prop}
	\label{tildeGisfat}
	We can choose $\delta(\epsilon)=O(\epsilon^2)$ such that
	\begin{equation}
	\frac{\#\tilde{G}(h)}{\#G(h)}\geq 1-C\epsilon
	\end{equation}
	for all $\epsilon<\epsilon_0$ and $h<h_0(\epsilon)$, where $C>0$ is a constant.
\end{prop}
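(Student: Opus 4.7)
The strategy is to use the uniform spectral flow speed bound $M$ from \eqref{globalboundgevrey} together with the semiclassical Weyl law. The Lipschitz-in-$t$ property of each eigenvalue branch $E_j$ confines those $j \in G(h) \setminus \tilde{G}(h)$ to a thin slab of width $O(\delta)$ around the endpoints $\{a,b\}$ at any fixed reference time, while $\tilde{G}(h)$ covers essentially the full energy band of width $\lambda = O(\epsilon)$. Balancing the two sizes forces the choice $\delta = O(\epsilon^2)$.

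The first step is an elementary continuity argument. If $j \in G(h) \setminus \tilde{G}(h)$ then $E_j(t)$ takes values both inside and outside $[a,b]$ on $(0,\delta)$, so by the intermediate value theorem $E_j(t^*) \in \{a,b\}$ for some $t^* \in [0,\delta]$. The Lipschitz bound with constant $M$ then places $E_j(0)$ in the slab $[a - M\delta, a+M\delta] \cup [b - M\delta, b+M\delta]$. Symmetrically, every $j$ with $E_j(0) \in [a+M\delta, b-M\delta]$ satisfies $E_j(t) \in [a,b]$ for all $t \in (0,\delta)$, and hence belongs to $\tilde{G}(h)$.

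The second step is to apply the semiclassical Weyl law for $\mathcal{P}_h(0)$ to these two sets. Since $[a,b]$ lies in the regular energy range of $P_0(\cdot,\cdot;0)$, the coarea formula gives symplectic measure $O(\delta)$ for the boundary slab and $\lambda + O(\delta) = O(\epsilon)$ for the inner band $[a+M\delta, b-M\delta]$. Hence
\[
\#\bigl(G(h) \setminus \tilde{G}(h)\bigr) \leq C\delta \, (2\pi h)^{-n}(1+o(1)), \qquad \#\tilde{G}(h) \geq c\epsilon \, (2\pi h)^{-n}(1+o(1))
\]
as $h \to 0$. Taking the quotient and choosing $\delta(\epsilon) = \epsilon^2/C_0$ for a suitable constant $C_0$ bounds the ratio $\#(G(h) \setminus \tilde{G}(h))/\#G(h)$ by $C\epsilon$, which is equivalent to the claimed lower bound on $\#\tilde{G}(h)/\#G(h)$.

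The main obstacle, such as it is, is purely bookkeeping: ensuring the implicit constants in the coarea estimate remain uniform in the shrinking parameter $\epsilon$. This causes no difficulty because $[a,b]$ is centred on a fixed regular energy $E$ of $P_0(\cdot,\cdot;0)$, so the density of the Liouville measure on the nearby energy surfaces is bounded above and below on the shrinking interval.
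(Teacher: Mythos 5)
Your proposal is correct and follows essentially the same route as the paper: confine the exceptional indices $j \in G(h)\setminus\tilde G(h)$ to the $O(\delta)$-width boundary slabs at time $t=0$ via the uniform Lipschitz bound $M$, then compare phase-space volumes of width $O(\delta)$ versus $O(\lambda)=O(\epsilon)$ through the semiclassical Weyl law and conclude that $\delta=O(\epsilon^2)$ suffices. Your version is slightly more explicit (spelling out the intermediate value theorem step and bounding $\#(G\setminus\tilde G)$ and $\#\tilde G$ separately rather than working directly with the ratio), but the decomposition and the key input are the same.
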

\begin{proof}
	We can bound
	\begin{equation}
	\label{beforeapplyingsclweyl}
	\frac{\#G(h)}{\#\tilde{G}(h)}\leq \frac{N_h([a+M\delta,b-M\delta])}{N_h([a-M\delta,b+M\delta])}
	\end{equation}
	where $N_h(I)$ counts the semiclassical eigenvalues of the operator $\mathcal{P}_h(0)$ in $I$.	
	Recalling that the interval $[a,b]$ is of scale $\lambda=O(\epsilon)$, it follows that for any choice of $\delta=O(\epsilon^2)$, the ratio of phase space volumes
	\begin{equation}
	\frac{\mu(P_0(x,\xi;0)\in [a-M\delta,a+M\delta]\cup [b-M\delta,b+M\delta])}{\mu(P_0(x,\xi;0)\in[a-M\delta,a+M\delta])}
	\end{equation}
	can be bounded by a constant multiple of $\epsilon$ for all sufficiently small $\epsilon$. Application of the semiclassical Weyl asymptotics to \eqref{beforeapplyingsclweyl} completes the proof.
\end{proof}

\subsection{Eigenvalue and quasi-eigenvalue variation}
\label{flowspeedsec}

We now turn our attention to the variation of quasi-eigenvalues and eigenvalues as $t\in(0,\delta)$ varies. The quasi-eigenvalues can be handled rather explicitly.

\begin{prop}
	\label{quasispeedprop}
	For any all sufficiently small $\delta(\epsilon)>0$ and all $t\in(0,\delta)$, we have 
	\begin{equation}
	\limsup_{h\rightarrow 0}\partial_t \mu_m(t;h)\leq Q_- -c.
	\end{equation}
	for all $m\in \cup_{t\in (0,\delta)}\mathcal{M}_h(t)$ uniformly in $t$.
\end{prop}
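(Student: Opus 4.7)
The plan is to differentiate the explicit expression $\mu_m(t;h) = K^0(h(m+\vartheta/4),t;h)$, extract its leading classical contribution, identify that contribution with a torus-average of $\partial_t H$ using the computation in Section \ref{leading}, and then close the argument with the slow-torus bound \eqref{slowpostori}. First I would invoke the semiclassical expansion $K^0(I,t;h) = K(I;t) + hK_1(I,t;h)$ furnished by Theorem \ref{main2}, noting that each term is smooth in $t$ in the relevant Gevrey class, uniformly on the bounded range of $I$ of interest. Differentiating in $t$ and writing $I_m := h(m+\vartheta/4)$ yields
\[
\partial_t \mu_m(t;h) = \partial_t K(I_m;t) + O(h),
\]
uniformly in $m \in \mathcal{M}_h(t)$ and $t \in (0,\delta)$.

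Next, by construction of $\mathcal{M}_h(t)$ via the Bohr--Sommerfeld--type indexing of $S(t)=\{I(\omega;t):\omega\in\overline{\Omega}\}$, any such $m$ satisfies $I_m = I(\omega;t) + O(h)$ for some $\omega \in \overline{\Omega}$, so smoothness of $K$ allows me to replace $\partial_t K(I_m;t)$ by $\partial_t K(I(\omega;t);t) + O(h)$. The classical input provided by Section \ref{leading}, obtained by performing two preparatory KAM iterations before applying the normal form of Theorem \ref{main1}, is the identity
\[
\partial_t K(I(\omega;t);t) = (2\pi)^{-n}\int_{\T^n}\partial_t H(\theta,I(\omega;t);t)\,d\theta + O(\epsilon),
\]
valid uniformly for $\omega \in \overline{\Omega}$ and $t \in (0,\delta)$. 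Plugging this into the previous step and applying \eqref{slowpostori} from Proposition \ref{fattori} gives
\[
\partial_t \mu_m(t;h) < Q_- - 2c + O(\epsilon) + O(h).
\]
Finally I would choose $\epsilon$ (and hence $\delta(\epsilon)$) small enough that the $O(\epsilon)$ correction is bounded by $c/2$, and then pass to $\limsup_{h\to 0}$ to kill the $O(h)$ term, producing the desired bound $\limsup_{h\to 0}\partial_t \mu_m(t;h) \leq Q_- - c$ uniformly in $t$ and $m$.

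The hard part will be the third step, namely identifying $\partial_t K$ at a nonresonant action with the bare angle-average of $\partial_t H$ up to an $O(\epsilon)$ error. The difficulty is genuine: the symplectomorphism $\chi(\cdot,\cdot;t)$ producing the normal form depends on $t$, so differentiating the identity $H(\chi(\theta,I;t);t) = K(I;t) + R(\theta,I;t)$ in $t$ introduces a Poisson-bracket contamination coming from $\partial_t \chi$ which must be shown to average to a higher-order quantity on the KAM tori. The two explicit KAM pre-iterations performed in Section \ref{leading} are designed precisely to reduce this cross-term to the controllable error displayed above; once that classical identity is in hand, the remaining pieces of the argument are short applications of Theorem \ref{main2} and Proposition \ref{fattori}.
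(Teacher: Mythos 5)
Your proposal is correct and follows essentially the same route as the paper: differentiate $K^0$, reduce to the classical term $K_0$ via Theorem \ref{main2}, identify $h(m+\vartheta/4)$ with a nonresonant action $I(\omega;t)$ up to $O(h)$, and invoke the identity $\partial_t K_0(I;t)=(2\pi)^{-n}\int_{\T^n}\partial_t H(\theta,I;0)\,d\theta+o(1)$ from Proposition \ref{bnf2} together with \eqref{slowpostori}. The step you flag as the hard part is exactly what the two preliminary KAM iterations of Section \ref{leading} are built to deliver, so your assessment of where the real work lies is also accurate.
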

\begin{proof}
	From Proposition \ref{bnf2}, we have 
	\begin{equation}
	K_0(I;t)=H^0(I)+t\cdot (2\pi)^{-n}\int_{\T^n} \partial_t H(\theta,I;0)\, d\theta +O(t^{9/8})
	\end{equation}
	for any $I\in D$.
	Hence we have
	\begin{equation}
	\partial_t(K_0(h(m+\vartheta/4);t))< (2\pi)^{-n}\int_{\T^n} \partial_t H(\theta,h(m+\vartheta/4);0) \, d\theta + \epsilon
	\end{equation}
	for all $t\in(0,\delta(\epsilon))$, taking $\delta$ sufficiently small.
	From the definition of $\mathcal{M}_h(t)$, we know that $|h(m+\vartheta/4)-I(\omega;t)|< Lh$ for some $\omega\in \overline{\Omega}$, and so from the regularity of $I$ in $t$ it follows that
	\begin{equation}
	\partial_t(K_0(h(m+\vartheta/4);t)) < (2\pi)^{-n}\int_{\T^n} \partial_t H(\theta,I(\omega;t);t) \, d\theta+\epsilon+O(h)
	\end{equation}
	for some $\omega\in \overline{\Omega}$. This allows us to use \eqref{slowpostori}.
	
	Indeed, we have 
	\begin{eqnarray}
	\partial_t\mu_m(t;h)&=&\partial_t(K^0(h(m+\vartheta/4);t,h))\\
	&=& \partial_t(K_0(h(m+\vartheta/4);t))+O(h)\\
	\Rightarrow \limsup_{h\rightarrow 0} \partial_t\mu_m(t;h)& < & Q_- -2c+\epsilon.
	\end{eqnarray}
\end{proof}
In particular, we can choose $B>0$ and $h_0>0$ such that 
\begin{equation}
\label{Bdefn}
\partial_t \mu_m(t;h)<B< Q_-- c
\end{equation}
for all $t\in (0,\delta)$ and all $h<h_0$.

\begin{remark}
	We have abused notation slightly here by writing $\mu_m(t;h)$ even when $m\notin \mathcal{M}_h(t)$. That is, we track the behaviour of $K^0(h(m+\vartheta/4),t;h)$ even for $t\in(0,\delta)$ such that this does not correspond to a quasi-eigenvalue in our family. This is a necessity due to the rough nature of the set $\{I(\omega;t):\omega\in\overline{\Omega}\}$ of nonresonant actions. Indices $m\in \Z^n$ will typically be elements of $\mathcal{M}_h(t)$ for only $O(h)$-sized $t$-intervals at a time.
\end{remark}

\begin{remark}
	This is the last part of the argument that involves placing an additional restriction on the size of $\delta>0$. 
\end{remark}
We now consider the variation of eigenvalues. For each fixed $h>0$, the operators $\mathcal{P}_h(t)$ comprise an holomorphic family of type A in the sense of \cite{kato} and so we can choose eigenvalues and corresponding eigenprojections holomorphic in the parameter $t$. Thus if at each time $t$ we order our eigenpairs $E_j(t;h)$ in order of increasing energy, by holomorphy it follows that $E_j$ will be continuous, piecewise smooth, and have multiplicity $1$ for all but finitely many $t\in (0,\delta)$. On this cofinite set, we have
\begin{eqnarray}
\dot{E}_j(t;h)&=&\label{qequantity}\langle\mathcal{P}'_h(t)u_j(t;h),u_j(t;h)\rangle.
\end{eqnarray}
from $(u_j)$ being an orthonormal basis. We will control \eqref{qequantity} using our assumption of quantum ergodicity. 

To this end, we now suppose for the sake of contradiction that there exists a positive measure set $\mathcal{B}\subset (0,\delta)$ such that $\mathcal{P}_h(t)$ is quantum ergodic in the sense of \eqref{eq:qedef} for every $t\in \mathcal{B}$.
\begin{prop}
	\label{pointwisebound}
	For every $t\in \mathcal{B}$ and $\epsilon>0$, there exists $h_0(t,\epsilon)$ such that for all $h<h_0$, we have
	\begin{equation}
	|\langle \mathcal{P}_h'(t)u_j,u_j \rangle-\int_{\Sigma_{E_j}}\partial_tP_0\, d\mu_{E_j} |<\epsilon
	\end{equation}
	for a family of indices $S(t;h)\subset \{j\in\N:E_j(t;h)\in [a,b]\}$ with
	\begin{equation}
	\frac{\#S(t;h)}{\{j\in\N:E_j(t;h)\in[a,b]\}}>1-\epsilon.
	\end{equation}
\end{prop}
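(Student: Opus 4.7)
The plan is to deduce Proposition \ref{pointwisebound} from the variance form of quantum ergodicity \eqref{eq:qedef}, applied to a cut-off version of the Hadamard observable $\mathcal{P}_h'(t)$ itself.

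First I would observe that since the full symbol of $\mathcal{P}_h(t)$ depends smoothly on $t$, termwise differentiation of \eqref{operatorP} produces
\[
\mathcal{P}_h'(t)=\sum_{j=0}^{m}(\partial_t P_j)(x,hD;t)h^j,
\]
a semiclassical pseudodifferential operator in the same Gevrey class with principal symbol $\partial_t P_0(\cdot,\cdot;t)$. This operator is not of semiclassical order $0$, so cannot be fed directly into \eqref{eq:qedef}. I therefore fix a cutoff $\chi\in\mathcal{C}_c^\infty(\R)$ equal to $1$ on $[a,b]$ and supported in a slightly larger regular energy band for $P_0(\cdot,\cdot;t)$, and set
\[
A_h:=\chi(\mathcal{P}_h(t))\,\mathcal{P}_h'(t)\,\chi(\mathcal{P}_h(t)).
\]
The semiclassical functional calculus (Appendix \ref{gevsymbsec}) exhibits $\chi(\mathcal{P}_h(t))$ as an order-$0$ pseudodifferential operator with principal symbol $\chi\circ P_0(\cdot,\cdot;t)$, so $A_h$ is of order $0$ with principal symbol $(\chi\circ P_0)^2\,\partial_t P_0$. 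Crucially, for any eigenfunction $u_j$ with $E_j(t;h)\in[a,b]$ we have $\chi(\mathcal{P}_h(t))u_j=u_j$, and hence $\langle A_h u_j,u_j\rangle=\langle\mathcal{P}_h'(t)u_j,u_j\rangle$.

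Next I would apply the quantum ergodicity hypothesis at $t\in\mathcal{B}$: inserting $A_h$ into \eqref{eq:qedef} yields
\[
h^n\!\!\sum_{E_j(t;h)\in[a,b]}\left|\langle\mathcal{P}_h'(t)u_j,u_j\rangle-\frac{1}{\mu_{E_j}(\Sigma_{E_j})}\int_{\Sigma_{E_j}}\partial_t P_0\,d\mu_{E_j}\right|^2\longrightarrow 0
\]
as $h\to 0$, because $(\chi\circ P_0)^2\equiv 1$ on each $\Sigma_{E_j}$ with $E_j\in[a,b]$. Chebyshev's inequality then bounds the number of indices for which the quantity in absolute value exceeds $\epsilon$ by $\epsilon^{-2}h^{-n}$ times the above sum, which is $o(h^{-n})$. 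Since the semiclassical Weyl law gives $\#\{E_j(t;h)\in[a,b]\}$ of order $h^{-n}$, the proportion of bad indices tends to zero, and for $h<h_0(t,\epsilon)$ small enough it falls below $\epsilon$; defining $S(t;h)$ to be the complementary good set finishes the argument.

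The only mildly delicate point is the functional-calculus step in the Gevrey symbol class, but this is routine given the pseudodifferential calculus in Appendix \ref{gevsymbsec}, so I do not anticipate any real obstacle: the proposition is essentially the standard density reformulation of \eqref{eq:qedef} specialised to the Hadamard observable $\mathcal{P}_h'(t)$, with the cutoff trick absorbing the fact that $\mathcal{P}_h'(t)$ itself is not order zero.
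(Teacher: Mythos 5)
Your proposal is correct and is essentially the paper's argument: the paper simply cites the density-one consequence \eqref{eq:qecons} of the quantum ergodicity hypothesis, and your Chebyshev argument applied to \eqref{eq:qedef} is just that reformulation written out for fixed $\epsilon$. Your cutoff $\chi(\mathcal{P}_h(t))\,\mathcal{P}_h'(t)\,\chi(\mathcal{P}_h(t))$ correctly handles the fact that $\mathcal{P}_h'(t)$ is not of order zero, a point the paper leaves implicit (and your inclusion of the normalising factor $\mu_{E_j}(\Sigma_{E_j})^{-1}$ corrects an apparent typo in the statement).
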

\begin{proof}
	This is a direct application of \eqref{eq:qecons}.
\end{proof}

We also note that we have a global in time bound 
\begin{equation}
\label{globalboundgevrey}
E_j'(t)\leq M<\infty
\end{equation}
from differentiation of the expression
\begin{equation}
E_j(t)=\langle \mathcal{P}_h(t)u_j(t),u_j(t) \rangle
\end{equation}
and using a routine elliptic parametrix construction that is uniform in $t\in(0,1)$ to bound the quantity
\begin{equation}
\langle \mathcal{P}_h'(t)u_j(t),u_j(t)\rangle
\end{equation}
given that $E_j(t)$ lies in a fixed energy band $[a,b]$.

Recalling \eqref{qplusqminus}, Proposition \ref{pointwisebound} implies that 
\begin{equation}
\langle\mathcal{P}_h'(t)u_j,u_j \rangle \in [Q_--\epsilon,Q_++\epsilon]
\end{equation}
for all $j\in S(t,h)$ such that $E_j$ is smooth at $t$, and all $h<h_0(t,\epsilon)$.

Now, from the outer regularity of the Lebesgue measure, we may then choose a subinterval $J\subset (0,\delta)$ such that
\begin{equation}
\frac{m(\mathcal{B}\cap J)}{m(J)}>1-\epsilon.
\end{equation}
We can then apply the monotone convergence theorem to upgrade Proposition \ref{pointwisebound} for $t\in \mathcal{B}$ to a statement that is uniform in a large measure subset of $J$.
\begin{prop}
	\label{almostuniformgevrey}
	There exists a subset $\tilde{\mathcal{B}}\subseteq \mathcal{B}\cap J$ and a $h_0>0$ such that
	\begin{equation}
	\frac{m(\tilde{\mathcal{B}})}{m(J)}>1-2\epsilon
	\end{equation}
	and for any $h<h_0(\epsilon)$ and any $t\in \tilde{\mathcal{B}}$, there exists a subset 
	\begin{equation}
	Z(t,h)\subset \{j\in \N: E_j(t,h)\in [a,b]\}
	\end{equation} 
	such that
	\begin{equation}
	\frac{\#Z(t,h)}{\#\{j\in \N:E_j(t,h)\in [a,b]\}} >  1- 2\epsilon\quad \forall 0<h < h_0
	\end{equation}
	and 
	\begin{equation}
	\label{enlargedinterval}
	\langle\mathcal{P}_h'(t)u_j,u_j \rangle \in [Q_--\epsilon,Q_++\epsilon]\quad \forall j \in Z(t,h).
	\end{equation}
\end{prop}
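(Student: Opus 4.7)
The approach is to combine the pointwise-in-$t$ quantum ergodicity from Proposition~\ref{pointwisebound} with the uniformity estimate \eqref{ergodicboundsclose} and then extract a large-measure subset of $\mathcal{B}\cap J$ on which the semiclassical threshold becomes uniform; this last step is an elementary application of continuity of Lebesgue measure from below, as hinted at by the paper's reference to the monotone convergence theorem. First, I would observe that for every $E\in[a,b]$ and every $t\in(0,\delta)$, the ergodic average $\mu_E(\Sigma_E)^{-1}\int_{\Sigma_E}\partial_t P_0\, d\mu_E$ lies in $[Q_-(t),Q_+(t)]\subseteq[Q_-,Q_+]$ by the definitions in \eqref{qplusqminus} and \eqref{ergodicboundsclose}. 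Combining this containment with the pointwise bound $|\langle\mathcal{P}'_h(t)u_j,u_j\rangle-\mu_{E_j}(\Sigma_{E_j})^{-1}\int_{\Sigma_{E_j}}\partial_t P_0\, d\mu_{E_j}|<\epsilon$ from Proposition~\ref{pointwisebound}, I would deduce that for each $t\in\mathcal{B}$ there exists $h_0(t,\epsilon)>0$ and a set $S(t,h)\subseteq\{j:E_j(t,h)\in[a,b]\}$ of relative density at least $1-\epsilon>1-2\epsilon$ on which $\langle\mathcal{P}'_h(t)u_j,u_j\rangle\in[Q_--\epsilon,Q_++\epsilon]$, for every $h<h_0(t,\epsilon)$.

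Next, for each positive integer $n$ I would define
\[
V_n:=\{t\in\mathcal{B}\cap J : h_0(t,\epsilon)\geq 1/n\},
\]
an increasing sequence of subsets whose union equals $\mathcal{B}\cap J$ by the previous paragraph. Each $V_n$ is Lebesgue measurable: by Kato's theory of holomorphic families of type A, for each fixed $h$ both the eigenvalues $E_j(\cdot,h)$ and the matrix elements $\langle\mathcal{P}'_h(\cdot)u_j,u_j\rangle$ are piecewise real-analytic in $t$, so the relative density of good indices is a measurable function of $t$; the intersection over $h<1/n$ in the definition of $V_n$ can be reduced to a countable one using this piecewise analytic structure (or, equivalently, by restricting throughout to a sequence $h_k\searrow 0$ and applying Egorov's theorem). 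Continuity of Lebesgue measure from below then gives $m(V_n)\uparrow m(\mathcal{B}\cap J)>(1-\epsilon)m(J)$. I would choose $N$ so that $m(V_N)>(1-2\epsilon)m(J)$ and set $\tilde{\mathcal{B}}:=V_N$, $h_0(\epsilon):=1/N$, and $Z(t,h):=S(t,h)$, yielding the claimed statement.

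The main obstacle is the measurability of $V_n$, which forces one to track the piecewise analytic structure of the spectral data as eigenvalues cross the endpoints of $[a,b]$ (at which the counting function $\#\{j:E_j(t,h)\in[a,b]\}$ jumps) and the thresholds in the matrix-element condition. Once this bookkeeping is done, the remainder reduces to the standard continuity-of-measure argument.
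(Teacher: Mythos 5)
Your proposal is correct and matches the approach the paper sketches: the paper gives no formal proof but explicitly signposts applying the monotone convergence theorem (equivalently, continuity of measure from below, as you use) to upgrade the pointwise-in-$t$ conclusion of Proposition~\ref{pointwisebound} to a uniform threshold $h_0(\epsilon)$ on a large-measure subset $\tilde{\mathcal{B}}\subseteq\mathcal{B}\cap J$. Your treatment of the measurability of the sets $V_n$ is a detail the paper elides; the cleaner of your two suggested routes is to restrict throughout to a fixed sequence $h_k\searrow 0$ (which is all that the downstream Borel--Cantelli argument in Proposition~\ref{specnonconckamprop} requires), turning the uncountable intersection into a countable one, rather than relying on piecewise analyticity in $t$ to collapse an intersection over the parameter $h$.
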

In light of Proposition \ref{almostuniformgevrey} , we re-define $Q_-,Q_+$ to be the endpoints of the enlarged interval in \eqref{enlargedinterval}. Hence
\begin{equation}
\label{hvfcons}
\langle\mathcal{P}_h'(t)u_j,u_j \rangle \in [Q_--\epsilon,Q_++\epsilon]\quad \forall j \in Z(t,h).
\end{equation}
In terms of the re-defined $Q_-,Q_+$, we have
\begin{equation}
\label{discrepancy}
Q_--B>c-\epsilon>0.
\end{equation}
and so we have established a discrepancy between the typical speed of eigenvalue flow and the upper bound for the speed of quasi-eigenvalue flow.

\subsection{Spectral non-concentration}
\label{specflowsec}

We can now complete the proof of Theorem \ref{nonqe} by proving a spectral non-concentration result that follows from the results of Section \ref{flowspeedsec}.

\begin{prop}
	\label{specnonconckamprop}
	For sufficiently small $\epsilon>0$, there exists $t_*\in J$ such that 
	\begin{equation}
	\label{specnonconckam}
	\frac{N(t_*;h)}{\#\mathcal{M}_h(t_*)}<1/2
	\end{equation}
	for a sequence $h_j\rightarrow 0$, where 
	\begin{equation}
	\label{Ndef}
	N(t;h):=\#\{j\in\N:E_j(t;h)\in W(t;h)\}.
	\end{equation}
	is the number of exact eigenvalues lying in the union $W(t,h)$ of the  quasi-eigenvalue windows as introduced in \eqref{wthdef}.
\end{prop}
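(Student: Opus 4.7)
The plan is to argue by contradiction: if the conclusion fails, then $N(t;h) \geq \tfrac{1}{2}\#\mathcal{M}_h(t)$ for every $t\in J$ along some sequence $h_j\to 0$, and hence
\[
\int_J N(t;h)\,dt \;\geq\; \tfrac{1}{2}\,m(J)\,\inf_{t\in J}\#\mathcal{M}_h(t).
\]
I would derive an incompatible upper bound on $\int_J N(t;h)\,dt$ from the flow-speed discrepancy \eqref{discrepancy}.

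Fubini gives $\int_J N(t;h)\,dt = \sum_{j\in G(h)}\tau_j(h)$ with $\tau_j(h):=m(\{t\in J:E_j(t)\in W(t;h)\})$. I would split $G(h)$ into ``typical'' indices
\[
\tilde Z(h):=\bigl\{j\in\tilde G(h):m(\{t\in\tilde{\mathcal{B}}:j\in Z(t,h)\})\geq(1-2\sqrt\epsilon)\,m(J)\bigr\}
\]
and the rest. A Fubini--Chebyshev computation using the density bounds of Proposition~\ref{almostuniformgevrey} and Proposition~\ref{tildeGisfat} gives $|G(h)\setminus\tilde Z(h)|\lesssim\sqrt\epsilon\,|G(h)|$.

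The main technical step is a per-$j$ bound for typical $j\in\tilde Z(h)$. On the good set $S_j:=\{t\in\tilde{\mathcal B}:j\in Z(t,h)\}$, which has measure at least $(1-2\sqrt\epsilon)m(J)$, combining \eqref{hvfcons} and \eqref{Bdefn} gives $(E_j-\mu_m)'(t)\geq c-\epsilon>0$ almost everywhere. I would argue on each connected component of $\{t\in J:|E_j(t)-\mu_m(t;h)|<h^{n+1}\}$, using the lower bound on $(E_j-\mu_m)'$ over the $S_j$-portion and the uniform Lipschitz bound \eqref{globalboundgevrey} over the $S_j^c$-portion to control the measure of each component. Summing over components and over the $O(\epsilon^{n+1}/h^n)$ quasi-eigenvalue windows that the trajectory $t\mapsto E_j(t)$ can encounter on $J$ (controlled by Proposition~\ref{counting} together with the range bound $|E_j(J)|\lesssim m(J)$) gives
\[
\tau_j(h)\;\lesssim\;\frac{\epsilon^{n+1}h}{c-\epsilon}\qquad\text{for } j\in\tilde Z(h).
\]
I expect this per-$j$ bound to be the chief obstacle, since the estimate must absorb the $S_j^c$ contributions without spoiling the $h$-dependence; a careful Fubini rearrangement of the error terms, exploiting the disjointness of the $h^{n+1}$-windows at each $t$ and the smallness of $m(S_j^c)$, should suffice.

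Summing these per-$j$ estimates produces a typical contribution $\sum_{j\in\tilde Z}\tau_j\lesssim \epsilon^{n+2}h^{1-n}/(c-\epsilon)$, which vanishes compared with the target $\sim\epsilon^{n+2}h^{-n}$ as $h\to 0$. The atypical contribution $\sum_{j\notin\tilde Z}\tau_j$ is controlled by Fubini together with the Weyl asymptotic of Proposition~\ref{counting} applied to the count $\#\{j\notin Z(t,h):E_j(t)\in W(t;h)\}$ at each $t\in\tilde{\mathcal B}$. Taking $\epsilon$ and then $h$ sufficiently small along the hypothesized sequence yields the desired contradiction, completing the proof.
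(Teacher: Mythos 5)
Your overall architecture is the same as the paper's (integrate the window-occupation time $\tau_j(h)=m(C_j)$ over $J$, discard the atypical indices, and bound $\tau_j$ for typical $j$ via the derivative discrepancy between $E_j$ and the $\mu_m$), but the central per-$j$ estimate you assert is false as stated, and you have correctly identified it as the load-bearing step. The claim $\tau_j\lesssim \epsilon^{n+1}h/(c-\epsilon)$ cannot hold: on the bad set $S_j^c$ you have no lower bound on $(E_j-\mu_m)'$, only the two-sided Lipschitz bound \eqref{globalboundgevrey}, so the eigenvalue branch can track a quasi-eigenvalue exactly and remain inside a single window throughout $S_j^c$. Since $m(S_j^c)$ can be as large as $2\sqrt{\epsilon}\,m(J)$ \emph{independently of $h$}, no Fubini rearrangement can absorb this into an $O(h)$ term. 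Running your component-by-component argument correctly gives
\begin{equation*}
\tau_j\;\lesssim\;\frac{h}{c-\epsilon}\;+\;\Bigl(1+\frac{M}{c-\epsilon}\Bigr)\,m(S_j^c)\;\lesssim\;\frac{h}{c-\epsilon}+\sqrt{\epsilon}\,m(J),
\end{equation*}
i.e.\ an $h$-independent but $\epsilon$-small remainder. This weaker bound is exactly what the paper establishes (its inequality bounding $m(C_j)/m(J)$ by an arbitrarily small $\gamma$), and it still suffices for the final averaging because $\#G(h)$ is comparable to $\inf_t\#\mathcal{M}_h(t)$; so the proof is repairable, but not with the $h$-decay you claim. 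Relatedly, the "disjointness of the $h^{n+1}$-windows at each $t$" that you invoke is not available: distinct $m$ can produce quasi-eigenvalues within $2h^{n+1}$ of one another, which is why the paper's covering of $C_j$ by intervals $[s_j,s_j']$ explicitly allows the overlap $s_{j+1}=s_j'$ and tracks distinct indices $m(j)$ rather than assuming the windows are pairwise disjoint.

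A second, smaller gap is the opening reduction. The negation of the proposition is not "$N(t;h)\geq\tfrac12\#\mathcal{M}_h(t)$ for every $t\in J$ at a common sequence $h_j\to 0$"; it is that for each $t$ there is a $t$-dependent threshold $h_0(t)$ below which the ratio is $\geq 1/2$, and these thresholds need not be uniform in $t$, so you cannot fix one small $h$ and integrate over all of $J$. The paper avoids this by proving directly that for every small $h$ the set $\{t\in J: N(t;h)/\#\mathcal{M}_h(t)\leq 1/2\}$ has measure at least $m(J)/2$, and then extracting a single $t_*$ lying in infinitely many of these sets along a sequence $h_j\to 0$ (the Borel--Cantelli/$\limsup$ step at the end of the paper's proof). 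You should restructure your contradiction in the same way.
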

\begin{proof}
    The method of proof is by averaging in $t$ and using Proposition \ref{almostuniformgevrey} to show that most individual eigenfunctions cannot lie in $W(t,h)$ for a significant proportion of $t\in J$.
	
	We begin by defining
	\begin{eqnarray}
	A_j(h)&=&\{t\in J : E_j(t;h)\in [a,b]\}\\
	B_j(h)&=&\{t\in J : j\in Z(t;h)\}\\
	C_j(h)&=&\{t\in J:E_j(t;h)\in W(t;h)\}.
	\end{eqnarray}

	From Proposition \ref{almostuniformgevrey}, for each $t\in \tilde{\mathcal{B}}$ we have
	\begin{equation}
	\sum_{j\in \N} 1_{B_j} \geq (1-2\epsilon)	\sum_{j\in \N}1_{A_j}
	\end{equation}
	for $h<h_0(\epsilon)$.
	Integrating, we obtain
	\begin{equation}
	\sum_{j\in\N}\int_{\tilde{B}} 1_{B_j}\, dt \geq (1-2\epsilon)\sum_{j\in \N}\int_{\tilde{B}} 1_{A_j}\, dt.
	\end{equation}
	Hence
	\begin{eqnarray}
	\sum_{j\in\N}\int_J 1_{B_j}\, dt &\geq& (1-2\epsilon)\sum_{j\in \N}\left(\int_J 1_{A_j}\, dt-\int_{J\setminus\tilde{B}}1_{A_j}\, dt\right)\\
	&\geq &\label{meow}(1-2\epsilon)\sum_{j\in \N}\left(\int_J 1_{A_j}\, dt-2\epsilon m(J)\right)
	\end{eqnarray}
	which can be rewritten as
	\begin{equation}
	\sum_{j\in\N}m(B_j) \geq (1-2\epsilon)\sum_{j\in \N}(m(A_j)-2\epsilon m(J)).
	\end{equation}
	From the definitions \eqref{Gdef} and \eqref{tildeGdef}, we know that $m(A_j)>0$ only if $j\in G(h)$ and $m(A_j)=m(J)$ if $j\in\tilde{G}(h)$.
	Thus we can estimate
	\begin{eqnarray}
	\frac{1}{\#G(h)}\sum_{j\in \N} m(B_j) &\geq& (1-2\epsilon)(\frac{\#\tilde{G}(h)}{\#G(h)}-2\epsilon)m(J)\\
	&\geq& 	(1-\epsilon)(1-O(\epsilon))m(J)\\
	&=: &(1-\eta)m(J)
	\end{eqnarray}
	where $\limsup_{h\rightarrow 0}\eta(\epsilon;h)=o_\epsilon(1)$.

	Consequently we have 
	\begin{equation}
	m(B_j)\geq (1-\eta^{1/2})m(A_j)
	\end{equation}
	for a subfamily $\mathcal{F}(h)\subset \tilde{G}(h)$ with
	\begin{equation}
	\frac{\#\mathcal{F}(h)}{\#G(h)}\geq 1-\eta^{1/2}-O(\epsilon)
	\end{equation}
	in the limit $h\rightarrow 0$, where we have made use of Proposition \ref{tildeGisfat}.
	
	Taking $E(t;h):=E_j(t;h)$ for some $j\in \mathcal{F}$, the bound from the Hadamard variational formula \eqref{hvfcons} yields
	\begin{equation}
	\label{eigenjumplower}
	E(t_2;h)-E(t_1;h)\geq ((1-\eta^{1/2})Q_--M\eta^{1/2})m(J)
	\end{equation}
	where $M$ is the uniform bound on eigenvalue flow speed for eigenvalues in $[a,b]$.
	
	On the other hand, we now bound $E(t_2;h)-E(t_1;h)$ above.
	To do this, we define $\tilde{E}(t;h)=E(t;h)-Bt$ and $\tilde{\mu}_m(t;h)=\mu_m(t;h)-Bt$
	where $B$ was the upper bound in \eqref{Bdefn}.
	
	Then the transformed quasi-eigenvalue windows $\tilde{\mu}_m(t;h)$ are non-increasing. From this it follows that if $\tilde{E}(s;h)\in [\tilde{\mu}_m(s;h)-h^{n+1},\tilde{\mu}_m(s;h)+h^{n+1}]$ and $m\in\mathcal{M}_h(s)$ for some $s\in J$, then $\tilde{E}(s';h)-\tilde{E}(s;h)<2h^{{n+1}}$, where $s'$ is the final time $t\in J$ such that $m\in\mathcal{M}_h(t)$ and $\tilde{E}(t;h)\in [\tilde{\mu}_m(t;h)-h^{n+1},\tilde{\mu}_m+h^{n+1}]$. This implies that $E(s';h)-E(s;h)<2h^{n+1}+B(s'-s)$.
	
	Generalising this idea, we can cover each $C_k(h)$ with a finite union of almost-disjoint intervals $\cup_jI_j$ with $I_j=[s_j,s_j']$ defined as follows:
	
	\begin{enumerate}
		\item We define $s_0:=\inf\{t\in J:E(t;h)\in W(t;h)\}$, and we choose an $m(0)\in\mathcal{M}_h(s_0)$ such 
		that $E(t;h)\in [\mu_{m(0)}(t;h)-h^{n+1},\mu_{m(0)}(t;h)+h^{n+1}]$ and $m(0)\in \mathcal{M}_h(t)$ for 
		all sufficiently small $t-s_0>0$.
		\item We then define $s_0':=\sup\{t\in J:E(t;h)\in [\mu_{m(0)}(t;h)-h^{n+1},\mu_{m(0)}(t;h)+h^{n+1}]\}$.
		\item If $\{t\in J: t>s_{j-1}' \textrm{ and } E(t;h)\in W(t;h)\}$ is empty, we terminate the inductive 
		process, otherwise we proceed inductively by defining $s_j:=\inf\{t\in J: t>s_{j-1}' \textrm{ and } E
		(t;h)\in W(t;h)\}$ and choosing a corresponding $m(j)\in\mathcal{M}_h(s_j)$ such that $E(t;h)\in [\mu_
		{m(j)}(t;h)-h^{n+1},\mu_{m(j)}(t;h)+h^{n+1}]$ and $m(j)\in \mathcal{M}_h(t)$ for all sufficiently 
		small $t-s_{j-1}>0$.
		\item We then define $s_j':=\sup\{t\in J:E(t;h)\in [\mu_{m(j)}(t;h)-h^{n+1},\mu_{m(j)}(t;h)+h^{n+1}]\}$.
	\end{enumerate}
	From the Weyl asymptotics, this procedure must terminate after finitely many iterations.
	
	\begin{remark}
		In the case that $E(t;h)$ is still in a quasi-eigenvalue window after the window corresponding to $\mu_{m(j)}$, we will have $s_{j+1}=s_j'$. This is the only kind of overlap possible between the intervals $I_j$. We also remark that the $m(j)$ are necessarily distinct, by the nature of this construction.
	\end{remark}
	For each such interval $I_j=[s_j,s_j']$, we have that $E(s_j';h)-E(s_j;h)\leq 2h^{n+1}+B(s_j'-s_j)$.
	
	As there can be at most $O(h^{-n})$ intervals $I_j$, we obtain:
	\begin{equation}
	\label{coolbound}
	\sum_{j}E(s_j';h)-E(s_j;h)\leq B\sum_j (s_j'-s_j)+O(h).
	\end{equation}
	For such eigenvalues, we thus obtain the upper bound
	\begin{eqnarray}
	\nonumber E(t_2;h)-E(t_1;h)&\leq&\nonumber \sum_j (E(s_j';h)-E(s_j;h))\\
	&+&\nonumber\left(m(J)(1-\eta^{1/2})-\sum_j (s_j'-s_j)\right)Q_++m(J)\eta^{1/2}M\\
	&\leq& \nonumber (B-Q_+)\sum_j (s_j'-s_j)+m(J)(1-\eta^{1/2})Q_++m(J)\eta^{1/2}M\\
	&\leq & \label{eigenjumpupper}(B-Q_+)m(C_j)+((1-\eta^{1/2})Q_++M\eta^{1/2})m(J)
	\end{eqnarray}
	in the limit $h\rightarrow 0$.
	Rearranging \eqref{eigenjumpupper} and using \eqref{eigenjumplower}, we arrive at
	\begin{equation}
	(Q_+-B)\frac{m(C_j)}{m(J)} \leq  2M\eta^{1/2}+(1-\eta^{1/2})(Q_+-Q_-).
	\end{equation}
    Hence by taking $\epsilon$ sufficiently small and then passing to sufficiently small $0<h<h_0(\epsilon)$ we can bound 
	\begin{equation}
	\frac{m(C_j)}{m(J)}
	\end{equation}
	by an arbitrarily small positive constant $\gamma$ for all $j\in\mathcal{F}$.
	
	Hence we have
	\begin{eqnarray}
	\int_J N(t;h)\, dt&\leq &\int_J \sum_{j\in \N}1_{C_j}\, dt\\
	 &\leq & \int_J \gamma\sum_{j\in \mathcal{F}} 1_{A_j}+\#(G\setminus \mathcal{F})\, dt\\
	&\leq & (\gamma \#\mathcal{F}+(\eta^{1/2}+O(\epsilon))\#G)m(J) \\
	&\leq & (\gamma+\eta^{1/2}+O(\epsilon))(\#G)m(J)
	\end{eqnarray}
	where we have used Proposition \ref{tildeGisfat} in the final line.
	
	Fixing sufficiently small $\epsilon>0$, for all $h<h_0(\epsilon)$ we have
	\begin{equation}
	\frac{1}{m(J)}\int_J \frac{N(t;h)}{\#\mathcal{M}_h(t)}\, dt \leq 1/4.
	\end{equation}
	It follows that for each such $h<h_0$, the set 
	\begin{equation}
	\{t\in J: \frac{N(t;h)}{\#\mathcal{M}_h(t)} \leq 1/2\}
	\end{equation}
	has measure at least $m(J)/2$.
	Taking a sequence $h_j\rightarrow 0$ and applying the Borel--Cantelli lemma completes the proof.
\end{proof}

We now prove an elementary spectral theory result that will show that the conclusion of Proposition \ref{specnonconckamprop} is in fact absurd, hence completing the proof of Theorem \ref{thm:main}.

We denote by $U$, the $h$-dependent span of all eigenfunctions with eigenvalues in $W(t;h)$.

\begin{prop}
	\label{spectralcont}
	For sufficiently small $h>0$, the projections
	\begin{equation}
	w_m(t_*,h)=\pi_U(v_m(t_*,h))
	\end{equation}
	are linearly independent.
\end{prop}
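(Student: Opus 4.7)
The plan is to show that if $\sum_{m \in \mathcal{M}_h(t_*)} c_m w_m = 0$ for coefficients $c_m \in \mathbb{C}$, then all $c_m$ vanish for $h$ sufficiently small. The strategy is to sandwich $\bigl\|\sum_m c_m v_m\bigr\|^2$ between a lower bound coming from the approximate orthonormality of the quasimodes $v_m$ and an upper bound coming from the fact that each $v_m$ is almost entirely captured by $U$. Both ingredients are delivered by Popov's Gevrey quantum Birkhoff normal form (Theorem \ref{main2}): the $v_m$ have quasimode defect and pairwise inner product deviations smaller than any prescribed power of $h$.

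First I would estimate $\|\pi_{U^\perp} v_m\|$. Expanding $v_m = \sum_j c_{m,j}\, u_j(t_*;h)$ in the orthonormal eigenbasis of $\mathcal{P}_h(t_*)$, the quasimode bound $\|(\mathcal{P}_h(t_*) - \mu_m(t_*;h)) v_m\| \leq C_N h^N$ gives $\sum_j |c_{m,j}|^2 (E_j(t_*;h) - \mu_m(t_*;h))^2 \leq C_N^2 h^{2N}$. Chebyshev's inequality then yields
\[\sum_{|E_j(t_*;h) - \mu_m(t_*;h)| > h^{n+1}} |c_{m,j}|^2 \leq C_N^2 h^{2N - 2n - 2}.\]
Since $[\mu_m(t_*;h) - h^{n+1}, \mu_m(t_*;h) + h^{n+1}] \subset W(t_*;h)$ by the definition \eqref{wthdef} of $W$, this gives $\|\pi_{U^\perp} v_m\|^2 \leq C_N^2 h^{2N - 2n - 2}$.

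Second, the quasimodes satisfy $\langle v_m, v_{m'}\rangle = \delta_{m,m'} + O(h^M)$ for any prescribed $M$, since they arise as images of sharply localized lattice elements under an approximately unitary Gevrey Fourier integral operator. Supposing $\sum_m c_m w_m = 0$ and applying $\pi_U$ gives $\sum_m c_m v_m \in U^\perp$, hence $\sum_m c_m v_m = \sum_m c_m \pi_{U^\perp} v_m$. By Cauchy--Schwarz together with the count $\#\mathcal{M}_h(t_*) = O(h^{-n})$ from Proposition \ref{counting},
\[\Bigl\|\sum_m c_m\, \pi_{U^\perp} v_m \Bigr\|^2 \leq \Bigl(\sum_m |c_m|^2\Bigr)\Bigl(\sum_m \|\pi_{U^\perp} v_m\|^2\Bigr) \leq C\, h^{2N - 3n - 2} \sum_m |c_m|^2,\]
whereas the near-orthonormality of the $v_m$ gives
\[\Bigl\|\sum_m c_m v_m\Bigr\|^2 \geq \bigl(1 - C'\, h^{M - n}\bigr) \sum_m |c_m|^2.\]
Taking $N$ and $M$ so that $2N - 3n - 2 > 0$ and $M > n$, both permissible in the Gevrey setting, the upper bound is strictly smaller than the lower bound for $h$ small, forcing $\sum_m |c_m|^2 = 0$.

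The main obstacle is the factor $\#\mathcal{M}_h(t_*) \sim h^{-n}$ lurking in the Cauchy--Schwarz step: each pointwise estimate on $\|\pi_{U^\perp} v_m\|^2$ must absorb an extra $h^{-n}$ coming from the sum over the index set, and similarly the off-diagonal inner product bound must beat $h^n$. This is precisely why the Gevrey regularity provided by Theorem \ref{main2} is essential: the defects are smaller than any polynomial in $h$, so the exponents $2N - 3n - 2$ and $M - n$ can be made as large as needed and the sandwich inequality is comfortably satisfied.
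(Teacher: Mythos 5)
Your proof is correct and follows essentially the same route as the paper: the same Chebyshev-type bound on $\|\pi_{U^\perp}v_m\|$ from the $O(h^{\gamma+1})$ quasimode defect, the same use of almost-orthonormality of the $v_m$, and the same absorption of the $\#\mathcal{M}_h(t_*)=O(h^{-n})$ count. The only cosmetic difference is that the paper packages the final linear-algebra step as invertibility of the Gram matrix of the $w_m$ via a Neumann series, whereas you argue directly from a hypothetical vanishing linear combination; the two are equivalent.
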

\begin{proof}
	First, we show that the estimate from Definition \ref{gevmodes} on the error of quasimodes implies that the projections $\pi_U(v_m(t_*,h))$ are large. In particular, for $m\in \mathcal{M}_h(t_*)$, we have
	\begin{eqnarray*}
		\left\|(\mathcal{P}_h(t_*)-\mu_m(t_*,h))\sum_{j\in\mathbb{N}}\langle v_m(t_*,h),u_j(t_*,h)\rangle u_j\right\|^2 & = & O(h^{2\gamma+2})\\ 
		\Rightarrow \sum_{|E_j-\mu_m|>h^{n+1}} |E_j(t,h)-\mu_m(t,h)|^2|\langle v_m(t_*,h),u_j(t_*,h)\rangle|^2 &=& O(h^{2\gamma+2}) \\
		\Rightarrow \pi_{U^\perp}(v_m(t_*,h)) &=& O(h^{\gamma-n}).
	\end{eqnarray*}
	Hence for sufficiently small $h$, we have 
	\begin{equation}
	\label{bigproj}
	\|w_m\|^2=\|\pi_U(v_m(t_*,h))\|^2= 1+O(h^{\gamma+1})+O(h^{2\gamma-2n}).
	\end{equation}
	It then follows from Definition \ref{gevmodes} and \eqref{bigproj} that the $w_m$ are almost orthogonal for distinct $m\in\mathcal{M}_h(t)$.
	\begin{eqnarray*}
		|\langle \pi_U(v_m(t_*,h)),\pi_U(v_k(t_*,h))\rangle|&\leq& |\langle v_m(t_*,h),v_k(t_*,h)\rangle|+|\langle\pi_{U^\perp}(v_m(t_*,h)),\pi_{U^\perp}(v_k(t,h))\rangle|\\
		& = & O(h^{\gamma+1})+O(h^{2\gamma-2n}).
	\end{eqnarray*}
	Hence there exist constants $c,C>0$ such that we have 
	\begin{equation}
	|\langle\pi_U(v_m(t_*,h)),\pi_U(v_k(t_*,h))\rangle-\delta_{k,m}|=O(h^{\gamma+1})+O(h^{2\gamma-2n}).
	\end{equation}
	for all sufficiently small $h$.
	
	If we enumerate the quasimodes $v_m(t_*,h)$ by positive integers rather than $m\in \Z^n$, we can then form the Gram matrix $M(h)\in \textrm{Mat}(\#\mathcal{M}_h(t_*),\R)$, with entries given by
	\begin{equation}
	M_{ij}(h)=\langle w_i,w_j\rangle.
	\end{equation}
	Since
	\begin{equation}
	\label{approxid2}
	\|M-I\|_{HS}= (\#\mathcal{M}_h(t_*))(O(h^{\gamma+1})+O(h^{2\gamma-2n}))=O(h^{\gamma+1-n})+O(h^{2\gamma-3n})
	\end{equation}
	we can invert $M=I+(M-I)$ as a Neumann series provided the exponents of the semiclassical parameter $h$ are positive. This can be ensured by taking $\gamma>3n/2$. Since $M$ is nonsingular, we can therefore conclude that the collection of functions
	\begin{equation}
	\{\pi_U(v_m(t_*,h)):m\in \mathcal{M}_h(t_*)\}
	\end{equation}
	are linearly independent.
\end{proof}


We are now in a position to complete the proof of Theorem \ref{nonqe}.

\begin{proof}[Completion of proof of Theorem \ref{nonqe}]
	Having fixed $\epsilon>0$ in Proposition \ref{quasispeedprop}, we have shown in Proposition \ref{specnonconckamprop} that there exists a $t_*\in (0,\delta)$ at which we have the spectral non-concentration result \eqref{specnonconckam} for a sequence $h_j\rightarrow 0$.
	
	On the other hand, we have shown in Proposition \ref{spectralcont} that the projections $\pi_U(v_m(t_*,h))$ are $\#\mathcal{M}_h(t_*)$ linearly independent vectors in a vector space of dimension $\dim(U)=N(t_*,h)<\#\mathcal{M}_h(t_*)/2$.	
	This contradiction completes the proof.
\end{proof}

\newpage

\section{Birkhoff normal form}
\label{sec:bnf}
\label{chp4}

In this section we construct a family of Birkhoff normal forms corresponding to a family of Gevrey smooth Hamiltonians $H(\theta,I;t)$, real-analytic in the parameter $t\in (-1,1)$. The introduction of this parameter leads to only minor changes in the argument of Popov \cite{popovkam}.

We formulate the KAM theorem from \cite{popovkam} in Section \ref{kamform} and outline the proof in Section \ref{kamproof}. We then complete the Birkhoff normal form construction following \cite{popovkam} in Section \ref{mainresultssec}

In Section \ref{leading}, we compute the leading order behaviour of this Birkhoff normal form as $t\rightarrow 0$, which was used in Proposition \ref{quasispeedprop} to obtain an expression for the derivatives of the quasi-eigenvalues of the operator $\mathcal{P}_h(t)$ constructed in Section \ref{sec:qbnf}.

\subsection{Notation}
\label{sec:notation}

We begin by introducing some notational conventions that will be used several times in this section.

\begin{defn}
	\label{domains1}
	If $D\subset \mathbb{R}^n$ and $s,r>0$ we write
	\begin{equation}
	\mathbb{T}^n+s :=\{z\in\mathbb{C}^n/2\pi\mathbb{Z}^n:|\textrm{Im}(z)|\leq s\}
	\end{equation}
	and
	\begin{equation}
	D_{s,r}:=\{\theta\in\mathbb{C}^n / 2\pi\mathbb{Z}^n:|\textrm{Im}(\theta)|<s\}\times \{I\in \mathbb{C}^n:|I|<r\},
	\end{equation}
	where $|\cdot|$ denotes the sup-norm on $\C^n$ induced by the $2$-dimensional $\ell^\infty$ norm on $\C$.
\end{defn}

These domains arise from considering the analytic extension of real analytic Hamiltonians in action-angle variables. In this topic it is common to bound derivatives of analytic functions using Cauchy estimates, which requires keeping track of shrinking sequences of domains. 

For simplicity of nomenclature, we call an analytic function of several complex variables real analytic if its restriction to a function of $n$ real variables is real-valued.

As a final notational convenience, we use $|\cdot|$ to denote the $\ell^1$ norm when applied to elements of $\Z^n$ throughout this paper, as well as the matrix norm induced by the sup norm on $\C^n$.

\subsection{Formulation of the KAM theorem}
\label{kamform}
Let $D^0\subset \mathbb{R}^n$ be a bounded domain, and consider a completely integrable Hamiltonian $H^0(I)=H^0(\theta,I):\mathbb{T}^n\times D^0 \rightarrow \mathbb{R}$ in action-angle coordinates. To begin with, we shall assume that this Hamiltonian is real analytic.

In addition, we assume the non-degeneracy condition $\det\left(\frac{\partial^2 H}{\partial I^2}\right)\neq 0$. This assumption implies that the map relating the action variable $I$ to the frequency $\omega=\nabla H^0(I)$ is locally invertible. In fact, we assume that 
\begin{equation}
\label{nondegen}
I\mapsto \nabla H^0(I)
\end{equation}
is a diffeomorphism from $D^0$ to $\Omega^0\subset \mathbb{R}^n$. The inverse to this map is given by $\nabla g^0$, where $g^0$ is the Legendre transform of $H^0$.

Taking $D\subset D^0$ a subdomain, and denoting by $\Omega=\nabla H^0(D)$ the corresponding frequency set, the phase space $\mathbb{T}^n\times D$ is then foliated by the family of Lagrangian tori $\{\mathbb{T}^n\times \{I\}:I\in D\}$ that are invariant under Hamiltonian flow associated to $H^0$.

The KAM theorem asserts that small perturbations of $H(\theta,I)=H^0(I)+H^1(\theta,I)$ on $\mathbb{T}^n\times D$ still possess a family of Lagrangian tori which fill up phase space up to a set of Liouville volume $o(1)$ in the size of the perturbation. More precisely, if $\Omega:=\{\omega:\omega=\nabla_I H^0\}$ is the set of frequencies for the quasi-periodic flow of $H^0$, the frequencies satisfying:
\begin{equation}
\label{diophantine}
|\langle\omega,k\rangle|\geq \frac{\kappa}{|k|^\tau}
\end{equation}
for all nonzero $k\in \mathbb{Z}^n$ and fixed $\kappa >0$ and $\tau>n-1$ also correspond to Lagrangian tori for the perturbed Hamiltonian $H$, provided $\|H-H^0\| < \epsilon(\kappa)$ in a suitable norm.
Such frequencies are said to be non-resonant, and we denote the set of non-resonant frequencies by $\Omega^*_\kappa$, suppressing the dependence on $\tau$ from our notation. These sets are obtained by taking the intersection of the sets
\begin{equation}
\{\omega\in \Omega:|\langle\omega,k\rangle|\geq \frac{\kappa}{|k|^\tau}\}
\end{equation}
over all nonzero $k\in \Z^n$, and hence $\cap_{\kappa>0}\Omega^*_\kappa$ is closed and perfect, with $\cap_{\kappa>0}\Omega^*_\kappa$ of full measure in $\Omega$, as can be seen from the observation that 
\begin{equation}
m(\{\omega\in \R^n: |\langle k,\omega\rangle|< \frac{\kappa}{|k|^\tau}\})=O(\frac{\kappa}{|k|^{\tau+1}}).
\end{equation}

We work with the sets
\begin{equation}
\Omega_\kappa:=\{\omega\in\Omega_\kappa^*:\textrm{dist}(\omega,\partial\Omega) \geq\kappa\}
\end{equation}
which have positive measure for sufficiently small $\kappa$.
It is also convenient to introduce notation for the set of points of Lebesgue density in $\Omega_\kappa$, which we denote by
\begin{equation}
\tilde{\Omega}_\kappa:=\{\omega\in \Omega:\frac{m(B(\omega,r)\cap \Omega_\kappa}{m(B(\omega,r))})\rightarrow 1 \textrm{ as }r\rightarrow 0\}.
\end{equation}
From the Lebesgue density theorem we have that $m(\tilde{\Omega}_\kappa)=m(\Omega_\kappa)$. We also note that a smooth function vanishing on $\Omega_\kappa$ is necessarily flat on $\tilde{\Omega}_{\kappa}$.

The construction of the Birkhoff normal form is a consequence of  Theorem \ref{kamwithparams}, which is a version of the KAM theorem localised around the frequency $\omega$ which is taken as an independent parameter. This version is particularly useful for the  Birkhoff normal form construction, as it makes it an easier task to check the regularity of the invariant tori with respect to the  frequency parameter.
To illustrate the setup of this theorem, we set 
\begin{equation}
\label{domains}
\Omega'=\{\omega\in \Omega: \textrm{dist}(\omega,\Omega_\kappa)\leq \kappa/2\},\quad D'=\nabla g^0(\Omega').
\end{equation}

Taking $z_0\in D'$ we let $I=z-z_0$ lie in a small ball of radius $R$ about $0$. That is, $R$ is chosen such that $B_R(z_0)\subset D$.
Taylor expanding gives us the expression
\begin{equation}
\label{taylorexpand}
H^0(z)=H^0(z_0)+\langle \nabla_z H^0(z_0),I\rangle+\int_0^1 (1-t)\langle \nabla_z^2 H^0(z_0+tI)I,I\rangle\, dt.
\end{equation}

We now take $\omega\in\Omega^0$ to be the corresponding frequency $\nabla H^0(z_0)$. The inverse of the frequency map is 
\begin{equation}
\label{psidef}
\psi_0(\omega)=\nabla g^0(\omega),
\end{equation}
where $g^0$ is the Legendre transform of $H^0$.
Hence we can write
\begin{equation}
H^0(z)=H^0(\psi_0(\omega))+\langle \omega, I\rangle +\langle P^0(I;\omega)I,I\rangle
\end{equation}
where $P^0$ is the quadratic remainder term in \eqref{taylorexpand}.
Expanding about the point $z_0=\nabla g^0(\omega)$, we can write our perturbation $H^1$ locally as
\begin{equation}
H^1(\theta,z)=H^1(\theta,\nabla g^0(\omega)+I)=P^1(\theta,I;\omega).
\end{equation}

This leads us to consider perturbed real analytic Hamiltonians in the form
\begin{equation}
\label{expnearfreq}
H(\theta,I;\omega)=H^0(\psi_0(\omega))+\langle\omega,I\rangle+P(\theta,I;\omega)=:N(I;\omega)+P(\theta,I;\omega).
\end{equation}
where
\begin{equation}
N(I;\omega)=H^0(\psi_0(\omega))+\langle \omega, I\rangle
\end{equation}
and
\begin{equation}
\label{defnofPzero}
P(\theta,I;\omega)=\langle P^0(I;\omega)I,I\rangle+P^1(\theta,I;\omega).
\end{equation}

The traditional formulations of the KAM theorem assert the existence of a Cantor family of tori that persist under small perturbations of a single Hamiltonian $H^0$ with domain $D$. In the framework laid out above, we now have a Cantor family of Hamiltonians parametrised by $\omega\in \Omega_\kappa$. Note that each of these Hamiltonians is only linear in $I$.

The essence the frequency localised KAM theorem in Theorem \ref{kamwithparams} is that for sufficiently small $P$, we can find a symplectic change of variables that transforms $H$ to a linear normal form in $I$ with remainder quadratic in $I$ for $\omega\in \Omega_\kappa$. This establishes the persistence of the Lagrangian torus  with frequency $\omega$. From Theorem \ref{kamwithparams}, one can obtain Theorem \ref{kamcons}, which establishes the existence of a Cantor family of invariant tori for the original Hamiltonian $H$ as with traditional formulations of the KAM theorem.

To work with Gevrey smooth Hamiltonians, we fix $L_2\geq L_0\geq 1$ and $A_0>1$, and assume that $H^0\in G^{\rho,1}_{L_0,L_2}(D^0\times (-1,1))$ and $g^0\in G^{\rho,1}_{L_0,L_2}(\Omega^0)$ with the estimates
\begin{equation}
\label{gevhamest}
\|H^0\|_{L_0,L_2},\|g^0\|_{L_0,L_2}\leq A_0.
\end{equation} 
For $L_2\geq L_1\geq 1$ we now consider the analytic family of Gevrey perturbations $${H^1\in G^{\rho,\rho,1}_{L_1,L_2,L_2}(\mathbb{T}^n\times D\times (-1,1))}$$
with the perturbation norm
\begin{equation}
\label{gevpertest}
\epsilon_H:=\kappa^{-2}\|H^1\|_{L_1,L_2,L_2}.
\end{equation}
The estimate \eqref{gevhamest} implies that there is a constant $C(n,\rho)$ dependent only on $n$ and $\rho$ such that taking
\begin{equation}
\label{Radbound}
R\leq \frac{C(n,\rho)\kappa}{A_0L_0^2}
\end{equation}
is sufficient to ensure that $B_R(z_0)\subset D$ for any $z_0\in D'$.

At this point we introduce the notational convention for this section that $C$ represents an arbitrary positive constant, dependent only on $n,\tau,\rho$ and $L_0$. Similarly, $c$ will represent a positive constant strictly less than $1$, also only dependent on $n,\tau,\rho$ and $L_0$. We will be explicit when we stray from this convention.

The  estimates \eqref{gevhamest} and \eqref{gevpertest}, together with Proposition A.3 in \cite{popovkam} show that our constructed functions $P^0$ and $P^1$ are in the Gevrey classes $${G^\rho_{CL_0,CL_2,CL_2}(B_R\times \overline{\Omega'}\times (-1,1))\subset G^\rho_{CL_2,CL_2}(B_R\times \overline{\Omega'}\times(-1,1))}$$ and $$G^{\rho,\rho,\rho,1}_{L_1,L_2,CL_2,L_2}(\mathbb{T}^n\times B_R\times \overline{\Omega'}\times (-1,1))$$ respectively, where the $C$ does not depend on $L_0$ or $L_2$.
Additionally we have the estimate
\begin{equation}
\|P^1\|_{L_1,CL_2,CL_2,CL_2}\leq \kappa^{-2}\epsilon_H.
\end{equation}
Dropping the factors in our Gevrey constants dependent only on $n,\tau,\rho,L_0$ for brevity of notation, we are in a position to state the local KAM theorem in terms of the weighted norm
\begin{equation}
\langle P\rangle_r:=r^2\|P^0\|_{L_2,L_2,L_2}+\|P^1\|_{L_1,L_2,L_2,L_2}
\end{equation}
for $0<r<R$.

\begin{thm}
	\label{kamwithparams}
	Suppose $0<\zeta\leq 1$ is fixed and $\kappa < L_2^{-1-\zeta}$. Then there exists ${N(n,\rho,\tau)>0}$ and $\epsilon>0$ independent of $\kappa,L_1,L_2,R,\Omega$ such that whenever the Hamiltonian 
	\begin{equation}
	H(\theta,I;\omega,t)=H^0(\psi_0(\omega);t)+\langle \omega,I\rangle+\langle P^0(I;\omega,t)I,I\rangle +P^1(\theta,I;\omega,t)
	\end{equation} 
	and $0<r<R$ are such that 
	\begin{equation}
	\label{kamwithparamasassump}
	\langle P\rangle_r <\epsilon\kappa r L_1^{-N}
	\end{equation}
	we can find $$\phi\in G^{\rho(\tau+1)+1,1}(\Omega\times (-3/4,3/4),\Omega)$$ and $$\Phi=(U,V)\in G^{\rho,\rho(\tau+1)+1,1}(\mathbb{T}^n\times \Omega\times (-3/4,3/4),\mathbb{T}^n\times B_R)$$ such that
	\begin{enumerate}
		\item For all $\omega \in \Omega_\kappa$ and all $t\in (-3/4,3/4)$, the map $\Phi_{\omega,t}=\Phi(\cdot;\omega,t):\mathbb{T}^n\rightarrow \mathbb{T}^n\times B_R$ is a $G^\rho$ embedding, with image $\Lambda_{\omega,t}$ an invariant Lagrangian torus with respect to the Hamiltonian $H_{\phi(\omega,t),t}(\theta,I)=H(\theta,I;\phi(\omega,t),t)$. The Hamiltonian vector field restricted to this torus is given by
		\begin{equation}
		X_{H_{\phi(\omega,t),t}}\circ \Phi_{\omega,t}=D\Phi_{\omega,t}\cdot \mathcal{L}_\omega
		\end{equation}
		where 
		\begin{equation}
		\mathcal{L}_\omega=\sum_{j=1}^n \omega_j\frac{\partial}{\partial \theta_j}\in T\mathbb{T}^n.\\
		\end{equation}
		
		\item There exist positive constants $A$ and $C$ dependent only on $n,\tau,\rho,L_0$ such that
		\begin{eqnarray}
		\label{kamwithparamsbound}
		& &|\partial_\theta^\alpha \partial_\omega^\beta (U(\theta;\omega,t)-\theta)|+r^{-1}|\partial_\theta^\alpha \partial_\omega^\beta V(\theta;\omega,t)|+\kappa^{-1}| \partial_\omega^\beta(\phi(\omega;t)-\omega)|\nonumber \\
		&\leq& A(CL_1)^{|\alpha|}(CL_1^{\tau+1}/\kappa)^{|\beta|}\alpha!^\rho\beta!^{\rho(\tau+1)+1}\frac{\langle P\rangle_r}{\kappa r}L_1^N
		\end{eqnarray}
		uniformly in $\mathbb{T}^n\times \Omega\times (-3/4,3/4)$.
	\end{enumerate}
	
	We remark that at the endpoint $t=0$, this result is trivial by taking $\phi(\omega,0)=\omega,U(\theta,\omega,0)=\theta$ and $V(\theta,\omega,0)=\nabla g^0(\omega)$.
\end{thm}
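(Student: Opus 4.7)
The plan is to adapt Popov's parameterized KAM iteration from \cite{popovkam}, carrying the new real-analytic parameter $t \in (-1,1)$ through the construction essentially as a spectator. Because the input Hamiltonian depends analytically on $t$ and every step of the standard KAM Newton scheme (Fourier expansion in $\theta$, division by small divisors on $\Omega_\kappa$, time-one flow of a Hamiltonian vector field, composition) preserves analytic dependence on parameters, the $t$-regularity should be inherited from the input provided the analyticity width in $t$ is shrunk only a bounded amount over the infinitely many iterations. The nontrivial work is to propagate Gevrey estimates in the frequency variable $\omega$ uniformly through the iteration and then extend off the Cantor set $\Omega_\kappa$.

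First, I would set up the inductive scheme on a shrinking sequence of complex domains $D_{s_\nu, r_\nu} \times (\Omega + \kappa_\nu) \times J_\nu$, where $J_\nu = (-(1-2^{-\nu}/4), 1-2^{-\nu}/4)$. At step $\nu$, the Hamiltonian has the form $H_\nu = N_\nu(I;\omega,t) + P_\nu(\theta, I; \omega, t)$ with $N_\nu$ affine in $I$, and perturbation size $\varepsilon_\nu$. One solves the homological equation $\{N_\nu, F_\nu\} + P_\nu = [P_\nu]$ by expanding $P_\nu$ in Fourier series in $\theta$ and dividing by $i\langle k, \omega\rangle$, paying a factor $|k|^\tau/\kappa$ by the Diophantine condition. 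The symplectic transformation is the time-one flow $\Phi_\nu'$ of $X_{F_\nu}$; the new linear term in $I$ is absorbed into a correction $\omega \mapsto \phi_\nu(\omega,t)$. Standard Cauchy estimates on a geometric sequence of width losses in $\theta$ and in $t$ deliver a quadratic bound $\varepsilon_{\nu+1} \leq C\kappa^{-a} s_\nu^{-b} \varepsilon_\nu^2$, which converges under the smallness assumption \eqref{kamwithparamasassump}; this fixes the constant $N(n,\rho,\tau)$.

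Tracking the $\omega$-derivatives is the delicate part. Each $\omega$-derivative falling on a small divisor $\langle k,\omega\rangle^{-1}$ produces an extra factor of order $|k|^{\tau+1}\kappa^{-1}$, and an induction on $|\beta|$ using Leibniz, Faà di Bruno, and Popov's majorant bookkeeping from \cite{popovkam} shows that after $\nu$ iterations the Gevrey constant in $\omega$ remains $(CL_1^{\tau+1}/\kappa)^{|\beta|}\beta!^{\rho(\tau+1)+1}$, matching the claim \eqref{kamwithparamsbound}; the exponent $\rho(\tau+1)+1$ comes from combining the $\rho$-Gevrey input with the $(\tau+1)$ small-divisor loss per derivative and a single extra $+1$ from the Rüssmann-type summation $\sum_k |k|^{\tau+1}e^{-|k|s_\nu}$. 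For $t$, all operations remain real-analytic, so $F_\nu, \Phi_\nu, \phi_\nu$ are analytic in $J_\nu$ and the limits are analytic on $J_\infty = (-3/4, 3/4)$, giving the terminal $G^{\cdot, 1}$ factor. At $t=0$ one has $P^1 \equiv 0$, so the iteration is vacuous and produces $\phi = \omega$, $U = \mathrm{id}$, $V = \nabla g^0$ as claimed.

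The passage to the limit produces $\phi$ and $\Phi$ defined only on the Cantor set $\Omega_\kappa \times (-3/4, 3/4)$. I would then invoke the anisotropic Whitney extension theorem from Appendix \ref{whitneysec} to extend $\phi$ and $\Phi$ to the full set $\Omega \times (-3/4, 3/4)$ in the class $G^{\rho, \rho(\tau+1)+1, 1}$: the iterated Gevrey estimate \eqref{kamwithparamsbound} provides precisely the Whitney jet required, and the extension preserves the Gevrey class at the cost of adjusting the constants $A$ and $C$ (still depending only on $n,\tau,\rho,L_0$). The main obstacle is the Gevrey bookkeeping of item (ii): one must organize the estimates so that the factorials in $\beta$ do not accumulate a factor dependent on the iteration step $\nu$, which requires Popov's trick of estimating $\omega$-derivatives at step $\nu+1$ in terms of those at step $\nu$ through a single small-divisor penalty rather than iterating the loss, together with a careful choice of the geometric shrinking parameters $s_\nu, r_\nu, \kappa_\nu$ to absorb all polynomial-in-$\nu$ factors into the super-exponential decay of $\varepsilon_\nu$.
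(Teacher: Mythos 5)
There is a genuine gap at the heart of your iteration. You set the scheme up on shrinking complex domains $D_{s_\nu,r_\nu}$, solve the homological equation by Fourier expansion in $\theta$, and close the quadratic estimate by ``standard Cauchy estimates on a geometric sequence of width losses in $\theta$''. All of this presupposes that the perturbation extends holomorphically to a strip in $(\theta,I)$ (and is controllable on complex neighbourhoods of $\Omega$). But in Theorem \ref{kamwithparams} the perturbation $P=\langle P^0 I,I\rangle+P^1$ is only Gevrey of class $G^{\rho,\rho,\rho,1}$ with $\rho>1$ in $(\theta,I,\omega)$ — it is analytic \emph{only} in $t$ — so there is no positive width of $\theta$-analyticity to shrink, and the scheme as written cannot even be initialized. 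The proof in the paper deals with this by a Moser--Popov smoothing step that your proposal omits: one first Whitney-extends $P^0,P^1$ to $\T^n\times\R^{2n}\times(-1,1)$ (Proposition \ref{whitneythm}) and then constructs a sequence of genuinely real-analytic approximants $P_j$ on shrinking complex domains $U_j$ with super-exponentially small increments $|P_{j+1}-P_j|_{U_{j+1}}\lesssim \exp(-c(L_1u_j)^{-1/(\rho-1)})$ (Proposition \ref{approxlemma}). The analytic KAM step (Theorem \ref{kamstep}) is then applied to the approximants $H_j=N_0+P_j$, with the increment $P_j-P_{j-1}$ fed in as part of the perturbation at stage $j$ (Proposition \ref{iterative}); the anisotropic Gevrey estimates in $\omega$ (Lemma \ref{gevest}, exponent $\rho'=\rho(\tau+1)+1$) emerge from balancing the approximation rate $u_j$ against the small-divisor losses, not from a direct Leibniz/Fa\`a di Bruno induction on $\omega$-derivatives of the Gevrey Hamiltonian. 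Without the approximation step, your ``single small-divisor penalty per $\omega$-derivative'' bookkeeping has nothing analytic to act on.

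The parts of your plan concerning $t$ and the endgame are essentially right and match the paper: since the data are analytic in $t$, no shrinkage of the $t$-domain is needed (the paper keeps the imaginary $t$-width fixed at $(2L_2)^{-1}$ throughout and recovers $t$-derivative bounds at the end by a single Cauchy estimate, Corollary \ref{gevestcor}, rather than losing width at each step as you propose — either variant works), and the limit object, defined only as a jet on $\T^n\times\Omega_\kappa\times(-3/4,3/4)$, is extended to all of $\Omega$ by the anisotropic Whitney theorem exactly as you describe. To repair the proposal you must insert the analytic-approximation layer and rederive the quadratic convergence and the $\omega$-Gevrey estimates relative to the approximating sequence.
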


Theorem \ref{kamwithparams} can be proved in the same way as \cite{popovkam} Theorem 2.1, based on the rapidly converging iterative procedure introduced by Kolmogorov \cite{kolmogorov}. Indeed, much of the technicality in \cite{popovkam} involves the approximation of Gevrey class Hamiltonians by real analytic Hamiltonians. Thanks to the assumption of analyticity in $t$ in Theorem \ref{kamwithparams}, no such approximation is necessary in the $t$ parameter.

In the next section, we sketch the key steps in the proof of Theorem \ref{kamwithparams}, highlighting the points at which the presence of the $t$ parameter requires a modification of the argument in \cite{popovkam}.

First, we discuss the result that will comprise the steps of the iterative construction. Given a Hamiltonian in the form
\begin{equation}
H(\theta,I;\omega,t)=e(\omega;t)+\langle \omega,I \rangle +P(\theta,I;\omega,t)=N(I;\omega,t)+P(\theta,I;\omega,t),
\end{equation}
we aim to construct a $t$-dependent symplectomorphism $\Phi$ and a $t$-dependent frequency transformation $\phi$ such that for $\mathcal{F}=(\Phi,\phi)$, we have 
\begin{equation}
(H\circ \mathcal{F})(\theta,I;\omega,t)=N_+(I;\omega,t)+P_+(\theta,I;\omega,t)
\end{equation}
where $N_+(I,\omega,t) = e_+(\omega)+\langle I,\omega\rangle$ and with $|P_+|$ controlled by $|P|^r$ for some $r>1$.

This construction is analogous to that in \cite{poschel}.

\begin{thm}
	\label{kamstep}
	Suppose $\epsilon,h,v,s,r,\eta,\sigma,K$ are positive constants such that
	\begin{equation}
	\label{kamstepconsts}
	s,r<1,\;v<1/6,\;\eta<1/8,\;\sigma<s/5,\;\epsilon\leq c\kappa \eta r \sigma^{\tau+1},\;\epsilon\leq cvhr,\;h\leq \kappa/2K^{\tau+1}.
	\end{equation}
	where $c$ is a constant dependent only on $n$ and $\tau$.
	
	\noindent Suppose $H(\theta,I;\omega,t)=N(I;\omega,t)+P(\theta,I;\omega,t)$ is real analytic on $D_{s,r}\times O_h \times (-1,1)$, and $|P|_{s,r,h}\leq \epsilon$.
	Here, $D_{s,r}$ is as in Definition \ref{domains1} and 
	\begin{equation}
	O_h:=\{\omega\in \C^n:\textrm{dist}(\omega,\Omega_\kappa)<h\}.
	\end{equation}
	Then there exists a real analytic map \begin{equation}\mathcal{F}=(\Phi,\phi):D_{s-5\sigma,\eta r}\times O_{(1/2-3v)h}\times (-1,1)\rightarrow D_{s,r}\times O_h \end{equation} where the maps
	\begin{equation}
	\Phi:D_{s-5\sigma,\eta r}\times O_h\times (-1,1) \rightarrow D_{s,r}
	\end{equation}
	and
	\begin{equation}
	\phi: O_{(1/2-3v)h}\times (-1,1)\rightarrow O_h
	\end{equation}
	are such that 
	\begin{equation}
	H\circ\mathcal{F}=e_+(\omega,t)+\langle \omega,I\rangle+P_+(\theta,I;\omega,t)=N_+(I;\omega,t)+P_+(\theta,I;\omega,t)
	\end{equation}
	and we have the new remainder estimate
	\begin{equation}
	\label{newerrbd}
	|P_+|_{s-5\sigma,\eta r,(1/2-2v)h}\leq C\left(\frac{\epsilon^2}{\kappa r \sigma^{\tau+1}}+(\eta^2+K^ne^{-K\sigma})\epsilon\right).
	\end{equation}
	Moreover $\Phi$ is symplectic for each $(\omega,t)$ and has second component affine in $I$. 
	Finally, we have the following uniform estimates on the change of variables.
	\begin{equation}
	|W(\Phi-id)|,|W(D\Phi-Id)W^{-1}|\leq \frac{C\epsilon}{\kappa r \sigma^{\tau+1}}
	\end{equation}
	\begin{equation}
	|\phi-id|,vh|D\phi-Id|\leq \frac{C\epsilon}{r}
	\end{equation}
	where $W=\textrm{diag}(\sigma^{-1}Id,r^{-1}Id)$.
	All estimates are uniform in the analytic parameter $t\in (-1,1)$.
\end{thm}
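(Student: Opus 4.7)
The plan is to follow the classical Pöschel-style KAM iterative step, adapted to the present setting with an extra analytic parameter $t$ that is simply carried along passively. First, I would approximate $P$ by a truncation $R$ in which the Fourier series in $\theta$ is cut off at $|k|\leq K$ and the Taylor expansion in $I$ is cut off at linear order, so $R(\theta,I;\omega,t)=R_0(\theta;\omega,t)+\langle R_1(\theta;\omega,t),I\rangle$. By Paley--Wiener on the strip of width $s$ and Cauchy estimates on the disc of radius $r$, the error $P-R$ is bounded (after shrinking the $\theta$-strip by $\sigma$ and the $I$-disc to $\eta r$) by $C(K^ne^{-K\sigma}+\eta^{2})\epsilon$, which already accounts for two of the three terms in \eqref{newerrbd}.

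Next, the homological equation
\[
\{N,F\}+R=[R],
\]
where $[\,\cdot\,]$ denotes the $\theta$-average, reduces to $\mathcal{L}_\omega F=[R]-R$ because $\partial_I N=\omega$. This is solved Fourier-mode by Fourier-mode, setting $\widehat F_k = \widehat R_k/(i\langle\omega,k\rangle)$ for $0<|k|\leq K$ and choosing $\widehat F_0=0$. The Diophantine condition on $\Omega_\kappa$ combined with the hypothesis $h\leq \kappa/2K^{\tau+1}$ (which ensures the bound extends to complex $\omega\in O_h$) gives $|\langle\omega,k\rangle|\geq \kappa/(2|k|^\tau)$, costing a factor $\kappa^{-1}\sigma^{-\tau}$ on the strip $s-\sigma$ after summing against $|k|^\tau e^{-|k|\sigma}$. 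Since $F$ inherits the property of being affine in $I$ from $R$, its time-$1$ Hamiltonian flow $\Phi$ is symplectic and preserves affine-in-$I$ functions, in particular its second component is affine in $I$. The smallness condition $\epsilon\leq c\kappa\eta r\sigma^{\tau+1}$ is precisely what is needed for $\Phi$ to map $D_{s-5\sigma,\eta r}$ into $D_{s,r}$, and it yields the stated bound $|W(\Phi-\mathrm{id})|+|W(D\Phi-\mathrm{Id})W^{-1}|\leq C\epsilon/(\kappa r\sigma^{\tau+1})$.

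After the symplectic change of variables, $H\circ\Phi = N+[R] +\bigl(\{N,F\}+R-[R]\bigr)+\bigl(P-R\bigr)+\tfrac{1}{2}\{R+[R]+P-R,F\}+\cdots$, where the second group vanishes by the homological equation and the Lie-series tail contributes the $\epsilon^2/(\kappa r\sigma^{\tau+1})$ piece of \eqref{newerrbd}. The averaged term $[R](I;\omega,t)=e_*(\omega,t)+\langle\omega_*(\omega,t),I\rangle$ is affine in $I$: the constant part is absorbed into $e_+$, and the linear part is absorbed by introducing a frequency shift $\phi$ defined implicitly by $\phi(\omega,t)+\omega_*(\phi(\omega,t),t)=\omega$. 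The inverse function theorem, applied using $|\omega_*|\leq C\epsilon/r$ and $|D\omega_*|\leq C\epsilon/(rh)$ together with the condition $\epsilon\leq cvhr$, solves this on $O_{(1/2-3v)h}$ and gives the stated bounds on $\phi-\mathrm{id}$ and $D\phi-\mathrm{Id}$. Composing $\Phi$ with $\phi$ in the frequency slot yields $\mathcal{F}$, and $H\circ\mathcal{F}=e_+(\omega,t)+\langle\omega,I\rangle+P_+$ as required.

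The main obstacle is the careful bookkeeping of the nested domain shrinkages $s\mapsto s-\sigma\mapsto s-3\sigma\mapsto s-5\sigma$, $r\mapsto \eta r$, and $h\mapsto (1/2-3v)h\mapsto(1/2-2v)h$, together with the resulting chain of Cauchy estimates needed to control $F$, $X_F$, its time-$1$ flow, and finally the composition $H\circ\mathcal{F}$, so as to match the sharp weights $\sigma^{-(\tau+1)}$, $r^{-1}$, $(vh)^{-1}$ that appear in the conclusion. The $t$-analytic dependence, by contrast, requires no new ideas: $t$ enters only as a passive parameter in $N$ and $P$, and all of the above estimates—being obtained by Cauchy integrals in $(\theta,I,\omega)$ and by the inverse function theorem in $\omega$—are uniform in $t\in(-1,1)$ and preserve joint analyticity in $(\theta,I,\omega,t)$.
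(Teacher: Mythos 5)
Your proposal is correct and takes essentially the same route as the paper, which in fact does not write out a proof at all but instead cites Popov's Proposition~3.2 and P\"oschel for the details, noting that the only genuine modification is to replace the usual inverse function theorem for the frequency map $\phi$ by its $t$-parametrized analogue (Proposition~\ref{popovlemma} in the appendix). You have reconstructed exactly the intended P\"oschel--Popov KAM step (truncation of $P$, solution of the homological equation $\mathcal{L}_\omega F=[R]-R$ under the extended Diophantine bound valid on $O_h$, time-$1$ flow of the affine-in-$I$ Hamiltonian $F$, Lie-series estimate of the quadratic remainder, and an inverse-function-theorem frequency shift), and you correctly identify that the $t$-dependence is purely passive because all Cauchy estimates are in $(\theta,I,\omega)$ only.
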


This theorem is identical to \cite{popovkam} Proposition 3.2, with all estimates uniform in the parameter $t$. The proof is identical, with a detailed exposition in \cite{poschel}. The application of  \cite{popovkam} Lemma 3.4 to obtain the frequency transformation $\phi$ is replaced by Lemma \ref{popovlemma} in our setting. 

As in \cite{poschel},\cite{popov1}, Theorem \ref{kamstep} can be used to prove the KAM theorem for real analytic Hamiltonians $H(\theta,I;\omega,t)$. However, in order to treat the more general class of Gevrey smooth Hamiltonians $H\in G^{\rho,\rho,\rho,1}((\mathbb{T}^n\times D\times \Omega)\times (-1,1))$, we require the approximation result Proposition \ref{approxlemma}. 

\subsection{Proof of the KAM theorem}
\label{kamproof}
Following the proof of Theorem \ref{kamwithparams} in \cite{popovkam} Section 3, we extend the $P^j(\theta,I,\omega,t)$ to Gevrey functions 
\begin{equation}\tilde{P}^j\in G^{\rho,\rho,1}_{CL_1,CL_2,CL_2}(\mathbb{T}^n\times \mathbb{R}^{2n}\times (-1,1)) 
\end{equation}
where $C$ depends only on $n$ and $\rho$. We do this whilst preserving analyticity in $t$ by making use of an adapted version of the Whitney extension theorem for anisotropic Gevrey classes, from Proposition  \ref{whitneythm}.

We thus obtain the estimate
\begin{equation}
\label{whitneyapplication}
\|\tilde{P}^j\|\leq AL_1^{n+1}\|P^j\|
\end{equation}
where $A$ also only depends on $n$ and $\rho$.

We then cut-off $\tilde{P}^j$ without loss to have $(I,\omega)$ supported in $B_1\times B_{\bar{R}}\subset \R^{2n}$, where $1\ll\bar{R}$ is such that $\Omega^0\subset B_{\bar{R}-1}$. 
From here, we suppress the tilde in our notation, as well as the factor $C$ in our Gevrey constant.

We then require the following approximation result for functions in anisotropic Gevrey classes that plays a key role in the KAM iterative scheme. 

\begin{prop}
	\label{approxlemma}
	Suppose $P\in G^{\rho,\rho,1}_{L_1,L_2,L_2}(\mathbb{T}^n\times \R^{2n}\times (-1,1))$ satisfies ${\textrm{supp}_{(I,\omega)}(P)\subset B_1\times B_{\bar{R}}}$.
	If $u_j,w_j,v_j$ are positive real sequences monotonically tending to zero such that
	\begin{equation}
	\label{uvw}
	v_jL_2,w_jL_2\leq u_jL_1 \leq 1, \; v_0,w_0\leq L_2^{-1-\zeta} 
	\end{equation}
	where $1\leq L_1 \leq L_2$ and $0<\zeta \leq 1$ are fixed, then we can find a sequence of real analytic functions $P_j:U_j\rightarrow \mathbb{C}$ such that 
	\begin{equation}
	|P_{j+1}-P_j|_{U_{j+1}}\leq C(\bar{R}^n+1)L_1^n \exp\left(-\frac{3}{4}(\rho-1)(2L_1 u_j)^{-1/(\rho-1)}\right)\|P\|,
	\end{equation}
	\begin{equation}
	|P_0|_{U_0}\leq C(\bar{R}^n+1)\left(1+L_1^n \exp\left(-\frac{3}{4}(\rho-1)(2L_1 u_0)^{-1/(\rho-1)}\right)\right),
	\end{equation}
	and
	\begin{equation}
	|\partial_x^\alpha (P-P_j)(\theta,I;\omega,t)|\leq C(1+\bar{R}^n)L_1^nL_2 \exp\left(-\frac{3}{4}(\rho-1)(2L_1 u_j)^{-1/(\rho-1)}\right)
	\end{equation}
	in $\mathbb{T}^n\times B_1 \times B_{\bar{R}}\times (-1,1)$ for $|\alpha|\leq 1$,
	where 
	\begin{multline}
	U_j^m:=\{(\theta,I;\omega,t)\in \C^n / 2\pi \mathbb{Z}^n \times \C^n \times \C^n\times \C:\\
	|\textrm{Re}(\theta)|\leq\pi,|\textrm{Re}(I)|\leq 2,|\textrm{Re}(\omega)|\leq \bar{R}+1,|\textrm{Re}(t)|\leq 1,\\|\textrm{Im}(\theta)|\leq 2u_j,|\textrm{Im}(I)|\leq 2v_j,|\textrm{Im}(\omega_k)|\leq 2w_j,|\textrm{Im}(t)|\leq (2L_2)^{-1} \}
	\end{multline}
	and 
	\begin{equation}
	U_j:= U_j^1
	\end{equation}
	where we have identified $[-\pi,\pi]^n$ with $\T^n$ for simplicity of notation.
\end{prop}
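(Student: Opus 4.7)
The plan is to construct the $P_j$ via truncation of the Fourier expansion of $P$ in the three Gevrey directions $(\theta, I, \omega)$, while treating the analytic direction $t$ as an inert parameter throughout. The hypothesis that $P$ has Gevrey index $1$ in $t$ with constant $L_2$ automatically supplies an analytic extension of $P$ to the strip $|\textrm{Im}(t)| \leq (2L_2)^{-1}$, so the only substantive work is in the three Gevrey directions. After the preliminary cutoff, $P$ is $2\pi$-periodic in $\theta$ and compactly supported in $(I,\omega) \in B_1\times B_{\bar R}$, so it admits the mixed Fourier expansion
\[
P(\theta,I,\omega,t) \;=\; \sum_{k\in \Z^n}\int_{\R^{2n}} \widehat{P}(k,\xi,\zeta;t)\, e^{i(k\cdot\theta+\xi\cdot I + \zeta\cdot\omega)}\,d\xi\,d\zeta,
\]
and the Gevrey hypotheses yield the stretched-exponential Fourier decay
\[
|\widehat{P}(k,\xi,\zeta;t)| \leq C(1+\bar R^n)\|P\|\,\exp\bigl(-\alpha(|k|/L_1)^{1/\rho}\bigr)\exp\bigl(-\alpha(|\xi|/L_2)^{1/\rho}\bigr)\exp\bigl(-\alpha(|\zeta|/L_2)^{1/\rho}\bigr)
\]
for some $\alpha=\alpha(\rho)>0$, by the standard optimisation-in-$N$ argument applied to the Gevrey bounds on high derivatives of $P$.

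For each $j \geq 0$, I would choose the single truncation radius $K_j := (2L_1 u_j)^{-1/(\rho-1)}$ and define $P_j$ to be the Fourier truncation of $P$ to the region $\{|k|, |\xi|, |\zeta| \leq K_j\}$. The resulting $P_j$ is entire in $(\theta, I, \omega)$, inherits the $t$-analyticity of each Fourier coefficient $\widehat{P}(k,\xi,\zeta;\cdot)$ on the strip $|\textrm{Im}(t)| \leq (2L_2)^{-1}$, and is therefore real analytic on $U_j$. The compatibility hypothesis \eqref{uvw}, $v_j L_2, w_j L_2 \leq u_j L_1 \leq 1$, is exactly what ensures that the Paley--Wiener exponential weights $e^{2u_j|k|}$, $e^{2v_j|\xi|}$, $e^{2w_j|\zeta|}$ on $U_j$ are simultaneously dominated by the Gevrey decay factors at the chosen single radius $K_j$, which is keyed only to $u_j$ and $L_1$.

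The three quantitative estimates then follow from standard exponential balancing. The bound on $|P_0|_{U_0}$ is a direct Paley--Wiener estimate for the truncated Fourier sum over $U_0$, with the leading $1$ in the parenthesis coming from the zero-frequency contribution. The telescoping bound on $|P_{j+1} - P_j|_{U_{j+1}}$ is obtained by integrating the Fourier weight over the annular shell $K_{j+1} \leq \max(|k|,|\xi|,|\zeta|) \leq K_j$ against the Paley--Wiener weight on $U_{j+1}$; monotonicity of $u_j$ in $j$ makes the shell contribution decay at the rate governed by $K_{j+1}$, producing the factor $\exp\bigl(-\tfrac{3}{4}(\rho-1)(2L_1 u_j)^{-1/(\rho-1)}\bigr)$ after the sharp exponent computation in the Paley--Wiener/Gevrey balance. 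The real-domain estimate on $|\partial_x^\alpha(P-P_j)|$ for $|\alpha| \leq 1$ is simply the Fourier tail outside $\{|k|,|\xi|,|\zeta| \leq K_j\}$, with the extra derivative contributing a polynomial factor in the Fourier variables that is absorbed into the prefactor $L_2$.

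The genuine technical obstacle is that the three Gevrey directions are asymmetric: the $\theta$-direction has Gevrey constant $L_1$ while the $I$- and $\omega$-directions have the larger constant $L_2$, and the three target widths $u_j, v_j, w_j$ are distinct and shrinking at possibly different rates. One must therefore choose a single truncation radius that simultaneously achieves the sharp exponent $\tfrac{3}{4}(\rho-1)$ in the direction with the weakest Gevrey decay, while still respecting the Paley--Wiener constraint in each imaginary direction. The hypothesis \eqref{uvw} is precisely tuned for this balance, and once it is unpacked the remaining work is a careful but routine optimisation in the integer scaling parameter $N$ from the Gevrey-to-Fourier translation, combined with the observation that analyticity in $t$ requires no modification to the scheme.
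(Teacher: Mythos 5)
There is a genuine quantitative gap in your choice of truncation radius. With the Gevrey--Fourier decay $|\widehat P(k,\xi,\zeta;t)|\lesssim \exp\bigl(-\alpha(|k|/L_1)^{1/\rho}\bigr)\cdots$ that you correctly derive, truncating at $K_j=(2L_1u_j)^{-1/(\rho-1)}$ gives a tail (already on the real domain, before any Paley--Wiener weight) of size
\begin{equation*}
\exp\bigl(-\alpha(K_j/L_1)^{1/\rho}\bigr)=\exp\bigl(-\alpha L_1^{-1/\rho}(2L_1u_j)^{-1/(\rho(\rho-1))}\bigr),
\end{equation*}
and since $\tfrac{1}{\rho(\rho-1)}<\tfrac{1}{\rho-1}$ this is exponentially \emph{larger} than the required $\exp\bigl(-\tfrac34(\rho-1)(2L_1u_j)^{-1/(\rho-1)}\bigr)$ as $u_j\to 0$. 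So the third estimate and the telescoping estimate both fail as written. The source of the confusion is that $(2L_1u_j)^{-1/(\rho-1)}$ is the optimal Taylor truncation \emph{order} $N_j$ (optimising $L_1^{N}N!^{\rho-1}(2u_j)^{N}$ gives $N\sim(2L_1u_j)^{-1/(\rho-1)}$ and value $\exp(-(\rho-1)N)$, which is exactly where the exponent in the statement comes from), not the optimal Fourier truncation \emph{radius}. For your scheme to work you must take $K_j\sim L_1(2L_1u_j)^{-\rho/(\rho-1)}$: this is the saddle point of $r\mapsto 2u_jr-\rho(r/L_1)^{1/\rho}$, where the Paley--Wiener growth on $U_j$ exactly balances the Gevrey decay, and the saddle value is $\exp\bigl(-(\rho-1)(2L_1u_j)^{-1/(\rho-1)}\bigr)$, recovering the claimed exponent. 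With that correction (and a direction-dependent radius, or the maximum of the three balance radii, since by \eqref{uvw} the $I$- and $\omega$-directions balance at values at least as good), the Fourier route should go through; as written it does not.

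For comparison, the paper follows Popov and does not use Fourier truncation at all: it builds almost-analytic extensions $F_j$ on $U_j^2$ by truncating the Taylor expansion of $P$ in the imaginary directions at a $j$-dependent order (the formula \eqref{effjay}), with the order chosen so that the Taylor remainder is $\exp\bigl(-c(\rho-1)(2L_1u_j)^{-1/(\rho-1)}\bigr)$; the $t$-direction needs no truncation because Gevrey index $1$ in $t$ with constant $L_2$ already gives genuine analyticity on the strip $|\mathrm{Im}(t)|\leq(2L_2)^{-1}$, which is the one observation your write-up shares with the paper's argument. Your instinct that $t$ is inert is correct, but the core approximation mechanism you propose is different from the paper's and, at the stated radius, quantitatively insufficient.
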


The proof of Proposition \ref{approxlemma} can be found in  \cite{popovkam} Section 3. The first step is to extend $P$ to functions $F_j:U_j^2\rightarrow \mathbb{C}$ that are almost analytic in $(\theta,I,\omega)$ and are analytic in $t$. The Gevrey estimate on $t$-derivatives of $P$ imply that the Taylor expansions in $t$ have radius of convergence $L_2^{-1}$, and so the expression
	\begin{equation}
	\label{effjay}
	F_j(\theta+i\tilde{\theta},I+i\tilde{I},\omega+i\tilde{\omega},t+i\tilde{t}):=\sum_{\mathcal{M}_j} \partial_\theta^\alpha \partial_I^\beta \partial_\omega^\gamma P(\theta,I;\omega,t)\frac{(i\tilde{\theta})^\alpha (i\tilde{I})^\beta(i\tilde{\omega})^\gamma(i\tilde{t})^\delta}{\alpha!\beta!\gamma!\delta!}
	\end{equation}
	is convergent on $U_j^2$ where the index set is as in \cite{popovkam}.
	
	The remainder of the proof in \cite{popovkam} can be followed without change. As $P$ is analytic in $t$, we do not need to consider shrinking domains of analyticity as in the other variables.

The iterative scheme in \cite{popovkam} Section 3.3 can then be carried out, defining decreasing sequences of our parameters $s_j,r_j,h_j,\eta_j,\epsilon_j,\sigma_j,K_j$ such that the hypotheses of Theorem \ref{kamstep} are always satisfied, as well as decreasing sequences of the the parameters $u_j,v_j,w_j$ such that the hypotheses of the Proposition \ref{approxlemma} are always satisfied. Due to the modifications made in Theorem \ref{kamstep} and Proposition \ref{approxlemma} from their analogues in  \cite{popovkam}, all estimates are uniform in the analytic parameter $t\in (-1,1)$. 

Writing $U_j=U_j^1\cap \{|I|<r_j\}$ where $U_j^1$ is defined as in Proposition \ref{approxlemma} and applying Proposition \ref{approxlemma} to the terms $P^0,P^1$ from \eqref{defnofPzero}, we obtain sequences $P_j^0,P_j^1$ of real analytic functions in $U_j^1$ that are good approximations to $P^0$ and $P^1$.

Setting
\begin{equation}
P_j(\theta,I;\omega,t):=\langle P_j^0(I;\omega,t)I,I\rangle+P_j^1(\theta,I;\omega,t),
\end{equation}
Proposition \ref{approxlemma}, together with the factors picked up during the Whitney extension of $P^0,P^1$ in \eqref{whitneyapplication} imply the estimates
\begin{equation}
\label{Pzerobound}
|P_0|_{U_0}\leq \tilde{\epsilon}_0
\end{equation}
and
\begin{eqnarray}
\label{Pjbound}
|P_j-P_{j-1}|_{U_j}&\leq& \tilde{\epsilon}_j
\end{eqnarray}
where $\tilde{\epsilon}_j$ is a positive sequence rapidly converging to zero. 

Defining the Hamiltonian
\begin{equation}
H_j(\theta,I;\omega,t)=N_0(I;\omega)+P_j(\theta,I;\omega,t)=\langle \omega, I\rangle+P_j(\theta,I;\omega,t)
\end{equation}
which is real analytic in $U_j$, one can now perform the KAM iterative scheme as in \cite{popovkam} Proposition 3.5, using the key ingredient of Theorem \ref{kamstep}. For $j\geq 0$ we denote by $\mathcal{D}_j$ the class of real-analytic diffeomorphisms from $D_{j+1}\times O_{j+1}\times (-1,1)\rightarrow D_j\times O_j$ of the form
\begin{equation}
\mathcal{F}(\theta,I;\omega,t)=(\Phi(\theta,I;\omega,t),\phi(\omega;t))=(U(\theta;\omega,t),V(\theta,I;\omega,t),\phi(\omega;t))
\end{equation}
where $\Phi$ is affine in $I$ and canonical for fixed $(\omega,t)$. The domains are defined in terms of the parameters by $
D_j=D_{s_j,r_j}$ and $O_j=O_{h_j}$.

\begin{prop}
	\label{iterative}
	Suppose $P_j$ is real analytic on $U_j$ for each $j\geq 0$, and that we have the estimates
	\begin{equation}
	|P_0|_{U_0}\leq \tilde{\epsilon}_0
	\end{equation}
	and
	\begin{equation}
	|P_j-P_{j-1}|_{U_j}\leq \tilde{\epsilon}_j
	\end{equation}
	for each $j\geq 1$.
	
	Then for each $j\geq 0$, we can find a real-analytic normal form $N_j(I;\omega,t)=e_j(\omega,t)+\langle\omega,I\rangle$ and a real analytic map $\mathcal{F}^j$ given by
	\begin{equation}
	\mathcal{F}^{j+1}=\mathcal{F}_0\circ \ldots\circ \mathcal{F}_j: D_{j+1}\times O_{j+1}\times (-1,1) \rightarrow (D_0\times O_0)\cap U_j
	\end{equation}
	with the convention that the empty composition is the identity and where the $\mathcal{F}_j\in\mathcal{D}_j$ are such that
	\begin{equation}
	H_j\circ \mathcal{F}^{j+1}=N_{j+1}+R_{j+1}
	\end{equation}
	\begin{equation}
	|R_{j+1}|_{j+1}\leq \epsilon_{j+1}
	\end{equation}
	\begin{equation}
	\label{jacobian}
	|\bar{W}_j(\mathcal{F}_j-id)|_{j+1},|\bar{W}_j(D\mathcal{F}_j-Id)\bar{W}_j^{-1}|<\frac{C\epsilon_j}{r_j h_j}
	\end{equation}
	\begin{equation}
	\label{Sjbound}
	|\bar{W}_0(\mathcal{F}^{j+1}-\mathcal{F}^j)|_{j+1}<\frac{C\epsilon_j}{r_j h_j}
	\end{equation}
	where the constants $C$ depend only on $n$ and $\rho$ and $\bar{W}_j=\textrm{diag}(\sigma_j^{-1}\textrm{Id},r_j^{-1}\textrm{Id},h_j^{-1}\textrm{Id})$.
\end{prop}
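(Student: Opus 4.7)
My plan is to prove Proposition \ref{iterative} by induction on $j$, using the single-step KAM theorem (Theorem \ref{kamstep}) as the engine and absorbing the Gevrey-approximation increments $P_j-P_{j-1}$ into the remainder at each stage. The base case $j=0$ is trivial: set $\mathcal{F}^0=\mathrm{id}$, $N_0=\langle\omega,I\rangle$ and $R_0=P_0$, so that $|R_0|_{U_0}\le\tilde\epsilon_0\le\epsilon_0$ by hypothesis, after a suitable choice of $\epsilon_0$.

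For the inductive step, assume we have constructed $\mathcal{F}^j:D_j\times O_j\times(-1,1)\to U_{j-1}$ with $H_{j-1}\circ\mathcal{F}^j=N_j+R_j$, $|R_j|_j\le\epsilon_j$. The key observation is that
\begin{equation*}
H_j\circ\mathcal{F}^j \;=\; H_{j-1}\circ\mathcal{F}^j \;+\; (P_j-P_{j-1})\circ\mathcal{F}^j \;=\; N_j+\tilde R_j,
\end{equation*}
where $\tilde R_j=R_j+(P_j-P_{j-1})\circ\mathcal{F}^j$. Provided that, by induction, the image of $\mathcal{F}^j$ is contained in $U_j$ (which we must maintain using the telescoped form of \eqref{Sjbound}), we obtain $|\tilde R_j|_j\le\epsilon_j+\tilde\epsilon_j$. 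We then apply Theorem \ref{kamstep} to $N_j+\tilde R_j$ on the domain $D_j\times O_j\times(-1,1)$, producing $\mathcal{F}_j\in\mathcal{D}_j$ with
\begin{equation*}
(N_j+\tilde R_j)\circ\mathcal{F}_j = N_{j+1}+R_{j+1}, \qquad N_{j+1}=e_{j+1}(\omega,t)+\langle\omega,I\rangle,
\end{equation*}
and we define $\mathcal{F}^{j+1}=\mathcal{F}^j\circ\mathcal{F}_j$. The estimate \eqref{jacobian} comes directly from the uniform bounds on $\mathcal{F}_j$ in Theorem \ref{kamstep}, and \eqref{Sjbound} follows from \eqref{jacobian} by the mean value theorem (using that $\mathcal{F}^{j+1}-\mathcal{F}^j=\mathcal{F}^j\circ\mathcal{F}_j-\mathcal{F}^j\circ\mathrm{id}$).

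The delicate step is the choice of the sequences $s_j,r_j,h_j,\eta_j,\sigma_j,K_j,\epsilon_j$. Following the standard Pöschel-style scheme as in \cite{popov1}, one takes $\epsilon_j$ to decay super-exponentially (e.g.\ $\epsilon_{j+1}=c\epsilon_j^{4/3}$ up to constants), the geometric parameters $\sigma_j,\eta_j,v_j^{\mathrm{KAM}}\to 0$ at a rate slow enough that \eqref{kamstepconsts} is preserved, and $K_j\to\infty$ at the minimal rate needed so that $K_j^n e^{-K_j\sigma_j}=O(\eta_j^2)$. The new-error estimate \eqref{newerrbd} then gives $|R_{j+1}|_{j+1}\le\epsilon_{j+1}$ provided $\tilde\epsilon_j$ is dominated by $\epsilon_j$, which is exactly the purpose of matching the analytic approximation parameters $(u_j,v_j,w_j)$ of Proposition \ref{approxlemma} to the KAM parameters---and this is where the specific exponent $\rho(\tau+1)+1$ for the frequency-regularity enters. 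Simultaneously, the geometric parameters must be chosen to keep $D_{j+1}\subset D_j$ strictly inside after the deformation, i.e.\ to preserve $\mathrm{Im}(\mathcal{F}^{j+1})\subset U_j$, which reduces via \eqref{Sjbound} to a summability condition of the form $\sum_k\epsilon_k/(r_kh_k\sigma_k)<\infty$.

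The main obstacle is this parameter-matching bookkeeping, together with the verification that every estimate remains uniform in the analytic parameter $t\in(-1,1)$. Uniformity in $t$ is automatic because Theorem \ref{kamstep} and Proposition \ref{approxlemma} are both formulated with $t$-uniform bounds, so the scheme runs identically on each $t$-slice and the composition $\mathcal{F}^{j+1}$ inherits analyticity and uniform estimates in $t$. Beyond this, the argument is essentially the one in \cite[Proposition 3.5]{popovkam}; the only change is the innocuous insertion of the term $(P_j-P_{j-1})\circ\mathcal{F}^j$ into the remainder at each step, whose size is already subsumed into the choice of $\epsilon_j$.
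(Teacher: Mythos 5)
Your overall strategy --- induction on $j$, using Theorem~\ref{kamstep} as the engine, absorbing the analytic-approximation increments $P_j-P_{j-1}$ into the remainder, and tracking parameter matching --- is the same scheme the paper invokes (it defers to Proposition 3.5 of \cite{popovkam} for the details, noting only that all estimates are $t$-uniform thanks to the $t$-uniform versions of Theorem~\ref{kamstep} and Proposition~\ref{approxlemma}). Your remarks on the role of \eqref{newerrbd}, the super-exponential decay of $\epsilon_j$, and $t$-uniformity being automatic are all on point.

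There is, however, a domain-ordering issue in your inductive step. You define
\[
\tilde R_j := R_j + (P_j-P_{j-1})\circ\mathcal{F}^j
\]
on $D_j\times O_j\times(-1,1)$ and then feed $N_j+\tilde R_j$ into the KAM step. For this composition to be defined and for $|(P_j-P_{j-1})\circ\mathcal{F}^j|\le\tilde\epsilon_j$ to hold, you need $\mathcal{F}^j(D_j\times O_j\times(-1,1))\subset U_j$, since the analyticity and smallness of $P_j-P_{j-1}$ are only hypothesised on $U_j$. But the proposition's inductive invariant is weaker: $\mathcal{F}^j$ maps into $(D_0\times O_0)\cap U_{j-1}$, one index behind. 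You notice this (``Provided that, by induction, the image of $\mathcal{F}^j$ is contained in $U_j$...''), but that stronger containment does not follow from the stated inductive hypothesis, nor does it self-propagate: if $\mathcal{F}^j$ lands in $U_j$ and $\mathcal{F}_j$ lands in $D_j\times O_j$, you only get $\mathcal{F}^{j+1}$ landing in $U_j$, not $U_{j+1}$. The clean fix, and what the ``off-by-one'' $U_j$ in the proposition's target is signalling, is to absorb the increment \emph{after} the KAM step: apply Theorem~\ref{kamstep} to $N_j+R_j=H_{j-1}\circ\mathcal{F}^j$ (well-defined on $D_j\times O_j$), obtain $\mathcal{F}_j$ and $(N_j+R_j)\circ\mathcal{F}_j=N_{j+1}+\hat R_{j+1}$, set $\mathcal{F}^{j+1}=\mathcal{F}^j\circ\mathcal{F}_j$ (which by the shrinking-domain estimates does land in $U_j$), and only then define
\[
R_{j+1}:=\hat R_{j+1}+(P_j-P_{j-1})\circ\mathcal{F}^{j+1},
\]
which is well-defined because $\mathcal{F}^{j+1}$ maps into $U_j$ and gives $|R_{j+1}|_{j+1}\le|\hat R_{j+1}|_{j+1}+\tilde\epsilon_j$. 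With that reordering the rest of your bookkeeping goes through as you sketch it.
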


To show that this iterative scheme converges in the Gevrey class $G^{\rho,\rho(\tau+1)+1,\rho(\tau+1)+1,1}$ requires Gevrey estimates for the $\mathcal{S}_j:=\mathcal{F}^{j+1}-\mathcal{F}^j$.
To this end we introduce the domains
\begin{equation}
\tilde{D}_j:=\{(\theta,I)\in D_j:|\textrm{Im}(\theta)|<s_j/2\},\; \tilde{O}_j:=\{\omega\in\mathbb{C}^n:\textrm{dist}(\omega,\Omega_\kappa)<h_j/2\}
\end{equation}
For multi-indices $\alpha,\beta$ with $|\beta|\leq m$, we also introduce the following notation for the $(m-|\beta|)$-th Taylor remainder in the frequency variable, centred at $\omega$.
\begin{equation}
R^m_\omega (\partial_\theta^\alpha\partial_\omega^\beta \mathcal{S}^j)(\theta,I,\omega',t):=\partial_\theta^\alpha\partial_\omega^\beta \mathcal{S}^j-\sum_{|\gamma|\leq m-|\beta|}(\omega'-\omega)^\gamma \partial_\theta^\alpha\partial_\omega^{\beta+\gamma} \mathcal{S}^j(\theta,I,\omega,t)/\gamma!.
\end{equation}
We then have the following Gevrey estimates from \cite{popovkam} Lemma 3.6 uniformly in the $t$ parameter.
\begin{lem}
	\label{gevest}
	\begin{equation}
	\label{Mgevbound}
	|\bar{W}_0\partial_\theta^\alpha \partial_\omega^\beta \mathcal{S}^j(\theta,0,\omega,t)|\leq \hat{\epsilon}AC^{|\alpha|+|\beta|}L_1^{|\alpha|+|\beta|(\tau+1)+1}\kappa^{-|\beta|}\alpha!^\rho \beta!^{\rho'}E_j^{1/2}
	\end{equation}
	for all $(\theta,0;\omega,t)\in \tilde{D}_{j+1}\times \tilde{O}_{j+1}\times (-1,1)$, where $\rho'=\rho(\tau+1)+1$.
	\begin{eqnarray}
	\label{Lgevbound}
	& &|\bar{W}_0 (R_\omega^m\partial_\theta^\alpha \partial_\omega^\beta \mathcal{S}^j)(\theta,0,\omega',t)|\\
	&\leq & \nonumber\hat{\epsilon}AC^{m+|\alpha|+1}L_1^{|\alpha|+(m+1)(\tau+1)+1}\kappa^{-m-1}\frac{|\omega-\omega'|^{m-|\beta|+1}}{(m-|\beta|+1)!}\alpha!^\rho(m+1)!^{\rho'}E_j^{1/2}
	\end{eqnarray}
	for all $\theta\in\mathbb{T}^n$, $\omega,\omega'\in\Omega_\kappa$ and $|\beta|\leq m$, where the constants $A,C$ only depend on $n,\rho,\tau,\zeta.$
\end{lem}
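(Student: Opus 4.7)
My strategy is to derive both bounds by Cauchy's inequality applied to the differences $\mathcal{S}^j$, which Proposition \ref{iterative} guarantees to be analytic on $D_{j+1} \times O_{j+1} \times (-1,1)$ with $\bar{W}_0$-weighted $L^\infty$ norm controlled by $C\epsilon_j/(r_j h_j)$ via \eqref{Sjbound}. The Gevrey character of the bounds then emerges by carefully matching the shrinkage rates of the KAM parameters $(s_j, r_j, h_j, \sigma_j, K_j)$ against the super-exponential convergence rate of $\epsilon_j$; the factor $E_j^{1/2}$ in the statement reflects the standard device of reserving half of the quadratic-KAM decay to absorb algebraic-in-$j$ losses, so that the other half is retained as decay in $j$ that is ultimately summable.

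For \eqref{Mgevbound}, at a point $(\theta, 0, \omega, t) \in \tilde{D}_{j+1} \times \tilde{O}_{j+1} \times (-1,1)$, the polydisc of $(\theta, \omega)$-radii $(s_{j+1}/2, h_{j+1}/2)$ lies inside the analyticity domain $D_{j+1} \times O_{j+1}$, since $\tilde{D}_{j+1}$ and $\tilde{O}_{j+1}$ are defined as $(s_{j+1}/2)$-interior and $(h_{j+1}/2)$-interior subsets respectively. Cauchy's inequality combined with \eqref{Sjbound} yields
\[
|\bar{W}_0 \partial_\theta^\alpha \partial_\omega^\beta \mathcal{S}^j(\theta, 0, \omega, t)| \leq \frac{\alpha!\,\beta!\,C\epsilon_j}{(s_{j+1}/2)^{|\alpha|}(h_{j+1}/2)^{|\beta|} r_j h_j},
\]
uniformly in $t$. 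From the concrete parameter choices in \cite{popovkam} Section 3.3, I expect $s_{j+1}^{-|\alpha|} \lesssim (CL_1)^{|\alpha|} (j+1)^{(\rho-1)|\alpha|}$ (the factor $j^{\rho-1}$ coming from the approximation parameter $u_j$ in Proposition \ref{approxlemma}), while $h_{j+1}^{-|\beta|} \lesssim (CL_1^{\tau+1}/\kappa)^{|\beta|} K_{j+1}^{(\tau+1)|\beta|}$ via the Diophantine truncation at wavenumber $K_{j+1}$ in Theorem \ref{kamstep}. Writing $\epsilon_j \leq E_j$ with $E_j$ decaying like $\epsilon_0^{(3/2)^j}$, I would then split $E_j = E_j^{1/2} \cdot E_j^{1/2}$ and use one factor to absorb the polynomial-in-$j$ factors $j^{(\rho-1)|\alpha|} K_{j+1}^{(\tau+1)|\beta|}$ via Stirling-type reshuffling, repackaging $(j!)^{\rho-1}$ into $\alpha!^{\rho}$ and $K_{j+1}^{(\tau+1)|\beta|}$ into $\beta!^{\rho'}$ with $\rho' = \rho(\tau+1)+1$, while retaining $E_j^{1/2}$ as the decaying factor.

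The Taylor remainder bound \eqref{Lgevbound} follows from \eqref{Mgevbound} by the integral form of Taylor's theorem in $\omega$ at base point $\omega \in \Omega_\kappa$: since $\tilde{O}_{j+1}$ contains an $h_{j+1}/2$-tube around $\Omega_\kappa$, for $|\omega-\omega'| \leq h_{j+1}/2$ the real segment joining $\omega$ and $\omega'$ lies in $\tilde{O}_{j+1}$, and applying \eqref{Mgevbound} to the $(m-|\beta|+1)$-th $\omega$-derivative integrand produces the claimed bound with $(m+1)!^{\rho'}$ in place of $\beta!^{\rho'}$; the case $|\omega-\omega'| > h_{j+1}/2$ is dispatched by the trivial bound together with a geometric power of $|\omega-\omega'|/h_{j+1}$. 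The main obstacle is the parameter bookkeeping: one must verify that the concrete sequences $(s_j, r_j, h_j, \sigma_j, K_j, u_j, v_j, w_j, \epsilon_j)$ chosen in \cite{popovkam} simultaneously satisfy the hypotheses of Theorem \ref{kamstep} and Proposition \ref{approxlemma} and produce exactly the exponents $\rho$ on $\alpha$ and $\rho' = \rho(\tau+1)+1$ on $\beta$; this is the point at which the Diophantine exponent $\tau$ enters the Gevrey index. Uniformity in $t \in (-1,1)$ is automatic since Proposition \ref{approxlemma} furnishes an analytic strip $|\operatorname{Im}(t)| < (2L_2)^{-1}$ on which every Cauchy estimate above holds with $t$-independent constants.
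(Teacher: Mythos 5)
Your overall strategy — Cauchy estimates for the analytic functions $\mathcal{S}^j=\mathcal{F}^{j+1}-\mathcal{F}^j$ on the shrinking domains $D_{j+1}\times O_{j+1}$, traded against the super-exponential decay of $\epsilon_j$ and converted into the Gevrey indices $\rho$ and $\rho'=\rho(\tau+1)+1$ by Stirling — is indeed the substance of Popov's Lemma 3.6, and the paper's treatment of this statement is essentially just to \emph{cite} that lemma, remarking that the estimates carry over uniformly in $t$ because every bound in Theorem \ref{kamstep}, Proposition \ref{approxlemma}, and Proposition \ref{iterative} was arranged to be $t$-uniform. In that sense you and the paper are on the same path, and you have correctly located where the Diophantine exponent $\tau$ enters (through the constraint $h\leq\kappa/(2K^{\tau+1})$) and where the factor $E_j^{1/2}$ comes from (reserving half the decay).

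That said, there are two places where your proposal is thinner than a proof and one small misattribution worth flagging. First, the assertions $s_{j+1}^{-|\alpha|}\lesssim(CL_1)^{|\alpha|}(j+1)^{(\rho-1)|\alpha|}$ and the analogous statement for $h_{j+1}$ are precisely the parameter bookkeeping that makes or breaks the argument, and you only say ``I expect.'' Whether the width sequences decay polynomially or geometrically in $j$ determines whether the Stirling reshuffling closes, and the precise choice must also be checked simultaneously against \eqref{uvw}, \eqref{kamstepconsts}, and the constraint that $K_j^n e^{-K_j\sigma_j}\epsilon_j$ stays subordinate — none of which you verify. Second, your treatment of \eqref{Lgevbound} for $|\omega-\omega'|>h_{j+1}/2$ is waved at rather than shown: bounding each term of the jet remainder separately by \eqref{Mgevbound} produces a sum over $|\gamma|\le m-|\beta|$ that must be packed back into the single factor $|\omega-\omega'|^{m-|\beta|+1}/(m-|\beta|+1)!$ times $(m+1)!^{\rho'}$, and the $\kappa^{-m-1}$ factor has to absorb the geometric conversion; that is a real calculation, not a one-line dispatch. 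Finally, your explanation of the $t$-uniformity via the strip $|\mathrm{Im}(t)|<(2L_2)^{-1}$ is aimed at the wrong target: Lemma \ref{gevest} takes no $t$-derivatives, so the strip is irrelevant here (it is needed for Corollary \ref{gevestcor}); the $t$-uniformity for this lemma comes simply from the $t$-uniform estimates already built into Proposition \ref{iterative}.
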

We can now bound derivatives in $t$, we use the Cauchy estimate from Proposition \ref{cauchyappendix}. This yields the following corollary.
\begin{cor}
	\label{gevestcor}
	\begin{equation}
	\label{Mgevcorbound}
	|\bar{W}_0\partial_\theta^\alpha \partial_\omega^\beta \partial_t^\gamma\mathcal{S}^j(\theta,0;\omega,t)|\leq \hat{\epsilon}AC^{|\alpha|+|\beta|+|\gamma|}L_1^{|\alpha|+|\beta|(\tau+1)+1}\kappa^{-|\beta|}\alpha!^\rho \beta!^{\rho'}\gamma!E_j^{1/2}
	\end{equation}
	for all $(\theta,0;\omega,t)\in \tilde{D}_{j+1}\times \tilde{O}_{j+1}\times (-3/4,3/4)$, where $\rho'=\rho(\tau+1)+1$.
	\begin{eqnarray}
	\label{Lgevcorbound}
	& &|\bar{W}_0 (R_\omega^m\partial_\theta^\alpha \partial_\omega^\beta\partial_t^\gamma \mathcal{S}^j)(\theta,0,\omega',t)|\\
	&\leq &\nonumber \hat{\epsilon}AC^{m+|\alpha|+|\gamma|+1}L_1^{|\alpha|+(m+1)(\tau+1)+1}\kappa^{-m-1}\frac{|\omega-\omega'|^{m-|\beta|+1}}{(m-|\beta|+1)!}\alpha!^\rho(m+1)!^{\rho'}\gamma!E_j^{1/2}
	\end{eqnarray}
	for all $\theta\in\mathbb{T}^n$,  $\omega,\omega'\in\Omega_\kappa$, $t\in (-3/4,3/4)$ and $|\beta|\leq m$, where the constants $A,C$ only depend on $n,\rho,\tau,\zeta.$
\end{cor}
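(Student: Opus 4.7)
The plan is to derive Corollary \ref{gevestcor} from Lemma \ref{gevest} by a single application of the Cauchy estimate in the analytic parameter $t$, as indicated in the sentence immediately preceding the statement. The key observation is that at every stage of the iterative construction, $\mathcal{S}^j=\mathcal{F}^{j+1}-\mathcal{F}^j$ is holomorphic in $t$ on a fixed-width complex neighbourhood of the real interval. This analyticity is built in from the beginning: the almost-analytic extensions $F_j$ in \eqref{effjay} are genuinely holomorphic in $t$ on $\{|\mathrm{Im}(t)|<(2L_2)^{-1}\}$; the KAM step of Theorem \ref{kamstep} produces $\Phi$ and $\phi$ analytic in $t$ on this strip with estimates explicitly uniform in $t$; and the composition in Proposition \ref{iterative} preserves the property.

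First I would verify that the proof of Lemma \ref{gevest} goes through unchanged with $t$ taken complex in the strip $\{|\mathrm{Im}(t)|<(2L_2)^{-1}\}$, so that the right-hand sides of \eqref{Mgevbound} and \eqref{Lgevbound} continue to bound the corresponding left-hand sides for all such complex $t$. This is implicit in the uniform-in-$t$ clauses of Theorem \ref{kamstep} and Proposition \ref{iterative}, which themselves reflect the fact that the KAM iteration treats $t$ purely as a passive holomorphic parameter; no step in the derivation uses the reality of $t$, so the bounds transplant to the complex strip with no modification.

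Once this complex-analytic version of Lemma \ref{gevest} is in place, the corollary follows immediately by Proposition \ref{cauchyappendix}. Fix real $(\theta,0;\omega)\in\tilde{D}_{j+1}\times\tilde{O}_{j+1}$ and real $t_0\in(-3/4,3/4)$, and apply the Cauchy formula on a disc centred at $t_0$ of radius $r_\ast$ bounded below by a positive constant, namely the smaller of $(2L_2)^{-1}$ and the distance $1/4$ from $(-3/4,3/4)$ to the boundary of $(-1,1)$. This contributes a factor $\gamma!\,r_\ast^{-|\gamma|}$, and absorbing $r_\ast^{-|\gamma|}$ into the constant $C^{|\gamma|}$ yields the Gevrey-order-one factor $\gamma!$ in the $t$ variable and reproduces \eqref{Mgevcorbound}. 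Applying the same Cauchy step to the Taylor remainder $R_\omega^m\partial_\theta^\alpha\partial_\omega^\beta\mathcal{S}^j$, which depends analytically on $t$ since both $\mathcal{S}^j$ and its Taylor polynomial in $\omega$ do, produces \eqref{Lgevcorbound}. The restriction $t\in(-3/4,3/4)$ is exactly what is needed to keep the Cauchy contour inside the strip of analyticity.

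The main obstacle is clerical rather than mathematical: one must check that the bounds of Lemma \ref{gevest} hold on a complex $t$-strip of width uniform in the iteration index $j$, so that the Cauchy radius $r_\ast$ does not degenerate as $j\to\infty$. This is automatic provided one tracks the $t$-analyticity strip through Theorem \ref{kamstep} and Proposition \ref{iterative} and observes that these constructions shrink only the $(\theta,I,\omega)$-domains at each iteration, leaving the $t$-strip untouched.
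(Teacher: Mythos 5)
Your proposal is correct and takes essentially the same route as the paper: the paper derives Corollary \ref{gevestcor} from Lemma \ref{gevest} in a single sentence by invoking the Cauchy estimate of Proposition \ref{cauchyappendix} in the holomorphic parameter $t$, exactly as you lay out. The points you flag — that the $t$-analyticity strip of width $\sim(2L_2)^{-1}$ is fixed across the iteration since the KAM steps shrink only the $(\theta,I,\omega)$-domains, and that the Cauchy radius therefore does not degenerate as $j\to\infty$ — are precisely the implicit verifications the paper's one-line argument relies on.
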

From Proposition \ref{iterative} and Lemma \ref{gevestcor}, the rapid decay of $E_j$ implies that the limit
\begin{equation}
\label{jetnotation}
\partial_\theta^\alpha \partial_t^\gamma \mathcal{H}^\beta(\theta,\omega;t):=\lim_{j\rightarrow\infty} \partial_\theta^\alpha \partial_\omega^\beta\partial_t^\gamma(\mathcal{F}^j(\theta,0;\omega,t)-(\theta,0,\omega))
\end{equation}
exists for each $(\theta;\omega,t)\in \mathbb{T}^n\times \Omega_\kappa\times (-3/4,3/4)$, and each triple of multi-indices $\alpha,\beta,\gamma$. Convergence is uniform, and the limit is smooth in $\theta$ and $t$ and continuous in $\omega$, with $\partial_\theta^\alpha\partial_t^\gamma(\mathcal{H}^\beta)=\partial_\theta^\alpha\partial_t^\gamma \mathcal{H}^\beta$, justifying the notation in \eqref{jetnotation}. 

We now need to use the jet $\mathcal{H}=(\partial_\theta^\alpha\partial_t^\gamma \mathcal{H}^\beta)$ of continuous functions $\mathbb{T}^n\times \Omega_\kappa\times (-3/4,3/4) \rightarrow \mathbb{T}^n\times D \times \Omega$ to obtain a Gevrey function on $\mathbb{T}^n\times \Omega \times (-3/4,3/4)$ by using a Gevrey version of the Whitney extension theorem.
We define
\begin{equation}
(R_\omega^m \partial_\theta^\alpha \partial_t^\gamma \mathcal{H})_\beta(\theta,\omega',t):=\partial_\theta^\alpha\partial_t^\gamma \mathcal{H}^\beta(\theta,\omega',t)-\sum_{|\delta|\leq m-|\beta|}(\omega'-\omega)^\delta \partial_\theta^\alpha\partial_t^\gamma\mathcal{H}^{\beta+\delta}(\theta;\omega,t)/\gamma!
\end{equation}
In this notation, the results of Corollary \ref{gevestcor} yield
\begin{equation}
\label{impgevest}
|\bar{W}_0\partial_\theta^\alpha\partial_t^\gamma \mathcal{H}^\beta(\theta;\omega,t)|\leq \hat{\epsilon}AL_1(CL_1)^{|\alpha|}(CL_1^{\tau+1}/\kappa)^{|\beta|}C^\gamma\alpha!^{\rho}\beta!^{\rho'}\gamma!
\end{equation}
and
\begin{equation}
\label{impgevest2}
|\bar{W}_0(R_\omega^m \partial_\theta^\alpha \partial_t^\gamma\mathcal{H})_\beta(\theta,\omega',t)|\leq \hat{\epsilon}AL_1(CL_1)^{|\alpha|}(CL_1^{\tau+1}/\kappa)^{m+1}C^\gamma\frac{|\omega-\omega'|^{m-|\beta|+1}}{(m-|\beta|+1)!}\alpha!^{\rho}(m+1)!^{\rho'}\gamma!
\end{equation}
for $|\beta|\leq m$, and $(\theta,\omega,\omega',t)\in \mathbb{T}^n\times \Omega_\kappa \times \Omega_\kappa\times (-3/4,3/4)$, where $A$ and $C$ depend only on $n,\rho,\tau.$
These estimates allow us to apply the following consequence of Theorem \ref{whitneymainthm}.

\begin{prop}
	\label{whitneythm}
	Suppose $K\subset \mathbb{R}^n$ is compact, and $1\leq \rho < \rho'$. 
	If the jet $(f^{\alpha,\beta,\gamma})$ of functions $f^{\alpha,\beta,\gamma}:\mathbb{T}^n\times K\times (-3/4,3/4) \rightarrow \mathbb{R}$ is continuous on $\mathbb{T}^n\times K\times (-3/4,3/4)$ and is smooth in $(\theta,t)\in\T^n\times (-3/4,3/4)$ for each fixed $\omega\in K$ where
	\begin{equation}
	\partial_\theta^{\alpha'}\partial_t^{\gamma'}(f^{\alpha,\beta,\gamma})=f^{\alpha+\alpha',\beta,\gamma+\gamma'}
	\end{equation}
	and we have the estimates
	\begin{equation}
	| f^{\alpha,\beta,\gamma}(\theta;\omega,t)|\leq AC_1^{|\alpha|}C_2^{|\beta|}C_3^{|\gamma|}\alpha!^{\rho}\beta!^{\rho'}\gamma!
	\end{equation}
	and
	\begin{equation}
	|(R_\omega^m \partial_\theta^\alpha \partial_t^\gamma f)_\beta(\theta,\omega',t)|\leq AC_1^{|\alpha|}C_2^{m+1}C_3^{|\gamma|}\frac{|\omega-\omega'|^{m-|\beta|+1}}{(m-|\beta|+1)!}\alpha!^\rho (m+1)!^{\rho'}\gamma!
	\end{equation}
	then there exist positive constants $A_0,C_0$, dependent only on $(n,\rho,\tau)$ (in particular, independent of the set $K$) such that we can extend $f$ to $\tilde{f}\in G^{\rho,\rho',1}(\mathbb{T}^n\times \mathbb{R}^n\times (-3/4,3/4))$ such that $\partial_\theta^\alpha \partial_\omega^\beta\partial_t^\gamma \tilde{f}= f^{\alpha,\beta,\omega}$ on $\mathbb{T}^n\times K\times (-3/4,3/4)$ and
	\begin{equation}
	|\partial_{\theta}^\alpha \partial_\omega^\beta \partial_t^\gamma\tilde{f}(\theta,\omega)|\leq A_0 A\max(C_1,1)C_0^{|\alpha|+|\beta|+|\gamma|+n}C_1^{|\alpha|+n}C_2^{|\beta|}C_3^{|\gamma|}\alpha!^\rho \beta!^{\rho'}\gamma!
	\end{equation}
\end{prop}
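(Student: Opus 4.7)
The plan is to reduce the statement to the Gevrey Whitney extension theorem in the $\omega$-variable alone, exploiting the fact that in the $\theta$ and $t$ directions the jet is already smooth with appropriate Gevrey bounds. Since $\theta$ lies on the compact manifold $\T^n$ and $t$ lies in the relatively compact interval $(-3/4,3/4)$ where smoothness is built into the hypothesis, no Whitney-type extension is required in those directions; the only genuine extension problem is extending the $\omega$-jet $(f^{0,\beta,0}(\theta,\cdot,t))_\beta$ from the compact set $K$ to all of $\R^n$. Thus $\theta$ and $t$ will serve as smooth parameters while we apply Theorem \ref{whitneymainthm} in $\omega$.

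Concretely, I would invoke the construction behind Theorem \ref{whitneymainthm}: take a Whitney decomposition of $\R^n \setminus K$ into cubes $Q_i$ with a subordinate partition of unity $\{\phi_i(\omega)\}$, and define
\[
\tilde{f}(\theta,\omega,t) \;=\; \sum_i \phi_i(\omega)\, T_i(\theta,\omega,t), \qquad \omega \in \R^n \setminus K,
\]
with $\tilde{f}(\theta,\omega,t) = f^{0,0,0}(\theta,\omega,t)$ for $\omega \in K$. Here $T_i$ is the Taylor polynomial in $\omega$ of degree $N_i$, chosen as a function of $\dist(Q_i,K)$ in the usual Gevrey Whitney manner, with coefficients $f^{0,\beta,0}(\theta,\omega_i,t)/\beta!$ for a point $\omega_i \in K$ nearest to $Q_i$. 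Because the $\phi_i$ depend only on $\omega$ and the coefficients of $T_i$ depend smoothly on $(\theta,t)$ through the jet, the operator $\partial_\theta^\alpha \partial_t^\gamma$ commutes with the sum and produces the same expression with the coefficients replaced by $f^{\alpha,\beta,\gamma}(\theta,\omega_i,t)/\beta!$. The jet hypotheses then provide the $C_1^{|\alpha|} C_3^{|\gamma|} \alpha!^\rho \gamma!$ bounds for these coefficients directly.

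The heart of the argument is then to verify the anisotropic Gevrey estimate on $\partial_\theta^\alpha \partial_\omega^\beta \partial_t^\gamma \tilde{f}$. Derivatives in $\omega$ distribute by Leibniz across $\phi_i$ and $T_i$; the resulting sum is estimated via the Taylor remainder hypothesis on the jet together with the standard Gevrey Whitney bounds on derivatives of $\phi_i$ and on the distance-to-$K$ weighting of $T_i$. This is exactly the argument in the proof of Theorem \ref{whitneymainthm}, and it produces the $C_0^{|\alpha|+|\beta|+|\gamma|+n}\, C_2^{|\beta|} \beta!^{\rho'}$ factor along with the absolute constants $A_0$, $C_0$ depending only on $(n,\rho,\tau)$. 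The main technical obstacle, and the source of the extra $C_1^n$ and $\max(C_1,1)$ factors in the conclusion, is combinatorial bookkeeping: one must use Faà di Bruno carefully to control derivatives of the product $\phi_i T_i$ and to track how constants compound when derivatives simultaneously hit the partition of unity and the polynomial, and one must ensure that these constants depend only on the ambient dimension and the Gevrey indices and not on the set $K$ itself. Once those estimates are assembled, the $(\theta,t)$-regularity is inherited at no additional cost from the smoothness of the jet in those variables, completing the proof.
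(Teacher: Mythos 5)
Your overall strategy is the right one and matches the paper's: the paper proves this proposition by a one-line reduction to Theorem \ref{whitneymainthm} (following Popov), i.e.\ by running the anisotropic Whitney machinery with $\omega$ as the sole extension variable and $(\theta,t)\in\T^n\times(-3/4,3/4)$ riding along as smooth Gevrey parameters, exactly as you propose. Where you diverge is in the choice of local approximants glued by the partition of unity. You claim to "invoke the construction behind Theorem \ref{whitneymainthm}" but then describe the classical truncated-Taylor-polynomial construction, with degree $N_i$ tied to $\dist(Q_i,K)^{-1/(\rho'-1)}$. The construction actually used in Appendix \ref{whitneysec} (Bruna's method) is different: for each base point $\omega_i\in K$ one first produces, via the anisotropic Carleman--Borel theorem (Proposition \ref{carlemanvariant}), a genuine function $f_{\omega_i}\in G^{\rho,\rho',1}$ whose \emph{entire} $\omega$-jet at $\omega_i$ matches the given data, and one glues these functions, not polynomials. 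The Carleman route buys uniformity over general non-quasianalytic weight sequences and avoids having to choose and optimize the truncation degrees and verify the resulting exponential smallness of the discarded tail; your route is more elementary but makes that optimization step, which you do not carry out, the crux of the estimate. Both are legitimate and both use the Taylor-compatibility hypothesis in the same way, so I would not call this a gap, only a different (and somewhat less detailed) realization of the same scheme. Two smaller points: the tool for controlling derivatives of the product $\phi_i T_i$ is the Leibniz rule, not Fa\`a di Bruno (no composition occurs); and your sketch does not actually account for the specific $\max(C_1,1)\,C_1^{|\alpha|+n}$ loss in the stated conclusion, which in the paper is inherited verbatim from the constants in Theorem \ref{whitneymainthm} and Proposition \ref{carlemanvariant} rather than from "combinatorial bookkeeping" in the $\omega$-gluing.
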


The proof of Proposition \ref{whitneythm} is identical to that in \cite{popovkam} Theorem 3.7, making use of Theorem \ref{whitneymainthm} involving the parameter $t$.

Having established Proposition \ref{whitneythm}, the proof of Theorem \ref{kamwithparams} can be completed as in \cite{popovkam} Section 3.5 without modification.

\subsection{Birkhoff normal form}
\label{mainresultssec}
We obtain a Birkhoff normal form for near-integrable Hamiltonians using a version of the KAM theorem that is a consequence of Theorem \ref{kamwithparams}. The Gevrey index $\rho(\tau+1)+1$ frequently appears in these results, and so we introduce $\rho':=\rho(\tau+1)+1$.

\begin{thm}
	\label{kamcons}
	Fix $0<\zeta \leq 1$ and let $H^0(I;t)$ be a real-valued non-degenerate smooth family of Hamiltonian in $G^{\rho,1}(D^0\times (-1,1))$ and let $D$ be a subdomain with $\overline{D}\subset D^0$. We define $\Omega=\nabla H^0(D)$ and fix $L_2\geq L_1 \geq 1$ and $\kappa \leq L_2^{-1-\zeta}$ such that $L_2\geq L_0$ and $\Omega_\kappa \neq \emptyset$. Then there exists $N=N(n,\rho,\tau)$ and $\epsilon>0$ independent of $\kappa,L_1,L_2$ and $D\subset D^0$ such that for any $H\in G^{\rho,\rho,1}_{L_1,L_2,L_2}(\mathbb{T}^n\times D\times (-1,1))$ with norm
	\begin{equation}
	\label{epsilonhdef}
	\epsilon_H:=\kappa^{-2}\|H-H^0\|_{L_1,L_2,L_2}\leq \epsilon L_1^{-N}
	\end{equation}
	there exists a map 
	\begin{equation}
	\bar{\Phi}=(\bar{U},\bar{V})\in G^{\rho,\rho',1}(\mathbb{T}^n\times \Omega\times (-3/4,3/4),\mathbb{T}^n \times D)
	\end{equation}
	such that 
	\begin{enumerate}
		\item For each $\omega\in \Omega_\kappa$ and each $t\in (-3/4,3/4)$, $\Lambda_\omega=\{\bar{\Phi}(\theta;\omega,t):\theta\in\mathbb{T}^n\}$ is an embedded invariant Lagrangian torus of $H$, and $X_H\circ \bar{\Phi}(\cdot;\omega,t)=D\bar{\Phi}(\cdot;\omega,t)\cdot \mathcal{L}_\omega$.
		\item There exist constants $A,C>0$ independent of $\kappa,L_1,L_2$ and $D\subset D^0$ such that
		\begin{eqnarray}
		& &\nonumber|\partial_\theta^\alpha \partial_\omega^\beta(\bar{U}(\theta;\omega,t)-\theta)|+\kappa^{-1}|\partial_\theta^\alpha\partial_\omega^\beta(\bar{V}(\theta;\omega,t)-\nabla g^0(\omega))|\\
		&\leq &\label{kamconsbound} A(CL_1)^{|\alpha|}(CL_1^{\tau+1}/\kappa)^{|\beta|}\alpha!^\rho \beta!^{\rho'}L_1^{N/2}\epsilon_H^{1/2}
		\end{eqnarray}
		uniformly in $\mathbb{T}^n\times \Omega\times (-3/4,3/4)$.
	\end{enumerate}
\end{thm}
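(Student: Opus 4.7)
The plan is to derive Theorem \ref{kamcons} from the frequency-parametrized Theorem \ref{kamwithparams} by performing the Taylor expansion around $z_0 = \nabla g^0(\omega)$ described in Section \ref{kamform}, verifying the smallness hypothesis of Theorem \ref{kamwithparams}, and translating the resulting invariant tori back to the original action coordinates.

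For each $\omega$ in the neighborhood $\Omega'$ of $\Omega_\kappa$ introduced in \eqref{domains}, I would Taylor expand $H^0(\,\cdot\,;t)$ around $z_0 = \nabla g^0(\omega)$ following \eqref{taylorexpand}--\eqref{defnofPzero}, obtaining the family of Hamiltonians $H(\theta, I + z_0; t) = N(I;\omega,t) + P(\theta, I; \omega, t)$ with $P = \langle P^0 I, I\rangle + P^1$. The functions $P^0, P^1$ inherit anisotropic Gevrey regularity in $(\theta, I, \omega, t)$ from $H^0$ and $H^1$, with uniform norm estimates in $\omega \in \Omega'$, as recorded in the discussion preceding Theorem \ref{kamwithparams}.

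Next, I would choose the radius $r$ as a small multiple of $\kappa \epsilon_H^{1/2}$, so that $r^2 \|P^0\|$ and $\|P^1\|$ are both controlled by $C\kappa^2 \epsilon_H$ and hence $\langle P \rangle_r \leq C' \kappa^2 \epsilon_H$. Combined with the bound on $R$ in \eqref{Radbound} and the hypothesis $\epsilon_H \leq \epsilon L_1^{-N}$, this ensures both $r < R$ and the smallness hypothesis $\langle P\rangle_r < \epsilon \kappa r L_1^{-N}$ of Theorem \ref{kamwithparams}, after possibly enlarging $N = N(n,\rho,\tau)$. Applying Theorem \ref{kamwithparams} then produces $\phi \in G^{\rho',1}(\Omega \times (-3/4,3/4), \Omega)$ and $\Phi = (U,V) \in G^{\rho,\rho',1}(\T^n\times\Omega\times(-3/4,3/4), \T^n\times B_R)$ satisfying \eqref{kamwithparamsbound} uniformly in $\omega$; for $\omega \in \Omega_\kappa$, the image $\Phi_{\omega,t}(\T^n)$ is an invariant Lagrangian torus for the Taylor-expanded Hamiltonian $H_{\phi(\omega,t),t}(\theta,I) = H(\theta, I + \nabla g^0(\phi(\omega,t)); t)$, carrying quasi-periodic motion of frequency $\omega$.

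To pass to the original $(\theta,z)$-coordinates of Theorem \ref{kamcons}, I define
\begin{equation*}
\bar{U}(\theta;\omega,t) := U(\theta;\omega,t), \qquad \bar{V}(\theta;\omega,t) := V(\theta;\omega,t) + \nabla g^0(\phi(\omega,t)),
\end{equation*}
so that the affine shift $(\theta, I) \mapsto (\theta, I + \nabla g^0(\phi(\omega,t)))$ transports $\Lambda_{\omega,t}$ to the invariant Lagrangian torus $\bar{\Phi}_{\omega,t}(\T^n)$ of $H$, preserving the conjugacy of vector fields on the torus. The Gevrey regularity $\bar{\Phi} \in G^{\rho,\rho',1}$ follows from closure of the anisotropic Gevrey classes under composition applied to $\nabla g^0 \circ \phi$, using the inclusion $G^{\rho,1} \subset G^{\rho',1}$ (since $\rho \leq \rho'$). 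The estimate \eqref{kamconsbound} on $\bar{U} - \theta$ is immediate from \eqref{kamwithparamsbound} upon substituting $\langle P\rangle_r/(\kappa r) \leq C L_1^{-N/2}\epsilon_H^{1/2}$; for $\bar{V} - \nabla g^0(\omega) = V + (\nabla g^0(\phi(\omega,t)) - \nabla g^0(\omega))$, the first summand is bounded by \eqref{kamwithparamsbound} and the second by combining the Gevrey regularity of $g^0$ with the estimate on $\phi(\omega,t) - \omega$ also supplied by \eqref{kamwithparamsbound}.

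The principal technical obstacle will be tracking the anisotropic Gevrey constants through the composition $\nabla g^0 \circ \phi$ so as to match the precise form $(CL_1)^{|\alpha|}(CL_1^{\tau+1}/\kappa)^{|\beta|}\alpha!^\rho\beta!^{\rho'}$ in \eqref{kamconsbound}. This is a chain-rule calculation in the anisotropic Gevrey calculus, straightforward in principle but demanding careful bookkeeping, which exploits the smallness of $\phi - \mathrm{id}$ provided by \eqref{kamwithparamsbound} to absorb all higher-order terms generated by the composition.
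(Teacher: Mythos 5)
Your proposal is correct and follows exactly the route the paper itself indicates: the paper's proof of Theorem \ref{kamcons} is given only as a citation to Popov's Theorem 1.1, to be carried out verbatim using Theorem \ref{kamwithparams} in place of Popov's frequency-parametrized KAM theorem, and that is precisely the Taylor-expansion reduction, $r\sim\kappa\epsilon_H^{1/2}$ scaling, and affine translation back to action coordinates that you describe. One minor bookkeeping note: the constants $\epsilon$ and $N$ appearing in Theorem \ref{kamcons} are not literally those of Theorem \ref{kamwithparams} but are obtained by roughly squaring and doubling them, which is what makes the factor $L_1^{N/2}\epsilon_H^{1/2}$ in \eqref{kamconsbound} emerge from $\frac{\langle P\rangle_r}{\kappa r}L_1^{N'}$ after the substitution $\langle P\rangle_r/(\kappa r)\lesssim\epsilon_H^{1/2}$; you allude to ``enlarging $N$'' but it is worth making this renaming explicit when writing the proof out.
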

The proof of Theorem \ref{kamcons} is identical to \cite{popovkam} Theorem 1.1, making use of Theorem \ref{kamwithparams}. 

We can now use Theorem \ref{kamcons} to obtain the Birkhoff normal form as done in \cite{popovkam}. 
\begin{thm}
	\label{main1}
	Suppose the assumptions of Theorem \ref{kamcons} hold. Then there exists $N(n,\rho,\tau)>0$ and $\epsilon>0$ independent of $\kappa,L_1,L_2,D$ such that for any $H\in G^{\rho,\rho,1}_{L_1,L_2,L_2}(\mathbb{T}^n\times D\times (-1,1))$ with 
	\begin{equation}
	\label{pertsmall2}
	\epsilon_H\leq \epsilon L_1^{-N-2(\tau+2)}
	\end{equation}
	where $\epsilon_H$ is as in \eqref{epsilonhdef}, there is a family of $G^{\rho',\rho'}$ maps $\omega:D\times (-1/2,1,2)\rightarrow \Omega$ and a family of maps $\chi\in G^{\rho,\rho',\rho'}(\mathbb{T}^n\times D\times (-1/2,1,2),\mathbb{T}^n\times D)$ that are diffeomorphisms and exact symplectomorphisms respectively for each fixed $t\in (-1/2,1,2)$.
	Moreover, we can choose the maps $\omega$ and $\chi$ such that family of transformed Hamiltonians 
	\begin{equation}
	\tilde{H}(\theta,I;t):=(H\circ \chi)(\theta,I;t)
	\end{equation}
	is of Gevrey class $G^{\rho,\rho',\rho'}(\mathbb{T}^n\times D\times (-1/2,1,2))$ and can be decomposed as
	\begin{equation}
	\label{bnfeq}
	K(I;t)+R(\theta,I;t):=\tilde{H}(0,I;t)+(\tilde{H}(\theta,I;t)-\tilde{H}(0,I;t))
	\end{equation}
	such that:
	\begin{enumerate}
		\item $\mathbb{T}^n\times \{I\}$ is an invariant Lagrangian torus of $\tilde{H}(\cdot,\cdot;t)$ for each $I\in E_\kappa(t)=\omega^{-1}(\tilde{\Omega}_\kappa;t)$ and each $t\in (-1/2,1,2)$. 
		\item $
		\IB (\nabla K(I;t)-\omega(I;t))=\IB R(\theta,I;t)=0\quad\textrm{for all }(\theta,I;t)\in \T^n\times E_\kappa(t)\times (-1/2,1,2),\beta\in\N^n.
		$
		\item There exist $A,C>0$ independent of $\kappa,L_1,L_2,$ and $D\subset D^0$ such that we have the estimates
		\begin{eqnarray}
		& &|\TA\IB\TD \phi(\theta,I;t)|+|\IB\TD(\omega(I;t)-\nabla H^0(I;t))|+|\TA\IB\TD (\tilde{H}(\theta,I;t)-H^0(I;t))|\nonumber\\
		&\leq & \label{main1est}A\kappa C^{|\alpha|+|\beta|+|\delta|}L_1^{|\alpha|}(L_1^{\tau+1}/\kappa)^{|\beta|}\alpha!^\rho \beta!^{\rho'}\delta!^{\rho'}L_1^{N/2}\epsilon_H^{1/2}
		\end{eqnarray}
		uniformly in $\T^n\times D\times (-1/2,1,2)$ for all $\alpha,\beta$, where $\phi\in G^{\rho,\rho',\rho'}(\T^n\times D\times (-1/2,1,2))$ is such that $\langle \theta,I\rangle+\phi(\theta,I;t) $ generates the symplectomorphism $\chi$ in the sense of Proposition \ref{genfunc}.
	\end{enumerate}
\end{thm}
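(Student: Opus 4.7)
The plan is to upgrade the Cantor family of Lagrangian embeddings $\bar{\Phi}$ provided by Theorem \ref{kamcons} into a single exact symplectic change of variables $\chi$ on all of $\T^n\times D$ whose action on $H$ produces the claimed decomposition. Modulo the additional parameter $t$, this is exactly the construction of Popov \cite{popovkam}; the only new ingredient is the propagation of joint regularity in $t$ through each step, supplied by the anisotropic Gevrey Whitney extension Proposition \ref{whitneythm} with $t$-parameter. First I apply Theorem \ref{kamcons} to $H$, producing $\bar{\Phi}=(\bar U,\bar V)\in G^{\rho,\rho',1}(\T^n\times\Omega\times(-3/4,3/4),\T^n\times D)$ whose image $\Lambda_\omega(t)=\bar\Phi(\T^n;\omega,t)$ is, for each $(\omega,t)\in\Omega_\kappa\times(-3/4,3/4)$, a Lagrangian invariant torus of $H(\cdot,\cdot;t)$ on which the flow is conjugate to the linear flow of frequency $\omega$.

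Next, for each $\omega\in\Omega_\kappa$ the exactness of $\Lambda_\omega(t)$ canonically determines an action value $I(\omega;t)\in D$ and the $\beta$-jet in $I$ at $I(\omega;t)$ of a generating function $\phi_\omega(\theta,I;t)$ such that $\langle\theta,I\rangle+\phi_\omega(\theta,I;t)$ generates, near $\T^n\times\{I(\omega;t)\}$, an exact symplectomorphism whose restriction to that torus agrees with $\bar\Phi(\cdot;\omega,t)$. The bound \eqref{kamconsbound} of Theorem \ref{kamcons}, together with Cauchy estimates in $t$ on the analyticity strip supplied by the $G^1$ regularity of $\bar\Phi$ in $t$, yield the jet hypotheses of Proposition \ref{whitneythm} for these families as functions of $(\theta;\omega,t)\in\T^n\times\Omega_\kappa\times(-3/4,3/4)$ with Gevrey exponents $(\rho,\rho',1)$. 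Applying Proposition \ref{whitneythm} produces Gevrey extensions $\phi\in G^{\rho,\rho',\rho'}(\T^n\times D\times(-1/2,1/2))$ and $I(\cdot;\cdot)\in G^{\rho',\rho'}(\Omega\times(-1/2,1/2),D)$. Inverting the latter in $\omega$ via the implicit function theorem, which is legitimate since \eqref{pertsmall2} keeps its Jacobian close to that of the unperturbed diffeomorphism $\nabla g^0$, produces $\omega(I;t)\in G^{\rho',\rho'}(D\times(-1/2,1/2),\Omega)$, and I define $\chi\in G^{\rho,\rho',\rho'}$ as the exact symplectomorphism generated by $\langle\theta,I\rangle+\phi(\theta,I;t)$ via Proposition \ref{genfunc}.

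Setting $\tilde H:=H\circ\chi$, $K(I;t):=\tilde H(0,I;t)$ and $R:=\tilde H-K$, the two flatness statements follow from the invariance and nonresonance of the KAM tori. For every $I\in E_\kappa(t):=\omega^{-1}(\tilde\Omega_\kappa;t)$, the torus $\T^n\times\{I\}$ is by construction an invariant torus of $X_{\tilde H}$ on which the induced flow is linear with Diophantine frequency $\omega(I;t)$; conservation of $\tilde H$ along orbits, together with density of those orbits, forces $\tilde H(\cdot,I;t)$ to be constant in $\theta$, so $R(\cdot,I;t)\equiv 0$, and inspection of the Hamiltonian vector field along the torus forces $\nabla_I\tilde H(\cdot,I;t)=\omega(I;t)$, hence $\nabla K(I;t)=\omega(I;t)$ on $E_\kappa(t)$. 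To upgrade this pointwise vanishing to infinite-order $\IB$-flatness at every $I\in E_\kappa(t)$, I use that $\tilde\Omega_\kappa$ consists of Lebesgue density points, that $\omega(\cdot;t)$ is a local diffeomorphism (so density points of $\tilde\Omega_\kappa$ pull back to density points of $E_\kappa(t)$), and that $R$ and $\nabla K-\omega$ are Gevrey, hence $C^\infty$, in $I$: iterated Taylor expansion along approaches tangent to $E_\kappa(t)$ in every direction forces all $I$-derivatives to vanish at such points.

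The estimate \eqref{main1est} follows by composing the KAM bound \eqref{kamconsbound} with the Whitney loss in Proposition \ref{whitneythm} and the Gevrey composition and inversion bounds from Appendix \ref{gevreyappendix}; each step introduces only a constant depending on $(n,\rho,\tau,L_0)$ and a polynomial power of $L_1$, which is absorbed into $L_1^{N/2}$ by enlarging $N$ at the outset consistent with the tolerance \eqref{pertsmall2}. The main obstacle is bookkeeping: one must verify at every step that the anisotropic Gevrey rates $(\rho,\rho',\rho')$ in $(\theta,I,t)$ are preserved and that Cauchy estimates on $t$-derivatives of the analytic-in-$t$ objects interact correctly with the Whitney extension step in the $\omega$ variable. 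This is precisely what the joint-variable formulation of Proposition \ref{whitneythm} is designed to handle.
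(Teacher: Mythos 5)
Your overall skeleton matches the paper's: apply Theorem \ref{kamcons}, build a generating function $\langle\theta,I\rangle+\phi$ for $\chi$, invert the action map, and deduce flatness on $E_\kappa(t)$ from the fact that its points are Lebesgue density points of $I(\Omega_\kappa)$. The flatness and invariance arguments in your third paragraph, and the bookkeeping of Gevrey exponents in your last, are essentially what the paper does. However, the middle step --- producing $\phi$ --- is where your route diverges and where there is a genuine gap. You claim that for each $\omega\in\Omega_\kappa$ the exactness of $\Lambda_\omega(t)$ ``canonically determines \ldots the $\beta$-jet in $I$ at $I(\omega;t)$ of a generating function.'' It does not: exactness of a single Lagrangian torus determines only the primitive of the canonical $1$-form restricted to that torus, i.e.\ the $0$-jet in the transverse variable. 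The higher $I$-derivatives at $I(\omega;t)$ encode how the family of tori varies with $\omega$, and over the Cantor set $\Omega_\kappa$ these must be \emph{defined} using the already-extended $\bar\Phi$ from Theorem \ref{kamcons}; one must then \emph{verify} the Taylor-remainder hypotheses of Proposition \ref{whitneythm} for this jet, which is not a formal consequence of \eqref{kamconsbound} as you assert. The paper sidesteps this entirely: since $\bar\Phi$ is already a genuine Gevrey function of $\omega$ on all of $\Omega$, it inverts $\bar U$ to write each torus as a graph $(\gamma,F(\gamma;\omega,t))$, and defines $\tilde\psi(x;\omega,t)=\int_0^1\langle F(p(sx);\omega,t),x\rangle\,ds$ globally in $\omega$ --- no second Whitney extension is needed at this stage.

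The second, related omission is periodicity. For $\chi$ to descend to a map of $\T^n\times D$, the function $\phi=\Phi-\langle x,I\rangle$ must be $2\pi$-periodic in $x$. The integral $\tilde\psi$ produces a function whose periodicity defect $\tilde\psi(x+2\pi m)-\tilde\psi(x)-\langle 2\pi m,R(\omega,t)\rangle$ vanishes only for $\omega\in\Omega_\kappa$, where the torus is actually Lagrangian so that the period integrals are homotopy invariants. For $\omega\notin\Omega_\kappa$ the graph of $F$ need not be Lagrangian and $\tilde Q$ is not periodic. The paper repairs this with an averaging trick, $Q(x;\omega,t):=\sum_{k\in\Z^n}f(x+2\pi k)\tilde Q(x+2\pi k;\omega,t)$ for a Gevrey partition of unity $f$, which restores periodicity for all $\omega$ while agreeing with $\tilde Q$ on $\Omega_\kappa$ and preserving the Gevrey bounds. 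Your proposal never addresses this point; if you insist on the Whitney-extension route you would still need to arrange that the extended $\phi$ is periodic in $\theta$, which the extension theorem does not provide for free. I would recommend replacing your second paragraph with the paper's construction: invert $\bar U$ via Corollary \ref{komatsucor}, integrate the canonical $1$-form to get $\tilde\psi$ and the action $R(\omega,t)$, apply the averaging trick and a cutoff near $\partial\Omega$, and only then invert $(\omega,t)\mapsto(R(\omega,t),t)$ and solve the generating-function relations by Corollary \ref{komatsucor}.
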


\begin{remark}
	For our purposes, high regularity in the $t$-parameter is not required, so we have dropped from analyticity to $G^{\rho'}$ regularity in $t$ at this point in order to simplify the proceeding arguments. I expect that analyticity in $t$ could be preserved by using a stronger variant of the Komatsu implicit function theorem than Corollary \ref{komatsucor}
\end{remark}

\begin{proof}
	We begin by taking $\epsilon,N$ as in Theorem \ref{kamcons} and noting that $\epsilon_H\leq \epsilon L_1^{-N-2}$ by assumption. 
	This implies that the factor $(ACL_1)L_1^{N/2}\sqrt{\epsilon_H}$ occurring in the Gevrey estimate \eqref{kamconsbound} can be bounded above by $AC\sqrt{\epsilon}$.
	Hence, taking $\epsilon$ small enough that both the conclusion to Theorem \ref{kamcons} holds as well as $AC\sqrt{\epsilon}<1/2$, we can first apply the Cauchy estimate from Proposition \ref{cauchyappendix} to \eqref{kamconsbound} in $t$, and then apply a variant of the Komatsu implicit function theorem, Corollary \ref{komatsucor}, to obtain a solution $\theta(\gamma;\omega,t):\T^n\times \Omega\times (-1/2,1,2)\rightarrow\T^n$ to the implicit equation
	\begin{equation}
	\bar{U}(\theta;\omega,t)=\gamma.
	\end{equation}
	Moreover, this solution satisfies the Gevrey estimate
	\begin{equation}
	|\partial_\gamma^\alpha\OB\TD(\theta(\gamma;\omega,t)-\gamma)|\leq AC^{|\alpha|+|\beta|+|\delta|}L_1^{|\alpha|}(L_1^{\tau+1}/\kappa)^{|\beta|}\alpha!^\rho\beta!^{\rho'}\delta!^{\rho'}L_1^{N/2}\sqrt{\epsilon_H}
	\end{equation}
	uniformly on $\T^n\times\Omega\times (-1/2,1,2)$.
	
	We set $F(\gamma;\omega,t):=\bar{V}(\theta(\gamma;\omega,t);\omega,t)$. In terms of $(\gamma;\omega,t)$, the Lagrangian torus $\Lambda_\omega$ is now given by $(\gamma,F(\gamma;\omega,t):\gamma\in\T^n)$ for each $\omega\in\Omega_\kappa$ and each $t\in (-1/2,1,2)$.
	Moreover, Proposition \ref{gevcomp2} on the composition of Gevrey functions gives us the estimate
	\begin{equation}
	\label{Fest}
	|\partial_\gamma^\alpha\OB\TD(F(\gamma;\omega,t)-\nabla g^0(\omega))|\leq A\kappa C^{|\alpha|+|\beta|+|\delta|}L_1^{|\alpha|}(L_1^{\tau+1}/\kappa)^{|\beta|}\alpha!^\rho\beta!^{\rho'}\delta!^{\rho'}L_1^{N/2}\sqrt{\epsilon_H}.
	\end{equation}
	We next construct functions $\psi\in G^{\rho,\rho',\rho'}(\R^n\times \Omega\times (-1/2,1,2))$ and $R\in G^{\rho',\rho'}(\Omega\times (-1/2,1,2))$ such that the function 
	\begin{equation}
	\label{Qdefn}
	Q(x;\omega,t):=\psi(x;\omega,t)-\langle x,R(\omega,t)\rangle
	\end{equation}
	is $2\pi$-periodic in $x$ and satisfies
	\begin{equation}
	\nabla_x\psi(x;\omega,t)=F(p(x),\omega,t)
	\end{equation}
	in $\R^n\times \Omega_\kappa\times (-1/2,1,2)$ where $p:\R^n\rightarrow \T^n$ is the canonical projection as well as the estimate
	\begin{eqnarray}
	\label{constructest}
	& &|\partial_x^\alpha\OB\TD Q(x;\omega,t)|+|\OB \TD (R(\omega,t)-\nabla g^0(\omega))|\\
	&\leq & A\kappa C^{|\alpha|+|\beta|+|\delta|}L_1^{|\alpha|}(L_1^{\tau+1}/\kappa)^{|\beta|}\alpha!^\rho\beta!^{\rho'}\delta!^{\rho'}L_1^{N/2}\sqrt{\epsilon_H}
	\end{eqnarray}
	for $(x;\omega,t)\in \R^n\times\Omega\times (-1/2,1,2)$.
	
	We do this by first integrating the canonical $1$-form $I\, dx$ over the chain
	\begin{equation}
	c_x:=\{(sx,F(p(sx);\omega,t)):0\leq s \leq 1\}\subset \R^n\times D.
	\end{equation}
	We define
	\begin{equation}
	\tilde{\psi}(x;\omega,t):=\int_{c_x}\sigma=\int_0^1 \langle F(p(sx);\omega,t),x\rangle \, ds
	\end{equation}
	in $\R^n\times \Omega\times (-1/2,1,2)$.
	From the estimate \eqref{Fest} it follows that $\tilde{\psi}(x;\omega,t)-\langle\nabla g^0(\omega),x\rangle$ is bounded above by the right hand side of \eqref{constructest} in $[0,4\pi]^n\times \Omega\times (-1/2,1,2)$. Hence if we define $R_j(\omega,t)=(2\pi)^{-1}\tilde{\psi}(2\pi e_j;\omega,t)$, then $R-\nabla g^0$ satisfies the required estimates in \eqref{constructest}.
	
	Since for $\omega\in\Omega_\kappa$ we know that $\Lambda_\omega$ is a Lagrangian torus, it follows that the  integral of the canonical $1$-form over any closed chain in $\Lambda_\omega$ is homotopy invariant. This means that such an integral is a homomorphism from the fundamental group of $\Lambda_\omega$ to $\R$. Hence
	\begin{equation}
	\tilde{\psi}(x+2\pi m;\omega,t)-\tilde{\psi}(x;\omega,t)=\langle 2\pi m,R(\omega,t)\rangle
	\end{equation}
	and so the function
	\begin{equation}
	\tilde{Q}(x;\omega,t):=\tilde{\psi}(x,\omega)-\langle x,R(\omega,t)\rangle
	\end{equation}
	both satisfies the Gevrey estimate in \eqref{constructest} and is $2\pi$-periodic in $x$ for $(\omega,t)\in \Omega_\kappa \times (-1/2,1,2)$.
	
	To obtain the sought $Q$ in \eqref{Qdefn} from $\tilde{Q}$, we use an averaging trick.
	Choosing $f\in G^{\rho}_C(\R^n)$ for some positive constant $C$ such that $f$ is supported in $[\pi/2,7\pi/2]^n$ and 
	\begin{equation}
	\sum_{k\in\mathbb{Z}^n}f(x+2\pi k)=1
	\end{equation}
	for each $x\in\R^n$, it then follows that
	\begin{equation}
	Q(x;\omega,t):=\sum_{k\in\mathbb{Z}^n}f(x+2\pi k)\tilde{Q}(x+2\pi k;\omega,t)
	\end{equation}
	is $2\pi$-periodic in $x$ for every $\omega\in\Omega$, and coincides with $\tilde{Q}$ for $\omega\in \Omega_\kappa$. Moreover, $Q$ satisfies the same Gevrey estimate \eqref{constructest} as $\tilde{Q}$. We define 
	\begin{equation}
	\psi(x;\omega,t):=Q(x;\omega,t)+\langle x, R(\omega,t)\rangle.
	\end{equation}
	Note that by multiplying $Q$ and $R-\nabla g^0$ by a cut-off function $h\in G^{\rho'}_{C/\kappa}$ which is equal to $1$ in a $\omega$-neighbourhood of $\Omega_\kappa$ and vanishes for $\textrm{dist}(\omega,\R^n\setminus\Omega)\leq \kappa/2$ where $C>0$ is independent of $\Omega\subset \Omega^0$, we can assume that $\psi(x;\omega,t)=\langle x, \nabla g^0(\omega)\rangle$ for $\textrm{dist}(\omega,\R^n\setminus\Omega)\leq \kappa/2$. This cutoff preserves the Gevrey estimates on $\psi$.
	
	Now since $\epsilon_HL_1^{N+2(\tau+2)}\leq \epsilon$, we have that $\kappa A(CL_1)(CL_1^{\tau+1}/\kappa)L_1^{N/2}\sqrt{\epsilon_H}\leq AC^2\sqrt{\epsilon}$. By taking $\epsilon$ sufficiently small we have that $\omega\mapsto \nabla_x\psi(x;\omega,t)$ is a diffeomorphism for any fixed $x\in \R^n$ from the Gevrey estimate \eqref{constructest}. Hence we have a $G^{\rho,\rho'}$-foliation of $\T^n\times D$ by Lagrangian tori $\Lambda_\omega=\{(p(x),\nabla_x\psi(x,\omega)):x\in\R^n\}$ where $\omega\in \Omega$.
	
	In the sought coordinate change, the action $I(\omega,t)$ of the Lagrangian torus $\Lambda_\omega$ will be  given by $R(\omega,t)$. Hence from \eqref{constructest} and Proposition \ref{komatsuprop}, it follows that for $\epsilon$ sufficiently small, the map 
	\begin{equation}
	(\omega,t)\mapsto (I(\omega,t),t)=(R(\omega,t),t)
	\end{equation}
	is a $G^{\rho',\rho'}$-diffeomorphism and we have the Gevrey estimate
	\begin{eqnarray}
	& &|\partial_I^\alpha\partial_t^\beta (\omega(I,t)-\nabla H^0(I;t))|\\
	&\leq & A\kappa C^{|\alpha|+|\beta|}(L_1^{\tau+1}/\kappa)^{|\alpha|}\alpha!^{\rho'}\beta!^{\rho'}L_1^{N/2}\sqrt{\epsilon_H}
	\end{eqnarray}
	uniformly for $(\theta,I,t)\in \T^n\times D\times (-1/2,1,2)$.
	
	We construct the sought symplectomorphism $\chi$ using the generating function $\Phi(x,I;t)$, setting
	\begin{equation}
	\Phi(x,I;t)=\psi(x,\omega(I;t);t)
	\end{equation}
	and noting that we have the required $2\pi$-periodicity of $\phi(x,I;t):=\Phi(x,I,t)-\langle x,I\rangle$, and from Proposition \ref{gevcomp2}, we also have the estimate
	\begin{equation}
	|\partial_x^\alpha \partial_I^\beta\TD(\Phi(x,I;t-\langle x,I\rangle))|\leq A\kappa C^{|\alpha|+|\beta|+|\delta|}L_1^{|\alpha|}(L_1^{\tau+1}/\kappa)^{|\beta|}\alpha!^\rho\beta!^{\rho'}\delta!^{\rho'}L_1^{N/2}\sqrt{\epsilon_H}.
	\end{equation}
	We can then apply Corollary \ref{komatsucor} to solve the implicit equation 
	\begin{equation}
	\partial_I\Phi(\gamma,I,t)=\theta
	\end{equation}
	for $\gamma$ with the estimate
	\begin{equation}
	|\partial_\theta^\alpha\partial_I^\beta\TD(\gamma(\theta,I,t)-\theta)|\leq  A\kappa C^{|\alpha|+|\beta|+|\delta|}L_1^{|\alpha|}(L_1^{\tau+1}/\kappa)^{|\beta|}\alpha!^\rho\beta!^{\rho'}\delta!^{\rho'}L_1^{N/2}\sqrt{\epsilon_H}.
	\end{equation}
	This completes the construction of a symplectomorphism $\chi$ satisfying
	\begin{equation}
	\chi(\partial_I\Phi(\theta,I,t),I)=(\theta,\partial_\theta \Phi(\theta,I,t)).
	\end{equation} 
	It follows that
	\begin{equation}
	(\theta,F(\theta;\omega,t))=\chi(\partial_I\Phi(\theta,I(\omega),t),I(\omega))=\chi(\theta,I(\omega),t)
	\end{equation}
	for $\omega\in\Omega_\kappa$ and so
	\begin{equation}
	\Lambda_\omega=\{\chi(\theta,I(\omega),t):\theta\in \T^n\}.
	\end{equation}
	for $(\omega,t)\in \Omega_\kappa\times (-1/2,1,2)$.
	
	We now set $\tilde{H},K,R$ as in the theorem statement in terms of the symplectomorphism $\chi$.
	Since $H$ is constant on $\Lambda_\omega$ for each $\omega\in \Omega_\kappa$, it follows that $R(\cdot,I;t)$ is identically zero for each $I=I(\omega)$ with $\omega\in\Omega_\kappa$. Hence $R$ is flat at $I\in E_\kappa(t)$, since each point in $E_\kappa(t)$ is of positive density in $I(\Omega_\kappa)$. 
	
	Finally, the Gevrey estimate in \eqref{main1est} for $\tilde{H}(\theta,I,t)-H(I,t)$ follows from Proposition \ref{gevcomp2}.
	This completes the proof.
\end{proof}

\subsection{Calculation of $\partial_t K_0(I,0)$}
\label{leading}
\label{derivcalcsec}
A crucial ingredient in the proof of Theorem \ref{thm:main} is the calculation of the derivative of quasi-eigenvalues in Proposition \ref{quasispeedprop} in the semiclassical limit $h\rightarrow 0$. From the truncated quantum Birkhoff normal form \ref{main2}, this can be reduced to the study of the $t$-dependence of the integrable term $K(I;t)$ in the classical Birkhoff normal form established in Theorem \ref{main1}. 

We now consider a $1$-parameter family of Hamiltonians $H(\theta,I;t)$ satsfying the assumptions of Theorem \ref{thm:main}. We can write
\begin{equation}
H(\theta,I;t)=H^0(I)+H^1(\theta,I;t)
\end{equation}
with
\begin{equation}
H^0(I):=H(\theta,I;0)
\end{equation}
and
\begin{equation}
\label{perttaylor}
H^1(\theta,I;t):=t\partial_tH(\theta,I;0)+\int_0^t (1-s)\partial_t^2 H(\theta,I;s)\, ds=t\partial_tH(\theta,I;0)+O(t^2)
\end{equation}
and we assume that $H$ additionally satisfies the assumptions of Theorem \ref{main1} with this choice of $H^0,H^1$. By applying two KAM stem iterations to $H(\theta,I;t)$, we obtain a transformed completely integrable component and reduce the order of magnitude of the $\theta$-dependent remainder.
An application of Theorem \ref{main1} to this transformed Hamiltonian produces a Birkhoff normal form, and \eqref{main1est}, yields an expression for $K(I;t)$ up to order $o(t)$.

The KAM step iterations required differ from that in Theorem \ref{kamstep}, in that they are not parametrised by $\omega\in\Omega$ and instead take place in action-angle space  $\T^n\times D$. Such a KAM step appears in the proof of the KAM theorem found in \cite{galavotti}. We first describe the KAM step without the presence of the parameter $t$ for simplicity. One begins with a perturbation
\begin{equation}
H(\theta,I)=H^0(I)+H^1(\theta,I)
\end{equation}
of a completely integrable Hamiltonian $H^0(I)$, and a fixed perturbation $H^1(\theta,I)$, both  analytic on the complex domain
\begin{equation}
\theta\in 2\pi \mathbb{C}^n\setminus 2\pi \mathbb{R}^n \quad |\textrm{Im}(\theta)|< s
\end{equation} 
\begin{equation}
\textrm{Re}(I)\in D\quad |\textrm{Re}(I)|< r.
\end{equation}
We assume that $\|H^1\|_{s,r}=O(\epsilon)$ in the uniform sense.

By consideration of the linearised Hamilton-Jacobi equation, we choose a symplectic transformation $\chi: \mathbb{T}^n\times D\rightarrow \mathbb{T}^n\times D$ with aim is to write
\begin{equation}
\label{aim}
\tilde{H}(\theta,I)=(H\circ\chi)(\theta,I)=\tilde{H}^0(I)+\tilde{H}^1(\theta,I)
\end{equation}
with $\tilde{H}^1=O(\epsilon^\alpha)$ for some $\alpha>1$. 
Then we have transformed a sufficiently small perturbation of an integrable Hamiltonian to an even smaller perturbation of a new integrable Hamiltonian, in a way we can hope to iterate.

Obtaining the ``new" error bound for $\tilde{H}^1$ necessarily requires a shrinking of the domains of analyticity, through the use of Cauchy estimates to control derivatives. Moreover, there is a more subtle shrinking of domain required in the $I$ variable, due to the infamous ``small-divisor" problem.
Specifically, $\chi$ is found using terms of the generating function
\begin{equation}
\label{genfunc}
\Phi(I',\theta)=i\sum_{k\in \mathbb{Z}^n:0<|k|\leq M} \frac{H^1_k(I')e^{ik\cdot \theta}}{\omega(I')\cdot k}. 
\end{equation}
where $H^1_k$ denotes the $k$-th Fourier coefficient of $H^1$, and $\omega=\nabla_I H^0(I)$ 
{(See \cite[(2.10)]{galavotti}).}

The denominators in \eqref{genfunc} can generally be zero, and so one must restrict to values of $I'$ for which we have a nonresonance condition
\begin{equation}
\omega(I')\cdot k\geq \frac{C}{|k|^2}
\end{equation}
for all $0<|k|\leq M$, where $C$ and $M$ are chosen suitably.
We also need to remove those actions $I'$ with $\textrm{dist}(I',\partial \Omega)\leq \tilde{\rho}$ so that the perturbed tori do not escape the coordinate patch. (See \cite[(3.12)]{galavotti} for the choice of the constant $\tilde{\rho}$).
This leads to the definition of the set
\begin{equation}
\tilde{D}_1=\{I\in D:\textrm{dist}(I,\partial D)>\tilde{\rho}\textrm{ and } \omega(I)\cdot k \geq \frac{C}{|k|^2}\textrm{ for all } 0<|k|\leq M.\}
\end{equation}
For any $\tilde{I}\in \tilde{D}_1$ the expression \eqref{genfunc} is certainly defined, but as the domain might have rather rough boundary, it is convenient to slightly enlarge $\tilde{D}_1$ to the open set
\begin{equation}
D_1=\cup_{I\in \tilde{D}_1} B(I,\tilde{\rho}/2).
\end{equation}
Upon restricting to this action set for suitable $C$ and $M$, the objective of \eqref{aim} can indeed be achieved, and the ``integrable part" of the new Hamiltonian can be written as
\begin{equation}
\tilde{H}^0(I)=H^0(I)+(2\pi)^{-n}\int H^1(\theta,I)\, d\theta
\end{equation}
(See \cite[(3.38)]{galavotti}).
The overall transformed Hamiltonian is then given by
\begin{equation}
\tilde{H}(\tilde{\theta},\tilde{I})=\tilde{H}^0(\tilde{I})+\tilde{H}^1(\tilde{\theta},\tilde{I})
\end{equation}
in the domain $\T^n\times D_1$ with 
\begin{equation}
\label{3/2}
\|\tilde{H}^1\|=O(\epsilon^{3/2})
\end{equation}.

The classical KAM theorem is then proven in \cite{galavotti} by iterating this procedure, carefully choosing the $C,M,\tilde{\rho}$ and the analyticity parameters $r,s$ so that the estimate \eqref{3/2} is satisfied with every step, ensuring convergence, and so that the limiting domain $\cap_j D_j$ of nonresonant actions is of large measure. A full discussion of this procedure can be found in \cite{galavotti}.

We now return to our setting of the one-parameter family of Hamiltonians
$$H(\theta,I;t)=H^0(I)+H^1(\theta,I;t)$$
One iteration of the KAM step outlined above yields a family of symplectomorphisms
\begin{equation}
\chi_1:\T^n\times D_1\rightarrow \T^n\times D
\end{equation}
parametrised by $t$ such that
\begin{equation}
\label{firstiter}
\tilde{H}(\theta,I;t)=(H\circ \chi_1)(\theta,I;t)=H^0(I)+t\cdot (2\pi)^{-n}\int_{\T^n}\partial_tH(\theta,I;0)\, d\theta+\tilde{H}^1(\theta,I;t)
\end{equation}
where the second term comes from \eqref{perttaylor} and the error term $\tilde{H}^1(\theta,I;t)=O(t^{3/2})$.
Regarding this transformed Hamiltonian as being a small  perturbation of the integrable Hamiltonian
\begin{equation}
\tilde{H}^0(I;t)=H^0(I)+t\cdot (2\pi)^{-n}\int_{\T^n}\partial_t H(\theta,I;0)\, d\theta
\end{equation}
we perform one more KAM iteration to obtain another family of symplectomorphisms
\begin{equation}
\chi_2:\T^n\times D_3\rightarrow \T^n\times D_2
\end{equation}
parametrised by $t$ such that
\begin{eqnarray}
\nonumber\tilde{\tilde{H}}(\theta,I;t)&=&(\tilde{H}\circ\chi_2)(\theta,I;t)\\
\nonumber &=&H^0(I)+t\cdot (2\pi)^{-n}\int_{\T^n}\partial_t H(\theta,I;0)\, d\theta +(2\pi)^{-n}\int_{\T^n}\tilde{H}^1(\theta,I;t)\, d\theta +\tilde{\tilde{H}}^1(\theta,I;t).
\end{eqnarray}
Moreover, by taking our initial choice of nonresonance parameter $C$ sufficiently small, we can ensure that the action domain $D_3$ contains a collection of nonresonant actions $E_\kappa(t)$ with
\begin{equation}
\nabla_I(\tilde{\tilde{H}}^0(E_\kappa(t)))=\Omega_\kappa
\end{equation}
where 
\begin{equation}
\tilde{\tilde{H}}^0(I;t)=H^0(I)+t\cdot (2\pi)^{-n}\int_{\T^n}H^1(\theta,I)\, d\theta+(2\pi)^{-n}\int_{\T^n}\tilde{H}^1(\theta,I;t)\, d\theta.
\end{equation}

We now summarise the preceding discussion.
\begin{prop}
	\label{bnf15}
	Suppose $H(\theta,I;t)$ is a family of real analytic perturbations of the completely integrable non-degenerate Hamiltonian $H^0(I)$ in $\T^n\times D\times (-1,1)$ that has an analytic extension to
	\begin{equation}
	W_{s,r}(D):= \{(\theta,I)\in \C^n/2\pi\Z \times \C^n:|\textrm{Im}(\theta)|<s,\textrm{dist}(I,D)<r \}.
	\end{equation}
	
	Suppose further that the conditions
	\begin{equation}
	\left|\frac{\partial H^0}{\partial I}\right|\leq E,
	\end{equation}
	\begin{equation}
	\left|\left(\frac{\partial^2 H^0}{\partial I^2}\right)^{-1}\right|\leq \eta,
	\end{equation}
	and
	\begin{equation}
	\left(\left|\frac{\partial H^1}{\partial I}\right|+r^{-1}\left|\frac{\partial H^1}{\partial \theta}\right|\right)\leq \epsilon
	\end{equation}
	are satisfied. 
	
	Then for sufficiently small $\delta > 0$, there exists a subdomain $\tilde{D}\subset D$ and a family of real analytic symplectic maps
	\begin{equation}
	\chi:\T^n\times \tilde{D} \times (-\delta,\delta) \rightarrow \T^n\times D
	\end{equation}
	that analytically extend to a new domain of holomorphy
	\begin{equation}
	W_{s_+,r_+}(\tilde{D})
	\end{equation} 
	such that
	\begin{equation}
	(H\circ \chi)(\theta,I;t)=\tilde{H}^0(I;t)+ \tilde{H}^1(\theta,I;t).
	\end{equation}
	with
	\begin{equation}
	\partial_t\tilde{H}^0(I;0)=(2\pi)^{-n}\int_{\T^n} \partial_t H(\theta,I;0)\, d\theta
	\end{equation} 
	and
	\begin{equation}
    \|\tilde{H}^1\|_{s_+,r_+} = O(t^{9/4})
	\end{equation}
	with constant depending only on $n$ and $E$.
	Moreover, this domain $\tilde{D}$ contains a collection $E_\kappa(t)$ of actions such that 
	\begin{equation}
	\nabla_I(\tilde{H}^0)(E_\kappa(t))=\Omega_\kappa.
	\end{equation}
\end{prop}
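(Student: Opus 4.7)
The plan is to treat $t$ itself as the small parameter governing the size of the perturbation and perform two successive KAM step iterations in the style of Galavotti, as sketched in the discussion immediately preceding the proposition. The Taylor expansion \eqref{perttaylor} gives $H^1(\theta,I;t)=t\,\partial_tH(\theta,I;0)+O(t^2)$, so if we fix uniform bounds $E,\eta$ on $H^0$ and measure the perturbation in a Cauchy norm on $W_{s,r}(D)$, then $\|H^1\|_{s,r}=O(t)$. Hence the classical KAM step applies for all $t$ in a sufficiently small interval $(-\delta,\delta)$, with the constants $C,M,\tilde{\rho}$ chosen in terms of $n,E,\eta$ and $\kappa$.

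First I would apply one KAM step as described above. By the Hamilton--Jacobi construction of the generating function $\Phi$ in \eqref{genfunc}, the associated symplectomorphism $\chi_1(\cdot,\cdot;t):\T^n\times D_1\to \T^n\times D$ depends analytically on $t$ (since the ingredients depend analytically on $t$), and
\begin{equation*}
(H\circ\chi_1)(\theta,I;t)=H^0(I)+t\cdot(2\pi)^{-n}\!\int_{\T^n}\partial_tH(\theta',I;0)\,d\theta'+\tilde H^1(\theta,I;t),
\end{equation*}
where the isolated linear-in-$t$ term comes from averaging $t\,\partial_tH(\cdot,I;0)$ over $\T^n$, and $\|\tilde H^1\|_{s_1,r_1}=O(t^{3/2})$ by the standard quadratic improvement in \eqref{3/2}. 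Next I would apply a second KAM step, now viewing $\tilde H^0(I;t):=H^0(I)+t\cdot(2\pi)^{-n}\!\int_{\T^n}\partial_tH(\theta,I;0)\,d\theta$ as the integrable background (which remains non-degenerate for $t$ small by a perturbation argument) and $\tilde H^1=O(t^{3/2})$ as the new perturbation. This produces $\chi_2:\T^n\times D_3\to\T^n\times D_2$, with $\chi:=\chi_1\circ\chi_2$, and the transformed Hamiltonian becomes
\begin{equation*}
\tilde{\tilde H}(\theta,I;t)=\tilde H^0(I;t)+(2\pi)^{-n}\!\int_{\T^n}\tilde H^1(\theta',I;t)\,d\theta'+\tilde{\tilde H}^1(\theta,I;t),
\end{equation*}
with $\|\tilde{\tilde H}^1\|_{s_+,r_+}=O((t^{3/2})^{3/2})=O(t^{9/4})$.

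Setting $\tilde H^0(I;t)$ in the proposition to the full integrable part of $\tilde{\tilde H}$ and $\tilde H^1$ to $\tilde{\tilde H}^1$ completes the construction. To verify the formula for $\partial_t\tilde H^0(I;0)$, observe that $H^0$ contributes nothing, the explicit linear term yields exactly $(2\pi)^{-n}\int_{\T^n}\partial_tH(\theta,I;0)\,d\theta$, and the remaining averaged correction $(2\pi)^{-n}\int \tilde H^1\,d\theta=O(t^{3/2})$ has vanishing $t$-derivative at $t=0$. For the final claim, one chooses the nonresonance threshold $C$ in each KAM step small enough (depending on $\kappa$ and $\tau$) that the full Diophantine set $\Omega_\kappa$ is contained in the image of the approximate frequency map of $\tilde H^0$; since $\tilde H^0$ differs from $H^0$ by $O(t)$, non-degeneracy ensures $\nabla_I\tilde H^0$ is a local diffeomorphism for $t\in(-\delta,\delta)$, and we set $E_\kappa(t):=(\nabla_I\tilde H^0(\cdot;t))^{-1}(\Omega_\kappa)\subset D_3=\tilde D$.

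The main obstacle will be the careful bookkeeping of the shrinking analyticity domains $(s,r)\mapsto(s_1,r_1)\mapsto(s_+,r_+)$ across the two iterations, while retaining uniform control of the constants hidden in the $O(t^{3/2})$ and $O(t^{9/4})$ remainders in a norm compatible with Cauchy estimates (so that the derivative-in-$t$ conclusion at $t=0$ survives). A related but secondary difficulty is that the KAM step of \cite{galavotti} uses a truncated nonresonance condition $|\omega(I)\cdot k|\geq C/|k|^2$ for $|k|\leq M$, whereas $\Omega_\kappa$ is defined by an ungraded Diophantine inequality; one must verify that by taking $C$ sufficiently small relative to $\kappa$ and ensuring the actions in $E_\kappa(t)$ stay a positive distance from $\partial D_3$, the coarser finite-$k$ conditions are automatically satisfied and $E_\kappa(t)$ is well-defined as claimed.
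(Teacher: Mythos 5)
Your proposal matches the paper's own treatment essentially verbatim: Proposition \ref{bnf15} is stated in the paper as ``summarising the preceding discussion,'' which is precisely the two Galavotti-style KAM iterations you describe, producing the $O(t^{3/2})$ remainder after the first step and the $O(t^{9/4})$ remainder after the second, with $\partial_t\tilde{H}^0(I;0)$ read off from the linear-in-$t$ averaged term since the absorbed $O(t^{3/2})$ correction has vanishing $t$-derivative at $t=0$. The technical caveats you flag at the end (domain bookkeeping across the two shrinking holomorphy domains, and reconciling the truncated nonresonance conditions $|\omega(I)\cdot k|\geq C/|k|^2$ for $|k|\leq M$ with membership of $\Omega_\kappa$ by choosing $C$ small relative to $\kappa$) are exactly the points the paper's sketch also leaves implicit, so your reading of what needs care is sound.
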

We can also generalise this result to the Gevrey setting.
\begin{prop}
	\label{bnf175}
	Suppose $H(\theta,I;t)\in G^{\rho,\rho,1}(\T^n\times D\times (-1,1)$ is a family of Hamiltonians satisfying the assumptions of Theorem \ref{main1} where $H^0(I):=H(\theta,I;0)$ for fixed $\rho>1$, and choose $\kappa>0$ small. Then for sufficiently small $\|H(\theta,I;t)-H^0(I)\|_{L_1,L_2,L_2}$, there exists a subdomain $\tilde{D}\subset D$ and a $G^{\rho,\rho,1}$ family of  symplectic maps
	\begin{equation}
	\chi:\T^n\times \tilde{D} \times (-1,1) \rightarrow \T^n\times D
	\end{equation}
	such that
	\begin{equation}
	(H\circ \chi)(\theta,I;t)=\tilde{H}^0(I;t)+ \tilde{H}^1(\theta,I;t).
	\end{equation}
	with
	\begin{equation}
	\label{newh0}
	\partial_t\tilde{H}^0(I;0)=(2\pi)^{-n}\int_{\T^n} \partial_t H(\theta,I;0)\, d\theta
	\end{equation} 
	and
	\begin{equation}
	\label{why9on4}
	\|\tilde{H}^1\|_{CL_1,CL_2,CL_2} = O(t^{9/4})
	\end{equation}
	with constant independent of $\kappa$ and with $C$ dependent only on $n$ and $\rho$.
	
	Moreover, the domain $\tilde{D}$ contains $E_\kappa(t)=\omega^{-1}(\Omega_\kappa;t)=(\nabla_I \tilde{H}^0)^{-1}(\Omega_\kappa;t)$
\end{prop}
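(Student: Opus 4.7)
The plan is to mirror the two-step argument of the analytic Proposition \ref{bnf15} in the Gevrey category, leveraging the approximation and Whitney-extension machinery already developed in Section \ref{kamproof}. First I would Taylor expand in $t$ to write $H(\theta,I;t) = H^0(I) + H^1(\theta,I;t)$ with $H^1(\theta,I;t) = t\,\partial_t H(\theta,I;0) + O(t^2)$; since $H$ is analytic in $t$, Cauchy estimates convert the Gevrey bound on $H$ into a Gevrey bound on $H^1$ of size $O(t)$ in the norm $\|\cdot\|_{CL_1,CL_2,CL_2}$. The smallness assumption on $\|H-H^0\|_{L_1,L_2,L_2}$ then ensures the hypotheses needed to launch a KAM step.

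For the first KAM iteration, I would work directly in action--angle coordinates on a nonresonant subdomain (as in the Gal\'avotti-style scheme underlying \eqref{firstiter}), rather than parametrizing by frequency as in Theorem \ref{kamwithparams}. Applying Proposition \ref{approxlemma} to $H^1$ produces analytic approximants $H^1_j$ on shrinking complex strips, and for each $H^1_j$ a single analytic KAM step produces a symplectomorphism $\chi_{1,j}$ satisfying
\[
(H^0 + H^1_j)\circ\chi_{1,j} = H^0(I) + (2\pi)^{-n}\!\int_{\T^n}\! H^1_j(\theta,I;t)\,d\theta + O(t^{3/2}).
\]
The jet $\{\chi_{1,j}\}$ converges exactly as in Proposition \ref{iterative} and Lemma \ref{gevest}, and extends via Proposition \ref{whitneythm} to a Gevrey symplectomorphism $\chi_1$. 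Because $H^1$ is analytic in $t$, $t$-differentiation commutes with the angular average, so evaluating at $t=0$ recovers \eqref{newh0}.

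For the second iteration, I would regard $H\circ\chi_1$ as a perturbation of size $O(t^{3/2})$ of the new integrable Hamiltonian $H^0(I)+(2\pi)^{-n}\!\int_{\T^n}H^1(\theta,I;t)\,d\theta$, and apply the same Gevrey KAM step to produce a second symplectomorphism $\chi_2$. A single step converts a perturbation of size $\epsilon$ into one of size $\epsilon^{3/2}$, so the cumulative bound is $(t^{3/2})^{3/2}=t^{9/4}$, giving \eqref{why9on4}. The nonresonant action set $E_\kappa(t)$ is then produced via the Komatsu-type implicit function theorem of Corollary \ref{komatsucor}, since $\nabla_I\tilde{H}^0(\,\cdot\,;t)$ is a small Gevrey perturbation of the nondegenerate map $\nabla H^0$ and hence admits a Gevrey inverse $\omega\mapsto I(\omega;t)$ carrying $\Omega_\kappa$ into $\tilde D$.

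The main obstacle I anticipate is ensuring that the single-step gain $\epsilon\mapsto\epsilon^{3/2}$ is preserved in the Gevrey norm at each of the two stages, with the Gevrey constants $L_1,L_2$ inflated by at most a bounded multiplicative factor between stages, so that the final constant $C$ depends only on $n$ and $\rho$. Because only two iterations are required rather than the infinite sequence of Section \ref{kamproof}, the delicate convergence bookkeeping there collapses: one needs only a fixed, uniform choice of the KAM step parameters $\sigma,\eta,K$ at each of the two stages, and all small losses from the approximation and Whitney-extension layers can be absorbed into $C$ without any careful telescoping of geometric sequences.
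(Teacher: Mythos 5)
Your proposal is correct and follows essentially the same route as the paper: decompose $H=H^0+H^1$, approximate by analytic Hamiltonians via Proposition \ref{approxlemma} after a Whitney extension, perform two Gal\'avotti-style KAM steps in action--angle space, and obtain \eqref{why9on4} from the single-step gain $\epsilon\mapsto\epsilon^{3/2}$ applied twice. The one place the paper is lighter than your sketch is the Gevrey convergence of the single-step maps: since the generating function of each step is a weighted sum of \emph{finitely many} Fourier components of the perturbation, $G^{\rho,\rho,1}$ convergence of the analytic approximants passes directly to the generating functions and then to the symplectomorphisms via Corollary \ref{komatsucor}, so the jet-convergence and Whitney-extension machinery of Lemma \ref{gevest} and Proposition \ref{whitneythm} (which is tailored to the Cantor-set, frequency-parametrized iteration) is not needed at that stage.
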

\begin{proof}
	This result is established via the approximation of Gevrey functions by real-analytic functions. First, we define
	\begin{equation}
	H^0(I)=H(\theta,I;0)
	\end{equation}
	and 
	\begin{equation}
	H^1(\theta,I;t)=H(\theta,I;t)-H(\theta,I;0)=\int_0^t \partial_tH(\theta,I;s)\, ds
	\end{equation}
	and	use Proposition \ref{whitneythm} to boundedly extend $H^0$ and $H^1$ to the domain $\T^n\times \R^n\times (-1,1)$, before cutting off in $I$ to a ball $B_{\tilde{R}}$ with $D\subset B_{\tilde{R}-1}$.
	From the same methods used in the proof of Proposition \ref{approxlemma}, we may then construct sequences of real analytic functions $P^0_j$ and $P^1_j$ on shrinking $j$ dependent complex domains $U_j$ containing $\T^n\times \R^n\times (-1,1)$ with a corresponding sequence $u_j\rightarrow 0$ such that 
	\begin{equation}
	\label{pkuj}
	|P^k_{j+1}-P^k_j|_{U_{j+1}}\leq C(D^0,L_1,L_2) \exp\left(-\frac{3}{4}(\rho-1)(2L_1 u_j)^{-1/(\rho-1)}\right)\|H^k\|
	\end{equation}
	and
	\begin{equation}
	|\partial_x^\alpha (P^k_j-H^k)(\theta,I;t)|\leq C(D^0,L_1,L_2) \exp\left(-\frac{3}{4}(\rho-1)(2L_1 u_j)^{-1/(\rho-1)}\right)
	\end{equation}
	in $\mathbb{T}^n\times B_{\tilde{R}}\times (-1,1)$ for $|\alpha|\leq 1$. These sequences $P^k_j$ are convergent in ${G^{\rho,\rho,1}(\T^n\times\R^n\times (-1,1))}$, as is shown in \cite{gevapproxref} Proposition 2.2. (This fact can be readily obtained by applying Cauchy estimates to \eqref{pkuj}.)
	
	Now for each $j\in \N$, we can carry out the first KAM step for the real analytic Hamiltonian $P_j=P^0_j+P^1_j$ to obtain a real analytic symplectic map 
	\begin{equation}
	\chi_j:\T^n\times D_1 \rightarrow \T^n\times D
	\end{equation}
	defined in shrinking holomorphy domains such that
	\begin{equation}
	((P^0_j+P^1_j)\circ \chi_j)(\theta,I;t)=P^0(I)+t\cdot (2\pi)^{-n} \int_{\T^n} \partial_t P_j^1(\theta,I;0)\, d\theta+\tilde{P}^1(\theta,I;t)
	\end{equation}
	with $\|P_j^1\|=O(t^{3/2})$.
	Note that for an individual KAM step, the symplectic map $\chi_j$ is defined using a generating function $\Phi_j$ that is a weighted sum of finitely many Fourier components of $P^1_j$ (see \eqref{genfunc} and \cite[Equation 3.14]{galavotti}.)
	This implies that as $P^0_j+P^1_j\rightarrow H^0+H^1$ in $G^{\rho,\rho,1}(\T^n\times D_1\times (-1,1))$, the generating functions $\Phi_j$ converges to some
	\begin{equation}
	\Phi\in G^{\rho,\rho,1}(\T^n\times D_1\times (-1,1))
	\end{equation}
	in the $G^{\rho,\rho,1}$ sense.
	From Corollary \ref{komatsucor}, it follows that the corresponding symplectic maps $\chi_j$ converge to some
	\begin{equation}
	\chi^1\in G^{\rho,\rho,1}(\T^n\times D_1\times (-1,1))
	\end{equation} 
	in the Gevrey sense.
	
	Similarly, the symplectic maps $\tilde{\chi}_j$ that  comprise a single KAM step for the Hamiltonians
	\begin{equation}
	(P^0_j+P^1_j)\circ\chi_j
	\end{equation}
	can also be seen to converge to some
	\begin{equation}
	\chi^2\in G^{\rho,\rho,1}(\T^n\times D_2,\T^n\times D_1).
	\end{equation}
	
	It follows that the family of symplectic maps $\chi_j\circ\tilde{\chi}_j$ whose existence is asserted by applying Proposition \ref{bnf15} to $P^0_j+P^1_j$ converge to some $\chi:=\chi^1\circ\chi^2$ in the $G^{\rho,\rho,1}$-sense.
	Moreover, if we write 
	\begin{equation}
	(P^0_j+P^1_j)\circ \chi_j\circ \tilde{\chi}_j=\tilde{H}^0_j(I;t)+ \tilde{H}^1_j(\theta,I;t).
	\end{equation}
	in the notation of Proposition \ref{bnf15}, we have that $\tilde{H}^k_j$ are convergent sequences in $G^{\rho,\rho,1}$, and so it follows that their limits $\tilde{H}^0,\tilde{H}^1$ satisfy
	\begin{equation}
	\partial_t\tilde{H}^0(I;0)=(2\pi)^{-n}\int_{\T^n} \partial_t H(\theta,I;0)\, d\theta
	\end{equation}
	and
	\begin{equation}
	\label{estimate}
	\|\tilde{H}^1\|_{CL_1,CL_2,CL_2} = O(t^{9/4})
	\end{equation}
	as required.
\end{proof}

Finally, we complete our computation of $\partial_tK_0(I;0)$ for a given Hamiltonian $H(\theta,I;t)$ satisfying the conditions of Theorem \ref{main1} by applying Proposition \ref{bnf175} to $H$, prior to applying Theorem \ref{main2} to compute the Birkhoff normal form of the transformed Hamiltonian $\tilde{H}(\theta,I;t)$.

By applying Proposition \ref{bnf175} to $H(\theta,I;t)$ with $\|H(\theta,I;t)-H(\theta,I;0)\|$ sufficiently small, we can then apply Theorem \ref{main1} to the Hamiltonian 
\begin{equation}
\tilde{H}(\theta,I;t)=\tilde{H}^0(I;t)+\tilde{H}^1(\theta,I;t)
\end{equation}
with an improved error term.

\begin{prop}
	\label{bnf2}
	Suppose the assumptions of Theorem \ref{kamcons} hold for the Hamiltonian
	\begin{equation}
	H(\theta,I;t)\in G^{\rho,\rho,1}(\mathbb{T}^n\times D\times (-1,1)).
	\end{equation} 
	Then there exists $N(n,\rho,\tau)>0$ and $\epsilon>0$ independent of $L_1,L_2,D$ such that for any $H\in G^{\rho,\rho,1}_{L_1,L_2,L_2}(\mathbb{T}^n\times D\times (-1,1))$ with 
	\begin{equation}
	\label{pertsmall3}
	\kappa^{-2}\|H(\theta,I;t)-H(\theta,I;0)\|_{L_1,L_2,L_2}=\epsilon_H\leq \epsilon L_1^{-N-2(\tau+2)}
	\end{equation}
	there is a subdomain $\tilde{D}\subset D$ containing $E_\kappa(0)$ and a family of $G^{\rho',\rho'}$ maps $\omega:\tilde{D}\times (-1/2,1,2)\rightarrow \Omega$ and a family of maps $\chi\in G^{\rho,\rho',\rho'}(\mathbb{T}^n\times \tilde{D}\times (-1/2,1,2),\mathbb{T}^n\times \tilde{D})$ that are diffeomorphisms and exact symplectomorphisms respectively for each fixed $t\in (-1/2,1,2)$.
	Moreover, we can choose the maps $\omega$ and $\chi$ such that family of transformed Hamiltonians 
	\begin{equation}
	\tilde{H}(\theta,I;t):=(H\circ \chi)(\theta,I;t)
	\end{equation}
	is of Gevrey class $G^{\rho,\rho',\rho'}(\mathbb{T}^n\times \tilde{D}\times (-1/2,1,2))$ and can be decomposed as
	\begin{equation}
	K(I;t)+R(\theta,I;t):=\tilde{H}(0,I;t)+(\tilde{H}(\theta,I;t)-\tilde{H}(0,I;t))
	\end{equation}
	such that:
	\begin{enumerate}
		\item $\mathbb{T}^n\times \{I\}$ is an invariant Lagrangian torus of $\tilde{H}(\cdot,\cdot;t)$ for each $I\in E_\kappa(t)=\omega^{-1}(\tilde{\Omega}_\kappa)$ and each $t\in (-1/2,1,2)$. 
		\item $
		\IB (\nabla K(I;t)-\omega(I;t))=\IB R(\theta,I;t)=0\quad\textrm{for all }(\theta,I;t)\in \T^n\times E_\kappa(t)\times (-1/2,1,2),\beta\in\N^n.
		$
		\item There exist $A,C>0$ independent of $\kappa,L_1,L_2,$ and $D\subset D^0$ such that we have the estimates
		\begin{eqnarray}
		& &|\TA\IB\TD \phi(\theta,I;t)|+|\IB\TD(\omega(I;t)-\nabla \tilde{H}^0(I;t))|+|\TA\IB\TD (\tilde{H}(\theta,I;t)-\tilde{H}^0(I;t))|\nonumber\\
		&\leq & \label{main1est2}A C^{|\alpha|+|\beta|+|\delta|}L_1^{|\alpha|}(L_1^{\tau+1}/\kappa)^{|\beta|}\alpha!^\rho \beta!^{\rho'}\delta!^{\rho'}L_1^{N/2}|t|^{9/8}
		\end{eqnarray}
		uniformly in $\T^n\times \tilde{D}\times (-1/2,1,2)$ for all $\alpha,\beta$, where $\phi\in G^{\rho,\rho',\rho'}(\T^n\times \tilde{D}\times (-1/2,1,2))$ is such that $\langle \theta,I\rangle+\phi(\theta,I;t) $ generates the symplectomorphisms $\chi$ in the sense of Proposition \ref{genfunc} and $\tilde{H}^0,\tilde{H}^1$ are as in Proposition \ref{bnf175}.
		
		\item \begin{equation}
		\label{chp4tderiv}
		\partial_t K(I;t)=(2\pi)^{-n}\int_{\T^n} \partial_tH(\theta,I;0)+o(1)
		\end{equation}
		uniformly in $\T^n\times \tilde{D}\times (-1/2,1,2)$.
	\end{enumerate}
\end{prop}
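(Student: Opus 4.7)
The strategy is a two-stage reduction: first, we use Proposition \ref{bnf175} to prepare the Hamiltonian so that the $t$-dependent perturbation of an integrable Hamiltonian is of order $t^{9/4}$; then we feed this prepared Hamiltonian into Theorem \ref{main1}, using the improved smallness of the perturbation to obtain both the sharper error estimate \eqref{main1est2} and the leading-order formula \eqref{chp4tderiv}.

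\emph{Stage 1.} Apply Proposition \ref{bnf175} to $H(\theta,I;t)$. This produces a subdomain $\tilde{D}\subset D$ and a $G^{\rho,\rho,1}$ family of symplectomorphisms $\chi_0: \T^n\times \tilde D\times (-1,1)\to \T^n\times D$ with
\[
(H\circ\chi_0)(\theta,I;t)=\tilde H^0(I;t)+\tilde H^1(\theta,I;t),\qquad \partial_t\tilde H^0(I;0)=(2\pi)^{-n}\!\int_{\T^n}\!\partial_tH(\theta,I;0)\,d\theta,
\]
and the crucial estimate $\|\tilde H^1\|_{CL_1,CL_2,CL_2}=O(t^{9/4})$. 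The torus average appears in the leading coefficient of $\tilde H^0(I;t)$ because each KAM step kills only the mean-zero part of the current perturbation, leaving behind the Fourier-zero mode, which at first order in $t$ is exactly the torus average of $\partial_tH(\cdot,\cdot;0)$.

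\emph{Stage 2.} Regard $\tilde H^0(I;t)$ as the new (now $t$-dependent) completely integrable part and $\tilde H^1$ as the perturbation. For $|t|$ small enough the smallness condition \eqref{pertsmall2} is satisfied with $\epsilon_{\tilde H}=\kappa^{-2}\|\tilde H^1\|=O(t^{9/4})$; the Gevrey constants only grow by a factor depending on $n,\rho$, so the structural hypotheses of Theorem \ref{main1} transfer. Apply Theorem \ref{main1} to produce a further symplectomorphism $\chi_1\in G^{\rho,\rho',\rho'}$ and a Birkhoff normal form $(\tilde H\circ\chi_1)(\theta,I;t)=K(I;t)+R(\theta,I;t)$, and set $\chi=\chi_0\circ\chi_1$. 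Parts (1), (2), (3) of the proposition are inherited directly from the corresponding conclusions of Theorem \ref{main1}, with the $\sqrt{\epsilon_H}$ factor in \eqref{main1est} replaced by $\sqrt{\epsilon_{\tilde H}}=O(|t|^{9/8})$, producing \eqref{main1est2}.

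\emph{Derivative of $K$.} For part (4), decompose $K(I;t)=\tilde H^0(I;t)+\bigl(K(I;t)-\tilde H^0(I;t)\bigr)$. Setting $|\alpha|=|\beta|=0$ and $|\delta|=1$ in \eqref{main1est2} shows that $\partial_t\bigl(K-\tilde H^0\bigr)=O(|t|^{9/8})=o(1)$ uniformly in $I\in\tilde D$. On the other hand, from \eqref{newh0} and the smoothness of $\tilde H^0$ in $t$, Taylor expansion yields
\[
\partial_t\tilde H^0(I;t)=(2\pi)^{-n}\!\int_{\T^n}\!\partial_tH(\theta,I;0)\,d\theta+O(t),
\]
so adding the two contributions gives \eqref{chp4tderiv}.

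\emph{Main obstacle.} The principal subtlety is ensuring that $t$-regularity of the error survives the BNF construction: the estimate \eqref{main1est2} must control $\partial_t$-derivatives with the same $|t|^{9/8}$ smallness as the function itself, because we need $\partial_t(K-\tilde H^0)=o(1)$. This is precisely why the $\delta$-derivative factor in \eqref{main1est2} carries the same $|t|^{9/8}$ weight rather than a weaker bound: the factor is inherited from the $t$-regularity built into the Gevrey/Whitney machinery (Corollary \ref{gevestcor} and Proposition \ref{whitneythm}) during the proof of Theorem \ref{main1}, which tracks $t$-derivatives uniformly throughout the iterative scheme. Once this uniformity is in hand, the remainder of the argument is routine bookkeeping.
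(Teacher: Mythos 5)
Your proof is correct and takes essentially the same route as the paper: first apply Proposition \ref{bnf175} to produce $\tilde H=\tilde H^0+\tilde H^1$ with $\|\tilde H^1\|=O(t^{9/4})$, then feed $\tilde H$ into Theorem \ref{main1}, so the $\sqrt{\epsilon_H}$ in \eqref{main1est} becomes $\sqrt{\epsilon_{\tilde H}}=O(|t|^{9/8})$, and finally derive \eqref{chp4tderiv} by writing $K=\tilde H^0+(K-\tilde H^0)$, bounding $\partial_t(K-\tilde H^0)$ via \eqref{main1est2} with $|\delta|=1$, and Taylor-expanding $\partial_t\tilde H^0$ about $t=0$ using \eqref{newh0}. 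This is exactly the content of the paper's terse proof (which only remarks that the new claim is \eqref{chp4tderiv} and that $9/8=\tfrac12\cdot\tfrac94$); your write-up simply makes the two-stage composition and the $\theta=0$ specialisation explicit.
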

\begin{proof}
	The only new claim in this Proposition is \eqref{chp4tderiv}, which follows from \eqref{main1est2} and the expression \eqref{newh0} for $\tilde{H}^0$. Note that the exponent $9/8$ in \eqref{main1est2} comes from \eqref{why9on4} and the square root in \eqref{main1est}.
\end{proof}

\section{Quantum Birkhoff normal form}
\label{sec:qbnf}

Through the work in section \ref{sec:bnf}, we have now established that the Birkhoff normal form construction in \cite{popovkam} preserves smoothness in the $t$ parameter when applied to the Hamiltonian $P_0(x,\xi;t)$ that is the principal symbol of the operator introduced in \eqref{operatorP}.
This regularity in $t$ propagates through the quantum Birkhoff normal form construction in \cite{popovquasis}, which we discuss in this section.
The upshot of this regularity in $t$ is that the quasimodes constructed in \cite{popovquasis} Section 2.4 can be chosen to have associated quasi-eigenvalues varying smoothly in the parameter $t$. We discuss these quasimodes in Section \ref{gevquasisec}.
\subsection{Quantum Birkhoff normal form}

In \cite{popovquasis}, a quantum Birkhoff normal form is constructed for semiclassical pseudodifferential operators of the form \eqref{operatorP} after first obtaining a classical Birkhoff normal form for the principal symbol of regularity $G^{\rho,\rho'}$ as in Theorem \ref{main1}.
This normal form uses the Gevrey symbol classes introduced in Section \ref{gevsymbsec} and is stated in Theorem \ref{main2}.
We remark that the proof is presented in \cite{popovquasis} for differential operators, but can be carried out without change if the $\mathcal{P}_h$ is a pseudodifferential operator.

We denote by $\chi_1$ the symplectomorphism that transforms the completely integrable Hamiltonian $P(x,\xi;0)$ into action-angle coordinates $H=P\circ(\chi_1)$ and we denote by $\chi_0(t)$ the symplectomorphism that transforms the perturbed Hamiltonian $H(\theta,I;t)$ into Birkhoff normal form, as constructed in Theorem \ref{main1}.
For the purpose of stating the quantum Birkhoff normal form for $\mathcal{P}_h(t)$, the Maslov class of the KAM tori $\{\Lambda_\omega:\omega\in\Omega_\kappa\}$ (as defined in Section 3.4 of \cite{duistbook}) can be identified with elements of $\vartheta\in H^1(\T^n;\Z)$ via the family of symplectomorphisms $\chi_0(t)\circ\chi_1:\T^n\times D\rightarrow T^*M$.
Following \cite{popov2} and \cite{colin}, we can then associate a smooth line bundle $\L$ over $\T^n$ with the class $\vartheta$, such that smooth sections $f\in\mathcal{C}^\infty(\T^n,\L)$ can be canonically identified with smooth functions $\tilde{f}\in\mathcal{C}^\infty(\R^n,\C)$ satisfying the quasiperiodicity condition
\begin{equation}
\label{maslovcanonical}
\tilde{f}(x+2\pi p)=\exp\left(\frac{i\pi}{2}\langle\vartheta,p\rangle\right)\tilde{f}(x)
\end{equation}
for all $p\in \Z^n$.

The quantum Birkhoff normal form in \cite{popovquasis} is far sharper than is necessary for the purposes of this paper, with remainders of order $O(e^{-ch^{-1/\nu}})$.  We require only the following truncated version, with error terms of order $O(h^{\gamma+1})$ for some fixed $\gamma>0$.

\begin{thm}
	\label{main2}
	Suppose $\mathcal{P}_h(t)$ is as in \eqref{operatorP}. Then for each fixed $t$,  there exists a uniformly bounded family of semiclassical Fourier integral operators 
	\begin{equation}
	U_h(t):L^2(\T^n;\mathbb{L})\rightarrow L^2(M)\quad (0<h<h_0)
	\end{equation}
	that are associated with the canonical relation graph of the Birkhoff normal form transformation $\chi(t)$ such that we have
	\begin{enumerate}
		\item $U_h(t)^*U_h(t)-\textrm{Id}$ is a pseudodifferential operator with symbol in the Gevrey class $S_\ell(\T^n\times D)$ which restricts to an element of $h^{\gamma+1}S_\ell(\T^n\times Y)$ for some subdomain $Y$ of $D$ that contains $E_\kappa(t)$.
		\item $\mathcal{P}_h(t)\circ U_h(t)-U_h(t)\circ \mathcal{P}^0_h(t)=\mathcal{R}_h(t)\in h^{\gamma+1}S_\ell$, where the operator $\mathcal{P}^0_h(t)$ has symbol 
		\begin{equation}
		\label{fioconjflatness}
		p^0(\theta,I;t,h)=K^0(I;t,h)+R^0(\theta,I;t,h)=\sum_{j\leq \gamma}K_j(I;t)h^j+\sum_{j\leq \gamma}R_j(\theta,I;t)h^j
		\end{equation}
		with both $K^0$ and $R^0$ in the symbol class $S_\ell(\T^n\times D)$ from Definition \ref{selldef} where $\eta>0$ is a constant, $K_0(I;t),R_0(\theta,I;t)$ are the components of the Birkhoff normal form of the Hamiltonian $P_0\circ \chi_1$ as constructed in Theorem \ref{main1}, and
		\begin{equation}
		\label{qbnfflatness}
		\partial_I^\alpha R_j(\theta,I;t)=0 
		\end{equation}
		for $(\theta,I;t)\in \T^n\times E_\kappa(t)\times (-1,1)$.
		Moreover, the symbols $K_j,R_j$ in \eqref{fioconjflatness} are smooth in the parameter $t$.
	\end{enumerate}
\end{thm}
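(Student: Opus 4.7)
The plan is to take the construction of Popov \cite{popovquasis} essentially off the shelf, truncate it at order $h^{\gamma+1}$, and verify that every step respects smoothness in the parameter $t$. The starting point is Proposition \ref{bnf2}, which supplies a classical Birkhoff normal form transformation $\chi(t) = \chi_1 \circ \chi_0(t)$ whose generating function lies in $G^{\rho,\rho',\rho'}(\T^n\times D\times(-1/2,1/2))$; in particular it is $\mathcal{C}^\infty$ in $t$ and satisfies the anisotropic Gevrey estimates \eqref{main1est2} uniformly in $t$.

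First I would quantise $\chi(t)$ as a semiclassical Fourier integral operator $U_h^{(0)}(t):L^2(\T^n;\mathbb{L})\to L^2(M)$ by writing it, in local coordinates, as an oscillatory integral with phase function built from the generating function $\langle\theta,I\rangle+\phi(\theta,I;t)$ and a Gevrey symbol in $S_\ell$ equal to $1$ modulo $h$. The line bundle $\mathbb{L}$ and the quasi-periodicity \eqref{maslovcanonical} enter through the Maslov class $\vartheta$, handled exactly as in \cite{popov2,colin}. Because the phase and amplitude depend on $t$ only through the data of Proposition \ref{bnf2}, the resulting FIO is smooth in $t$. Egorov's theorem in Gevrey symbol classes (the symbolic version used in \cite{popovquasis}, which only requires the estimates of our Appendix \ref{gevsymbsec}) gives
\begin{equation}
U_h^{(0)}(t)^* \mathcal{P}_h(t) U_h^{(0)}(t) = \mathrm{Op}^W_h(K_0(I;t) + R_0(\theta,I;t)) + h\,Q_1(t) + O(h^2),
\end{equation}
with $K_0+R_0$ the classical Birkhoff normal form and each lower-order symbol smooth in $t$.

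Next I would run the standard semiclassical Birkhoff iteration: at stage $j$ solve the cohomological equation
\begin{equation}
\{K_0,a_j\}(\theta,I;t) + Q_j^{\mathrm{osc}}(\theta,I;t) = 0
\end{equation}
on $\T^n\times E_\kappa(t)$ in a Whitney sense, extend the solution to $\T^n\times D$ using Proposition \ref{whitneythm} (applied jointly in the $(I,t)$ variables), and conjugate $\mathcal{P}_h(t)$ by $\exp(i h^j \mathrm{Op}^W_h(a_j(t))/h)$. The compositional product of two Gevrey symbols depending smoothly on $t$ is itself smooth in $t$, so at each stage the new lower-order symbols $K_j(I;t)+R_j(\theta,I;t)$ inherit smooth $t$-dependence, with $R_j$ flat on $E_\kappa(t)$ because the cohomological solution can be chosen flat there (the inverse of $\mathcal{L}_\omega$ on non-zero Fourier modes is controlled by the Diophantine condition defining $\tilde\Omega_\kappa$). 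Iterating up to $j=\gamma$ and absorbing the conjugations into a single FIO $U_h(t) = U_h^{(0)}(t)\cdot\prod_j \exp(ih^{j-1}\mathrm{Op}^W_h(a_j(t)))$ yields the required intertwining relation with remainder $\mathcal{R}_h(t)\in h^{\gamma+1}S_\ell$. The near-unitarity statement $U_h^*U_h-\mathrm{Id}\in h^{\gamma+1}S_\ell$ on $\T^n\times Y$ follows the same way, truncating Popov's asymptotic unitarity construction at order $\gamma$.

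The main obstacle is the joint $(I,t)$-regularity of the Birkhoff solutions $a_j$. The Diophantine denominators depend on the frequency $\omega(I;t)$, and the flatness set $E_\kappa(t)=\omega^{-1}(\tilde\Omega_\kappa;t)$ moves with $t$, so one cannot just solve the cohomological equation at each fixed $t$ and hope for smoothness across $t$. The way around this is exactly the Whitney extension technology of Appendix \ref{whitneysec}: working in the fixed parameter $\tilde\omega\in\tilde\Omega_\kappa$ via the diffeomorphism \eqref{omegadiffeomainsec}, the Diophantine denominators become $t$-independent, the cohomological solutions are defined by explicit Fourier series which are manifestly smooth in $t$, and Proposition \ref{whitneythm} then re-extends them to a Gevrey function on $\T^n\times D\times(-1/2,1/2)$ whose jet on $\T^n\times E_\kappa(t)\times(-1/2,1/2)$ is the desired solution. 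Once this is in hand, \eqref{qbnfflatness} and the smoothness claim on $K_j,R_j$ are immediate, and truncating the $h$-expansion at order $\gamma$ completes the proof.
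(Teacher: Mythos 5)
Your proposal follows essentially the same route as the paper: quantise the classical Birkhoff transformation from Proposition~\ref{bnf2} as a Gevrey FIO, then correct by conjugation with a near-identity operator whose symbol is determined order by order via the homological equation, and handle the $t$-regularity through the anisotropic Whitney machinery. Two minor implementation differences are worth noting. First, the paper corrects $T_h(t)$ by right-composition with a single elliptic pseudodifferential operator $A_h(t)=\mathrm{Id}+O(h)$ whose symbol $\sum a_j h^j$ is determined iteratively (this is Proposition~\ref{symbolconst}), whereas you propose a Lie-series product of exponentials $\prod_j\exp(ih^{j-1}\mathrm{Op}^W_h(a_j))$; these give the same asymptotic object, though for the exponential version you should observe that $a_1$ is trivial because $\mathcal{P}_h(t)$ already has vanishing subprincipal symbol. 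Second, for $t$-smoothness of the homological equation solutions the paper simply notes (Theorem~\ref{homeqthm}) that the Fourier-coefficient construction from \cite{popovquasis} goes through unchanged and that the uniform rapid decay of the $u_k(I;t)$ gives smoothness of the limit, while you propose to pass to the fixed frequency parameter $\tilde\omega$ via~\eqref{omegadiffeomainsec} before applying Whitney extension; this reparametrisation makes the Diophantine denominators $t$-independent and is a clean way to organise the same estimates, but it is not logically necessary. One small imprecision: you write that the ``cohomological solution can be chosen flat'' on $E_\kappa(t)$ — it is rather the resulting remainder $R_j$ that is flat there, because the full jet of the homological equation is solved on $E_\kappa(t)$ before Whitney extension; the $a_j$ themselves need not be flat.
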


Our statement of Theorem \ref{main2} differs from \cite{popovquasis} Theorem 2.1 only in the presence of the parameter $t$, the smoothness of the symbols $K_j,R_j$ in $t$, and the truncation to fixed finite order $O(h^{\gamma+1})$. We sketch the details of the proof of Theorem \ref{main2} in this section, following the argument of Popov \cite{popovquasis}.

The construction of $U_h(t)$ can be broken into multiple steps. We begin by constructing a family of semiclassical Fourier integral operators $T_h(t)$ that conjugate $P_h(t)$ to a family of semiclassical pseudodifferential operators $P_h^1(t):\mathcal{C}^\infty(\T^n;\mathbb{L})$ with principal symbol equal to $K_0(I;t)+R_0(\theta,I;t)$, the Birkhoff normal form of $H$, and with  vanishing subprincipal symbol. The conjugating semiclassical Fourier integral operators arise by quantising the $G^\rho$ symplectomorphisms \begin{equation}
\chi_1:\mathbb{T}^n\times D \rightarrow T^*M
\end{equation}
and 
\begin{equation}
\chi_0:\mathbb{T}^n\times D \rightarrow \mathbb{T}^n\times D
\end{equation} 
that transform the unperturbed Hamiltonian $P(x,\xi;0)$ to action-angle variables and transform the perturbed Hamiltonian to Birkhoff normal form respectively, and composing these two operators. Full details for this construction can be found in \cite{popov2} Section 2.

From the regularity of the symplectomorphisms, it follows that there exists a semiclassical expansion for $P_h^1(t)$ with symbols smooth in $t$.

The symbol of the operator $P_h^1(t)$ satisfies the property $\eqref{fioconjflatness}$ to $O(h^2)$, and to improve this, we replace the conjugating Fourier integral operator $T_h$ with $T_hA_h$ for a suitable elliptic pseudodifferential operator $A_h$ whose symbol is determined iteratively on the family of Cantor-like sets $\{(\theta,I;t)\in \T^n\times \R^n\times (-1,1):I\in E_\kappa(t)\}$ by solving equations of the form
\begin{equation}
\label{firsthomeq}
\langle \nabla K_0,\partial_\theta\rangle f(\theta,I;t)=g(\theta,I;t)
\end{equation}
referred to in the literature as homological equations. In this manner the ``flatness condition" of \eqref{qbnfflatness} is obtained for $j>0$, where the $j=0$ statement is established by Theorem \ref{main1}. We outline this procedure in Section \ref{ellconj}.

The key fact is that the homological equation can be solved smoothly in the parameter $t$, which is the content of Theorem \ref{homeqthm}. One can then apply Theorem \ref{homeqthm} as in \cite{popovquasis} Section 2.3 to complete the construction of the quantum Birkhoff normal form, with the additional consequence of smoothness of symbols $K_j,R_j$.

\subsection{Construction of the quantum Birkhoff normal form}
\label{ellconj}
After conjugating $P_h(t)$ by semiclassical Fourier integral operators as described in the previous section, we obtain a family of self-adjoint semiclassical operators $P_h^1(t)$ with symbol $\tilde{p}\in S_{\tilde{\ell}}(\mathbb{T}^n\times D)$ satisfying the flatness condition \eqref{qbnfflatness} to order $h^2$, where $\tilde{\ell}=(\rho,\rho',\rho+\rho'-1)$. That is to say, the formal summation of $\tilde{p}$
\begin{equation}
\sum_{j=0}^\infty \tilde{p}_j(\theta,I;t)h^j
\end{equation}
satisfies
\begin{equation}
\tilde{p}_0(\theta,I;t)=K_0(I;t)+R_0(\theta,I;t)
\end{equation}
and
\begin{equation}
\tilde{p}_1(\theta,I;t)=0.
\end{equation}

The next step of the proof of Theorem \ref{main2} is the improvement of the order of the flatness condition by composition with a suitable elliptic semiclassical pseudodifferential operator $A_h(t)=Id+O(h)$ with symbol
\begin{equation}
a(\theta,I;t)=\sum_{j=1}^\infty a_j(\theta,I;t)h^j.
\end{equation}
To motivate the method, we suppose that a quantum Birkhoff normal form $P_h^0$ exists in the sense of Theorem \ref{main2}. Our current operator $\tilde{P}_h$ is equal to $P_h^0$ up to order $h^2$ by construction. Hence, we have
\begin{eqnarray}
T_h(t)A_h(t)\tilde{P}_h(t)&=& T_h(t)\tilde{P}_h(t)A_h(t)+ T_h(t)[A_h(t),\tilde{P}_h(t)]\\
&=& P^1_h(t)T_h(t)A_h(t)+h^2T(t)B(t)A(t)+T_h(t)[A_h(t),\tilde{P}_h(t)].
\end{eqnarray}
for some semiclassical pseudodifferential operator $B_h(t)$ in the symbol class $S_{\tilde{\ell}}(\mathbb{T}^n\times D)$.
From composition formulae, the symbol of the commutator is equal to
\begin{equation}
-(\partial_\theta^\alpha a_1 \partial_I^\alpha \tilde{p}_0)h^2=-\mathcal{L}_{\omega{I;t}}a_1
\end{equation}
where $\mathcal{L}_\omega=\langle \omega,\partial_\theta\rangle a_1(\theta,I;t)$. Thus to improve the order of the flatness condition, it suffices to choose $a_1$ solving the homological equation
\begin{equation}
\label{homeqex}
\mathcal{L}_{\omega(I;t)}a_1=b_0
\end{equation}
where $b_0$ denotes the principal symbol of $B_h(t)$. Indeed, if \eqref{homeqex} is solvable, then we have
\begin{equation}
T_h(t)A_h(t)P_h(t)=P_h^0(t)T_h(t)A_h(t)+O(h^3).
\end{equation}

Extending this idea, it is shown by Popov \cite{popov2} that we can choose higher order terms of the symbol $a$ in an iterative fashion by the solution of such a homological equation for each power of $h$ that we gain. The consequence is the following result.
\begin{prop}
	\label{symbolconst}
	There exists $a,K^0,r\in S_\ell(\mathbb{T}^n\times D)$ where $\ell=(\rho,\mu,\nu)$ such that
	\begin{equation}
	\label{aexpansion}
	a(\theta,I;t,h)\sim \sum_{j=0}^\infty a_j(\theta,I;t)h^j
	\end{equation}
	\begin{equation}
	\label{Kexpansion}
	K^0(I;t,h)\sim \sum_{j=0}^\infty K_j(I;t)h^j
	\end{equation}
	and
	\begin{equation}
	r(\theta,I;t,h)\sim \sum_{j=0}^\infty r_j(\theta,I;t)h^j
	\end{equation}
	where $a_0=1,r_0=R_0,K_1=0,$ and 
	\begin{equation}
	\tilde{p}\circ a-a\circ K^0\sim r.
	\end{equation}
	where each $r_j(\theta,I;t)$ is flat in $I$ on $\T^n\times E_\kappa(t).$
\end{prop}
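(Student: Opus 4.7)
The plan is to construct the symbols $a, K^0, r$ by an iterative procedure, determining their asymptotic expansions $a_j, K_j, r_j$ order by order in $h$ so that after composition the residual $\tilde{p}\circ a - a\circ K^0$ is flat, at each power of $h$, on the Cantor-like set $\T^n\times E_\kappa(t)$. The base case $a_0=1$, $K_0, R_0$ (the Birkhoff normal form components) and $K_1=0$ is prescribed. At order $h^{N+1}$, the semiclassical composition formula for $\tilde{p}\# a - a\# K^0$ produces a contribution of the form
\begin{equation}
\mathcal{L}_{\omega(I;t)} a_N(\theta,I;t) + G_N(\theta,I;t) - K_{N+1}(I;t)
\end{equation}
where $\mathcal{L}_{\omega(I;t)} = \langle \omega(I;t),\partial_\theta\rangle$ and $G_N$ is a polynomial expression in $\tilde{p}_j, a_j, K_j$ with $j\le N$, together with their $\theta$- and $I$-derivatives, coming from the Moyal expansion and from the second component $R_0$ of the Birkhoff normal form. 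The latter contributes harmlessly because $R_0$ and its $I$-derivatives vanish on $E_\kappa(t)$, allowing one to absorb such terms into the eventual remainder $r_j$.

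The next step is to solve the homological equation $\mathcal{L}_{\omega(I;t)} a_N = K_{N+1} - G_N$ (modulo terms flat on $\T^n\times E_\kappa(t)$) for the unknown pair $(a_N, K_{N+1})$. On the nonresonant set, Fourier expansion in $\theta$ dictates the choice
\begin{equation}
K_{N+1}(I;t) := (2\pi)^{-n}\int_{\T^n} G_N(\theta,I;t)\, d\theta \quad \text{for } I\in E_\kappa(t),
\end{equation}
and then the nonzero Fourier coefficients of $a_N$ are given formally by $\hat{a}_{N,k}(I;t) = -\hat{G}_{N,k}(I;t)/(i\langle k,\omega(I;t)\rangle)$. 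The Diophantine bound $|\langle k,\omega\rangle|\ge \kappa |k|^{-\tau}$ valid on $\Omega_\kappa$ controls the small divisors, yielding a Gevrey-class solution whose norm loses a factor like $L_1^{\tau+1}/\kappa$ per $I$-derivative. This is exactly the content of the homological equation solvability result (Theorem \ref{homeqthm}); applying it with parameter $t$ and using the smoothness of $\omega(I;t)$ from Theorem \ref{main1} yields $a_N, K_{N+1}$ and a remainder $r_N$ flat in $I$ on $E_\kappa(t)$, smoothly in $t$. One then extends $K_{N+1}$ off $E_\kappa(t)$ via the Whitney extension theorem for anisotropic Gevrey classes (Proposition \ref{whitneythm}) so that the extension lives in the correct Gevrey space.

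The main obstacle is the Gevrey control of the iteration: the factorial losses in $|\beta|!^\nu$ coming from repeated $I$-differentiation compound with each iterate, and one must show that the resulting sequences $a_j, K_j, r_j$ admit Gevrey estimates uniform enough that the formal sums $\sum a_j h^j$, $\sum K_j h^j$, $\sum r_j h^j$ are realised as honest elements of $S_\ell(\T^n\times D)$ with $\ell=(\rho,\mu,\nu)$. The choices $\mu > \rho(\tau+1)+1$ and $\nu=\rho(\tau+n+1)$ are precisely calibrated to absorb the $\kappa^{-1}L_1^{\tau+1}$ loss per $I$-derivative and the extra factor from the Whitney extension, following the book-keeping in \cite{popovquasis}. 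Finally, a Borel-type resummation in the Gevrey class yields genuine symbols $a, K^0, r$ with the stated asymptotic expansions, whose flatness at $E_\kappa(t)$ follows from the flatness at each order together with the fact that the Whitney extension can be chosen flat outside an arbitrarily small neighbourhood. Throughout, the $t$-regularity is preserved because every operation (Fourier division, Whitney extension, Borel resummation) is carried out on jets that are smooth in $t$, and Theorem \ref{main1} together with Proposition \ref{bnf175} supplies the smooth $t$-dependence of the inputs $K_0, R_0, \omega$.
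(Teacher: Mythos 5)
Your proposal follows essentially the same route as the paper: the paper motivates Proposition \ref{symbolconst} by exactly the iterative homological-equation argument you describe (choosing $K_{N+1}$ as the torus average, solving $\mathcal{L}_{\omega(I;t)}a_N$ for the oscillatory part via the Diophantine condition, and absorbing the flat contributions of $R_0$ into $r$), defers the Gevrey book-keeping of the iteration to Popov's construction in \cite{popov2}, and isolates the smooth $t$-dependence in Theorem \ref{homeqthm}, which is the same key lemma you invoke. Your account is correct and, if anything, slightly more explicit about the induction and the calibration of the indices $\mu,\nu$ than the paper's own exposition.
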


The symbol $K^0$ in the statement of theorem corresponds to the sought symbol $K^0$ in Theorem \ref{main2}, while the symbol $R^0$ is then constructed by solving $a\circ R^0=r$, which is possible by ellipticity.

The completion of the proof of Theorem \ref{main2} after establishing Proposition \ref{symbolconst} is contained in \cite{popov2} Section 3. For our additional requirement of smoothness in $t$ in Theorem \ref{main2}, it thus suffices to verify that the homological equation can be solved smoothly in the parameter $t$.
In particular, we require the following.

\begin{thm}
	\label{homeqthm}
	Suppose $f(\cdot,\cdot;t)\in G^{\rho,\mu}(\mathbb{T}^n\times D)$ satisfies the estimate
	\begin{equation}
	\label{homeqhyp}
	|\partial_\theta^\alpha\partial_I^\beta f(\theta,I;t)|\leq d_0 C^{|\alpha|+\mu|\beta|}\Gamma(\rho|\alpha|+\mu|\beta|+q)
	\end{equation}
	uniformly in the smooth parameter $t\in (-1,1)$ for some $q>0$ and some $C\geq 1$ and that for each $I\in D$, we have
	\begin{equation}
	\label{homeqavg}
	\int_{\T^n}f(\theta,I;t)\, d\theta=0.
	\end{equation}
	
	Then for any smooth family $\omega(\cdot;t) \in G^{\rho'}_{L_0}(D,\Omega)$ there is a solution $u(\cdot,\cdot;t)\in G^{\rho,\mu}(\T^n\times D)$ to the equation
	\begin{eqnarray}
	\label{homeqn}
	\mathcal{L}_\omega u(\theta,I;t)&=&f(\theta,I;t)\quad (\theta,I)\in \T^n\times E_\kappa(t)\\
	u(0,I;t)&=&0\quad I\in D
	\end{eqnarray}
	where $\mathcal{L}_\omega=\langle \omega(I;t),\frac{\partial}{\partial \theta}\rangle $.
	Moreover, $u$ is smooth in the parameter $t$ and satisfies the estimate
	\begin{equation}
	\label{homeqestimate}
	|\partial_\theta^\alpha \partial_I^\beta u(\theta,I;t)|\leq Ad_0C^{n+\tau+|\alpha|+\mu|\beta|+1 }\Gamma(\rho|\alpha|+\mu|\beta|+\rho(n+\tau+1)+q)
	\end{equation}
	where $A$ depends only on $n,\rho,\tau$ and $\mu$.
\end{thm}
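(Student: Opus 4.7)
The natural approach is Fourier expansion in the angular variable. Writing
\begin{equation*}
f(\theta,I;t)=\sum_{k\in\Z^n\setminus\{0\}}\hat{f}_k(I;t)\,e^{ik\cdot\theta},
\end{equation*}
where the $k=0$ mode vanishes by hypothesis \eqref{homeqavg}, I would seek $u$ as a formal series $\sum_{k\neq 0}\hat{u}_k(I;t)e^{ik\cdot\theta}$ and set
\begin{equation*}
\hat{u}_k(I;t):=\frac{\hat{f}_k(I;t)}{i\langle k,\omega(I;t)\rangle}
\end{equation*}
on the nonresonant set $E_\kappa(t)$, where the Diophantine bound $|\langle k,\omega(I;t)\rangle|\geq \kappa|k|^{-\tau}$ makes the division legitimate. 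Then $\mathcal{L}_\omega u=f$ on $\T^n\times E_\kappa(t)$ by construction, and the normalization $u(0,I;t)=0$ is automatic from the absence of the zero mode.

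The next step is to obtain the Gevrey estimates. Integration by parts against $(ik)^{\alpha}$ combined with hypothesis \eqref{homeqhyp} yields the exponential Fourier decay
\begin{equation*}
|\partial_I^{\beta'}\hat{f}_k(I;t)|\leq d_0\,C^{\mu|\beta'|}\Gamma(\mu|\beta'|+q)\,\exp\bigl(-c|k|^{1/\rho}\bigr)
\end{equation*}
for $k\neq 0$, uniformly in $t$. On the other hand, the Gevrey regularity of $\omega$ and the Fa\`a di Bruno formula, applied to the reciprocal of $\langle k,\omega(I;t)\rangle$ and combined with the Diophantine lower bound, produce an estimate of the shape
\begin{equation*}
|\partial_I^{\beta''}\langle k,\omega(I;t)\rangle^{-1}|\leq C^{|\beta''|}(\beta''!)^{\rho'}\kappa^{-|\beta''|-1}|k|^{(|\beta''|+1)(\tau+1)}
\end{equation*}
on $E_\kappa(t)$, with $\rho'=\rho(\tau+1)+1$. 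Combining via the Leibniz rule and summing in $k$ using the optimization $|k|^{N}e^{-c|k|^{1/\rho}}\lesssim (N!)^\rho$, the polynomial small-divisor losses are absorbed into the Gevrey decay at the price of the shift $\rho(n+\tau+1)$ in the Gamma function argument; this is where the hypothesis $\mu\geq\rho'$ implicit in the statement is essential, as it allows the $(\beta''!)^{\rho'}$ factors to be absorbed into $(\beta!)^\mu$ with room to spare.

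The foregoing produces a coherent Whitney jet of functions $\{\partial_\theta^\alpha\partial_I^\beta u\}$ on $\T^n\times E_\kappa(t)$, continuous in $(\theta,I)$ and smooth in $t$, that satisfies Taylor remainder estimates in $I$ of the type handled by Proposition \ref{whitneythm}. Applying the anisotropic Gevrey Whitney extension, now in the variable $I$ with $\theta$ and $t$ as smooth parameters, extends $u$ from $\T^n\times E_\kappa(t)\times (-1,1)$ to $\T^n\times D\times (-1,1)$ as a $G^{\rho,\mu}$ function smooth in $t$, with quantitative bounds of exactly the form \eqref{homeqestimate}. Smoothness in $t$ passes through the construction since differentiation in $t$ commutes with the termwise Fourier formula, the $t$-derivatives of $\omega$ satisfy the same Gevrey bounds as $\omega$ itself, and the Whitney extension can be performed $t$-parametrically.

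The principal technical obstacle is the combinatorial bookkeeping in the derivative estimates of the small-divisor factor $1/\langle k,\omega(I;t)\rangle$. The Fa\`a di Bruno expansion produces many terms, each carrying a power of $|k|$ multiplied by combinatorial factors, and one must track precisely how these combine with the Gevrey decay of $\hat{f}_k$ to land on the sharp exponent $\rho(n+\tau+1)+q$ rather than something strictly worse. The remaining details are largely arithmetic once the right inductive bound on $\partial_I^\beta\langle k,\omega\rangle^{-1}$ is established; this bound is the true heart of the argument and is precisely the content of the small-divisor machinery developed in Popov's work.
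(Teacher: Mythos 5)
Your proposal follows essentially the same route as the paper, which itself simply defers to Popov's Proposition 2.3: Fourier expansion in $\theta$, division by the small divisor $i\langle k,\omega(I;t)\rangle$ on the Diophantine set, Gevrey decay of Fourier coefficients combined with Fa\`a di Bruno estimates for $\langle k,\omega\rangle^{-1}$, and a Whitney extension from $E_\kappa(t)$ to $D$, with smoothness in $t$ carried through termwise. The sketch is correct and matches the paper's (cited) argument, including the observation that uniformity in $t$ of \eqref{homeqhyp} is what propagates to \eqref{homeqestimate}.
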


	This theorem statement differs from \cite{popovquasis} Proposition 2.3 only in the presence of the smooth parameter $t$, and indeed an identical proof based on taking the Fourier expansion
	\begin{equation}
	u(\theta,I;t)=\sum_{k\in \Z^n} e^{i\langle k,\theta \rangle} u_k(I;t)
	\end{equation}
	and solving for $u_k$ can be pursued. The rapid decay of Fourier coefficients established in \cite{popovquasis} implies that that the limit $u(\theta,I;t)$ is smooth in $t$ as required. The proof is then identical to that in \cite{popovquasis}, with the uniformity in \eqref{homeqestimate} following from the uniformity in \eqref{homeqhyp}.

\subsection{Quasimode construction}
\label{gevquasisec}
We now briefly outline how the construction of Gevrey class quasimodes for $\mathcal{P}_h(t)$ follow from the quantum Birkhoff normal form Theorem \ref{main2}.
These quasimodes microlocalise onto a family of nonresonant tori and moreover have quasi-eigenvalues are smooth in the parameter $t\in (-1,1)$.

\begin{defn}
	\label{gevmodes}
	An $O(h^{\gamma+1})$ family of $G^\rho$ quasimodes $\mathcal{Q}(t)$ for $P_h(t)$ is a family
	\begin{equation}
	\{(u_m(x;t,h),\lambda_m(t,h)):m\in\mathcal{M}_h(t)\} \subset \mathcal{C}^\infty(M\times \mathcal{D}_h(m))\times \mathcal{C}^\infty(\mathcal{D}_h(m))
	\end{equation}
	parametrised by $h\in (0,h_0]$ where
	\begin{itemize}
		\item $\mathcal{M}_h(t)\subset \Z^n$ is a $h$-dependent finite index set;
		\item $\mathcal{D}_h(m)=\{t\in (-1,1):m\in \mathcal{M}_h(t)\}$
		\item each $u(\cdot;t,h)$ is uniformly of class $G^\rho$;
		\item $\|P_h(t)u_m(\cdot;t,h)-\lambda_m(t;h)u_m(\cdot;t,h)\|_{L^2}=O(h^{\gamma+1}) \quad \forall m\in\mathcal{M}_h(t) $;
		\item $|\langle u_m(\cdot;t,h),u_l(\cdot;t,h)\rangle -\delta_{ml}|=O(h^{\gamma+1}) \quad \forall m,l\in\mathcal{M}_h(t) $.
	\end{itemize}
\end{defn}

\begin{thm}
	\label{gevquasisthm}
	Suppose now that $t\in (-1,1)$ is fixed and $S\subset E_\kappa(t)$ is a closed collection of nonresonant actions. For an arbitrary constant $L>1$, we define the index set
	\begin{equation}
	\label{indexsetref}
	\mathcal{M}_h:=\{m\in \Z^n: \textrm{dist}(S,h(m+\vartheta/4))< Lh\}
	\end{equation}
	where $\vartheta\in \Z^n$ is the Maslov class of any Lagrangian tori $\{\chi(\T^n\times \{I\})\}$ with $I\in S$. Note that this class is independent of choice of torus by the local constancy of the Maslov class.
	
	Then 
	\begin{equation}
	\{(u_m(x;t,h),\lambda_m(t;h)):m\in\mathcal{M}_h(t)\}:= (U_h(t)e_m,K^0(h(m+\vartheta/4);t,h)
	\end{equation}
	defines a $G^\rho$ family of quasimodes for $P_h(t)$ that has Gevrey microsupport on the family of tori 
	\begin{equation}
	\displaystyle
	\Lambda_S=\bigcup_{I\in S} \Lambda_{\omega(I;t)}=\bigcup_{I\in S}\chi(\T^n\times \{I\})\subset T^*M
	\end{equation}
	where $\{e_m\}_{m\in\Z^n}$ is the orthonormal basis of $L^2(\T^n;\mathbb{L})$ associated to the quasiperiodic functions
	\begin{equation}
	\tilde{e}_m(x):=\exp(i\langle m+\vartheta/4,x \rangle)
	\end{equation}
\end{thm}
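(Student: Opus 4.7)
The plan is to take $u_m(\cdot;t,h) := U_h(t) e_m$ and $\lambda_m(t,h) := K^0(h(m+\vartheta/4);t,h)$ as in the statement, and verify the four conditions of Definition \ref{gevmodes} by feeding them through the intertwining identity
\begin{equation}
\mathcal{P}_h(t) U_h(t) = U_h(t) \mathcal{P}_h^0(t) + \mathcal{R}_h(t)
\end{equation}
from Theorem \ref{main2}, where $\|\mathcal{R}_h(t)\|_{L^2 \to L^2} = O(h^{\gamma+1})$ by the Calder\'on–Vaillancourt bound applied to $S_\ell$–symbols. The uniform $G^\rho$ regularity of $u_m$ in $x$ is inherited from the Gevrey microlocal nature of $U_h(t)$ acting on the analytic basis vectors $e_m$, which satisfy \eqref{maslovcanonical}.

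The key computation is $\mathcal{P}_h^0(t) e_m$. Because $e_m$ is an eigenfunction of the action operator $hD_\theta$ with eigenvalue $h(m+\vartheta/4)$, Weyl quantization of the integrable part $K^0(I;t,h)$ yields exactly $K^0(h(m+\vartheta/4);t,h) e_m = \lambda_m e_m$. The task then reduces to bounding $R^0(\theta, hD_\theta;t,h) e_m$ in $L^2$. Using the composition formula for the Gevrey symbol classes $S_\ell$ from Appendix \ref{gevsymbsec}, one expresses this as a sum of terms involving $\partial_I^\alpha R_j(\theta, h(m+\vartheta/4);t)$ plus a smoothing remainder. By the flatness condition \eqref{qbnfflatness}, every $\partial_I^\alpha R_j$ vanishes on $E_\kappa(t)$. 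Since $m \in \mathcal{M}_h(t)$ forces $\mathrm{dist}(S, h(m+\vartheta/4)) < Lh$ with $S \subset E_\kappa(t)$, a Taylor expansion of $\partial_I^\alpha R_j$ in $I$ around a nearest nonresonant action, combined with the Gevrey estimates on $R_j$, yields bounds of the form $|\partial_I^\alpha R_j(\theta, h(m+\vartheta/4);t)| = O(h^N)$ for every $N \geq 0$. Summing the semiclassical expansion up to order $\gamma$ gives $\|R^0(\theta, hD_\theta;t,h) e_m\|_{L^2} = O(h^{\gamma+1})$, and applying the uniformly $L^2$-bounded operator $U_h(t)$ establishes the quasi-eigenvalue condition.

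For near-orthonormality, write $\langle u_m, u_k\rangle = \langle e_m, e_k\rangle + \langle (U_h(t)^*U_h(t) - \mathrm{Id}) e_m, e_k\rangle$. By Theorem \ref{main2}(1), the symbol of $U_h(t)^*U_h(t) - \mathrm{Id}$ lies in $h^{\gamma+1} S_\ell(\T^n \times Y)$ on a neighborhood $Y$ of $E_\kappa(t)$. Since the action-space microsupport of $e_m$ is the single point $\{h(m+\vartheta/4)\}$, which lies in $Y$ for sufficiently small $h$, the second term is $O(h^{\gamma+1})$ by the same flatness/localization argument, completing the orthonormality bound. The Gevrey microsupport claim is immediate: $U_h(t)$ is a semiclassical Fourier integral operator associated to the canonical graph of $\chi(t)$, and $e_m$ is microlocalized at $\T^n \times \{h(m+\vartheta/4)\}$, so $u_m$ is microlocalized at $\chi(t)(\T^n \times \{h(m+\vartheta/4)\}) \subset \Lambda_S$.

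The main obstacle is the technical step of reducing $R^0(\theta, hD_\theta;t,h) e_m$ to $O(h^{\gamma+1})$: this requires turning the \emph{pointwise} flatness \eqref{qbnfflatness} of the coefficients $R_j(\theta,I;t)$ on the Cantor-like set $E_\kappa(t)$ into quantitative decay estimates at the possibly-non-Diophantine nearby action $h(m+\vartheta/4)$, with constants uniform in $t$. The Gevrey estimates on $R_j$ and the Whitney structure of $E_\kappa(t)$ ensure that, although $h(m+\vartheta/4)$ need not lie in $E_\kappa(t)$, the closeness to $S$ guarantees vanishing of $R_j$ and its derivatives at $h(m+\vartheta/4)$ to sufficiently high order in $h$. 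Once this quantitative flatness is in place, the remaining verifications are formal consequences of Theorem \ref{main2} and the semiclassical pseudodifferential calculus.
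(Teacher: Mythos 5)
Your proposal is correct and follows essentially the same route as the paper: intertwine $\mathcal{P}_h(t)$ with $\mathcal{P}_h^0(t)$ via Theorem \ref{main2}, compute the action on $e_m$, split the symbol as $K^0 + R^0$, invoke Gevrey flatness to kill $R^0$ at actions $O(h)$-close to $E_\kappa(t)$, and derive near-orthogonality from $U_h^*U_h - \mathrm{Id}$ having symbol in $h^{\gamma+1}S_\ell$ near $E_\kappa(t)$. One minor simplification you miss: since the paper quantizes by the Kohn--Nirenberg (left) convention in Definition \ref{gevpseudo}, the operator $\mathcal{P}_h^0(t)$ acts on the exponential $e_m$ \emph{exactly} by multiplication by the full symbol $\sigma(\mathcal{P}_h^0)(\theta, h(m+\vartheta/4))$ --- no composition formula or ``Weyl plus corrections'' expansion is needed; the reduction to estimating $R^0(\theta, h(m+\vartheta/4);t,h)$ is immediate. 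You also overstate the difficulty of converting pointwise flatness on the Cantor set $E_\kappa(t)$ into quantitative decay near it: that step is exactly Proposition \ref{flatissmall}, which the paper cites directly and which already gives the required super-polynomial smallness with uniform constants.
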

\begin{proof}
	From the definition of the functions $e_m$, it follows that
	\begin{eqnarray}
	P_h^0(t)(e_m)(\theta)&=& \sigma(P_h^0(t))(\theta,h(m+\vartheta/4))e_m(\theta)\\
	&=& (K^0(h(m+\vartheta/4);t,h)+R^0(\theta,h(m+\vartheta/4);t,h))e_m(\theta)\\
	&=& (\lambda_m(t;h)+R^0(\theta,h(m+\vartheta/4))e_m(\theta).
	\end{eqnarray}
	From the definition \eqref{indexsetref} of the index set $\mathcal{M}_h(t)$ and from $\eqref{flatissmall}$, it thus follows that 
	\begin{equation}
	P_h(t)(U_h(t)e_m)=U_h(t)P_h^0(t)e_m=O(h^{\gamma+1})
	\end{equation}
	upon an application of Theorem \ref{main2}. 
	The almost-orthogonality of the $U_h(t)e_m$ then follows from the fact that $U_h(t)$ is almost unitary from Theorem \ref{main2}, and that the $e_m$ are exactly orthogonal by construction.
	This completes the proof.
\end{proof}
These quasimodes are as numerous as we could hope for, indeed the index set $\mathcal{M}_h(t)$ satisfies the local Weyl asymptotic
\begin{equation}
\label{indexsetasymp}
\lim_{h\rightarrow 0}(2\pi h)^n \# \mathcal{M}_h =m(\T^n\times S)= \mu(\Lambda_S)
\end{equation}
where $m$ denotes the $(2n)$-dimensional Lebesgue measure and $\mu$ denotes the symplectic measure $d\xi\,dx$. To see this, we can denote by $U$ the union of $n$-cubes centred at the lattice points in $\mathcal{M}_h$ with side length $h$. The containment
\begin{equation}
\label{latter}
S\subset U \subset \{I:\textrm{dist}(I,S)<\tilde{L}h\}
\end{equation}
for a constant $\tilde{L}$ then yields the claim by monotone convergence of measures, noting that since $S$ is closed we have
\begin{equation}
S=\overline{S}=\cap_{h>0} \{I:\textrm{dist}(I,S)<\tilde{L}h\}.
\end{equation}
In the special case of $S=\{I\}$, we have a family of $G^\rho$ quasimodes with microsupport on an individual torus $\chi(\T^n\times \{I\})$. 




\appendix

\section{Anisotropic Gevrey classes}
\counterwithin{thm}{section}
\counterwithin{equation}{section}
\label{gevreyappendix}

In this appendix, we define the Gevrey function spaces used throughout the paper and collect several of their properties from the appendix of \cite{popovkam}. 
\begin{defn}
	\label{gevrey}
	For $\rho\geq 1$ and $X\subset \mathbb{R}^n$ open, the \textsf{Gevrey class of order }$\rho$ is given by
	\begin{equation}
	\label{gevreyeqn}
	G^\rho_L(X):=\{f\in\mathcal{C}^\infty(X): \sup_\alpha\sup_{x\in X} |\partial_x^\alpha f(x)|L^{-|\alpha|}\alpha!^{-\rho}< \infty\}.
	\end{equation}
\end{defn}
If $f\in G^\rho_L(X)$, the supremum in \eqref{gevreyeqn} is denoted by $\|f\|_L$. We will frequently suppress the $L$ in our notation. Equipped with this norm, $G^\rho_L(X)$ is a Banach space.
Gevrey regularity is generally weaker the real analyticity (they coincide when $\rho=1$ as can be seen by using the Cauchy--Hadamard theorem to characterise analytic functions by the growth of their Taylor coefficients) and importantly, there exist bump functions in the Gevrey class for $\rho>1$. 

An important property of the Gevrey class that follows from Taylor's theorem is that if a Gevrey function has vanishing derivatives, then locally it is super-exponentially small.
\begin{prop}
	\label{flatissmall}
	Suppose $f\in G^\rho(X)$, and $\rho>1$.
	Then there exist positive constants $c,C,\eta$ and $r_0$ only dependent on the Gevrey constant $L$, the norm $\|f\|_L$, and the set $X$ such that
	\begin{equation}
	f(x_0+r)=\sum_{|\alpha|\leq \eta |r|^{1/(1-\rho)}}f_\alpha(x_0)r^\alpha +R(x_0,r)
	\end{equation}
	where $f_\alpha=(\partial^\alpha f)/\alpha!$ and
	\begin{equation}
	|\partial_x^\beta R(x_0,r)|\leq C^{1+|\beta|}\beta!^\rho e^{-c|r|^{-1/(\rho-1)}}\quad \forall 0<|r|\leq \min(r_0,d(x_0,\R^n\setminus X)).
	\end{equation}
\end{prop}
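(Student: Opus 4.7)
The plan is to prove Proposition \ref{flatissmall} by the standard quantitative Taylor remainder argument for Gevrey functions, with the truncation order chosen optimally as a function of $|r|$.

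First, I would fix $x_0$ and $r$ with $x_0+sr \in X$ for all $s\in [0,1]$, which holds whenever $|r| < d(x_0,\mathbb{R}^n \setminus X)$, and apply Taylor's theorem in integral remainder form to $f$ expanded about $x_0$:
\begin{equation*}
f(x_0+r) - \sum_{|\alpha|\leq N} f_\alpha(x_0)\, r^\alpha = \sum_{|\alpha|=N+1}\frac{|\alpha|}{\alpha!} r^\alpha \int_0^1 (1-s)^N \partial^\alpha f(x_0+sr)\, ds,
\end{equation*}
valid for any integer $N\geq 0$. Differentiating $\beta$ times in $x$ yields the analogous identity with $\partial^{\alpha+\beta}f$ in place of $\partial^\alpha f$. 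Combining the Gevrey bound $|\partial^\gamma f|\leq \|f\|_L L^{|\gamma|}\gamma!^\rho$ with the elementary estimate $(\alpha+\beta)!^\rho \leq 2^{\rho|\alpha+\beta|}\alpha!^\rho \beta!^\rho$, and noting that the number of multi-indices of length $N+1$ grows only polynomially in $N$, I would obtain
\begin{equation*}
|\partial_x^\beta R_N(x_0,r)| \leq A\|f\|_L (2^\rho L)^{|\beta|} \beta!^\rho (2^\rho L)^{N+1}(N+1)!^{\rho-1}|r|^{N+1}
\end{equation*}
for a constant $A$ depending only on $n$ and $\rho$.

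Second, I would optimize over $N$ by setting $M=N+1$ and noting that, by Stirling's formula, the logarithm of the $M$-dependent factor is
\begin{equation*}
M\log(2^\rho L|r|) + (\rho-1)(M\log M - M) + O(\log M),
\end{equation*}
which is minimized at $M_* \sim e^{-1}(2^\rho L|r|)^{-1/(\rho-1)}$ with minimum value approximately $-(\rho-1)M_*$. Choosing $\eta > 0$ slightly smaller than $e^{-1}(2^\rho L)^{-1/(\rho-1)}$, and choosing $r_0$ small enough that $N(r):=\lfloor \eta |r|^{1/(1-\rho)}\rfloor \geq 1$ for $0<|r|\leq r_0$, the integer $N(r)+1$ lies below $M_*$. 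Substituting this choice into the estimate above gives
\begin{equation*}
|\partial_x^\beta R_{N(r)}(x_0,r)| \leq C^{1+|\beta|}\beta!^\rho e^{-c|r|^{-1/(\rho-1)}}
\end{equation*}
for some positive constants $c,C$ depending only on $\rho$, $L$ and $\|f\|_L$. Since $1/(1-\rho) = -1/(\rho-1)$ for $\rho>1$, this is precisely the form of the claimed bound.

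The argument presents no genuine obstacle, being driven entirely by the Gevrey estimate together with Stirling's formula. The only point that requires minor care is factoring out the $\beta$-dependence cleanly from the $N$-dependence in the constants, which is handled by the inequality $(\alpha+\beta)!^\rho \leq 2^{\rho|\alpha+\beta|}\alpha!^\rho \beta!^\rho$ so that the $\beta$-factor $(2^\rho L)^{|\beta|}\beta!^\rho$ can be absorbed into $C^{1+|\beta|}\beta!^\rho$. The dependence of $c,C,\eta,r_0$ on the set $X$ enters only through the bound on $d(x_0,\mathbb{R}^n\setminus X)$ needed for the line segment $\{x_0+sr : s\in[0,1]\}$ to remain in $X$.
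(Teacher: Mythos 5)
Your proof is correct and follows the standard argument that the paper itself only gestures at (the paper states Proposition \ref{flatissmall} without proof, remarking only that it ``follows from Taylor's theorem''). The structure — integral-form Taylor remainder, the factorial splitting $(\alpha+\beta)!^\rho \leq 2^{\rho|\alpha+\beta|}\alpha!^\rho\beta!^\rho$ to isolate the $\beta$-dependence, absorption of the polynomially many multi-indices of length $N+1$ into a geometric factor, and optimization of the truncation order $N$ via Stirling — is exactly what one would write if the paper had supplied a proof. One small arithmetic slip: the stationary point of $M \mapsto (cL|r|)^M M!^{\rho-1}$ under Stirling's approximation is $M_* = (cL|r|)^{-1/(\rho-1)}$, without the extra factor of $e^{-1}$; your $M_*$ is off by that constant. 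This is harmless, however, since you only need $\eta$ chosen so that $N(r)+1$ is bounded by a fixed fraction of $(cL|r|)^{-1/(\rho-1)}$, and any choice of $\eta$ strictly below $e\,(cL)^{-1/(\rho-1)}$ — in particular your more conservative $\eta < e^{-1}(cL)^{-1/(\rho-1)}$ — yields a negative exponent proportional to $|r|^{-1/(\rho-1)}$, giving the claimed super-exponential bound. The dependence of the constants on $X$ enters, as you say, only through the requirement that the segment $\{x_0+sr\}$ remain in $X$, which the proposition already builds into the restriction $|r|\leq d(x_0,\mathbb{R}^n\setminus X)$.
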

We also need to consider anisotropic Gevrey classes, which are classes of Gevrey functions with differing regularity in individual variables.
\begin{defn}
	\label{gevrey2}
	Suppose $X$ and $Y$ are open subsets of Euclidean spaces. Suppose that $\rho_1,\rho_2\geq 1$ and $L_1,L_2>0$. Then
	\begin{equation}
	\label{anisotropicgevreyeqn}
	G^{\rho_1,\rho_2}_{L_1,L_2}(X\times Y)=\{f\in\mathcal{C}^\infty(X\times Y):\sup_{(x,y)\in X\times Y} |\partial_x^\alpha\partial_y^\beta f|  L_1^{-|\alpha|} L_2^{-|\beta|}\alpha!^{-\rho_1} \beta!^{-\rho_2}<\infty \}.
	\end{equation}
\end{defn}

If $f\in G^{\rho_1,\rho_2}_{L_1,L_2}$, then we denote the supremum in \eqref{anisotropicgevreyeqn} by $\|f\|_{L_1,L_2}$. Equipped with this norm, $G^{\rho_1,\rho_2}_{L_1,L_2}$ is a Banach space. This definition extends in the natural way to $k\geq 3$ variables. Furthermore, some of these variables might lie in complex domains.

In anisotropic Gevrey classes, one has the following implicit function theorem due to Komatsu.
\begin{prop}
	\label{komatsuprop}
	Suppose that $F\in G^{\rho,\rho'}_{L_1,L_2}(X\times \Omega^0,\R^n)$ where $X\subset \R^n$, $\Omega^0\subset \R^m$ and  $L_1\|F(x,\omega)-x\|_{L_1,L_2}\leq 1/2$. Then there exists a local solution $x=g(y,\omega)$ to the implicit equation
	\begin{equation}
	F(x,\omega)=y
	\end{equation} 
	defined in a domain $Y\times \Omega$.
	Moreover, there exist constants $A,C$ dependent only on $\rho,\rho',n,m$ such that $g\in G^{\rho,\rho'}_{CL_1,CL_2}(Y\times \Omega,X)$ with $\|g\|_{CL_1,CL_2}\leq A\|F\|_{L_1,L_2}$.
\end{prop}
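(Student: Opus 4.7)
The plan is to establish existence and smoothness of $g$ via the classical $\mathcal{C}^\infty$ implicit function theorem, and then upgrade to Gevrey regularity by solving a fixed point equation in an anisotropic Gevrey Banach space.

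First, applying the hypothesis $L_1\|F(x,\omega)-x\|_{L_1,L_2}\leq 1/2$ to first-order $x$-derivatives yields $|\partial_xF(x,\omega)-\mathrm{Id}|\leq 1/2$ uniformly on $X\times \Omega^0$, so $\partial_xF$ is invertible with uniformly bounded inverse. The smooth implicit function theorem then produces a unique $\mathcal{C}^\infty$ solution $g:Y\times \Omega\to X$ to $F(g(y,\omega),\omega)=y$ on an appropriate domain $Y\times \Omega$. It remains only to show that this $g$ is actually Gevrey of the claimed class and satisfies the stated norm bound.

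Writing $\phi(x,\omega):=F(x,\omega)-x$ and $h(y,\omega):=g(y,\omega)-y$, the defining equation becomes the fixed-point equation
\begin{equation*}
h(y,\omega) = -\phi(y+h(y,\omega),\omega).
\end{equation*}
I would solve this via Picard iteration $h_0\equiv 0$, $h_{k+1}:=-\phi(\mathrm{id}+h_k,\cdot)$ carried out inside the closed ball of radius $r:=2\|\phi\|_{L_1,L_2}$ in $G^{\rho,\rho'}_{CL_1,CL_2}(Y\times \Omega)$, for a suitable constant $C=C(n,m,\rho,\rho')\geq 1$. The essential analytic input is the anisotropic Gevrey composition estimate (Proposition A.3 of \cite{popovkam}, and referenced in the body of the paper as Proposition \ref{gevcomp2}), which shows that if $\phi\in G^{\rho,\rho'}_{L_1,L_2}$ and $h\in G^{\rho,\rho'}_{CL_1,CL_2}$ is small then $\phi\circ(\mathrm{id}+h,\cdot)\in G^{\rho,\rho'}_{CL_1,CL_2}$ with norm bounded by $\|\phi\|_{L_1,L_2}$ up to an absolute multiplicative constant. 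The smallness assumption $L_1\|\phi\|_{L_1,L_2}\leq 1/2$ is precisely what guarantees both invariance of the ball under the map $T[h]:=-\phi(\mathrm{id}+h,\cdot)$ and that $T$ is a strict contraction there, with contraction constant and ball radius depending only on $\rho,\rho',n,m$. The resulting fixed point $h_\infty$ lies in $G^{\rho,\rho'}_{CL_1,CL_2}$ with $\|h_\infty\|_{CL_1,CL_2}\leq A\|\phi\|_{L_1,L_2}\leq A\|F\|_{L_1,L_2}$. By uniqueness of $\mathcal{C}^\infty$ solutions of $F(g,\omega)=y$, this fixed point coincides with $g-\mathrm{id}$, which finishes the proof.

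The main obstacle is the bookkeeping within the Gevrey composition estimate. Faà di Bruno's formula expands $\partial_y^\alpha\partial_\omega^\beta[\phi(y+h(y,\omega),\omega)]$ as a sum indexed by partitions of $\alpha$ and $\beta$; one must verify that the $\alpha!^\rho\beta!^{\rho'}$ growth of the $\phi$- and $h$-derivatives combines, via the multinomial-type inequality $\prod_j \alpha_j!^\rho \leq (\sum_j\alpha_j)!^\rho$ valid for $\rho\geq 1$, to reproduce a Gevrey bound of the same anisotropic type with Gevrey constants inflated only by a factor $C(n,m,\rho,\rho')$. This is standard but combinatorially delicate, and it is effectively the only part of the argument that is not routine once the Picard framework is in place.
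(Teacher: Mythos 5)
The paper offers no proof of Proposition \ref{komatsuprop}: it is quoted as a known implicit function theorem of Komatsu, with the anisotropic version taken from \cite{popovkam}, so there is no argument in the text to compare yours against directly. Your setup is the right one — passing to $\phi=F-\mathrm{id}$ and $h=g-\mathrm{id}$, reducing to the fixed-point equation $h=-\phi(\mathrm{id}+h,\cdot)$, and extracting $|\partial_xF-\mathrm{Id}|\leq 1/2$ from the hypothesis $L_1\|F-\mathrm{id}\|_{L_1,L_2}\leq 1/2$ are all correct.

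The gap is in the claim that $T[h]:=-\phi(\mathrm{id}+h,\cdot)$ maps a ball of $G^{\rho,\rho'}_{CL_1,CL_2}$ into itself and contracts there. The composition estimate you invoke (Proposition \ref{gevcomp2}) returns a function whose first Gevrey constant is $B=4^l(4l)^\rho B_1\max(1,A_1B_2)$, i.e.\ it multiplies the inner map's constant $B_1$ by the fixed combinatorial factor $4^l(4l)^\rho>1$ \emph{regardless} of how small $\phi$ and $h$ are; smallness of $L_1\|\phi\|$ controls the norm, not the class. After $k$ Picard steps the iterate $h_k$ is therefore only known to lie in $G^{\rho,\rho'}_{(4^l(4l)^\rho)^k L_1,\,\cdot}$, and no ball of $G^{\rho,\rho'}_{CL_1,CL_2}$ is invariant for any fixed $C$. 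This is precisely why Komatsu's proof (and Popov's anisotropic adaptation) does not run a contraction in the Gevrey Banach space: one instead takes the $\mathcal{C}^\infty$ solution $g$ from the classical implicit function theorem, differentiates $F(g(y,\omega),\omega)=y$, solves for the top-order derivative of $g$ using the uniform invertibility of $\partial_xF$, and establishes the bounds on $\partial_y^\alpha\partial_\omega^\beta(g-\mathrm{id})$ by a single induction on $|\alpha|+|\beta|$ with a suitable majorant — the Fa\`a di Bruno bookkeeping you defer to the end is done once, for the limit, rather than per iteration. Your argument can be repaired along those lines, but as written the invariance/contraction step is unjustified, and that step is exactly where the difficulty of the theorem lives.
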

A consequence of this theorem is established by Popov in \cite{popovkam}.
\begin{cor}
	\label{komatsucor}
	Suppose $F\in G^{\rho,\rho'}_{L_1,L_2}(\T^n\times \Omega,\T^n)$ where $\Omega^0\subset \R^m$ and  $L_1\|F(\theta,\omega)-\theta\|_{L_1,L_2}\leq 1/2$. Then there exists a local solution $x=g(y,\omega)$ to the implicit equation
	\begin{equation}
	F(x,\omega)=y
	\end{equation} 
	defined on $\T^n\times \Omega$.
	Moreover, there exist positive constants $A,C$ dependent only on $\rho,\rho',n,m$ such that $g\in G^{\rho,\rho'}_{CL_1,CL_2}(\T^n\times \Omega)$ with $\|g\|_{CL_1,CL_2}\leq A\|F\|_{L_1,L_2}$.
\end{cor}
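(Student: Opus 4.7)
The plan is to lift the problem to the universal cover $\R^n$, invoke the Euclidean version Proposition \ref{komatsuprop} together with the smallness hypothesis to obtain a global inverse there, and then descend by periodicity.

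Writing $F(\theta,\omega)=\theta+\phi(\theta,\omega)$ with $\phi\in G^{\rho,\rho'}_{L_1,L_2}(\T^n\times\Omega,\R^n)$ satisfying $L_1\|\phi\|_{L_1,L_2}\leq 1/2$, I would first identify $\phi$ with its $2\pi\Z^n$-periodic lift to $\R^n\times\Omega$ and set $\tilde F(x,\omega):=x+\phi(x,\omega)$. The Gevrey norms of $\tilde F-\mathrm{id}$ equal those of $\phi$, and translation invariance of the Gevrey seminorms in the $x$ variable means any estimates on $\tilde F$ near a point $(x_0,\omega_0)\in\R^n\times\Omega$ are uniform in $x_0$.

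Next I would apply Proposition \ref{komatsuprop} to $\tilde F$ at each base point $(x_0,\omega_0)$ to obtain local Gevrey inverses $\tilde g_{x_0}\in G^{\rho,\rho'}_{CL_1,CL_2}$ with $\|\tilde g_{x_0}-\mathrm{id}\|_{CL_1,CL_2}\leq A\|\phi\|_{L_1,L_2}$, where $A,C$ depend only on $\rho,\rho',n,m$. Because the smallness condition makes $x\mapsto y-\phi(x,\omega)$ a uniform contraction on $\R^n$ for each fixed $(y,\omega)$, the Banach fixed-point theorem supplies a unique global solution $\tilde g(y,\omega)$ to $\tilde F(\tilde g(y,\omega),\omega)=y$; by uniqueness this $\tilde g$ must coincide with each local $\tilde g_{x_0}$ on its domain, so the local Gevrey estimates patch together to yield $\tilde g\in G^{\rho,\rho'}_{CL_1,CL_2}(\R^n\times\Omega,\R^n)$ with the same norm bound.

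Finally I would check equivariance: since $\phi$ is $2\pi\Z^n$-periodic in its first argument, $\tilde F(x+2\pi k,\omega)=\tilde F(x,\omega)+2\pi k$, whence uniqueness of the global solution forces $\tilde g(y+2\pi k,\omega)=\tilde g(y,\omega)+2\pi k$ for all $k\in\Z^n$. Consequently $\tilde g(y,\omega)-y$ is $2\pi\Z^n$-periodic in $y$ and descends to a function $g\in G^{\rho,\rho'}_{CL_1,CL_2}(\T^n\times\Omega,\T^n)$ inverting $F$, with the required estimate $\|g\|_{CL_1,CL_2}\leq A\|F\|_{L_1,L_2}$.

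The main obstacle I anticipate is the globalization step, since Proposition \ref{komatsuprop} only provides a local inverse on some neighborhood $Y\times\Omega$. The smallness hypothesis $L_1\|\phi\|_{L_1,L_2}\leq 1/2$ is precisely what is needed to promote these local inverses to a global one on $\R^n\times\Omega$, via the contraction-mapping argument above; without it, one could not rule out that $\tilde F_\omega$ folds or fails to be surjective. Once global existence and uniqueness on $\R^n$ are in hand, the periodic descent and the propagation of Gevrey bounds are essentially formal.
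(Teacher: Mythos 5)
The paper does not actually prove Corollary \ref{komatsucor}; it simply cites \cite{popovkam} (``A consequence of this theorem is established by Popov in \cite{popovkam}''), so there is no in-text proof to compare against. That said, your argument is correct and is the natural way to derive the toroidal statement from the Euclidean one: lift $F$ to $\tilde F(x,\omega)=x+\phi(x,\omega)$ on the universal cover, use the hypothesis $L_1\|\phi\|_{L_1,L_2}\le 1/2$ to get $|\partial_x\phi|\le 1/2$ and hence a uniform contraction $x\mapsto y-\phi(x,\omega)$ yielding a unique global solution $\tilde g$, identify $\tilde g$ with the local Gevrey inverses from Proposition \ref{komatsuprop} to propagate the $G^{\rho,\rho'}_{CL_1,CL_2}$ bound (translation invariance making the constants uniform in the base point), and finally use $\tilde F(x+2\pi k,\omega)=\tilde F(x,\omega)+2\pi k$ plus uniqueness to show $\tilde g(y,\omega)-y$ is $2\pi\Z^n$-periodic and hence descends to $\T^n\times\Omega$. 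The only implicit point worth flagging is that writing $F(\theta,\omega)=\theta+\phi(\theta,\omega)$ with $\phi$ a globally defined $\R^n$-valued periodic function already uses that $F(\cdot,\omega)$ is homotopic to the identity, which the sup-norm smallness hypothesis guarantees; you may wish to state this explicitly, but it is not a gap.
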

Finally, we have two results on the composition of functions of Gevrey regularity, which can also be found in \cite{popovkam}.
\begin{prop}
	\label{gevcomp1}
	Let $X\subset \R^n$, $Y\subset\R^m$, and $\Omega\subset \R^k$ be open sets.
	Suppose $g\in G^{\rho'}_{L_1}(\Omega,Y)$ with $\|g\|_{L_1}=A_1$ and $f\in G^{\rho,\rho'}_{B,L_2}(X\times Y)$ with $\|f\|_{B,L_2}=A_2$.
	Then the composition $F(x,\omega):=f(x,g(\omega))$ is in $G^{\rho,\rho'}_{B,L}(X\times \Omega)$, where $$L=2^{l+\rho'}l^{\rho'}L_1\max(1,A_1L_2)$$ with $l=\max(k,m,n)$.
	Moreover we have the Gevrey norm estimate $$\|F\|_{B,L}\leq A_2$$.
\end{prop}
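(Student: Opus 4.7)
The plan is to prove Proposition \ref{gevcomp1} using the multivariate Faà di Bruno formula and then carefully tracking the combinatorial factors that arise, so that they can be absorbed into the new Gevrey constant $L$. The key observation is that the $x$-derivatives commute with the composition (they act only on the first slot of $f$), so $\partial_x^\alpha F(x,\omega) = (\partial_x^\alpha f)(x, g(\omega))$. This immediately yields $|\partial_x^\alpha F| \leq A_2 B^{|\alpha|} \alpha!^\rho$, which handles the pure $x$-derivative component of the norm without producing any new constants. Consequently, the real content is in bounding the mixed derivatives $\partial_x^\alpha \partial_\omega^\beta F$, where the chain rule through $g$ comes into play.

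Next, I would apply the multivariate Faà di Bruno formula to write
\begin{equation*}
\partial_\omega^\beta\bigl[f(x,g(\omega))\bigr] \;=\; \sum_{s=1}^{|\beta|} \sum_{\gamma,\,\pi} c_{\beta,\gamma,\pi}\,(\partial_x^\alpha \partial_y^\gamma f)(x,g(\omega)) \prod_{j=1}^{s} \partial_\omega^{\pi_j} g_{i_j}(\omega),
\end{equation*}
where the inner sum runs over multi-indices $\gamma\in\N^m$ with $|\gamma|=s$, over ordered partitions $\pi=(\pi_1,\ldots,\pi_s)$ of $\beta$ into nonzero multi-indices, and over index assignments $(i_j)$ indicating which component of $g$ each block differentiates. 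The combinatorial coefficients satisfy $c_{\beta,\gamma,\pi} \leq \beta! / \prod_j \pi_j!$. Substituting the Gevrey bounds $|\partial_x^\alpha \partial_y^\gamma f| \leq A_2 B^{|\alpha|} L_2^{|\gamma|} \alpha!^\rho \gamma!^{\rho'}$ and $|\partial_\omega^{\pi_j} g| \leq A_1 L_1^{|\pi_j|} \pi_j!^{\rho'}$ gives a bound of the form
\begin{equation*}
|\partial_x^\alpha \partial_\omega^\beta F| \leq A_2 B^{|\alpha|} \alpha!^\rho L_1^{|\beta|} \sum_{s,\gamma,\pi} (A_1 L_2)^{s} \cdot \frac{\beta!}{\prod_j \pi_j!} \cdot \gamma!^{\rho'} \prod_j \pi_j!^{\rho'}.
\end{equation*}

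Then I would estimate the combinatorial sum. The basic inequalities needed are (i) the superadditivity estimate $\gamma!^{\rho'-1} \prod_j \pi_j!^{\rho'} \leq \beta!^{\rho'}/\beta!$ up to a factor of $\gamma!^{\rho'}\cdot (s!)^{\rho'-1}$, which follows from $\sum_j |\pi_j| = |\beta|$ and concavity of $\log\Gamma$, and (ii) the partition-counting bound: the number of ordered partitions of $\beta$ into $s$ nonempty blocks with labels in $\{1,\ldots,m\}$ is at most $m^s \binom{|\beta|}{s} \cdot s!$, which in turn is controlled by $(2l)^{|\beta|} l^{\rho' s}$. Collecting these, the sum reduces to a geometric-type series in $s$ bounded by $\sum_s (A_1 L_2 \cdot l^{\rho'})^s \leq 2^{\rho'} \max(1, A_1 L_2)^{|\beta|}$ after the standard manipulation, yielding an overall estimate of the shape $A_2 B^{|\alpha|} \alpha!^\rho \beta!^{\rho'} \bigl(2^{l+\rho'} l^{\rho'} L_1 \max(1,A_1 L_2)\bigr)^{|\beta|}$, matching the stated $L$.

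The main obstacle is the bookkeeping in the third step: arranging the partition counts, the quotient $\beta!/\prod \pi_j!$, and the product $\prod_j \pi_j!^{\rho'}$ so that a single exponent $|\beta|$ appears at the end with the cleanest possible base. The sharp inequality $\gamma!^{\rho'}\prod_j \pi_j!^{\rho'} \leq \beta!^{\rho'} \cdot (\text{combinatorial factor})$ is where the exponent $\rho'$ and the dimension $l$ enter the final constant; getting the precise $2^{l+\rho'} l^{\rho'}$ dependence requires care, but once it is done the conclusion $\|F\|_{B,L} \leq A_2$ is immediate and no further work on the $L_1$ dependence is needed because the $\omega$-derivative growth is entirely absorbed into the new constant $L$.
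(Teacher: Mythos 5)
The paper itself contains no proof of Proposition~\ref{gevcomp1}; it simply refers the reader to \cite{popovkam} (Proposition~A.3 there), so there is no in-paper argument to compare against. Your high-level plan is nevertheless the standard route one would take: $\partial_x^\alpha F(x,\omega) = (\partial_x^\alpha f)(x,g(\omega))$ disposes of the $x$-direction with no new constants, and the multivariate Fa\`a di Bruno formula is the right tool for the $\omega$-derivatives, with the $\rho'$-indices of $f$ in $y$ and of $g$ in $\omega$ matching exactly as needed.

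The gap is in your step (i). As written, the inequality
\begin{equation*}
\gamma!^{\rho'-1} \prod_j \pi_j!^{\rho'} \;\leq\; \frac{\beta!^{\rho'}}{\beta!}\cdot \gamma!^{\rho'}(s!)^{\rho'-1},
\qquad\text{equivalently}\qquad
\prod_j \pi_j!^{\rho'} \;\leq\; \beta!^{\rho'-1}\,\gamma!\,(s!)^{\rho'-1},
\end{equation*}
is false: take $k=m=1$, $\beta=3$, $s=1$, $\pi_1=3$, $\gamma=1$, which gives $6^{\rho'}$ on the left but only $6^{\rho'-1}$ on the right. The $s=1$ terms are exactly where the full factorial $\beta!$ appears with no combinatorial compensation, so any useful bound must reduce to (near-)equality there rather than decay by a factor of $\beta!$. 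A small but telling symptom is the appeal to ``concavity of $\log\Gamma$''; $\log\Gamma$ is \emph{convex}, and the relevant consequence is the superadditivity $a!\,b!\leq(a+b)!$ together with the sharper lemma that $s!\prod_j n_j!\leq n!$ whenever $n_1+\cdots+n_s=n$ with each $n_j\geq1$. That lemma, not the estimate you wrote, is what tames the sum over $s$ and the quotient $\beta!/\prod_j\pi_j!$. Moreover, one has to be careful that $\sum_j\pi_j=\beta$ (componentwise) does \emph{not} give $\prod_j\pi_j!\leq\beta!$ with a dimension-free constant: passing from $|\beta|!$ to $\beta!$ costs a factor $k^{|\beta|}$ (roughly $\binom{|\beta|}{\beta}$), and this is precisely where the $l=\max(k,m,n)$ enters the target constant $L$. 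Your outline identifies the right machinery, but the central combinatorial estimate as stated fails, so the derivation of the exact constant $2^{l+\rho'}l^{\rho'}L_1\max(1,A_1L_2)$ would need to be rebuilt on the corrected lemmas.
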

\begin{prop}
	\label{gevcomp2}
	Let $X\subset \R^n$, $Y\in\R^m$, and $\Omega\subset \R^k$ be open sets.
	Suppose $g\in G^{\rho,\rho'}_{B_1,L_1}(X\times \Omega,Y)$ with $\|g\|_{B_1,L_1}=A_1$ and $f\in G^{\rho,\rho'}_{B_2,L_2}(Y\times \Omega)$.
	Then the composition $F(x,\omega):=f(g(x,\omega),\omega)$ is in $G^{\rho,\rho'}_{B,L}(X\times \Omega)$, where $$B=4^l(4l)^\rho B_1 \max(1+A_1B_2)$$ and $$L=L_2+4^l(4l)^\rho L_1 \max(1,A_1B_2)$$ with $l=\max(k,m,n)$.
	Moreover we have the Gevrey norm estimate $$\|F\|_{B,L}\leq A_2.$$
\end{prop}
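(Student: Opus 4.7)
The plan is to reduce to the multivariate Fa\`a di Bruno formula applied to $F(x,\omega)=f(g(x,\omega),\omega)$ and then estimate term by term using the hypothesised Gevrey bounds on $f$ and $g$. Since $x$ appears only through $g$, every $x$-derivative contributes a chain-rule factor involving a derivative of $g$; $\omega$-derivatives, however, split naturally into those which land on the explicit second slot of $f$ and those which pass through $g$. The first step is therefore to apply the Leibniz rule in $\omega$ to separate an ``external'' multi-index $\delta \leq \beta$ and leave $\partial_x^\alpha \partial_\omega^{\beta-\delta}$ to be distributed through $g$ by the chain rule.

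Next I would invoke the classical multivariate Fa\`a di Bruno expansion, obtaining a representation of the form
\[
\partial_x^\alpha \partial_\omega^\beta F(x,\omega) = \sum_{\delta \leq \beta}\binom{\beta}{\delta}\sum_{\gamma}\sum_P (\partial_y^\gamma \partial_\omega^\delta f)(g(x,\omega),\omega)\prod_{i=1}^{|\gamma|}\partial_x^{\alpha^{(i)}}\partial_\omega^{\eta^{(i)}} g_{j_i}(x,\omega),
\]
where the inner sum ranges over partitions of $\alpha$ into $|\gamma|$ nontrivial multi-indices $\alpha^{(i)}$, partitions of $\beta-\delta$ into multi-indices $\eta^{(i)}$, and coordinate choices $j_i\in\{1,\dots,m\}$. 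Substituting the hypothesised bounds $|\partial_y^\gamma \partial_\omega^\delta f|\leq A_2 B_2^{|\gamma|} L_2^{|\delta|} \gamma!^\rho \delta!^{\rho'}$ and $|\partial_x^{\alpha^{(i)}}\partial_\omega^{\eta^{(i)}} g|\leq A_1 B_1^{|\alpha^{(i)}|} L_1^{|\eta^{(i)}|}\alpha^{(i)}!^\rho\eta^{(i)}!^{\rho'}$ term by term produces the aggregate factor $(A_1 B_2)^{|\gamma|}$, which is the source of the $\max(1, A_1 B_2)$ in the statement after the $|\gamma|$-sum is collapsed.

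The combinatorial heart of the argument is the standard Gevrey partition estimate $\sum_P \prod_i \alpha^{(i)}!^\rho \leq (Kl)^{|\gamma|}\alpha!^\rho$, valid for $\rho\geq 1$ and proved from the elementary identity $\sum \prod_i \alpha^{(i)}! \leq |\gamma|!\, \alpha!$ together with the super-multiplicativity of factorials raised to the power $\rho$, and the analogous bound for the $\eta^{(i)}$'s. Applying these two bounds converts the partition sum into a geometric factor of the form $C(n,m,\rho,\rho')^{|\gamma|+|\delta|}$ times $\alpha!^\rho (\beta-\delta)!^{\rho'}$; the $|\gamma|$-sum and the Leibniz $\delta$-sum are then geometric and binomial respectively, and collapse to yield $\|F\|_{B,L}\leq A_2$ for the stated $B$ and $L$.

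The principal obstacle is careful bookkeeping: one must track simultaneously the two different roles played by $\omega$-derivatives, verify that the geometric constants $4^l(4l)^\rho$ emerge correctly from the combined counts of partitions and coordinate indices, and ensure that the anisotropic Gevrey indices $\rho$ and $\rho'$ appear on the correct factorials at the end of the computation, matching the target class $G^{\rho,\rho'}_{B,L}$. No single step is deep, but the notational burden of propagating the two Gevrey indices through a Fa\`a di Bruno expansion in which $\omega$-derivatives have a bifurcated role is the delicate point, and I would proceed by first handling the purely $x$-dependent analogue (fixing $\omega$) to isolate the $\rho$-part, then treating the $\omega$-dependence via Leibniz to produce the $\rho'$-part, before combining the two.
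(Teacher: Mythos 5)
The paper does not actually prove Proposition \ref{gevcomp2}: it is stated without proof and attributed to \cite{popovkam} (where it appears in the appendix, with the evident typo $\max(1+A_1B_2)$ for $\max(1,A_1B_2)$). Your plan — reduce to the multivariate Fa\`a di Bruno expansion of $f$ composed with $(x,\omega)\mapsto(g(x,\omega),\omega)$, split $\omega$-derivatives into those acting on the explicit slot of $f$ and those passing through $g$, and then close the estimate with the standard Gevrey partition bound — is exactly the canonical route to such anisotropic composition lemmas and is in the same spirit as Popov's own argument, so the approach is sound.

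Two caveats worth flagging if you were to write this out in full. First, the displayed formula is not literally Leibniz followed by Fa\`a di Bruno; it is better obtained as a single Fa\`a di Bruno expansion for $f\circ G$ with $G(x,\omega)=(g(x,\omega),\omega)$, in which the trivial second component of $G$ (whose only nonvanishing derivatives are first-order) collapses the partition sum in the $\omega$-directions and produces the binomial factor $\binom{\beta}{\delta}$; organizing it this way avoids double-counting. Second, the combinatorial inequality you invoke, $\sum_P \prod_i \alpha^{(i)}!^\rho \leq (Kl)^{|\gamma|}\alpha!^\rho$, is the right ingredient, but it must be stated over the precise index set appearing in the multivariate Fa\`a di Bruno formula (unordered set partitions into blocks, with nonzero parts, counted with the correct multiplicities), not over all ordered multi-index decompositions of $\alpha$ — the latter sum is unbounded. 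The reduction you quote, $\sum\prod_i\alpha^{(i)}!\leq|\gamma|!\,\alpha!$, is valid only in that restricted form, and that is where the factor $4^l(4l)^\rho$ actually comes from. With those two points tightened, your argument goes through and recovers the stated $B$, $L$, and the bound $\|F\|_{B,L}\leq A_2$.
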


\section{Gevrey class symbols}
\counterwithin{thm}{section}
\counterwithin{equation}{section}
\label{gevsymbsec}

In this appendix, we introduce the class of Gevrey symbols used throughout this paper. 
We suppose $D$ is a bounded domain in $\mathbb{R}^n$, and take $X=\mathbb{T}^n$ or a bounded domain in $\mathbb{R}^m$.
We fix the parameters $\sigma,\mu>1$ and $\varrho\geq \sigma+\mu-1$, and denote the triple $(\sigma,\mu,\varrho)$ by $\ell$.

\begin{defn}
	\label{formalsymboldefn}
	A formal Gevrey symbol on $X\times D$ is a formal sum
	\begin{equation}
	\label{formalsymbol}
	\sum_{j=0}^\infty p_j(\theta,I)h^j
	\end{equation}
	where the $p_j\in\mathcal{C}_0^\infty(X\times D)$ are all supported in a fixed compact set and there exists a $C>0$ such that
	\begin{equation}
	\sup_{X\times D} |\partial_\theta^\beta \partial_I^\alpha p_j(\theta,I)|\leq C^{j+|\alpha|+|\beta|+1}\beta!^\sigma\alpha!^\mu j!^\varrho.
	\end{equation}
\end{defn}

\begin{defn}
	A realisation of the formal symbol \eqref{formalsymbol} is a function $p(\theta,I;h)\in\mathcal{C}_0^\infty(X\times D)$ for $0<h\leq h_0$ with
	\begin{equation}
	\sup_{X\times D \times (0,h_0]} \left|\partial_\theta^\beta \partial_I^\alpha \left(p(\theta,I;h)-\sum_{j=0}^N  p_j(\theta,I)h^j\right)\right|\leq h^{N+1}C_1^{N+|\alpha|+|\beta|+2}\beta!^\sigma\alpha!^\mu (N+1)!^\varrho.
	\end{equation}
\end{defn}

\begin{lem}
	Given a formal symbol \eqref{formalsymbol}, one choice of realisation is
	\begin{equation}
	p(\theta,I;h):= \sum_{j\leq \epsilon h^{-1/\varrho}} p_j(\theta,I)h^j
	\end{equation}
	where $\epsilon$ depends only on $n$ and $C_1$.
\end{lem}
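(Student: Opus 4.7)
The plan is to verify the defining estimate of a realization by direct term-by-term comparison, using that the truncation threshold $J(h) := \lfloor \epsilon h^{-1/\varrho}\rfloor$ is chosen precisely so that the Gevrey series $\sum p_j h^j$ remains tame on $\{j \le J(h)\}$. Setting $A_j := C^{j+|\alpha|+|\beta|+1}\beta!^\sigma\alpha!^\mu j!^\varrho h^j$ for the pointwise bound on $|\partial_\theta^\beta\partial_I^\alpha(p_j h^j)|$ coming from Definition \ref{formalsymboldefn}, the key observation is that $A_{j+1}/A_j = C(j+1)^\varrho h$ equals $C\epsilon^\varrho$ at $j = J(h)$. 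So if one chooses $\epsilon$ small enough that $C\epsilon^\varrho \le 1/2$, then the sequence $A_j$ is decreasing up to $j_* := \lfloor (Ch)^{-1/\varrho}\rfloor$ and increasing thereafter.

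I would then split into two cases. In Case 1 ($N < J(h)$), one has $p - \sum_{j\le N} p_j h^j = \sum_{j=N+1}^{J(h)} p_j h^j$. Factoring $h^{N+1}(N+1)!^\varrho$ out of the bound and using $j!/(N+1)! \le j^{j-N-1}$, each summand is bounded by $(Cj^\varrho h)^{j-N-1} \le (1/2)^{j-N-1}$, so the geometric series contributes a factor of at most $2$. This case is absorbed into any $C_1 \ge 2C$.

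In Case 2 ($N \ge J(h)$), one has $p - \sum_{j\le N} p_j h^j = -\sum_{j=J(h)+1}^N p_j h^j$. Splitting at $j_*$, the first sub-sum is monotone decreasing, bounded by $j_* \cdot A_{J(h)+1}$, while the second is monotone increasing, bounded by $N\cdot A_N$. By Stirling, $A_{J(h)+1} \lesssim (C\epsilon^\varrho)^{J(h)+1}$ is super-exponentially small, while the target satisfies $h^{N+1}(N+1)!^\varrho \ge (\epsilon/e)^{\varrho(N+1)}$ since $(N+1)^\varrho h \ge \epsilon^\varrho$ throughout Case 2. Choosing $C_1$ large relative to $e^\varrho/\epsilon^\varrho$ then absorbs the first sub-sum, and the boundedness of $Nr^N$ for $r = C/C_1 < 1$ absorbs the second; the polynomial-in-$N$ length of the summation is swallowed into a further adjustment of $C_1$.

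The main obstacle is the simultaneous calibration of the two constants: $\epsilon$ must be small relative to $C$ and $\varrho$ both for the geometric decay in Case 1 and for the super-exponential smallness of $A_{J(h)+1}$ in Case 2, while $C_1$ must be large relative to both $C$ and $e^\varrho/\epsilon^\varrho$. The bookkeeping heart of the argument is verifying that the final choices are independent of $\alpha$, $\beta$, $N$, and $h$ on the common compact support of the $p_j$, so that the estimate of a realization holds with a single pair $(C_1, h_0)$; in particular $\epsilon$ ends up depending only on the formal-symbol constant (and on $n$ through the dimensions of the multi-indices).
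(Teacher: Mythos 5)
Your argument is correct: the ratio test $A_{j+1}/A_j = C(j+1)^\varrho h \le C\epsilon^\varrho$ on $j\le \epsilon h^{-1/\varrho}$, the geometric summation in the case $N<J(h)$, and the Stirling comparison $h^{N+1}(N+1)!^\varrho \ge (\epsilon/e)^{\varrho(N+1)}$ in the case $N\ge J(h)$ all check out, and the constants $\epsilon$ and $C_1$ can indeed be calibrated independently of $\alpha,\beta,N,h$. The paper states this lemma without proof (it is a standard fact of the Gevrey symbol calculus imported from Popov's work), so there is no argument to compare against; yours is the expected one.
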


\begin{defn}
	We define the residual class of symbols $S_\ell^{-\infty}$ as the collection of realisations of the zero formal symbol.
\end{defn}

\begin{defn}
	\label{selldef}
	We write $f\sim g$ if $f-g\in S_\ell^{-\infty}$. It then follows that any two realisations of the same formal symbol are $\sim$-equivalent.
	We denote the set of equivalence classes by $S_\ell(X\times D)$.
\end{defn}

We now discuss the class of pseudodifferential operators corresponding to these symbols.
\begin{defn}
	\label{gevpseudo}
	To each symbol $p\in S_\ell(X\times D)$, we associate a semiclassical pseudodifferential operator defined by
	\begin{equation}
	(2\pi h)^{-n}\int_{X\times \mathbb{R}^n}e^{i(x-y)\cdot \xi/h}p(x,\xi;h)u(y)\, d\xi\, dy.
	\end{equation}
	for $u\in \mathcal{C}_0^\infty(X)$.
\end{defn}

The above construction is well defined modulo $\exp(-ch^{-1/\varrho})$, as for any $p\in S_\ell^{-\infty}(X\times D)$ we have
\begin{equation}
\|P_hu\|=O_{L^2}(\exp(-ch^{-1/\varrho}))
\end{equation}
for some constant $c>0$.
\begin{remark}
	The exponential decay of residual symbols is a key gain that comes from working in a Gevrey symbol class.
\end{remark}

The operations of symbol composition and conjugation then correspond to composing operators and taking adjoints respectively.
Moreover, if $p\in S_{(\sigma,\sigma,2\sigma-1)}$, then $G^\sigma$-smooth changes of variable preserve the symbol class of $p$.
This coordinate invariance allows us to extend the Gevrey pseudodifferential calculus to compact Gevrey manifolds.

\section{Estimates for analytic functions}
\counterwithin{thm}{section}
\counterwithin{equation}{section}
\label{analytic}

In this appendix we prove several elementary but important estimates for analytic functions.
\begin{prop}
	\label{cauchyappendix}
	Suppose $\tilde{\Omega}_j \subset \C$ are open sets and $\Omega_j\subset \tilde{\Omega}_j$ are such that $\textrm{dist}(\Omega_j,\C\setminus \tilde{\Omega_j})<r_j$. 
	
	Define
	\begin{equation}
	\Omega=\prod_{j=1}^n \Omega_j
	\end{equation}
	and
	\begin{equation}
	\tilde{\Omega}=\prod_{j=1}^n \tilde{\Omega}_j.
	\end{equation}
	
	If the analytic function $f:\tilde{\Omega}^n\rightarrow \mathbb{C}$ satisfies 
	\begin{equation}
	\|f\|_{\Omega}=A < \infty
	\end{equation}
	then we have
	\begin{equation}
	\|\partial^\alpha_zf\|_\Omega \leq Ar^{-\alpha} \alpha!
	\end{equation}
	for each multi-index $\alpha$.
\end{prop}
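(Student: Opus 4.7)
The plan is to prove this as a standard application of the multivariable Cauchy integral formula on polydiscs. The key point is that the distance condition on $\Omega_j$ within $\tilde\Omega_j$ (read here as ``$\Omega_j$ lies at distance at least $r_j$ from the complement of $\tilde\Omega_j$'', which is the hypothesis needed for the estimate to make sense) ensures that around every point of $\Omega$ one can inscribe a closed polydisc of polyradius $r=(r_1,\ldots,r_n)$ lying entirely in $\tilde\Omega$, where $f$ is bounded by $A$.

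First I would fix an arbitrary point $z_0=(z_{0,1},\ldots,z_{0,n})\in\Omega$ and consider the distinguished boundary torus
\begin{equation}
T_{z_0}(r):=\{w\in\C^n : |w_j-z_{0,j}|=r_j,\ j=1,\ldots,n\}\subset \tilde\Omega.
\end{equation}
Iterating the one-variable Cauchy integral formula in each coordinate, one has for any multi-index $\alpha=(\alpha_1,\ldots,\alpha_n)$ the representation
\begin{equation}
\partial_z^\alpha f(z_0)=\frac{\alpha!}{(2\pi i)^n}\oint_{|w_1-z_{0,1}|=r_1}\!\!\!\cdots\oint_{|w_n-z_{0,n}|=r_n} \frac{f(w)\,dw_1\cdots dw_n}{\prod_{j=1}^n(w_j-z_{0,j})^{\alpha_j+1}}.
\end{equation}

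Taking absolute values, pulling the sup norm of $f$ on $\tilde\Omega$ outside the integrals, and using that on $T_{z_0}(r)$ we have $|w_j-z_{0,j}|=r_j$, each circle contributes a factor $2\pi r_j\cdot r_j^{-(\alpha_j+1)}=2\pi r_j^{-\alpha_j}$, so
\begin{equation}
|\partial_z^\alpha f(z_0)|\le \frac{\alpha!}{(2\pi)^n}\cdot A\cdot\prod_{j=1}^n 2\pi r_j^{-\alpha_j}=A\,r^{-\alpha}\alpha!.
\end{equation}
Since $z_0\in\Omega$ was arbitrary, taking the supremum over $\Omega$ yields $\|\partial_z^\alpha f\|_\Omega\le A r^{-\alpha}\alpha!$, as required.

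There is essentially no substantial obstacle: the argument is a direct unpacking of the multivariable Cauchy formula, and the only point requiring care is that the polyradius $r$ is chosen small enough (namely at most the coordinate-wise distance from $\Omega_j$ to $\C\setminus\tilde\Omega_j$) to guarantee $T_{z_0}(r)\subset\tilde\Omega$, so that $f$ may be bounded by $A$ on the contour. This verification is immediate from the hypothesis on $\mathrm{dist}(\Omega_j,\C\setminus\tilde\Omega_j)$.
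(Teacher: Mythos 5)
Your proof is correct and follows essentially the same route as the paper's: the iterated (polydisc) Cauchy integral formula, differentiation under the integral, and the trivial estimate on the distinguished boundary. You also sensibly read the hypothesis $\mathrm{dist}(\Omega_j,\C\setminus\tilde\Omega_j)<r_j$ as the (evidently intended) reverse inequality, and note that the bound $A$ must really be a sup over the contours in $\tilde\Omega$ rather than over $\Omega$ alone --- both are typos in the statement that the paper's own proof silently glosses over as well.
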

\begin{proof}
	From the Cauchy integral formula, we have
	\begin{equation}
	f(z)=\frac{1}{(2\pi i)^n}\oint_{\partial B(z_1,r_1)}\oint_{\partial B(z_2,r_2)}\ldots \oint_{\partial B(z_n,r_n)} \frac{f(w)}{w-z}\, dw_1\, dw_2 \ldots \, dw_n.
	\end{equation}
	which yields
	\begin{equation}
	\partial^\alpha_z f(z)=\frac{\alpha!}{(2\pi i)^n}\oint_{\partial B(z_1,r_1)}\oint_{\partial B(z_2,r_2)}\ldots \oint_{\partial B(z_n,r_n)} \frac{f(w)}{(w-z)^{\alpha+1}}\, dw_1\, dw_2 \ldots \, dw_n.
	\end{equation}
	upon repeated differentiation, where $1$ denotes the multi-index $(1,1,\ldots,1)$.
	Hence 
	\begin{equation}
	\|\partial^\alpha_zf\|_\Omega \leq Ar^{-\alpha} \alpha!
	\end{equation}
	as required.
\end{proof}
We also have an implicit function theorem for real analytic functions. Defining
\begin{equation}
O_h=\{\omega\in \C^n:\textrm{dist}(\omega,\Omega)<h\}
\end{equation}
where distances in $\C^n$ are taken with the sup-norm, we have the following.
\begin{prop}
	\label{popovlemma}
	Suppose $f:O_h\times (-1,1)\rightarrow \C^n$ is real analytic, and we have the estimate
	\begin{equation}
	|f|_{h}<\infty,
	\end{equation}
	then for any $0<v<1/6$ such that
	\begin{equation}
	|f-id|_h\leq vh
	\end{equation}
	the function has a real analytic inverse $g:O_{(1/2-3v)h}\times (-1,1)\rightarrow O_{(1-4v)h}$ that satisfies the estimate
	\begin{equation}
	\label{appendixnormderiv}
	\max(|g-id|_{(1/2-3v)h},3vh|D\phi-Id|_{(1/2-3v)h})\leq |f-id|_h
	\end{equation}
	uniformly in $t\in (-1,1).$
	The matrix norm in \eqref{appendixnormderiv} is the norm induced by equipping $\C^n$ with the sup-norm.
\end{prop}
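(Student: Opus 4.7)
The plan is a quantitative Banach fixed point argument combined with Cauchy estimates. Write $f = \textrm{id} + u$ where $u : O_h \times (-1,1) \to \C^n$ is real analytic with $\epsilon := |u|_h = |f - \textrm{id}|_h \leq vh$. Solving $f(\omega) = \zeta$ is equivalent to solving the fixed point equation $\omega = \zeta - u(\omega) =: T_\zeta(\omega)$, so the inverse $g(\zeta)$ will be obtained as the unique fixed point of $T_\zeta$ on a suitable closed ball.

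First I would apply Proposition \ref{cauchyappendix} to $u$ on $O_h$ to derive the key Lipschitz bound. Any point of $O_{(1/2-2v)h}$ has distance at least $(1/2+2v)h$ from $\C^n \setminus O_h$, hence $|Du(w)| \leq \epsilon/((1/2+2v)h)$ for $w \in O_{(1/2-2v)h}$. Since $\epsilon \leq vh$ and $v < 1/6$, this gives $|Du|_{(1/2-2v)h} \leq v/(1/2+2v) < 1/2$, so $T_\zeta$ is a strict contraction on the closed ball $B(\zeta, \epsilon) \subset O_{(1/2-2v)h}$ (the inclusion holds for $\zeta \in O_{(1/2-3v)h}$). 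Moreover $|T_\zeta(\zeta) - \zeta| = |u(\zeta)| \leq \epsilon$, so $T_\zeta$ maps $\overline{B(\zeta,\epsilon)}$ into itself. The Banach fixed point theorem produces a unique fixed point $g(\zeta) \in \overline{B(\zeta,\epsilon)}$, and in particular $|g - \textrm{id}|_{(1/2-3v)h} \leq \epsilon$, which is the first half of \eqref{appendixnormderiv}. Because $T_\zeta$ depends real analytically on both $\zeta$ and the parameter $t$, standard arguments (either Picard iteration preserving analyticity or an application of the analytic implicit function theorem to $F(\omega,\zeta,t) := f(\omega,t) - \zeta$) yield $g \in G^{1,1}(O_{(1/2-3v)h} \times (-1,1))$; the image is contained in $O_{(1/2-2v)h} \subset O_{(1-4v)h}$ since $v < 1/4$.

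For the derivative bound, I would differentiate $\zeta = g(\zeta) + u(g(\zeta))$ to obtain $(Id + Du(g(\zeta)))\, Dg(\zeta) = Id$, hence $Dg - Id = -Du(g(\zeta)) \cdot Dg$. Using $g(\zeta) \in O_{(1/2-2v)h}$ and the Cauchy estimate above, $|Du(g(\zeta))| \leq \epsilon/((1/2+2v)h)$, which is strictly less than $1$, so $Id + Du$ is invertible and
\[
|Dg - Id|_{(1/2-3v)h} \leq \frac{|Du(g(\cdot))|}{1 - |Du(g(\cdot))|} \leq \frac{\epsilon}{(1/2+2v)h - \epsilon}.
\]
Multiplying by $3vh$, the desired bound $3vh|Dg - Id|_{(1/2-3v)h} \leq \epsilon$ reduces to $3vh + \epsilon \leq (1/2+2v)h$; since $\epsilon \leq vh$ this is $4v \leq 1/2 + 2v$, i.e. $v \leq 1/4$, which holds by hypothesis. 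All of the estimates above are pointwise and uniform in $t$ because $\epsilon = |u|_h$ is defined as the supremum over $O_h \times (-1,1)$, giving the stated uniformity.

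There is no genuine obstacle here: the argument is a standard quantitative inverse function theorem, and the only subtlety is choosing the nested scales $(1/2-3v)h \subset (1/2-2v)h \subset (1-4v)h \subset h$ so that Cauchy estimates on $O_h$ control $Du$ well enough on the image $O_{(1/2-2v)h}$ of $g$ to close both the contraction step and the derivative bound. The bookkeeping of these radii is the only thing that needs to be done carefully; the hypothesis $v < 1/6$ gives slack in every inequality.
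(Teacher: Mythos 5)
Your argument is correct and is essentially the proof the paper relies on: the paper simply defers to Popov's Lemma~3.4, which is exactly this quantitative contraction-mapping inverse function lemma on nested complex neighbourhoods, with the observation that the analytic parameter $t$ rides along harmlessly. Two small points should be tightened. First, the self-mapping of $\overline{B(\zeta,\epsilon)}$ should be justified by the direct bound $|T_\zeta(w)-\zeta|=|u(w)|\le\epsilon$ for \emph{all} $w$ in the ball (immediate from $T_\zeta(w)=\zeta-u(w)$); combining the contraction constant $1/2$ with the displacement of the centre, as written, only yields $3\epsilon/2$. Second, the operator-norm bound $|Du(w)|\le\epsilon/((1/2+2v)h)$ does not follow from the entrywise Cauchy estimate of Proposition~\ref{cauchyappendix}, which for the norm induced by the sup-norm costs a factor of $n$ that the closing inequality $4v\le 1/2+2v$ cannot absorb; one needs the directional version, i.e.\ apply the scalar Cauchy estimate to $s\mapsto u_i(w+se)$ for a sup-norm unit vector $e$ on the disc $|s|<(1/2+2v)h$, which remains inside $O_h$. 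With these two one-line repairs the proof is complete and the bookkeeping of radii is exactly right.
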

Proposition \ref{popovlemma} can be proven in the same way as in Lemma 3.4 of \cite{popovkam}. The only difference is that we need to work on domains of the form $O_{\lambda h}\times B^\mathbb{C}_1$, and invert maps of the form 
\begin{equation}
\tilde{f}(\omega,t):=(f(\omega,t),t)
\end{equation}
for given $f$ satisfying the assumptions of the proposition uniformly in $t$.

\section{Whitney extension theorem}
\label{whitneysec}
\counterwithin{thm}{section}
\counterwithin{equation}{section}

In this appendix, we prove a version of the Whitney extension theorem for anisotropic Gevrey classes. The proof is adapted from the work of Bruna \cite{bruna} in the case without in the non-anisotropic case.

\begin{defn}
	\begin{equation}
	\label{anisspace}
	\Cm=\{f\in\mathcal{C}^\infty(X\times Y,\R): \sup_{(x,y)\in X\times Y}\sup_{\alpha,\beta}\left(\frac{|(\partial_x^\alpha \partial_y^\beta f)(x,y)|}{L_1^{|\alpha|}L_2^{|\beta|}M_{|\alpha|}\tilde{M}_{|\beta|}}\right) < \infty \textrm{ for some }L_j>0\}
	\end{equation}
\end{defn}
where $X,Y$ are open sets in Euclidean spaces of possibly differing dimension, $\alpha,\beta$ are multi-indices of the appropriate dimension, and $M$ and $\tilde{M}$ are positive sequences satisfying
\begin{enumerate}
	\item $M_0=1$
	\item $M_k^2\leq M_{k-1}M_{k+1}$
	\item $M_k\leq A^k M_j M_{k-j}$
	\item $M_{k+1}^k \leq A^k M_k^{k+1}$
	\item $M_{k+1}/(kM_k)$ is increasing
	\item $\sum_{k\geq 0}M_k/M_{k+1}\leq ApM_p/M_{p+1}$ for $p>0$
\end{enumerate}
where $A>0$ is a positive constant.

In the Gevrey case of interest to us, $M_k=k!^{\rho_1},\tilde{M}_k=k!^{\rho_2}$.
For fixed $L_j>0$, the supremum in \eqref{anisspace} defines a norm which equips a subspace of $\Cm$ with a Banach space structure.
The space $\Cm$ is then the inductive limit of these spaces as $L=L_1=L_2\rightarrow\infty$, which identifies it a Silva space.

For $f\in \Cm$, and $z=(z_1,z_2)\in X\times Y, x\in X$ we define
\begin{defn}
	\label{mixedtaylor}
	\begin{equation}
	(T^{m}_xf)(z):=\sum_{|\alpha|\leq m}\frac{(\partial_x^{\alpha} f)(x,z_2)}{\alpha!}(z_1-x)^{\alpha}
	\end{equation}
\end{defn}
\begin{defn}
	\label{mixedremainder}
	\begin{equation}
	(R^{m}_xf)(z):=f(z)-(T^{m}_xf)(z).
	\end{equation}
\end{defn}
To slightly generalise this notation, for a jet $f^{\alpha,\beta}$ of continuous functions, we write
\begin{defn}
	\label{jetremainder}
	\begin{equation}
	(R^{m}_{x}f)_{\alpha,\beta}(z):=f^{\alpha,\beta}(z)-(T^{m-|\alpha|}_{x}f^{\alpha,\beta})(z)
	\end{equation}
\end{defn}

We can now pose the central question:

\medskip 
\fbox{\parbox{\textwidth}{
		Given a compact set $K\subset X$, under what conditions is it true that an arbitrary continuous jet $(f^{\alpha,\beta}):K\times Y\rightarrow \R$ is the jet of a function $\tilde{f}\in\Cm$?
}}
\medskip

We assume without loss of generality here that the set $X$ is a full Euclidean space $\R^d$, rather than just an open subset thereof. This question is the anisotropic non quasi-analytic analogue of Whitney's extension theorem from classical analysis, which deals with the $\mathcal{C}^\infty$ case.

We begin by finding \emph{necessary} conditions for the existence of such an extension, before proving that these conditions are indeed sufficient.

\begin{prop}
	\label{anistaylor}
	Suppose $f\in \Cm$ with Gevrey constants $L_1,L_2$. Then there exists a constant $A$ dependent only on the dimensions of $X,Y$ and on $M,\tilde{M}$ such that the jet $f^{\alpha,\beta}=\partial_z^{(\alpha,\beta)}f$ satisfy
	\begin{equation}
	\label{gevcond}
	|f^{\alpha,\beta}|\leq AL_1^{|\alpha|}L_2^{|\beta|}M_{|\alpha|}\tilde{M}_{|\beta|}
	\end{equation}
	and
	\begin{equation}
	\label{gevcond2}
	|(R^n_xf)_{k,l}(z)|\leq A\tilde{L}_1^{n+1}M_{n+1}L_2^{|l|}\tilde{M}_{|l|}\cdot \frac{|z_1-x|^{n+1}}{(n+1)!}
	\end{equation}
	for all non-negative integers $m,n$ and all multi-indices $|k|\leq m,|l|\leq n$, where $\tilde{L}_1=CL_1$ with the $C$ dependent only on the dimension of $X$.
\end{prop}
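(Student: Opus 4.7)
The first inequality \eqref{gevcond} is a direct rephrasing of the defining condition \eqref{anisspace} for $\Cm$: set $A$ equal to the finite value of the supremum in that definition, and recall that by convention $f^{\alpha,\beta} = \partial_z^{(\alpha,\beta)} f$; nothing further is required.

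For the Taylor remainder bound \eqref{gevcond2}, my plan is to freeze the second coordinate $z_2 \in Y$ as a parameter and apply the one-variable integral form of Taylor's theorem in the first variable to the smooth map $\xi \mapsto (\partial_x^k \partial_y^l f)(\xi, z_2)$, expanded at $\xi = x$. The standard integral remainder formula gives
\begin{equation*}
(R^n_x f)_{k,l}(z) = (n-|k|+1) \sum_{|\gamma| = n-|k|+1} \frac{(z_1 - x)^\gamma}{\gamma!} \int_0^1 (1-s)^{n-|k|} (\partial_x^{k+\gamma} \partial_y^l f)\bigl(x + s(z_1 - x),\, z_2\bigr)\, ds.
\end{equation*}
Each integrand is controlled uniformly in $s$ by the first part of the proposition applied to the multi-index $(k+\gamma, l)$, whose total $x$-order is $n+1$; this yields the pointwise estimate $A L_1^{n+1} M_{n+1} L_2^{|l|} \tilde{M}_{|l|}$ on the integrand. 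After evaluating the elementary $s$-integral and summing over $|\gamma| = n-|k|+1$ via the multinomial identity $\sum_{|\gamma| = p} 1/\gamma! = d^p/p!$ with $d = \dim X$, the direct conclusion will be a bound of the shape
\begin{equation*}
|(R^n_x f)_{k,l}(z)| \leq A\,(d L_1)^{n-|k|+1} L_1^{|k|} M_{n+1} L_2^{|l|} \tilde{M}_{|l|} \cdot \frac{|z_1 - x|^{n-|k|+1}}{(n-|k|+1)!}.
\end{equation*}

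The final step is a combinatorial reshuffling: the factorial ratios $(n+1)!/(n-|k|+1)!$ and the remaining $|k|$-dependent powers of $L_1$ and $|z_1-x|$ must be absorbed into an enlarged Gevrey constant $\tilde L_1 = CL_1$, with $C$ depending only on the dimension $d$ and on the axiom constant $A$ from the defining properties of the sequence $M$. The logarithmic-convexity axiom $M_k \leq A^k M_j M_{k-j}$ and the growth bound $M_{k+1}^k \leq A^k M_k^{k+1}$ are the essential tools here, as they control the exchange between the indices $n+1$ and $n-|k|+1$ needed to re-express the bound in the uniform target form. Throughout, the $y$-direction factor $L_2^{|l|} \tilde{M}_{|l|}$ is preserved intact from \eqref{gevcond} because the Taylor expansion is performed only in the $x$-direction.

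I expect the principal obstacle to be this final combinatorial cleanup, as the direct Taylor bound naturally produces exponents and factorials indexed by $n-|k|+1$, whereas the target uniform estimate is indexed by $n+1$. Matching the two forms relies crucially on the full set of sequence axioms for $M$, together with the freedom to enlarge the Gevrey constant to a universal $\tilde L_1 = CL_1$ depending only on the ambient dimension and the structural constants of $M$.
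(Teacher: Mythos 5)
Your approach (one-variable Taylor expansion in $x$ with $z_2$ frozen, applied to $\partial_x^k\partial_y^l f$) is the same as the paper's, and the intermediate bound you derive,
\begin{equation*}
|(R^n_x f)_{k,l}(z)| \leq A\,(d L_1)^{n-|k|+1} L_1^{|k|} M_{n+1} L_2^{|l|} \tilde{M}_{|l|} \cdot \frac{|z_1 - x|^{n-|k|+1}}{(n-|k|+1)!},
\end{equation*}
is correct, with $d=\dim X$ and $(dL_1)^{n-|k|+1}L_1^{|k|} \leq (dL_1)^{n+1}$. The step that fails is the ``combinatorial reshuffling'' you propose at the end: you cannot convert the factor $|z_1-x|^{n-|k|+1}/(n-|k|+1)!$ into $|z_1-x|^{n+1}/(n+1)!$ by enlarging the Gevrey constant. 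That exchange would require $|z_1-x|^{-|k|} \leq C^{|k|}\,(n-|k|+1)!/(n+1)!$ with $C$ uniform in $z_1$, which fails as $|z_1-x|\to 0$. The sequence axioms for $M$ cannot help here, because the obstruction lives in the power of $|z_1-x|$ and the factorial, not in the $M$-weights.

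What you have actually run into is a typo in the stated inequality \eqref{gevcond2} (reproduced in the paper's one-line deduction as well). The intended estimate has $n-|k|+1$ in both the power of $|z_1-x|$ and the factorial:
\begin{equation*}
|(R^n_x f)_{k,l}(z)| \leq A\,\tilde L_1^{\,n+1} M_{n+1} L_2^{|l|} \tilde{M}_{|l|} \cdot \frac{|z_1 - x|^{n-|k|+1}}{(n-|k|+1)!}.
\end{equation*}
This is exactly the form the paper uses downstream: see \eqref{diffoftaylor} in the proof of Theorem \ref{whitneymainthm}, and the analogous remainder estimates \eqref{Lgevbound} and \eqref{impgevest2}, all of which carry the factor $|\cdot|^{n-|k|+1}/(n-|k|+1)!$. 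Your derived bound is therefore the right one; the final reshuffling step should simply be dropped.
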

\begin{proof}
	The first estimate \eqref{gevcond} follows immediately from the definition of $\Cm$.
	We prove the second claim \eqref{gevcond2} by making use of the estimate \eqref{gevcond} on the jet $f^{\alpha,\beta}=\partial_x^\alpha \partial_y^\beta f$ and Taylor expansion.
	\begin{eqnarray}
	R_x^nf(z)&=& \sum_{|\alpha|=n+1}\frac{n+1}{\alpha!} (z_1-x)^{\alpha}\int_0^1 (1-t)^nf^{\alpha,0}(x+t(z_1-x),z_2)\, dt\\
	&\leq & \left(\sup_{|\alpha|=n+1}\sup_{z\in X\times Y} |f^{\alpha,0}(z)|\right)\cdot \sum_{|\alpha|=n+1}\left|\frac{(z_1-x)^\alpha}{\alpha!}\right|\\
	&\leq & \left(\sup_{|\alpha|=n+1}\sup_{z\in X\times Y} |f^{\alpha,0}(z)|\right)\cdot \frac{C^{n+1}|z_1-x|^{n+1}}{(n+1)!}\\
	\end{eqnarray}
	Hence
	\begin{equation}
	|(R^n_xf)_{k,l}(z)|=|(R_x^{n-|k|}f)(z)|\leq A\tilde{L}_1^{n+1}M_{n+1}L_2^{|l|}\tilde{M}_{|l|}\cdot \frac{|z_1-x|^{n+1}}{(n+1)!}
	\end{equation}
	as required.
\end{proof}
Subsequently, for simplicity of notation, we omit the tilde in $\tilde{L}_1$ with the understanding that we are allowed to absorb constants that are dependent only on the dimensions of $X,Y$ and on the sequences $M,\tilde{M}$.
\begin{thm}
	\label{whitneymainthm}
	Suppose $(f^{\alpha,\beta}):K\times Y\rightarrow \R$ is a jet of continuous functions smooth in $y$ that satisfies
	\begin{equation}
	\partial_y^\gamma(f^{\alpha,\beta})=f^{\alpha,\beta+\gamma}
	\end{equation} 
	as well as the conditions \eqref{gevcond} and \eqref{gevcond2} on $K\times Y$. Then there exists a function $f\in \Cm$ such that $\partial^{\alpha,\beta}_x f=f^{\alpha,\beta}$ on $K\times Y$.
	
	Moreover, there exist constants $C_0,C_1$ dependent only on the dimensions of $X$ and $Y$ and the weight sequences $(M_k),\tilde{M}_k$ such that
	\begin{equation}
	\|f\|_{C_1L_1,L_2}\leq C_0A.
	\end{equation}
\end{thm}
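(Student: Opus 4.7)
The strategy I would pursue is a direct adaptation of Bruna's Whitney extension argument for non-quasianalytic classes to the anisotropic setting, exploiting the fact that we are only extending in the $x$-variable while the $y$-dependence is already globally smooth with the correct Gevrey bounds. First I would perform a Whitney decomposition of $X \setminus K$ into a family of closed cubes $\{Q_i\}$ with pairwise disjoint interiors and diameter $d_i = \mathrm{diam}(Q_i)$ comparable to $\mathrm{dist}(Q_i, K)$. For each $i$, choose $x_i \in K$ with $|x_i - Q_i| \lesssim d_i$ and select a Gevrey partition of unity $\{\varphi_i\}$ of order $\sigma$ (with $\sigma$ the index associated to the weight sequence $M$) subordinate to a mild dilation of the $Q_i$, satisfying the standard derivative bounds $|\partial_x^\alpha \varphi_i(x)| \leq C^{|\alpha|+1} M_{|\alpha|} d_i^{-|\alpha|}$.

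The extension is then defined piecewise by
\begin{equation}
\tilde{f}(x,y) := \begin{cases} f^{0,0}(x,y) & x \in K \\ \displaystyle\sum_i \varphi_i(x) (T_{x_i}^{m_i} f)(x,y) & x \in X \setminus K \end{cases}
\end{equation}
where $T_{x_i}^{m_i} f$ is the mixed Taylor polynomial of Definition \ref{mixedtaylor} built from the jet data $f^{\alpha,0}(x_i, y)$, and the order $m_i$ is tied to $d_i$ by a rule of the form $m_i \sim d_i^{-1/(\sigma - 1)}$ chosen so that $L_1^{m_i+1} M_{m_i+1} d_i^{m_i+1}/(m_i+1)!$ decays super-exponentially in $d_i^{-1}$, which is the key Gevrey non-quasianalyticity gain. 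The $y$-derivatives require no special treatment: since $\partial_y^\gamma(f^{\alpha,\beta}) = f^{\alpha,\beta+\gamma}$ as assumed, differentiation in $y$ commutes with the Taylor construction in $x$ and yields exactly the analogous formula with $f^{\alpha,\gamma}$ in place of $f^{\alpha,0}$.

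The bulk of the work lies in verifying the Gevrey estimate on the derivatives $\partial_x^\alpha \partial_y^\beta \tilde{f}$ uniformly on $X \times Y$. Away from $K$, one expands $\partial_x^\alpha \partial_y^\beta \tilde{f}$ using Leibniz, splits into the two natural terms according to whether the $x$-derivatives fall on $\varphi_i$ or on $T_{x_i}^{m_i} f$, and rewrites using the identity $\sum_i \varphi_i \equiv 1$ together with the fact that the finite-overlap property of the cubes leaves only neighbouring $i,j$ contributing at each point. For these neighbours one compares via the differences $T_{x_i}^{m_i}f - T_{x_j}^{m_j}f$, which by the Taylor-remainder hypothesis \eqref{gevcond2} are controlled by $C L_1^{m+1} M_{m+1} d_i^{m+1} L_2^{|\beta|} \tilde M_{|\beta|}$ with $m = \min(m_i, m_j)$. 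The loss $d_i^{-|\alpha|}$ from differentiating $\varphi_i$ is then absorbed against the Taylor gain thanks to the choice of $m_i$, producing the required bound $|\partial_x^\alpha \partial_y^\beta \tilde{f}| \leq C_0 A (C_1 L_1)^{|\alpha|} L_2^{|\beta|} M_{|\alpha|} \tilde M_{|\beta|}$. The estimates away from $K$ extend continuously to $K$ by sending $x \to K$, which simultaneously shows that $\tilde{f}$ is $\mathcal{C}^\infty$ across $\partial K$ and that its derivatives there agree with the prescribed jet $f^{\alpha,\beta}$.

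The main obstacle, as in the classical Bruna proof, is the bookkeeping in this last step: one must delicately balance the choice of $m_i$ so that both the Gevrey norm estimate and the matching of the jet on $K$ hold simultaneously, using the regularity conditions 3--6 on the sequence $M$ (in particular the submultiplicativity $M_k \leq A^k M_j M_{k-j}$ and the sum condition $\sum_{k} M_k/M_{k+1} \lesssim p M_p/M_{p+1}$) to convert the $d_i$-dependent remainder bounds into $\alpha$-indexed Gevrey bounds. The anisotropy is harmless here: since the $y$-factors $L_2^{|\beta|} \tilde M_{|\beta|}$ appear linearly in all estimates and never interact with the $x$-extension mechanism, the scheme from \cite{bruna} adapts verbatim once one replaces scalar values of jet components by their $y$-dependent versions.
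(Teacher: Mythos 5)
Your proposal is a valid route, but it diverges from the paper's (and Bruna's) actual argument in one structural respect. You build the extension by gluing \emph{truncated Taylor polynomials} $T_{x_i}^{m_i}f$ of adaptively chosen order $m_i\sim d_i^{-1/(\sigma-1)}$, whereas the paper never truncates: it invokes Proposition \ref{carlemanvariant} (the anisotropic Carleman--Borel theorem) to produce, for each $x\in K$, a \emph{genuine} function $f_x\in\Cm$ whose full infinite jet at $x$ equals $(f^{\alpha,\beta}(x,\cdot))$, and then glues the $f_{x_j}$ with the partition of unity. The optimization over Taylor order that you perform explicitly through the $m_i\sim d_i^{-1/(\sigma-1)}$ rule is, in the paper, packaged once and for all into the weight function $h(t)=\sup_k k!/(t^kM_k)$: the difference estimate \eqref{fxfy} reads $|\partial^{k,l}_z(f_x-f_y)|\lesssim M_{|k|}\tilde M_{|l|}\,h(C_1L_1(|z_1-x|+|z_1-y|))^{-1}$, and this $h^{-1}$ factor plays exactly the role of your super-exponential Taylor gain. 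What the Carleman route buys is bookkeeping: since $f_x$ carries the \emph{entire} jet and already has uniform $\Cm$ norm, one never has to reconcile two neighbouring polynomials of different degree, and the jet-matching at $\partial K$ to all orders is automatic rather than something to be re-derived from the adaptive truncation. What your route buys is that it avoids proving the Borel-type Proposition \ref{carlemanvariant} as a separate input.

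One point to tighten if you carry out your version: the bound you state for the Gevrey partition of unity, $|\partial_x^\alpha\varphi_i|\leq C^{|\alpha|+1}M_{|\alpha|}d_i^{-|\alpha|}$, is more optimistic than the one the paper actually relies on. Proposition \ref{pou} gives $|\partial^\alpha\phi_i(z)|\leq A\,h(B\eta\,d(z,K))\,\eta^{|\alpha|}M_{|\alpha|}$ with a free parameter $\eta$; the prefactor $h(B\eta\,d(z,K))$ blows up as $z\to K$ and has to be cancelled against the $h^{-1}$ coming from \eqref{fxfy} by a judicious choice of $\eta$ (the paper takes $\eta=(C_1L_1)/cB$). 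In your scheme this is where the balancing against the Taylor gain lives, and if you keep the naive $d_i^{-|\alpha|}$ bound without the $h$-prefactor you will need to justify carefully that such a normalized $\mathcal{C}^\infty_M$ partition of unity exists with that sharp estimate; it is safer to take the Prop.~\ref{pou} form as given.
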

Before proving Theorem \ref{whitneymainthm}, we need to collect some lemmas, the proofs of which can be found in \cite{bruna}.
\begin{prop}
	\label{cubesprop}
	Suppose $K\subset \mathbb{R}^d$ is compact. Then there exists a collection of closed cubes $\{Q_j\}_{j\in\mathbb{N}}$ with sides parallel to the axes such that
	\begin{enumerate}
		\item $\R^d\setminus K=\cup_j Q_j$;
		\item $\textrm{int}(Q_j)$ are disjoint;
		\item $\delta_j:=\textrm{diam}(Q_j)\leq d_j:= d(Q_j,K)\leq 4\delta_j$;
		\item For $0<\lambda < 1/4$, $d(z,K)\sim\delta_j$ for $z\in Q_j^*:= (1+\lambda)Q_j$;
		\item Each $Q_i^*$ intersects at most $D=(12)^{2d}$ cubes $Q_j^*$;
		\item $\delta_i\sim\delta_j$ if $Q_i^*\cap Q_j^*\neq \emptyset$.
	\end{enumerate}
\end{prop}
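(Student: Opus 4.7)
The proof is the classical Whitney cube decomposition, which I would carry out in three stages. For each integer $k \in \Z$, let $\mathcal{M}_k$ denote the tiling of $\R^d$ into closed dyadic cubes with side $2^{-k}$, and let $\mathcal{F}_0 \subset \bigcup_k \mathcal{M}_k$ consist of those cubes $Q$ satisfying the sandwich condition $\mathrm{diam}(Q) \leq d(Q,K) \leq 4\,\mathrm{diam}(Q)$. First I would verify that $\mathcal{F}_0$ covers $\R^d \setminus K$: given $x \notin K$, choose $k$ so that the unique dyadic cube $Q_x \in \mathcal{M}_k$ containing $x$ satisfies $2\,\mathrm{diam}(Q_x) \leq d(x,K) < 4\,\mathrm{diam}(Q_x)$ (possible because $d(x,K)>0$), and the triangle inequality then yields the sandwich for $Q_x$ itself.

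Next, since any two dyadic cubes are either disjoint up to boundaries or one contains the other, I would extract the family $\{Q_j\}_{j\in\N}$ consisting of those cubes in $\mathcal{F}_0$ maximal under inclusion. This family covers $\R^d \setminus K$ by cubes with pairwise disjoint interiors, establishing (1), (2), and the sandwich estimate (3). The dyadic nesting is essential here: passing to maximal cubes can only enlarge cubes, and it is straightforward to check that the sandwich condition is preserved under replacing a cube by a larger dyadic cube containing it which still lies in $\mathcal{F}_0$.

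For (4) and (6) I would use the triangle inequality directly. Any $z \in Q_j^* = (1+\lambda)Q_j$ satisfies $d(z,K) \geq d_j - \lambda\delta_j \geq (1-\lambda)\delta_j$ and $d(z,K) \leq d_j + (1+\lambda)\delta_j \leq (5+\lambda)\delta_j$, proving (4). If $Q_i^* \cap Q_j^* \neq \emptyset$ and $z$ lies in the intersection, applying these two-sided bounds to both cubes forces $d(z,K) \sim \delta_i$ and $d(z,K) \sim \delta_j$ simultaneously, giving $\delta_i \sim \delta_j$ and hence (6).

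The counting bound (5) is then the one quantitative estimate that requires care. Fixing $Q_i$, any $Q_j$ with $Q_j^* \cap Q_i^* \neq \emptyset$ is contained in a ball of radius $C\delta_i$ centred at the centre of $Q_i$, has side length comparable to $\delta_i$ by (6), and has interior disjoint from the interiors of the other admissible $Q_j$'s. A volume count in $\R^d$ then bounds the number of admissible $Q_j$ by a dimension-only constant; tracking the comparability constants explicitly through the construction yields the stated bound $(12)^{2d}$. The only real obstacle is bookkeeping of these constants across (3), (4) and (6) to obtain the clean value $(12)^{2d}$; the underlying geometry is entirely classical and this proposition serves as a preliminary to the anisotropic Gevrey Whitney extension theorem stated earlier in the appendix.
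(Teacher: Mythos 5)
The paper does not prove this proposition; it simply refers the reader to Bruna's paper for the proofs of both Propositions \ref{cubesprop} and \ref{pou}, so there is no in-paper argument to compare against. Your proposal is the classical Stein-style dyadic Whitney decomposition, and it is correct in all essentials: the choice of generation $k$ with $2\,\mathrm{diam}(Q_x)\leq d(x,K)<4\,\mathrm{diam}(Q_x)$ is possible because the intervals $[2\cdot 2^{-k}\sqrt{d},\,4\cdot 2^{-k}\sqrt{d})$ tile $(0,\infty)$; the nested-chain condition $\mathrm{diam}(Q')\leq d(Q',K)\leq d(Q,K)\leq 4\,\mathrm{diam}(Q)$ for $Q\subset Q'$ both in $\mathcal{F}_0$ guarantees that maximal elements exist and that passing to them preserves the cover; and the triangle-inequality bounds $(1-\lambda)\delta_j\leq d(z,K)\leq(5+\lambda)\delta_j$ on $Q_j^*$ give (4), from which (6) follows immediately.

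One caveat worth flagging. In item (5) the proposition states a \emph{specific} constant $D=(12)^{2d}$, and you assert without computation that your construction "yields the stated bound." A volume count certainly gives \emph{some} dimensional constant: by (6) the $\delta_j$ with $Q_j^*\cap Q_i^*\neq\emptyset$ are comparable to $\delta_i$ with explicit constants coming from $(1-\lambda)\leq d(z,K)/\delta_j\leq (5+\lambda)$, hence all such $Q_j$ lie in a ball of radius $\lesssim\delta_i$ and have volume $\gtrsim\delta_i^d$, so the count is $\leq C(d,\lambda)$. But whether the constants propagate to exactly $(12)^{2d}$ in your dyadic construction (as opposed to Bruna's, which may set up the decomposition slightly differently) is not checked, and for $\lambda$ close to $1/4$ the comparability constants in (4) and (6) degrade. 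Since the precise value of $D$ plays no role in the Whitney extension argument — only that it is finite and dimension-dependent — this is a cosmetic discrepancy rather than a gap, but you should either carry out the bookkeeping or replace $(12)^{2d}$ with an unspecified dimensional constant and note that any such bound suffices.
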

\begin{prop}
	\label{pou}
	For each $\eta >0$, there exists a family of functions $\phi_i\in \mathcal{C}^\infty_M(\R^d)$ such that
	\begin{enumerate}
		\item $0\leq \phi_i$;
		\item $\textrm{supp}(\phi_i)\subset Q_i^*$;
		\item $\sum_i \phi_i(z)=1$ for $z\in \R^d$;
		\item $|\partial^\alpha \phi_i(z)|\leq Ah(B\eta d(z,K))\eta^{|\alpha|} M_{|\alpha|}$ for $z\in Q_i^*$.
	\end{enumerate}
	where $A,B>0$ are constants and 
	\begin{equation}
	h(t):=\sup_k\frac{k!}{t^kM_k}.
	\end{equation}
\end{prop}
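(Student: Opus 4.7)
Proof plan for Proposition \ref{pou}. The strategy is the standard Whitney partition of unity construction in the Gevrey category: I build adapted Gevrey bumps on each Whitney cube, normalize to obtain a partition of unity, and then convert the natural derivative estimates (which scale with the cube diameter $\delta_i$) into the advertised scaled form involving $\eta$ and the associated function $h$.

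First I would construct, for each index $i$, a nonnegative function $\psi_i \in \mathcal{C}^\infty_M(\R^d)$ with $\psi_i \equiv 1$ on $Q_i$, $\operatorname{supp}(\psi_i) \subset Q_i^*$, and satisfying the $\delta_i$-scaled Gevrey estimate
\[ |\partial^\alpha \psi_i(z)| \leq A_0 C_0^{|\alpha|} \delta_i^{-|\alpha|} M_{|\alpha|}. \]
Such bumps exist by a classical recipe: fix once and for all a $\psi_0 \in \mathcal{C}^\infty_M(\R^d)$ supported in $[-1-\lambda/2, 1+\lambda/2]^d$ with $\psi_0 \equiv 1$ on the unit cube (these exist because $M$ is non-quasi-analytic, conditions (1)–(6)), and rescale $\psi_i(z) := \psi_0(\delta_i^{-1}(z - z_i))$ where $z_i$ is the center of $Q_i$. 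The support and value conditions then follow directly from properties of $Q_i^*$ in Proposition \ref{cubesprop}.

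Second, I would define $\phi_i(z) := \psi_i(z)/S(z)$ where $S(z) := \sum_j \psi_j(z)$. Because the cubes $\{Q_j\}$ cover $\R^d \setminus K$, each point $z \in \R^d \setminus K$ lies in some $Q_{j_0}$, whence $\psi_{j_0}(z) = 1$ and $S(z) \geq 1$. By the finite overlap property (point 5 of Proposition \ref{cubesprop}) only at most $D$ of the $\psi_j$ are nonzero at any given point, and by point 6 all the $\delta_j$ occurring in such a sum are comparable to $\delta_i$. Properties $(1)$, $(2)$, $(3)$ of the proposition are then immediate from the construction.

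Third, and this is where the main work lies, I would bound the derivatives of $\phi_i$. Applying the Leibniz rule to $\phi_i = \psi_i \cdot S^{-1}$ and then Faà di Bruno to $S^{-1} = F(S)$ with $F(x) = 1/x$, combined with the Gevrey division lemma applied to $S$ (which is bounded below by $1$ and Gevrey on $Q_i^*$ at scale $\delta_i$), produces the bound
\[ |\partial^\alpha \phi_i(z)| \leq A_1 C_1^{|\alpha|} \delta_i^{-|\alpha|} M_{|\alpha|} \quad \text{for } z \in Q_i^*. \]
The finite overlap and scale-comparability make these applications uniform in $i$, with constants depending only on $d$, $\lambda$, $D$, and the weight sequence $M$.

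Finally, I would convert this $\delta_i$-scaled bound into the form $A h(B\eta d(z,K)) \eta^{|\alpha|} M_{|\alpha|}$. On $\operatorname{supp}(\phi_i) \subset Q_i^*$, property (4) of Proposition \ref{cubesprop} gives $\delta_i \sim d(z,K)$, so the estimate becomes $\lesssim C_1^{|\alpha|} d(z,K)^{-|\alpha|} M_{|\alpha|}$. I would then split into two regimes: if $\eta d(z,K) \gtrsim 1$, the bound reduces directly to a constant multiple of $\eta^{|\alpha|} M_{|\alpha|}$ and one uses $h \geq 1$. If $\eta d(z,K) < 1$, one exploits the defining property of the associated function, namely $h(t) \geq k!/(t^k M_k)$ for every $k$, so that the excess factor $(\eta d(z,K))^{-|\alpha|}$ in $d(z,K)^{-|\alpha|} M_{|\alpha|} = \eta^{|\alpha|} M_{|\alpha|} (\eta d(z,K))^{-|\alpha|}$ is absorbed, after an appropriate choice of $B$, into a single evaluation of $h$ at $B\eta d(z,K)$. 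The admissibility of this absorption is the main technical obstacle, and it is exactly here that the sequence-moderate-growth assumptions (3) and (4) and the summability in (6) on $M$ must be used, since they guarantee that taking the supremum in $k$ in the definition of $h$ gives enough room to dominate the single power $(\eta d(z,K))^{-|\alpha|}$ after paying a geometric factor $B^{|\alpha|}$. With the constants $A, B$ chosen accordingly, the estimate $(4)$ follows for all $z \in Q_i^*$, completing the proof.
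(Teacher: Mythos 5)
The paper itself does not prove Proposition \ref{pou}; it defers to \cite{bruna}, so your proposal has to be judged against Bruna's construction. Your steps (2) and (3) --- normalizing by $S=\sum_j\psi_j\ge 1$ off $K$ and controlling the quotient by Leibniz, Fa\`a di Bruno and the finite-overlap and scale-comparability properties of Proposition \ref{cubesprop} --- are structurally fine. The genuine gap is the combination of step (1) and step (4). A bump obtained by rescaling a fixed profile, $\psi_i(z)=\psi_0(\delta_i^{-1}(z-z_i))$, satisfies $|\partial^\alpha\psi_i|\le A_0C_0^{|\alpha|}\delta_i^{-|\alpha|}M_{|\alpha|}$ and nothing better, and this cannot be converted a posteriori into $A\,h(B\eta\,d(z,K))\,\eta^{|\alpha|}M_{|\alpha|}$. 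With $d=d(z,K)\sim\delta_i$, the conversion you need is $(C/(\eta d))^{|\alpha|}\le A\,h(B\eta d)$. For fixed $z$ and $\eta$ the right-hand side is a single finite number (for $M_k=k!^{\rho}$ with $\rho>1$ one has $h(t)=\sup_k k!^{1-\rho}t^{-k}<\infty$, of size roughly $e^{ct^{-1/(\rho-1)}}$), whereas the left-hand side tends to infinity as $|\alpha|\to\infty$ whenever $\eta d<C$. The absorption you invoke, $h(t)\ge k!/(t^kM_k)$, only gives $t^{-k}\le h(t)\,M_k/k!$, and the parasitic factor $M_k/k!=k!^{\rho-1}$ is not $O(B^k)$, so it cannot be hidden in the geometric factor you allow yourself. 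In short, $h(B\eta d)$ is a fixed constant once $z$ and $\eta$ are fixed; it does not supply $\alpha$-dependent room.

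The fix is that the parameter $\eta$ must be built into the bumps from the start, which is exactly what Bruna does. One convolves the indicator of (a slight enlargement of) $Q_i$ with infinitely many normalized indicators $u_{m_1}*u_{m_2}*\cdots$, choosing $m_j\sim\eta^{-1}M_{j-1}/M_j$ for $j\le p$, where $p=p(\eta\delta_i)$ is the largest index for which these steps still fit, and compressing the remaining steps using condition (6) on $M$ so that $\sum_j m_j\le\lambda\delta_i$ and the support stays in $Q_i^*$. Since $\|\partial^k(\chi*u_{m_1}*\cdots)\|_\infty\le\prod_{j\le k}(2/m_j)$, the first $p$ derivatives obey $(C\eta)^kM_k$ with no loss, and the loss for $k>p$ is an $\alpha$-independent factor comparable to $p!/((\eta\delta_i)^pM_p)\le h(B\eta\delta_i)\sim h(B\eta\,d(z,K))$; conditions (5) and (6) on $M$ are what make this bookkeeping close. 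So the factor $h(B\eta\,d(z,K))$ must be produced inside the construction of the $\psi_i$, not recovered afterwards from a $\delta_i$-scaled estimate; as written, your argument does not prove property (4).
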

\begin{prop}
	\label{openmapping}
	Suppose $T\in \mathcal{L}(E,F)$ is a continuous linear surjection between Silva spaces. Then for any bounded set $B\subset F$, there exists a bounded set $C\subset E$ with $T(C)=B$.
\end{prop}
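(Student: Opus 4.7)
The plan is to reduce the statement to a lifting problem for the quotient map $E \to E/\ker T$, and then to exploit the regularity of the inductive limit structure that characterises Silva spaces. First I would invoke the open mapping theorem for Silva spaces (which are ultrabornological and webbed, so De Wilde's theorem applies): a continuous linear surjection $T$ between Silva spaces is automatically open. Consequently $T$ factors as $T = \pi\circ q$, where $q:E\to E/\ker T$ is the canonical quotient and $\pi:E/\ker T\to F$ is a topological isomorphism. Pulling $B$ back to $B':=\pi^{-1}(B)\subset E/\ker T$ thus reduces the problem to showing that any bounded subset of $E/\ker T$ lifts to a bounded subset of $E$ under $q$.

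Next, write $E=\varinjlim E_n$ as a strict inductive limit of Banach spaces with compact inclusions $E_n\hookrightarrow E_{n+1}$. Since $\ker T$ is closed in $E$, the intersection $N_n:=E_n\cap \ker T$ is closed in $E_n$, so $E_n/N_n$ is a Banach space. A standard structure theorem for Silva spaces (originally due to Floret) identifies the quotient topology on $E/\ker T$ with the inductive limit topology of $\{E_n/N_n\}$, and shows that the inclusions $E_n/N_n\hookrightarrow E_{n+1}/N_{n+1}$ remain compact. Hence $E/\ker T$ is again a Silva space. By the regularity property of such spaces, every bounded subset is already contained and bounded in some Banach step $E_{n_0}/N_{n_0}$; apply this to $B'$ to obtain $r>0$ with $\|b'\|_{E_{n_0}/N_{n_0}}\leq r$ for all $b'\in B'$.

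Finally, the lift is constructed directly from the quotient norm. For each $b'\in B'$, choose a representative $e_{b'}\in E_{n_0}$ with $q(e_{b'})=b'$ and $\|e_{b'}\|_{E_{n_0}}\leq 2r$. Setting $C:=\{e_{b'}:b'\in B'\}$, the family $C$ is bounded in the Banach norm of $E_{n_0}$, hence bounded in $E$, and
\begin{equation}
T(C)=\pi(q(C))=\pi(B')=B,
\end{equation}
which is the desired lift.

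The step I expect to be the main obstacle is the structural identification in paragraph two: verifying that the quotient Silva space $E/\ker T$ really is $\varinjlim E_n/N_n$ with compact transition maps, and that regularity of this inductive limit therefore holds. One needs to check that $N_n$ is genuinely closed in $E_n$ (immediate from the continuity of $E_n\hookrightarrow E$), that a bounded set in the quotient $\varinjlim$-topology lands in one Banach step (this is where the compactness of $E_n\hookrightarrow E_{n+1}$ is used, via the Mujica/Floret regularity theorem for compact inductive limits), and that the $\varinjlim$-topology coincides with the quotient topology inherited from $E$. Once these are in place, the bounded-set lift is routine, since Banach-space quotient norms allow one to choose representatives of prescribed norm up to a factor of two.
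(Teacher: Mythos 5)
The paper does not actually prove Proposition \ref{openmapping}: it is one of the three lemmas (together with Propositions \ref{cubesprop} and \ref{pou}) whose proofs are explicitly deferred to \cite{bruna}, so there is no in-text argument to compare against. Your proof is correct and is essentially the standard argument for lifting bounded sets in Silva spaces: factor $T$ through $E/\ker T$ using De Wilde's open mapping theorem (Silva spaces are both webbed and ultrabornological, so the theorem applies in the needed direction), identify $E/\ker T$ with the compact inductive limit $\varinjlim E_n/(E_n\cap\ker T)$, use regularity of such limits to place the pulled-back bounded set inside a single Banach step, and lift representatives with quotient norm controlled up to a factor of two. The structural facts you single out as the main obstacle are all genuine theorems (Floret, Komatsu): the induced linking maps $E_n/N_n\to E_{n+1}/N_{n+1}$ are injective because $E_n\cap N_{n+1}=E_n\cap\ker T=N_n$, they are compact because the unit ball of $E_n/N_n$ lies in the image of twice the unit ball of $E_n$, the locally convex inductive limit commutes with the quotient, and (DFS)-limits are (boundedly) regular. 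One terminological correction: a Silva space is a \emph{compact}, not a \emph{strict}, inductive limit --- an inclusion $E_n\hookrightarrow E_{n+1}$ that is simultaneously compact and a topological embedding onto a closed subspace would force $E_n$ to be finite dimensional --- but since your argument only uses compactness of the linking maps and regularity, nothing in the proof is affected.
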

We also require an anisotropic version of Carleman's theorem, which is the special case of \ref{whitneymainthm} with $K=\{0\}$, and Gevrey analogue of Borel's theorem from classical analysis.
\begin{prop}
	\label{carlemanvariant}
	Let $(g_\alpha)_{\alpha\in \N^{d}}$ be a multisequence of functions in $\mathcal{C}^\infty_{\tilde{M}}(Y)$ such that
	\begin{equation}
	|\partial_y^l g_\alpha(y)|\leq KL_1^{|\alpha|}L_2^{|l|}M_{|\alpha|}\tilde{M}_{|l|}.
	\end{equation}
	for some constant $K>0$.
	
	Then there exists a function $f\in \Cm$ such that $g_\alpha(y)=\partial_x^{\alpha}f(0,y)$ for all $y\in Y$. Moreover, $\|f\|_{CL_1,L_2}\leq AK$ for some constants $A,C>0$ independent of $f,L_1,$ and $L_2$.
\end{prop}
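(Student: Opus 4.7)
\medskip

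\noindent\textbf{Proof plan.} The plan is to reduce this to the classical one-sided Carleman theorem for Denjoy--Carleman classes by treating the $y$-variable as a parameter, and to construct $f$ explicitly as a cutoff formal Taylor series in $x$.

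First, I would observe that the result of the present proposition is the special case of Theorem \ref{whitneymainthm} with $K=\{0\}$: indeed the jet $(f^{\alpha,\beta})(0,y):=\partial_y^\beta g_\alpha(y)$ trivially satisfies the bound \eqref{gevcond} (by hypothesis), and the Taylor remainder condition \eqref{gevcond2} becomes vacuous on the one-point set $K=\{0\}$ (there are no two distinct base points in $K$ to take a remainder between, so one can choose any extension of $f^{\alpha,\beta}$ away from $x=0$ to verify \eqref{gevcond2}, e.g.\ the one produced by the construction below). Thus once Theorem \ref{whitneymainthm} is in hand this is automatic; however since Proposition \ref{carlemanvariant} is used as an ingredient for that theorem, I would give a direct argument.

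Next, I would use the Denjoy--Carleman non-quasi-analyticity condition $\sum_k M_k/M_{k+1}<\infty$ (built into the hypotheses on $(M_k)$) to fix once and for all a cutoff function $\chi\in\mathcal{C}^\infty_M(\R^d)$ with $\chi\equiv 1$ on $\{|x|\leq 1\}$, $\mathrm{supp}\,\chi\subset\{|x|\leq 2\}$, and $|\partial^\gamma\chi(x)|\leq B_0 B_1^{|\gamma|}M_{|\gamma|}$ for absolute constants. I would then define the scaling sequence $\lambda_k:=c L_1 M_{k+1}/M_k$ with $c>0$ small, and set
\[
f(x,y):=\sum_{\alpha\in\N^d} g_\alpha(y)\,\frac{x^\alpha}{\alpha!}\,\chi(\lambda_{|\alpha|} x).
\]
The choice of $\lambda_k$ is dictated by the monotonicity of $M_{k+1}/(kM_k)$ and by the summability axiom, which together force $\sum_k M_k/(M_{k+1} \lambda_{k+1})<\infty$ and guarantee absolute convergence of the series (and of all its derivatives) in $\Cm$. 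Verifying the boundary condition $\partial_x^\beta f(0,y)=g_\beta(y)$ is a routine Leibniz calculation: since $\chi\equiv 1$ near $0$, all higher $x$-derivatives of $\chi(\lambda_{|\alpha|}x)$ vanish at $x=0$, so only the $\alpha=\beta$ term survives.

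The main technical step is the Gevrey estimate. For fixed $\gamma,l$ I would apply $\partial_x^\gamma\partial_y^l$ termwise; the $y$-derivative falls only on $g_\alpha$, giving the factor $KL_1^{|\alpha|}L_2^{|l|}M_{|\alpha|}\tilde M_{|l|}$ by hypothesis, and the $x$-derivatives are handled by Leibniz between $x^\alpha/\alpha!$ and $\chi(\lambda_{|\alpha|}x)$. On $\mathrm{supp}\,\chi(\lambda_{|\alpha|}\,\cdot)$ one has $|x|\leq 2/\lambda_{|\alpha|}$, so the monomial factor contributes at worst $\lambda_{|\alpha|}^{-|\alpha-\mu|}$, while the $\chi$-derivative contributes at worst $\lambda_{|\alpha|}^{|\gamma-\mu|}B_1^{|\gamma-\mu|}M_{|\gamma-\mu|}$. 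Using the axioms $M_k\leq A^k M_jM_{k-j}$ and $M_{k+1}^k\leq A^k M_k^{k+1}$ (which relate $\lambda_k$ back to the sequence $M_k$) to collect powers, and then summing in $\alpha$ via the summability axiom, one obtains
\[
|\partial_x^\gamma\partial_y^l f(x,y)|\leq A_0K\,(CL_1)^{|\gamma|}L_2^{|l|}M_{|\gamma|}\tilde M_{|l|},
\]
with $A_0,C$ depending only on $d$, $\dim Y$, and the sequences $M,\tilde M$. This is precisely the required bound.

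The main obstacle, and the step I would expect to require the most care, is Step 3: the bookkeeping that ensures the factors $\lambda_{|\alpha|}^{|\gamma|-|\alpha|}$ produced by Leibniz combine with $M_{|\alpha|}$ to yield a tail that is absolutely summable in $\alpha$ while simultaneously producing the target weight $M_{|\gamma|}$ on the output side. The axioms on $(M_k)$ are exactly what make this balancing possible, and this is also where the Gevrey index $\rho>1$ enters implicitly (through the existence of the bump $\chi$ and the summability of the residual sequence). The linearity of the construction in $(g_\alpha)$ yields the announced linear norm estimate $\|f\|_{CL_1,L_2}\leq AK$ automatically, and one could alternatively invoke Proposition \ref{openmapping} on the map $f\mapsto(\partial_x^\alpha f|_{x=0})$ between the appropriate Silva spaces to upgrade a bare existence statement to the norm bound.
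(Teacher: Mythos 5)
There is a genuine gap in the central construction. The function
\[
f(x,y)=\sum_{\alpha} g_\alpha(y)\,\frac{x^\alpha}{\alpha!}\,\chi(\lambda_{|\alpha|}x),\qquad \lambda_k=cL_1M_{k+1}/M_k,
\]
does not satisfy the required uniform estimates, and the failure occurs exactly at the step you flag as delicate. Consider the Leibniz term in $\partial_x^\gamma$ in which \emph{all} $x$-derivatives fall on the cutoff ($\mu=0$), with $|\gamma|\geq|\alpha|$. On $\mathrm{supp}\,\chi(\lambda_{|\alpha|}\cdot)$ one has $|x|^{|\alpha|}\leq (2/\lambda_{|\alpha|})^{|\alpha|}$, so this term is bounded (and, for a generic bump, essentially of size)
\[
K L_1^{|\alpha|}M_{|\alpha|}\cdot\frac{2^{|\alpha|}}{\alpha!}\,\lambda_{|\alpha|}^{|\gamma|-|\alpha|}\,B_1^{|\gamma|}M_{|\gamma|}
\;\leq\; K\,\frac{2^{|\alpha|}}{\alpha!}\,(cB_1L_1)^{|\gamma|}\,M_{|\gamma|}^{2},
\]
where I used log-convexity to get $M_{|\alpha|}\bigl(M_{|\alpha|+1}/M_{|\alpha|}\bigr)^{|\gamma|-|\alpha|}\leq M_{|\gamma|}$. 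The output weight is $M_{|\gamma|}^2$ rather than $M_{|\gamma|}$: you pick up one factor $M_{|\gamma|}$ from the derivatives of $\chi$ and (after the forced choice of $\lambda_k$, which is dictated by the terms with $|\gamma|<|\alpha|$) a second factor $M_{|\gamma|}$ from $M_{|\alpha|}\lambda_{|\alpha|}^{|\gamma|-|\alpha|}$. No choice of the scaling sequence repairs this: making $\lambda_k$ larger worsens these terms, making it smaller breaks the terms where all derivatives fall on the monomial. Since a $G^\rho$ bump supported at scale $1/\lambda$ genuinely has $k$-th derivatives of size comparable to $\lambda^kM_k$ for some $k$, there is no hidden cancellation; a single summand $g_\alpha(y)\frac{x^\alpha}{\alpha!}\chi(\lambda_{|\alpha|}x)$ already violates the target bound $C^{|\gamma|}L_1^{|\gamma|}M_{|\gamma|}$ when $|\gamma|\sim 2|\alpha|$.

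This is precisely why the paper does not use the rescaled-monomial cutoff. Following Petzsche, it constructs dedicated one-dimensional functions $\chi_p$ with $\chi_p^{(k)}(0)=\delta(k,p)$ satisfying the much sharper bound $|\chi_p^{(k)}|\leq M_kM_p^{-1}(Ae/L)^p(L(2+A^{-1}))^{k}$ — a single factor of $M_k$ on the output side together with a full gain of $M_p^{-1}$ — and then sets $f=\sum_\alpha\chi_\alpha(x)g_\alpha(y)$ with $\chi_\alpha=\prod_j\chi_{\alpha_j}$; the choice $L=2CL_1$ makes $\|\chi_\alpha g_\alpha\|\leq K2^{-|\alpha|}$, so the series converges geometrically in the Banach norm. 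The construction of such $\chi_p$ is the real content (it uses the strong non-quasianalyticity axiom (6) on $(M_k)$ and is not a rescaling of a fixed bump), and your proposal does not supply a substitute for it. Your side remarks (that the statement is the $K=\{0\}$ case of Theorem \ref{whitneymainthm}, and that Proposition \ref{openmapping} could upgrade existence to a norm bound) are fine but do not circumvent the need for this construction.
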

\begin{proof}
	We adapt the solution of \cite{pet} of the classical Carleman problem to this setting. Key is that the assumptions on $M$ imply that the hypotheses of \cite{pet} are satisfied. 
	Hence as in the proof of \cite{pet} Theorem 2.1 (ai), we can construct compactly supported $\chi_p(x)\in \mathcal{C}^\infty_{M_p}(\R)$ for each non-negative integer $p$ such that
	\begin{equation}
	\chi^{(k)}_p(0)=\delta(k,p)
	\end{equation}
	and
	\begin{equation}
	\|\chi_p\|_{L(2+A^{-1})}\leq \frac{1}{M_p}\cdot \left(\frac{Ae}{L}\right)^p
	\end{equation}
	for some dimensional constant $A$ and any $L>0$.
	Hence we can define
	\begin{equation}
	\chi_\alpha(x):=\prod_{j=1}^d \chi_{\alpha_j}(x_j)
	\end{equation}
	for $\alpha\in\N^d$ which satisfies
	\begin{equation}
	\chi_\alpha^{(\beta)}(0)=\delta(\beta,\alpha).
	\end{equation}
	Moreover, we have the estimate
	\begin{eqnarray}
	|\chi_\alpha^{(\beta)}|&= & \prod_{j=1}^d |\chi_{\alpha_j}^{\beta_j}|\\
	&\leq & \prod_{j=1}^d \frac{1}{M_{\alpha_j}}\left(\frac{Ae}{L}\right)^{\alpha_j}(L(2+A^{-1}))^{\beta_j}M_{\beta_j}\\
	&\leq & \left(\frac{Aec(d,M)}{L}\right)^{|\alpha|}\cdot M_{|\alpha|}^{-1}(L(2+A^{-1}))^{|\beta|}M_{|\beta|}.
	\end{eqnarray}
	
	By taking $L=2CL_1=2Aec(d,M)L_1$, we can estimate
	\begin{eqnarray}
	|\partial_x^k\partial_y^l (\chi_\alpha(x)g_\alpha(y))|&\leq & K((C/L)^{|\alpha|}M_{|\alpha|}^{-1}(L(2+A^{-1}))^{|k|}M_{|k|})\cdot(L_1^{|\alpha|}L_2^{|l|}M_{|\alpha|}\tilde{M}_{|l|})\\
	&\leq & K\cdot 2^{-|\alpha|}(2CL_1(2+A^{-1}))^{|k|}L_2^{|l|}M_{|k|}\tilde{M}_{|l|}
	\end{eqnarray}
	where $A,C,$ and $K$ are constants independent of $f,L_1,$ and $L_2$.
	
	Hence we have that $\|\chi_\alpha(x)g_\alpha(y)\|_{2CL_1(2+A^{-1}),L_2}\leq K\cdot 2^{-|\alpha|}$. It follows that 
	\begin{equation}
	f(x,y):=\sum_{\alpha\in \N^d}\chi_\alpha(x)g_\alpha(y)
	\end{equation}
	converges in the $\Cm$ sense, and satisfies $\partial_x^\alpha f(0,y)=g_\alpha(y)$ as required.
\end{proof}
Equipped with these tools, we are ready to prove Theorem \ref{whitneymainthm}.
\begin{proof}[Proof of Theorem \ref{whitneymainthm}]
	We begin by estimating the difference in Taylor expansions about different points in $K$.
	Using the identity
	\begin{equation}
	(T_x^{n}f)(z)-(T_y^{n}f)(z)=\sum_{|\alpha|\leq n} \frac{(z_1-x)^\alpha}{\alpha!}(R_y^nf)_{\alpha,0}(x,z_2)
	\end{equation}
	we can estimate
	\begin{eqnarray}
	& &\partial_z^{k,l}((T_x^{n}f)(z)-(T_y^{n}f)(z))\\
	&=& \sum_{|\alpha|\leq n-|k|}\frac{(z_1-x)^\alpha}{\alpha!}(R_y^{n}f)_{k+\alpha,l}(x)
	\end{eqnarray}
	using the assumed estimate \eqref{gevcond2} for $(R_y^{m,n}f)_{k,l}$. 
	This yields
	\begin{equation}
	\label{diffoftaylor}
	|\partial_z^{k,l}((T_x^{n}f)(z)-(T_y^{n}f)(z))|\leq AL_1^{n+1}M_{n+1}L_2^{|l|}\tilde{M}_{|l|}\frac{(|z_1-x|+|z_1-y|)^{n-|k|+1}}{(n-|k|+1)!}.
	\end{equation}
	We now invoke Proposition \ref{carlemanvariant}.
	For $x\in X$ consider the map $T_x:\Cm\rightarrow G_x$ given by $(T_xf)_\alpha(y):=f^{\alpha,0}(x,y)$ where the space $G_x$ consists of all multisequences of analytic functions $f_\alpha:Y\rightarrow \R$ satisfying $|f_\alpha|\leq AL_1^{|\alpha|}L_2^{|\beta|}M_{|\alpha|}\tilde{M}_{|\beta|}$ for some $A>0$.
	From the assumed estimate \eqref{gevcond} on $f^{\alpha,\beta}$, Proposition \ref{carlemanvariant} applies, and for each $x\in K$, we can find a function $f_x\in \Cm$ such that 
	
	
	
	
	\begin{equation}
	\label{fxsolvescarleman}
	\partial^{\alpha,\beta}_zf_x(x,z_2)=f^{\alpha,\beta}(x,z_2)
	\end{equation}
	for each $\alpha,\beta$. Moreover, the conclusion of Proposition \ref{carlemanvariant} implies that there exist constants $B=C_0A,K_1=C_1L_1,K_2=L_2>0$ such that the estimate
	\begin{equation}
	\label{boundedcarleman}
	|(\partial^{\alpha,\beta}_zf_x)(z)|\leq BK_1^{|\alpha|}K_2^{|\beta|}M_{|\alpha|}\tilde{M}_{|\beta|}
	\end{equation} 
	holds uniformly, where $C_j$ depend only on the dimensions of $X$ and $Y$ and the weight sequences $M_k,\tilde{M}_k$.
	
	
	Hence we can bound 
	\begin{equation}
	\partial_z^{k,l}(f_x(z)-(T_x^{m,n}f_x)(z))=(R^{m,n}f_x)_{k,l}(z)
	\end{equation}
	using the same calculation as in Proposition \ref{anistaylor}.
	We obtain
	\begin{eqnarray}
	|\partial_x^{k,l}(f_x(z)-(T_x^nf)(z))|&=&|(R^nf_x)_{k,l}(z)|\\
	&\leq& A(C_1L_1)^{n+1}M_{n+1}L_2^{|l|}\tilde{M}_{|l|}\frac{|z_1-x|^{n-|k|+1}}{(n-|k|+1)!}.
	\end{eqnarray}
	
	The upshot of this estimate is that we can replace $T_x^nf$ and $T_y^nf$ in \eqref{diffoftaylor} with $f_x$ and $f_y$ respectively. 
	That is, we have
	\begin{equation}
	\label{fxfyn}
	|\partial_z^{k,l}(f_x(z)-f_y(z))|\leq A(C_1L_1)^{n+1}M_{n+1}L_2^{|l|}\tilde{M}_{|l|}\frac{(|z_1-x|+|z_1-y|)^{n-|k|+1}}{(n-|k|+1)!}.
	\end{equation}
	We now fix $k,l$ and vary $n\geq k$ in order to optimise the upper bound \eqref{fxfyn}.
	By defining the quantity 
	\begin{equation}
	h(t):=\sup_{k\geq 0} \frac{k!}{t^k M_k}
	\end{equation}
	as in \cite{bruna} we obtain
	\begin{equation}
	\label{fxfy}
	|\partial_z^{k,l}(f_x(z)-f_y(z))|\leq A(C_1L_1)^{|k|}M_{|k|}L_2^{|l|}\tilde{M}_{|l|}h((C_1L_1)(|z_1-x_1|+|z_1-y|))^{-1}.
	\end{equation}
	by using property (3) following Definition \ref{anisspace}.
	
	The next step in the construction is to use Proposition \ref{pou} to piece together the functions $f_x$ using a $\mathcal{C}^\infty_{M}$ partition of unity subordinate to the cover arising from the decomposition of $X \setminus K$ by cubes in Proposition \ref{cubesprop}.
	Taking the collection $\{Q_j\}_{j\in\N}$ of cubes in $X=\mathbb{R}^d$ constructed by Proposition \ref{cubesprop}, we choose $x_j\in K$ such that $d(x_j,Q_j)=d(Q_j,K)$.
	Note that the conclusion of Proposition \ref{cubesprop} implies that 
	\begin{equation}
	|z-x_j|\sim d(z,K)
	\end{equation}
	for all $z\in Q_j^*$.
	Now taking $\phi_j$ as in Proposition \ref{pou}, we define:
	\begin{equation}
	\tilde{f}(z):= \begin{cases} f(z) &\mbox{if } z_1\in K \\ 
	\sum_i \phi_i(z_1)f_{x_j}(z) & \mbox{if } z_1\in X\setminus K.\end{cases}
	\end{equation}
	Note that since the partition of unity $\{\phi_j\}$ is locally finite, the function $\tilde{f}(z)$ is smooth in $(X\setminus K)\times Y$.
	It remains to check that $\tilde{f}$ is smooth elsewhere, and moreover that $\tilde{f}\in \Cm$.
	To this end, for $x\in K$ and $z_1\in X\setminus K$, we estimate
	\begin{equation}
	\partial_z^{\alpha,\beta}(\tilde{f}(z)-f_x(z))=\sum_{k\leq \alpha}\binom{\alpha}{k}\sum_i (\partial^k\phi_i)(z_1)\cdot \partial_z^{\alpha-k,\beta}(f_{x_i}(z)-f_x(z)).
	\end{equation}
	
	First we estimate the $k=0$ term. If $z_1\in \textrm{spt}(\phi_i)=Q_i^*$, we have 
	\begin{equation}
	d(z_1,x_i)\sim d(z_1,K)\leq d(z_1,x)
	\end{equation}
	and hence we have 
	\begin{equation}
	\label{kzero}
	|\sum_i \phi_i(z_1)\cdot \partial_z^{\alpha,\beta}(f_{x_i}(z)-f_x(z))|\leq A(C_1L_1)^{|\alpha|}M_{|\alpha|}L_2^{|\beta|}\tilde{M}_{|\beta|}h((C_1L_1)|z_1-x|)^{-1}
	\end{equation}
	from \eqref{fxfy}.
	
	We now estimate the terms with $|k|>0$. For $x\in X\setminus K$, we choose $\bar{x}\in K$ with $d(x,\bar{x})=d(x,K)$.
	Since $\sum_i \partial^k\phi_i=0$, we have
	\begin{equation}
	\sum_i (\partial^k\phi_i)(z_1)\cdot \partial_z^{\alpha-k,\beta}(f_{x_i}(z)-f_x(z))=\sum_i (\partial^k\phi_i)(z_1)\cdot \partial_z^{\alpha-k,\beta}(f_{x_i}(z)-f_{\bar{z_1}}(z)).
	\end{equation}
	Now as before, we exploit the fact that $d(z_1,x_i)\sim d(z_1,K)$ to bound
	\begin{equation}
	|\partial_z^{\alpha-k,\beta}(f_{x_i}(z)-f_{\bar{z_1}}(z))|\leq A(C_1L_1)^{|\alpha|-|k|}M_{|\alpha|-|k|}L_2^{|\beta|}\tilde{M}_{|\beta|}h((C_1L_1)d(z_1,K))^{-1}.
	\end{equation}
	Since $\log(M_j)$ is an increasing convex sequence with first term $0$, it is also superadditive, and we have $M_{|k|}M_{|l|}\leq M_{|k|+|l|}$. Hence for $|k|\geq 1$, we can use property (4) in Proposition \ref{pou} to conclude that
	\begin{equation}
	\left|\sum_i (\partial^k\phi_i)(z_1)\cdot \partial_z^{\alpha-k,\beta}(f_{x_i}(z)-f_x(z))\right|\leq AM_{|\alpha|}\tilde{M}_{|\beta|}(C_1L_1)^{|\alpha|-|k|}L_2^{|\beta|}\eta^{|k|}\frac{h(B\eta d(z_1,K))}{h((C_1L_1)d(z_1,K))}
	\end{equation}
	where $\eta$ remains to be chosen.
	Equation (15) from \cite{bruna} implies the existence of a constant $c>0$ such that
	\begin{equation}
	\frac{h(t)}{h(ct)}\leq \frac{A}{h(t)}
	\end{equation}
	for some $A>0$.
	Hence we choose $\eta =(C_1L_1)/cB$ to arrive at the estimate 
	\begin{equation}
	\label{knotzero}
	\left|\sum_i (\partial^k\phi_i)(z_1)\cdot \partial_z^{\alpha-k,\beta}(f_{x_i}(z)-f_x(z))\right| \leq A(C_1L_1)^{|\alpha|-|k|}L_2^{|\beta|}M_{|\alpha|}\tilde{M}_{|\beta|}\eta^{|k|}h((C_1L_1)|z_1-x|)^{-1}.
	\end{equation}
	Combining \eqref{kzero} and \eqref{knotzero}, we arrive at
	\begin{equation}
	\label{whitneykey}
	|\partial_z^{\alpha,\beta}(\tilde{f}(z)-f_x(z))|\leq AL_2^{|\beta|}M_{|\alpha|}\tilde{M}_{|\beta|}((C_1L_1)+\eta)^{|\alpha|}h((C_1L_1)|z_1-x|)^{-1}
	\end{equation}
	for $z\in (X\setminus K) \times Y$.
	
	The estimate \eqref{whitneykey} is key to proving $\tilde{f}\in\mathcal{C}^\infty(X\times Y)$ (and that the derivatives coincide with the those given by the jet $f^{\alpha,\beta}$), as well as the subsequent deduction of $\mathcal{C}^\infty_{M,\tilde{M}}$ regularity.
	We write
	\begin{equation}
	\tilde{f}^{\alpha,\beta}(z):=\begin{cases} \partial_z^{\alpha,\beta}\tilde{f}(z) &\mbox{if } z_1\in X\setminus K \\ 
	f^{\alpha,\beta}(z) & \mbox{if } z_1\in K.\end{cases}
	\end{equation}
	The smoothness of each $\tilde{f}^{\alpha,\beta}:X\times Y\rightarrow \R$ readily follows from the fact that  each $f^{\alpha,\beta}:K\times Y\rightarrow \R$ is smooth in $y$, together with the estimate
	\begin{equation}
	\label{smoothestimate}
	|\tilde{f}^{\alpha,\beta}(z)-\partial_z^{\alpha,\beta}T_x^mf(z)|=o(|z_1-x|^{m-|\alpha|}).
	\end{equation}
	For $z$ with $z_1\in K$, the estimate \eqref{smoothestimate} comes immediately from \eqref{gevcond2} on $K\times Y$. Otherwise, it is a consequence of the estimate \eqref{whitneykey}, the defining property  \eqref{fxsolvescarleman} of the functions $f_x$, and the fact that the function $h(t)$ increases faster than any polynomial in $t^{-1}$ as $t\rightarrow 0$.
	
	Finally, we need to check $\mathcal{C}^\infty_{M,\tilde{M}}$ regularity. That is, we need to verify that the Gevrey estimate
	\begin{equation}
	\label{endofwhitney}
	\|f\|_{C_1L_1,L_2}\leq C_0A.
	\end{equation}
	for some constants $C_0,C_1$ dependent only on the dimensions of the spaces $X$ and $Y$ and the weight sequences $M_k,\tilde{M}_k$.
	In light of \eqref{gevcond}, it only remains to prove \eqref{endofwhitney} on $(X\setminus K)\times Y$, and by multiplication by a cutoff function we may assume $d(z_1,K)$ is bounded.
	Then, by applying \eqref{whitneykey} with $x=\bar{z}_1$ we can further reduce the problem to verifying \eqref{endofwhitney} for $f_x$, uniformly in $x\in K$. However this was established earlier in \eqref{boundedcarleman}. 
	Hence, the proof is complete.
\end{proof}

\bibliographystyle{plain}
\bibliography{non.qe}

\begin{thebibliography}{10}

\bibitem{arnold2}
V.~I. Arnol$\prime$d.
\newblock {\em Geometrical methods in the theory of ordinary differential
  equations}, volume 250 of {\em Grundlehren der Mathematischen Wissenschaften
  [Fundamental Principles of Mathematical Science]}.
\newblock Springer-Verlag, New York-Berlin, 1983.
\newblock Translated from the Russian by Joseph Sz\"ucs, Translation edited by
  Mark Levi.

\bibitem{arnoldmechanics}
V.~I. Arnol$\prime$d.
\newblock {\em Mathematical methods of classical mechanics}, volume~60 of {\em
  Graduate Texts in Mathematics}.
\newblock Springer-Verlag, New York, 199?
\newblock Translated from the 1974 Russian original by K. Vogtmann and A.
  Weinstein, Corrected reprint of the second (1989) edition.

\bibitem{bruna}
Joaquim Bruna.
\newblock An extension theorem of {W}hitney type for non-quasi-analytic classes
  of functions.
\newblock {\em J. London Math. Soc. (2)}, 22(3):495--505, 1980.

\bibitem{verdiere}
Y.~Colin~de Verdi{\`e}re.
\newblock Ergodicit\'e et fonctions propres du laplacien.
\newblock {\em Comm. Math. Phys.}, 102(3):497--502, 1985.

\bibitem{colin}
Yves Colin~de Verdi\`ere.
\newblock Quasi-modes sur les vari\'et\'es {R}iemanniennes.
\newblock {\em Invent. Math.}, 43(1):15--52, 1977.

\bibitem{duistbook}
J.~J. Duistermaat.
\newblock {\em Fourier integral operators}.
\newblock Modern Birkh\"auser Classics. Birkh\"auser/Springer, New York, 2011.
\newblock Reprint of the 1996 edition [MR1362544], based on the original
  lecture notes published in 1973 [MR0451313].

\bibitem{DYG}
Semyon Dyatlov and Colin Guillarmou.
\newblock Microlocal limits of plane waves and {E}isenstein functions.
\newblock {\em Ann. Sci. \'{E}c. Norm. Sup\'{e}r. (4)}, 47(2):371--448, 2014.

\bibitem{galavotti}
Giovanni Gallavotti.
\newblock Perturbation theory for classical {H}amiltonian systems.
\newblock In {\em Scaling and self-similarity in physics ({B}ures-sur-{Y}vette,
  1981/1982)}, volume~7 of {\em Progr. Phys.}, pages 359--426. Birkh\"auser
  Boston, Boston, MA, 1983.

\bibitem{mushroom}
Sean~P. Gomes.
\newblock Percival's conjecture for the {B}unimovich mushroom billiard.
\newblock {\em Nonlinearity}, 31(9):4108--4136, 2018.

\bibitem{gutkin}
Boris Gutkin.
\newblock Note on converse quantum ergodicity.
\newblock {\em Proc. Amer. Math. Soc.}, 137(8):2795--2800, 2009.

\bibitem{hassellque}
Andrew Hassell.
\newblock Ergodic billiards that are not quantum unique ergodic.
\newblock {\em Ann. of Math. (2)}, 171(1):605--619, 2010.
\newblock With an appendix by the author and Luc Hillairet.

\bibitem{HRM}
B.~Helffer, A.~Martinez, and D.~Robert.
\newblock Ergodicit\'e et limite semi-classique.
\newblock {\em Comm. Math. Phys.}, 109(2):313--326, 1987.

\bibitem{gevapproxref}
Xuanji Hou and Georgi Popov.
\newblock Rigidity of reducibility of {G}evrey quasi-periodic cocycles on
  {$U(n)$}.
\newblock {\em Bull. Soc. Math. France}, 144(1):1--52, 2016.

\bibitem{kato}
Tosio Kato.
\newblock {\em Perturbation theory for linear operators}.
\newblock Classics in Mathematics. Springer-Verlag, Berlin, 1995.
\newblock Reprint of the 1980 edition.

\bibitem{ellipsoid}
Horst Kn\"{o}rrer.
\newblock Geodesics on the ellipsoid.
\newblock {\em Invent. Math.}, 59(2):119--143, 1980.

\bibitem{kolmogorov}
A.~N. Kolmogorov.
\newblock On conservation of conditionally periodic motions for a small change
  in {H}amilton's function.
\newblock {\em Dokl. Akad. Nauk SSSR (N.S.)}, 98:527--530, 1954.

\bibitem{marklof}
Jens Marklof and Stephen O'Keefe.
\newblock Weyl's law and quantum ergodicity for maps with divided phase space.
\newblock {\em Nonlinearity}, 18(1):277--304, 2005.
\newblock With an appendix ``Converse quantum ergodicity'' by Steve Zelditch.

\bibitem{moser}
J\"{u}rgen Moser.
\newblock A rapidly convergent iteration method and non-linear partial
  differential equations. {I}.
\newblock {\em Ann. Scuola Norm. Sup. Pisa (3)}, 20:265--315, 1966.

\bibitem{pet}
Hans-Joachim Petzsche.
\newblock On {E}. {B}orel's theorem.
\newblock {\em Math. Ann.}, 282(2):299--313, 1988.

\bibitem{popov1}
G.~Popov.
\newblock Invariant tori, effective stability, and quasimodes with
  exponentially small error terms. {I}. {B}irkhoff normal forms.
\newblock {\em Ann. Henri Poincar\'e}, 1(2):223--248, 2000.

\bibitem{popov2}
G.~Popov.
\newblock Invariant tori, effective stability, and quasimodes with
  exponentially small error terms. {II}. {Q}uantum {B}irkhoff normal forms.
\newblock {\em Ann. Henri Poincar\'e}, 1(2):249--279, 2000.

\bibitem{popovkam}
G.~Popov.
\newblock K{AM} theorem for {G}evrey {H}amiltonians.
\newblock {\em Ergodic Theory Dynam. Systems}, 24(5):1753--1786, 2004.

\bibitem{popovquasis}
Georgi Popov.
\newblock K{AM} theorem and quasimodes for {G}evrey {H}amiltonians.
\newblock {\em Mat. Contemp.}, 26:87--107, 2004.

\bibitem{poschel}
J\"urgen P\"oschel.
\newblock A lecture on the classical {KAM} theorem.
\newblock In {\em Smooth ergodic theory and its applications ({S}eattle, {WA},
  1999)}, volume~69 of {\em Proc. Sympos. Pure Math.}, pages 707--732. Amer.
  Math. Soc., Providence, RI, 2001.

\bibitem{schnirelman}
A.~I. {\v{S}}nirelman.
\newblock Ergodic properties of eigenfunctions.
\newblock {\em Uspehi Mat. Nauk}, 29(6(180)):181--182, 1974.

\bibitem{toth}
John~A. Toth and Steve Zelditch.
\newblock Norms of modes and quasi-modes revisited.
\newblock In {\em Harmonic analysis at {M}ount {H}olyoke ({S}outh {H}adley,
  {MA}, 2001)}, volume 320 of {\em Contemp. Math.}, pages 435--458. Amer. Math.
  Soc., Providence, RI, 2003.

\bibitem{zelditchnote}
Steve Zelditch.
\newblock Note on quantum unique ergodicity.
\newblock {\em Proc. Amer. Math. Soc.}, 132(6):1869--1872, 2004.

\bibitem{zelditch}
Steven Zelditch.
\newblock Uniform distribution of eigenfunctions on compact hyperbolic
  surfaces.
\newblock {\em Duke Math. J.}, 55(4):919--941, 1987.

\bibitem{zworski}
Maciej Zworski.
\newblock {\em Semiclassical analysis}, volume 138 of {\em Graduate Studies in
  Mathematics}.
\newblock American Mathematical Society, Providence, RI, 2012.

\end{thebibliography}
\end{document}